\newcommand{\ra}{\rightarrow}
\newcommand{\thh}{\text{th}}
\newcommand{\be}{\begin{equation}}
\newcommand{\ee}{\end{equation}}
\newcommand{\bi}{\begin{itemize}}
\newcommand{\ei}{\end{itemize}}
\newcommand{\br}{\begin{eqnarray}}
\newcommand{\er}{\end{eqnarray}}
\newcommand{\commentout}[1]{}
\newcommand{\TV}{\text{TV}}
\newcommand{\Wah}{W}
\newcommand{\ST}{\text{stop}}
\newcommand{\WD}{\text{WD}}
\newcommand{\Rm}{{\mathbb R}}
\newcommand{\Pm}{{\mathbb P}}
\newcommand{\Qm}{{\mathbb Q}}
\newcommand{\expE}{{\mathbb E}}
\newcommand{\Ind}{{\mathbbm{1}}}
\newtheorem{theo}{Theorem}[section]
\newtheorem{lem}[theo]{Lemma}
\newtheorem{defin}[theo]{Definition}
\newtheorem{cond}[theo]{Condition}
\newtheorem{prop}[theo]{Proposition}
\newtheorem{cor}[theo]{Corollary}
\newtheorem{rmk}[theo]{Remark}
\newtheorem{prob}[theo]{Problem}
\newtheorem{example}[theo]{Example}
\newcommand{\Law}{{\mathcal{L}}}
\newtheorem*{rep@theorem}{\rep@title}
\newcommand{\newreptheorem}[2]{%
\newenvironment{rep#1}[1]{%
 \def\rep@title{#2 \ref{##1}}%
 \begin{rep@theorem}}%
 {\end{rep@theorem}}}
\long\def\metanote#1#2{{\color{#1}\
\ifmmode\hbox\fi{\sffamily\mdseries\upshape [#2]}\ }}
\begin{document}
\setcounter{page}{1}

\title{The Fleming-Viot Process with McKean-Vlasov Dynamics}
\author{Oliver Tough\footnote{Department of Mathematics, Duke University, Durham, NC, USA. (oliver.kelsey.tough@duke.edu)} and James Nolen\footnote{Department of Mathematics, Duke University, Durham, NC, USA. (nolen@math.duke.edu)}}
\date{November 23, 2020}

\maketitle

\begin{abstract}
The Fleming-Viot particle system consists of $N$ identical particles diffusing in a domain $U \subset \Rm^d$. Whenever a particle hits the boundary $\partial U$, that particle jumps onto another particle in the interior. It is known that this system provides a particle representation for both the Quasi-Stationary Distribution (QSD) and the distribution conditioned on survival for a given diffusion killed at the boundary of its domain. We extend these results to the case of McKean-Vlasov dynamics. We prove that the law conditioned on survival of a given McKean-Vlasov process killed on the boundary of its domain may be obtained from the hydrodynamic limit of the corresponding Fleming-Viot particle system. We then show that if the target killed McKean-Vlasov process converges to a QSD as $t \to \infty$, such a QSD may be obtained from the stationary distributions of the corresponding $N$-particle Fleming-Viot system as $N\ra\infty$.
\end{abstract}


\tableofcontents

\section{Introduction}

The long-term behaviour of Markovian processes with an absorbing boundary has been studied since the work of Yaglom on sub-critical Galton-Watson processes \cite{Yaglom1947}, a review of which can be found in \cite{Meleard2011}. The long-time limits we obtain are quasi-stationary distribution (QSDs). In this paper we study the behavior of a system of interacting diffusion processes, known as a Fleming-Viot particle system, which is known to provide a particle representation for these long time limits \cite{Burdzy2000}, \cite[Section 6]{Meleard2011}.

Given an open set $U \subset \Rm^d$, we consider $N \geq 2$ particles diffusing in the domain $U$. The particle positions are denoted by $X_t^{1},\ldots,X_t^{N} \in U$, so that $\vec{X}^N_t=(X_t^{1},\ldots,X_t^{N})$ is a $U^N$-valued stochastic process. A drift acting on the particles will depend on their empirical measure. Let $\mathcal{P}(U)$ be the set of Borel probability measures on $U$, and let $\vartheta^N:U^N \to \mathcal{P}(U)$ be the map which takes the points $x_1,\dots,x_N \in U$ to their empirical measure,
\begin{equation}
 \vartheta^N(x_1,\ldots,x_N) = \frac{1}{N}\sum_{k=1}^N\delta_{x_k},
\label{eq:empirical map}
\end{equation}
which is invariant under permutation of the indices. Given $\vec{X}^N_t$, $\vartheta^N(\vec{X}^N_t)$ is the empirical measure of the $N$ particles (at time $t$), a random measure supported on $U$. We further define a measurable drift:
\begin{equation}
b: \mathcal{P}(U)\times U \ra \Rm^d. 
\label{eq:drift b}
\end{equation}

We now define the particle system which is the subject of this paper:
\begin{defin}[Fleming-Viot Particle System with McKean-Vlasov Dynamics]
Let $\upsilon^N$ be a probability measure on $U^N$, and let $\{ W^i_t\}_{i=1}^N$ be a collection of independent Brownian motions on $\Rm^d$. Then the particle system $\{X^i\}_{i=1}^N \subset U$ with initial distribution $\upsilon^N$ is defined up to a time $\tau_{\WD}:=\tau_\infty\wedge\tau_\ST\wedge\tau_{\max}$ by:
\begin{equation} \label{eq:N-particle system sde}
\left \{ \begin{split}
(i) & \quad \vec{X}^N_0 \sim \upsilon^N. \\
(ii) & \quad \text{For $t \in [0,\tau_{\WD})$ and between jump times (while $\{X^i_t\}_{i=1}^N \subset U$), the particles}\\
& \quad \text{evolve according to the system:} \\
&  \quad dX^i_t=b(\vartheta^N(\vec{X}^N_t),X^i_t)dt+dW^i_t,\quad i = 1,\dots,N, \\
& \quad \text{where the $W^{i}$ are independent Brownian motions in $\Rm^d$.} \\
(iii) & \quad \text{When a particle $X^i$ hits the boundary $\partial U$, $X^i$ instantly jumps }\\
& \quad \text{to the location of another particle chosen independently and uniformly at random.}
\end{split}  \right.
\end{equation}
\label{defin:Fleming-Viot MKV dynamics}
\end{defin}

We let $U^i_k\in\{1,\ldots,N\}\setminus\{i\}$ be the index of the particle i jumps onto on its $k^{\thh}$ death time, so that $\{\{U^i_k\}_{k=1}^\infty\}_{i=1}^N$ are a family of independent random variables such that for each $i \in \{1,\dots,N\}$ the variables $\{ U^i_k\}_{k= 1}^\infty$ are all uniformly distributed on the set $\{1,\dots,N\} \setminus \{i\}$.

We write $\tau_k$ for the $k^{\text{th}}$ jump time of any particle, and moreover $\tau_{\infty}=\lim_{k\ra\infty}\tau_k$, after which the particle system is not well-defined. Furthermore we write $\tau_\ST=\inf\{t>0:\;\exists\; j\neq k\text{ such that }X^j_{t-},\;X^k_{t-}\in \partial U\}$ after which the particle system is not well-defined. Furthermore if the domain $U$ is unbounded, the particles may "escape to infinity" in finite time, after which time the particle system is not well-defined. We write $\tau_{\max}=\inf\big\{t>0:\sup_{\substack{t'\leq t\\ 1\leq i\leq N}}\lvert X^{i}_{t'}\rvert=\infty\big\}$. Thus the particle system is well-defined only up to the time:
\[
\tau_{\WD}=\tau_{\infty}\wedge\tau_{\ST}\wedge \tau_{\max}.
\]

Although the Brownian motions $\{W_i\}_{i=1}^N$ are independent, the drift $b$ in the motion of the $i^{th}$ particle $X^i_t$ may depend on $X^i$ and on the empirical measure $\vartheta^N(\vec{X}^N_t)$ of all $N$ particles. The particles also depend on each other through the rule for relocating a particle when it hits the boundary $\partial U$. Because we do not make strong regularity assumptions on the drift $b$, we will interpret the SDE in \eqref{eq:N-particle system sde} in the weak sense, which we make precise in Definition \ref{defin:Fleming-Viot MKV dynamics weak}.

This system is a generalisation of the Fleming-Viot system introduced in the foundational papers of Burdzy, Holyst, Ingerman, and March \cite{Burdzy1996, Burdzy2000}. Their work involved the particular case of purely Brownian dynamics (i.e. $b \equiv 0$) on a bounded domain $U$. Even if $b \equiv 0$, it is not clear that the system \eqref{eq:N-particle system sde} should be well-defined for all $t > 0$. In particular, the following problem remains open:
\begin{prob}[\cite{Burdzya}]
Consider the $b\equiv 0$ case. Is it true that $\tau_{\infty}=\infty$, almost surely, for any bounded open connected set $D\subseteq \Rm^d$?
\label{prob:obtaining an infinite time horizon for the particle system in the Brownian case for bounded domains with arbitrary boundary}
\end{prob}

In \cite{Burdzy2000,Grigorescu2007a, Bieniek2009}, conditions for the global well-posedness ($\Pm(\tau_{\WD}= +\infty) = 1$) of this system were established for the case $b \equiv 0$ when $U$ is bounded (and the boundary satisfies various additional conditions). Note that the proof given in \cite[Theorem 1.1]{Burdzy2000} features an irreparable error, however implicit in \cite[Theorem 1.4]{Burdzy2000} is another proof when the domain satisfies an interior ball condition. These are complemented by \cite{Grigorescu2012,Villemonais2011}, providing well-posedness for general diffusions on possibly unbounded domains (satisfying various additional conditions). We provide a similar result (Theorem \ref{theo:Global Well-Posedness of the N-Particle System}) for $b\neq 0$ and $U$ being possibly unbounded.

In \cite{Burdzy2000,Grigorescu2004}, Burdzy, et al. also consider the limits $N \to \infty$ and $t \to \infty$. They established that the empirical measure of the particle converges to the solution of the heat equation renormalised to have constant mass 1, corresponding to the distribution of Brownian motion killed at the boundary of its domain, conditioned on survival. The notion of convergence was later strengthened by Grigorescu and Kang in \cite{Grigorescu2004}. In particular if $\frac{1}{N}\sum_{i=1}^N\delta_{X^{i}_0}\ra \nu$ weakly in probability then:
\[
\frac{1}{N}\sum_{i=1}^N\delta_{X^{i}_t}\ra \frac{u}{\lvert u\rvert_{\ast}}=\mathcal{L}_{\nu}(B_t\lvert \tau>t)\quad\text{weakly in probability}
\]
where $\lvert u\rvert_{\ast}$ is the mass of $u_t$, which is a solution of the Dirichlet heat equation:
\[
\partial_t u=\frac{1}{2}\Delta u, \quad u_{\lvert_{\partial U}},\quad u_0=\nu = 0
\]
and where $(B_t)_{0\leq t\leq\tau}$ is a Brownian motion with initial condition $B_0\sim\nu$ stopped at the time $\tau=\inf\{t>0:B_{t}\in \partial U\}$. Note that, by abuse of notation, we are using functions interchangeably with the measures having their density.

Moreover for fixed $N$, Burdzy, et al. \cite[Theorem 1.4]{Burdzy2000} prove that $\vec{X}^N_t$ has a stationary distribution $\mathbf{M}^N$ on $U^N$ to which the distribution of $\vec{X}^N_t$ converges exponentially fast as $t\ra\infty$. Furthermore the corresponding stationary random empirical measure $\chi^N_{\mathbf{M}}\sim \vartheta^N_{\#}\mathbf{M}^N$ converges weakly in probability as $N \to \infty$ to a function $\phi(x)$ which is the principal Dirichlet eigenfunction of the Laplacian on $U$:
\begin{equation}
\begin{split}
\Delta\phi+\lambda\phi=0,\quad \phi>0 \text{ on U},\quad \phi=0\text{ on }\partial U,
\end{split}
\label{eq:min efn and evalue}
\end{equation}
normalised to have integral 1. This corresponds to the quasi-stationary distribution (QSD) for Brownian motion killed on the boundary of its domain:
\[
\mathcal{L}_{\phi}(B_t\lvert \tau>t)=\phi, \quad 0\leq t<\infty.
\]
This QSD is the unique quasi-limiting distribution (QLD) for Brownian motion killed at the boundary of its domain. That is for any initial condition $\nu$:
\[
\mathcal{L}_{\nu}(B_t\lvert \tau>t)\ra \pi\quad\text{as}\quad t\ra\infty.
\]

Similar results have been established for a variety of other Fleming-Viot particle systems with Markovian dynamics: for instance by Ferrari and Maric \cite{Ferrari2006} in the case of countable state spaces and by Villemonais \cite{Villemonais2011} in the case of general Strong Markov processes. These are complemented by generic long-time convergence criteria for the conditional distribution of killed Markov processes \cite[Theorem 7]{Meleard2011}, \cite{Champagnat2018a}. Campi and Fischer \cite{Campi2017} have also considered a similar mean field game with particles killed at the boundary of their domain (corresponding to bankruptcy) and interacting with the renormalised empirical measure (their setup did not feature branching, so the mass decreases over time). 

\subsection*{Summary of results}
In the present article, we extend the results in the Markovian case to the more general system \eqref{eq:N-particle system sde} which includes dynamics of McKean-Vlasov type whereby the particles interact through the dependence of the drift $b$ on the empirical measure. Throughout the paper, we assume that the open set $U \subset \Rm^d$ satisfies the interior ball condition with radius $r>0$: for every $x\in U$ there exists a point $y\in U$ such that $x\in B(y,r)\subseteq U$. We also assume that the drift
\[
b: \mathcal{P}(U)\times U \ra \Rm^d
\]
is measurable with respect to the Borel sigma algebra on $\mathcal{P}(U)\times U$ and uniformly bounded by $B<\infty$, where $\mathcal{P}(U)$ is endowed with the topology of weak convergence of measures.

We begin by establishing in Theorem \ref{theo:Global Well-Posedness of the N-Particle System} global well-posedness of the system \eqref{eq:N-particle system sde} - that is $\Pm(\tau_{\WD}=\infty) = 1$. At the same time, we establish some estimates on the $N$-particle system which shall be used throughout this paper.

We then seek to characterise the behaviour as $t\ra\infty$ for fixed $N<\infty$. Here we must impose an additional assumption: that the domain $U$ is bounded and path-connected. The reason boundedness becomes necessary is that on unbounded domains we have the possibility of mass escaping to infinity over infinite time horizons. We conjecture that a Lyapunov criterion should exist allowing our large time results to be extended to the setting of unbounded domains. In the Markovian case, such Lyapunov criteria have been established in \cite{Champagnat2018,Champagnat2018a}. We establish in Theorem \ref{theo:Ergodicity of the N-Particle System} that the system \eqref{eq:N-particle system sde} is ergodic, having a unique stationary distribution $\psi^N$ on $U^N$. 

We then consider the behaviour of the system \eqref{eq:N-particle system sde} as $N \to \infty$. We no longer need to impose the assumption that $U$ is bounded and path-connected. We will establish a hydrodynamic limit theorem - Theorem \ref{theo:Hydrodynamic Limit Theorem} - which will be the main result of this paper. As we will show, the limit behavior of $\vec{X}^N$ as $N \to \infty$ can be described in terms of the following conditional McKean-Vlasov system:

\begin{equation} \label{eq:MKV sde}
\left \{ \begin{split}
(i) & \quad (X_t:0\leq t\leq \tau)\text{ is a continuous process defined up to the}\\
& \quad \text{stopping time }\tau=\inf\{t>0:X_t\in\partial U\},\\
(ii) & \quad X_t\in U\text{ for }t<\tau,\;X_{\tau} = \lim_{t \nearrow \tau^-} X_t \in \partial U,\\
(iii) & \quad  \text{Initial condition: $X_0 \sim \nu \in \mathcal{P}(U)$,} \\ 
(iv) & \quad X_t\text{ satisfies }dX_t=b(\mathcal{L}(X_t\lvert \tau>t),X_t)dt+dW_t,\quad 0\leq t \leq \tau,\\ & \text{where $W$ is a Brownian motion},
\end{split}  \right. 
\end{equation}
which gives rise to the flow of conditional laws:
\begin{equation}
(\mathcal{L}(X_t\lvert \tau>t):0\leq t<\infty).
\label{eq:flow of conditional laws}
\end{equation}
\begin{rmk}
Strictly speaking we should define $X_t$ as occupying some cemetary state for all $t\geq \tau$. This could be some point seperate from $\Rm^d$ or it could be the point on the boundary it hits at time $\tau$. Nevertheless it shall be more convenient for our purposes for killed processes to be defined only up the killing time $\tau$. By abuse of notation, we are writing $\mathcal{L}(X_t\lvert \tau>t)$ for $\mathcal{L}(X_{t\wedge \tau}\lvert \tau>t)$ and $\mathcal{L}(X_t)$ for the sub-probability measure $\mathcal{L}(X_t\lvert\tau>t)\Pm(\tau>t)$ - so in particular $\mathcal{L}(X_t)$ assigns mass only to $U$ and not to any "cemetary state".
\label{rmk:convention for L(xt)}
\end{rmk}

Such processes have been studied over finite time horizons for instance by Caines, Ho and Song \cite{Caines2019} and in the context of Mean Field Games by Campi and Fischer \cite{Campi2017}. In the SDE, we use $\mathcal{L}(X_t\lvert \tau>t) \in \mathcal{P}(U)$ to denote the law of $X_t$ conditioned on $\tau > t$, where $\tau$ is the first time $X_t$ hits the boundary $\partial U$. For convenience, we define:
\begin{equation}
m_t = \mathcal{L}(X_t\lvert \tau>t) \in \mathcal{P}(U), \quad t \geq 0 \label{eq:mdef}
\end{equation}
and 
\begin{equation}
J_t = - \ln \Pm (\tau > t), \quad t \geq 0. \label{eq:Jdef}
\end{equation}

These are only well-defined for as long as $\Pm(\tau>t)>0$. We therefore define the following:
\begin{defin}[Global Weak Solution to \eqref{eq:MKV sde}]
If a weak solution to \eqref{eq:MKV sde} satisfies $\Pm(\tau>t)>0$ for all $t\in [0,\infty)$ we say it is a global weak solution. 
\label{defin:Global weak solution to MKV SDE}
\end{defin}

We establish in Proposition \ref{prop:Properties of the McKean-Vlasov Process} that all weak solutions are global weak solutions along with the existence, uniqueness in law and time continuity of such solutions. This allows us to uniquely define the following semigroup:
\begin{equation}\label{eq:MKV SDE semigroup}
\begin{split}
G_t(\nu):=\mathcal{L}_{\nu}(X_t\lvert \tau>t)\text{ where }(X,\tau,W)\text{ is a }\\
\text{global weak solution to \eqref{eq:MKV sde} with initial condition }X_0 \sim \nu \end{split}
\end{equation}
which we later show in Proposition \ref{prop:basic properties of Gt} is jointly continuous in $[0,\infty)\times \mathcal{P}(U)$. The density $u = m_te^{-J_t}=\mathcal{L}(X_t)$ corresponds to a weak solution of the following nonlinear transport equation:
\[
\partial_t u + \nabla \cdot \left(b\left(\frac{u}{\lvert u\rvert_*},x\right) u\right) = \frac{1}{2} \Delta u, \quad \quad u \big \vert_{\partial U} = 0
\]
where $\lvert u\rvert_*$ is the mass of u on $U$. 

Returning to our Fleming-Viot system of $N$ particles, we define the empirical measure of the $N$-particle system:
\begin{equation}
m^N_t=\vartheta^N(\vec{X}^N_t).
\label{eq:defin mN}
\end{equation}
which has initial distribution:
\begin{equation}
m^N_0\sim \xi^N:=\vartheta^N_{\#}\upsilon^N.
\label{eq:initial dist mN0}
\end{equation}
Thus $m_0^N$ is a random probability measure on $U$; $\xi^N$ is the law of this random measure and is the pushforward of $\upsilon^N$ under the map $\vartheta^N$.

We further define
\begin{equation}
J^N_t = \frac{1}{N} \sup \{ k \in \mathbb{N}\;|\; \tau_k \leq t \},
\label{eq:defin JN}
\end{equation}
which is the number of jumps of the $N$-particle process up to time $t$, normalized by $1/N$. In Theorem \ref{theo:Hydrodynamic Limit Theorem} we establish $(m^N_t,J^N_t)_{0\leq t<\infty}$ converges uniformly on compacts in probability to $(m_t,J_t)_{0\leq t<\infty}$.

Having established ergodicity for fixed $N$ and hydrodynamic convergence to the flow of conditional laws \eqref{eq:flow of conditional laws} for the system \eqref{eq:MKV sde}, it is natural to ask whether we might obtain convergence in large time for \eqref{eq:flow of conditional laws}. We recall the semigroup \eqref{eq:MKV SDE semigroup} and ask when the limit
\[
\lim_{t\ra\infty}G_t(\nu)=\lim_{t\ra\infty}\mathcal{L}_{\nu}(X_t\lvert \tau>t)
\]
exists. We extend the definitions given in the Markovian case in \cite[Page 5]{Meleard2011}:

\begin{defin}[McKean-Vlasov QLDs and QSDs]
We take a domain $U$ and drift $b$ satisfying the assumptions of Proposition \ref{prop:Properties of the McKean-Vlasov Process} and take the unique associated semigroup as in \eqref{eq:MKV SDE semigroup}:
\[
G_t:\mathcal{P}(U)\ra \mathcal{P}(U).
\]
We let $\pi$ be a Borel probability measure on $U$. We say $\pi$ is a quasi-limiting distribution (QLD) for $(b,U)$ if there is a probability measure $\nu$ on $U$ such that:
\begin{equation}
G_t(\nu)\ra \pi\quad\text{in}\quad\mathcal{P}(U)\quad\text{as}\quad t\ra\infty.
\label{eq:MKV quasi limit dist eq}
\end{equation}
We define $\pi$ to be a quasi-stationary distribution (QSD) for $(b,U)$ if:
\begin{equation}
G_t(\pi)=\pi,\quad 0\leq t<\infty.
\label{eq:MKV quasi stat dist eq}
\end{equation}
We then define the set of QSDs to be:
\begin{equation}
\Pi=\{\pi\in \mathcal{P}(U):\pi\text{ is a QSD for }(b,U)\}.
\label{eq:set of QLDs}
\end{equation}
\end{defin}

We then ask in Problem \ref{prob:conv to equil} when we have:
\begin{equation}
    \text{the limit }\lim_{t\ra\infty}G_t(\nu)\text{ exists for every }\nu\in\mathcal{P}(U)
\label{eq:convergence to non-unique quasi-equil intro}
\end{equation}
(but we do not require the same limit for different $\nu\in\mathcal{P}(U)$). This is the most significant issue left unresolved in this paper; in our later theorems we assume we are working with a case where \eqref{eq:convergence to non-unique quasi-equil intro} does hold. We would not have \eqref{eq:convergence to non-unique quasi-equil intro} if, for example, $G_t(\nu)$ converges to a limit cycle as $t\ra \infty$ for some $\nu\in\mathcal{P}(U)$.

Whereas we are not able to resolve Problem \ref{prob:conv to equil}, we are able to extend \cite[Proposition 1]{Meleard2011} from the Markovian case to the McKean-Vlasov case: establishing in Proposition \ref{prop:properties of QSD} that $\pi$ is a QSD if and only it is a QLD, that QSDs can be characterised as the solutions of a nonlinear eigenproblem, that $\Pi$ is a non-empty compact set (in particular, at least one QSD exists) and that the killing time $\tau$ at quasi-equilibrium is exponentially distributed with rate given by the corresponding eigenvalue. This and all of our later results require the domain $U$ be bounded. 

We demonstrate in the following example that $\Pi$, the set of QSDs, need not be a singleton:

\begin{example}
We assume $U=[-1,1]$ and the drift is given by the first moment:
\[
dX_t=\gamma\expE[X_t\lvert \tau>t]dt+dW_t.
\]
This satisfies the conditions of Proposition \ref{prop:properties of QSD}, so we may check using Part \ref{enum:PDE for QSD} of Proposition \ref{prop:properties of QSD} that the QSDs are given by the following:
\[
\pi_b=Ae^{bx}\cos(\frac{\pi}{2}x)\quad \text{where $b$ is a solution of}\quad b=\tanh(\gamma b)-\frac{8\gamma b}{4\gamma^2b^2+\pi^2}.
\]

For all values of $\gamma$ $\pi_0=A\cos(\frac{\pi}{2}x)$ is a QSD, which for small $\gamma$ is the only QSD. Moreover by calculating the derivative of
\[
F(b)=\tanh(\gamma b)-\frac{8\gamma b}{4\gamma^2b^2+\pi^2}
\]
at 0 we see that $b\mapsto F(b)$ exhibits a pitchfork bifurcation at $\gamma=\frac{\pi^2}{\pi^2-8}$ so that for $\gamma>\frac{\pi^2}{\pi^2-8}$ there are multiple QSDs $\pi_0$ and $\pi_{\pm}$. 

\[
\begin{tikzpicture}
  \begin{axis}[
      axis x line=none,
      axis y line=none,
      domain=-1:1,
    ]
    \addplot[domain=-1:1,samples=50,smooth,thick,red] {pi*cos(deg(pi*x/2))/4};
    \addplot[domain=-1:1,samples=50,smooth,thick,black] {(4+pi^2)*exp(x)*cos(deg(pi*x/2))/(2*pi*(exp(1)+exp(-1)))};
    \addplot[domain=-1:1,samples=50,smooth,thick,blue] {(4+pi^2)*exp(-x)*cos(deg(pi*x/2))/(2*pi*(exp(1)+exp(-1)))};
\node[blue] at (0,375) {$\pi_-$};
\node at (200,375) {$\pi_+$};
\node[red] at (100,875) {$\pi_0$};
  \end{axis}
\end{tikzpicture}
\]

\label{exam:non-uniqueness in bounded MKV case}
\end{example}

We now recall that in Theorem \ref{theo:Ergodicity of the N-Particle System} we establish that $\vec{X}^N_t$ is ergodic with stationary distribution we call $\psi^N$. We may therefore associate to this an empirical measure-valued stationary distribution:
\begin{equation}
    \Psi^N:=\vartheta^N_{\#}\psi^N
\label{eq:defin PsiN}
\end{equation}
which is the stationary distribution for the empirical measure-valued process $m^N_t$. We associate to each $\Psi^N$ a random variable:
\begin{equation}
\pi^N\sim \Psi^N.
\label{eq:defin piN}
\end{equation}

In Theorem \ref{theo:main t theo conv of stat dists} we establish that if we do have \eqref{eq:convergence to non-unique quasi-equil intro} then the $\pi^N$ converge in the $\Wah$ metric in probability to $\Pi$. In other words if we sample a random empirical measure from $\Psi^N$ for large N, then with large probability our random empirical measure is close in the $\Wah$ metric to some QSD $\pi\in\Pi$. This is an extension of \cite[Theorem 1.4 (ii)]{Burdzy2000} which dealt with the $b=0$ case. 

Whilst we show $\pi^N$ is close to the set $\Pi$ with large probability, we do not show that it is close to all of $\Pi$ with large probability. When the QSDs are non-unique - when $\Pi$ contains more than one element - one may ask which QSDs are "selected" by the Fleming-Viot particle system? We conjecture that this should correspond to the stability of the semigroup $G_t$, so that in particular the stability of the QSDs could be determined by sampling $\pi^N$ sufficiently many times and observing which QSDs are "selected".

If we drop the assumption \eqref{eq:convergence to non-unique quasi-equil intro}, we shall see that the distribution of $\pi^N$ converges to the set of invariant measures for the semigroup $G_t$. Thus at least one of the invariant measures can be obtained from the Fleming-Viot particle system. More broadly, due to the McKean-Vlasov interaction, the semigroup $G_t$ could have more interesting dynamical systems properties than in the Markovian case. We therefore ask what about the dynamical system $G_t$ can be deduced from the corresponding Fleming-Viot particle system?

The following diagram summarises the relationship between our results:

\begin{tikzcd}[column sep=6cm, row sep=4cm]
\substack{\text{N-particle Fleming-Viot}\\\text{Particle System}} \arrow[r, "\text{Theorem }\ref{theo:Hydrodynamic Limit Theorem}\quad (N\ra\infty)"]\arrow[rd, swap,"\substack{\text{Theorem }\ref{theo:main t theo jt conv}\\ (N\wedge t\ra\infty)}"]  \arrow[d, "\substack{\text{Theorem }\ref{theo:main t theo jt conv}\\(t\ra\infty)}"]
& \substack{\text{Flow of Conditional Laws for the}\\ \text{Killed McKean-Vlasov SDE}} \arrow[d, "\substack{\text{Problem }\ref{prob:conv to equil}\\ (t\ra \infty)}"] \\
\substack{\text{Stationary Distribution for}\\ \text{the $N$-Particle System}} \arrow[r, "\text{Theorem }\ref{theo:main t theo conv of stat dists}\quad  (N\ra\infty)"]
& \text{Quasi-Stationary Distribution}
\end{tikzcd}

\section{Statement of Results}
We begin with a more precise description of the particle system \eqref{eq:N-particle system sde} which is the subject of this paper:
\begin{defin} [Weak Solution to \eqref{eq:N-particle system sde}]
\label{defin:Fleming-Viot MKV dynamics weak}
Let $\vec{W}_t = (W^1_t, \dots,W^N_t)$ be a collection of independent Brownian motions on $\Rm^d$ with respect to a right-continuous filtration $\{ \mathcal{F}_t\}_{t \geq 0}$. Let $\upsilon^N$ be a probability measure on $U^N$. We say that $(\vec{X}_t,\vec{W}_t,\mathcal{F}_t)$ is a weak solution to \eqref{eq:N-particle system sde} with initial condition $\upsilon^N$ if $\vec{X}_0\sim \upsilon^N$, and there is an increasing sequence of $\mathcal{F}_t$-stopping times $\{ \tau_k \}_{k = 0}^\infty$ with $\tau_0 = 0$ such that the following hold: 
\begin{enumerate}
    \item $\vec{X}_t$ is a c\'adl\'ag process. For each $k$, $\vec{X}_t$ is continuous on $[\tau_k,\tau_{k+1})$ and satisfies
\begin{equation}
X^i_t= X^i_{\tau_k} + \int_{\tau_k}^t b(\vartheta^N(\vec{X_s}),X^i_s)ds+W^i_t - W^i_{\tau_k},\quad \quad \quad i = 1,\dots,N; \;\; t \in [\tau_k,\tau_{k+1}). \label{eq:weak1}
\end{equation}
For all $k \geq 1$, and with probability one, there is a unique particle index $\ell(k) \in \{1,\dots,N\}$ such that
\begin{equation}
\tau_{k} = \min_{i=1,\dots,N} \inf \{ t > \tau_{k-1} \;|\;\; \lim_{s \to t^-} X^i_s \in U^c \} = \inf \{ t > \tau_{k-1} \;|\;\; \lim_{s \to t^-} X^{\ell(k)}_s \in U^c \}. \label{eq:ellkhit}
\end{equation}
\item
For all $k \geq 1$,
\begin{equation}
\lim_{t \to \tau_k^-} X^j_t = X^j_{\tau_k} \in U, \quad \forall\;\; j \in \{1,\dots,N\} \setminus \{\ell(k)\}, \label{eq:jindexcont}
\end{equation}
and
\begin{equation}
\Pm( X^{\ell(k)}_{\tau_k} = X^{j}_{\tau_k} \;|\; \mathcal{F}_{\tau_k^-} ) = \frac{1}{N-1}, \quad  \forall\;\; j \in \{1,\dots,N\} \setminus \{\ell(k)\} \label{eq:newindexunif}
\end{equation}
hold with probability one.
\end{enumerate}
\end{defin}

We note that this is no longer well-defined once two particles hit the boundary at the same time:
\[
\tau_{\ST}=\inf\{t>0:\;\exists\; j\neq k\text{ such that }X^j_t,\;X^k_t\in \partial U\}.
\]

Moreover if there are an infinite number of stopping times $\tau_k$ in finite time, this is no longer well-defined after the time:
\begin{equation}
\tau_{\infty} = \lim_{k \to \infty} \tau_k.
\label{eq:tau infinity defn}
\end{equation}

Furthermore if the domain $U$ is unbounded, the particles may "escape to infinity" in finite time, after which time the particle system is not well-defined. We write:
\begin{equation}
\tau_{\max}=\inf\big\{t>0:\sup_{\substack{t'\leq t\\ 1\leq i\leq N}}\lvert X^{i}_{t'}\rvert=\infty\big\}.
\end{equation}
Therefore we have $(\vec{X}_t,\vec{W}_t)_{0\leq t<\tau_{\WD}}$ is defined up to the time:
\begin{equation}
    \tau_{\WD}:=\tau_{\ST}\wedge\tau_{\infty}\wedge\tau_{\max}.
    \label{eq:WD stopping time}
\end{equation}

The index $\ell(k)$ in \eqref{eq:ellkhit} is the index of the unique particle that hits the boundary $\partial U$ at time $\tau_{k}$; the statement \eqref{eq:jindexcont} means that the paths of the other particles are continuous at time $\tau_k$; the statement \eqref{eq:ellkhit} means that at time $\tau_k$, the particle with index $\ell(k)$ jumps to the location of another particle chosen uniformly at random from the other $N-1$ particles.

Before stating our results, we define the spaces of measures we employ throughout this paper:
\begin{defin}[Spaces of Measures]
Given a metric space $(\chi,d)$, we equip $(\chi,d)$ with the Borel sigma algebra $\mathcal{B}(\chi)$ and define $\mathcal{P}(\chi)$ to be the space of probability measures on $\chi$ equipped with the topology of convergence of probability measures. We write $\mathcal{M}(\chi)$ for the space of Borel measures on $(\chi,d)$ equipped with the topology of weak convergence of measures. We further define $\mathcal{M}_+(\chi)=\mathcal{M}(\chi)\setminus \{0\}$ to be those measures with positive total mass (equipped with the same topology). 

We equip $\mathcal{P}(\chi)$ with the Wasserstein-1 metric on $\mathcal{P}$ using the bounded metric $d^1(x,y) = 1 \wedge d(x,y)$ on the underlying space $\chi$. We denote this metric $\Wah$ (unless there is a possible confusion as to the underlying metric space $\chi$, in which case we write $d_{\mathcal{P}_{\Wah}(\chi)}$) and write $\mathcal{P}_{\Wah}(\chi)=(\mathcal{P}(\chi),\Wah)$.

\sloppy We shall establish hydrodynamic convergence in the sense of uniform convergence in $\mathcal{P}_{\Wah}(U)\times\Rm_{\geq 0}$ on compact subsets of time in probability. We metrize this as follows. We firstly define the uniform metric over finite time horizons:
\begin{equation}
\begin{split}
    d^{\infty}_{[0,T]}:D([0,T];\mathcal{P}_{\Wah}(U)\times\Rm_{\geq 0})\times D([0,T];\mathcal{P}_{\Wah}(U)\times\Rm_{\geq 0})\ra \Rm_{\geq 0}\\
   d^{\infty}_{[0,T]}((\mu^1,y^1),(\mu^2,y^2))=\sup_{0\leq t\leq T}(\Wah(\mu^1_2,\mu^2_t)+\lvert y^1_t-y^2_t\rvert).
\end{split}
\label{eq:dinfty0T uniform metric}
\end{equation}

We then define the following metric:
\begin{equation}
\begin{split}
    d^{\infty}:D([0,\infty);\mathcal{P}_{\Wah}(U)\times\Rm_{\geq 0})\times D([0,\infty);\mathcal{P}_{\Wah}(U)\times\Rm_{\geq 0})\ra \Rm_{\geq 0}\\
   d^{\infty}(f,g)=\sum_{T=1}^{\infty}2^{-T}(d^{\infty}_{[0,T]}((f_t)_{0\leq t\leq T},(g_t)_{0\leq t\leq T})\wedge 1).
\end{split}
\label{eq:dinfty0T locally uniform metric}
\end{equation}
This metrises uniform convergence on compacts, which means that
\[
d^{\infty}((\mu^n_t,y^n_t)_{0\leq t<\infty},(\mu,y)_{0\leq t<\infty})\ra 0\quad\text{as}\quad n\ra\infty
\]
if and only if 
\[
\sup_{t\leq T}\Wah(\mu^n_t,\mu_t)\ra 0\quad\text{and}\quad \sup_{t\leq T}\lvert y^n_t - y_t\rvert \ra 0\quad\text{as}\quad n\ra\infty\quad\text{for every }T<\infty.
\]

Thus the random $\mathcal{P}_{\Wah}(U)\times \Rm_{\geq 0}$-valued Cadlag processes $(\mu^N,y^N)$ converge to $(\mu,y)$ uniformly in $\mathcal{P}_{\Wah}(U)\times \Rm_{\geq 0}$ on compacts in probability if and only if $d^{\infty}((\mu^N,y^N),(\mu,y))\ra 0$ in probability.

We shall also make use of the Total Variation norm, which we label $\lvert\lvert . \rvert\rvert_{\TV}$.
\label{defin:spaces of measures}
\end{defin}

We will always assume $U$ is an open subdomain of $\Rm^d$ whose boundary $\partial U$ satisfies the following interior ball condition:

\begin{cond}\label{cond:ball}
The boundary $\partial U$ satisfies the uniform interior ball condition: there is a fixed radius $r>0$ such that for every $x\in U$ there exists a point $y\in U$ such that $x\in B(y,r)\subseteq U$.
\end{cond}

Regarding the drift $b$, we will always assume that
$(\mu,x) \mapsto b(\mu,x)$ is measurable with respect to the Borel sigma algebra on $\mathcal{P}(U)\times U$ and uniformly bounded with $\lvert b\rvert\leq B<\infty$. For some results, we will also assume the following condition:

\begin{cond} \label{cond:blip}
The boundary $\partial U$ is $C^\infty$. Moreover, in addition to being measurable and uniformly bounded, the drift $b:\mathcal{P}_{\Wah}(U)\times U\ra \Rm^d$ is jointly continuous, and is Lipschitz in the first variable: there is $C > 0$ such that:
\begin{equation}
|b(\mu_1,x) - b(\mu_2,x)| \leq C \Wah(\mu_1,\mu_2), \quad \forall x \in U, \quad \mu_1, \mu_2 \in \mathcal{P}_{\Wah}(U).
\label{eq:b Lipschitz with respect to Wasserstein}
\end{equation}

\end{cond}

\begin{rmk}
The Lipschitz assumption \eqref{eq:b Lipschitz with respect to Wasserstein} may be replaced with the strictly weaker assumption that $b$ is uniformly Lipschitz with respect to the total variation metric. This does not require changes to the proof, however for simplicity we assume $b$ is uniformly Lipschitz with respect to the $\Wah$ metric.

Moreover the Lipschitz condition \eqref{eq:b Lipschitz with respect to Wasserstein} is used only to establish uniqueness in law of global weak solutions to \eqref{eq:MKV sde} for given initial conditions; for all our results this Lipschitz condition may be replaced with any other condition providing for uniqueness in law of global weak solutions to \eqref{eq:MKV sde}.

Furthermore in Proposition \ref{prop:Properties of the McKean-Vlasov Process} and theorems \ref{theo:Hydrodynamic Limit Theorem} and \ref{theo:N ra infty theorem} we could without changes to the proofs assume $b$ to be time-inhomogeneous; so that $b:[0,\infty)\times \mathcal{P}_{\Wah}(U)\times U\ra \Rm^d$ is measurable, and for Lebesgue-almost every $t$ and uniform $C<\infty$, $(m,x)\mapsto b(t,m,x)$ is jointly continuous and satisfies \eqref{eq:b Lipschitz with respect to Wasserstein}.
\label{rmk:uniqueness remark}
\end{rmk}

We firstly establish the particle system is defined over an infinite time horizon:
\begin{theo}[Global Well-Posedness of the $N$-Particle System \eqref{eq:N-particle system sde}]\label{theo:Global Well-Posedness of the N-Particle System} There exists a weak solution of \eqref{eq:N-particle system sde} for which $\Pm(\tau_{\WD}= \infty) = 1$; any weak solution of \eqref{eq:N-particle system sde} satisfies $\Pm(\tau_{\WD}=\infty)$ and weak solutions of \eqref{eq:N-particle system sde} are unique in law.
\end{theo}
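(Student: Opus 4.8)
The plan is to establish the three claims — existence of a globally well-posed weak solution, that every weak solution is globally well-posed, and uniqueness in law — in that order, reducing each to the others where possible. The central difficulty is ruling out $\tau_\infty < \infty$, i.e., accumulation of infinitely many boundary hits in finite time; the other two obstacle times $\tau_{\ST}$ and $\tau_{\max}$ are comparatively easy. First I would construct a weak solution by the obvious iterative recipe: run $N$ independent Itô SDEs $dX^i_t = b(\vartheta^N(\vec X_t),X^i_t)\,dt + dW^i_t$ with bounded drift $b$ (a weak solution exists by Girsanov, since $|b|\le B$, against $N$ independent Brownian motions started from $\upsilon^N$); stop at the first time $\tau_1$ some particle hits $\partial U$; check that a.s. exactly one particle is on the boundary at $\tau_1$ (this uses that, conditionally, each particle is a Brownian motion with bounded drift, so two independent such diffusions a.s. do not hit $\partial U$ simultaneously — a standard capacity/Green's-function argument, which also shows $\tau_{\ST}>\tau_k$ for every $k$), relocate the boundary particle onto one of the other $N-1$ chosen uniformly, and restart from the new configuration in $U^N$ using fresh increments of the same Brownian motions. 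Concatenating gives a process defined on $[0,\tau_\infty)$, consistent with Definition 2.1, and $\tau_{\max}=\infty$ a.s. because a Brownian motion with bounded drift does not explode (again Girsanov, comparing with Brownian motion).

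The heart of the matter is showing $\Pm(\tau_\infty = \infty) = 1$. Here I would exploit the uniform interior ball condition (Condition 2.3): for every $x\in U$ there is a ball $B(y,r)\subseteq U$ with $x$ on its boundary. The key quantitative estimate is that a single particle, being a Brownian motion with drift bounded by $B$, started \emph{anywhere} in $U$, has probability at least some $p_0 = p_0(r,B,d) > 0$ of surviving (not hitting $\partial U$) for a fixed duration $\delta_0 = \delta_0(r,B,d) > 0$ — indeed, starting at $x$, the particle stays inside the interior ball $B(y,r)$ with probability bounded below uniformly, by comparison with a Brownian motion with drift in a fixed ball, using that the exit time of such a process from a ball of radius $r$ has a nondegenerate distribution. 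Now suppose for contradiction $\tau_\infty < \infty$ with positive probability. On that event, infinitely many jumps occur in $[0,\tau_\infty)$, so in particular on $[\tau_\infty - \delta_0, \tau_\infty)$ there are infinitely many jumps, hence (since each jump is a boundary hit by some particle) the boundary-hitting process of the system is infinitely active on an interval of length $\delta_0$. But by the uniform survival estimate and the strong Markov property, at each jump time $\tau_k$ the relocated particle — sitting at the location of another particle in $U$ — independently has chance $\ge p_0$ of not hitting the boundary again before time $\tau_k + \delta_0$; more usefully, one shows the expected number of jumps in any interval of length $\delta_0$ is bounded (e.g., by $N/p_0$ or similar, via a renewal/domination argument comparing jump counts to a sum of geometric-type waiting times each with mean $\ge \delta_0 p_0 / N$), which contradicts infinite activity. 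Making this rigorous — correctly handling the dependence introduced by the relocation rule and the empirical-measure drift, and getting a genuinely uniform lower bound on inter-jump survival — is the main obstacle; I expect one argues it cleanly by introducing, for each particle $i$, an auxiliary comparison process and a stopping-time decomposition showing $\expE[J^N_{t}]$ (the normalized jump count) grows at most linearly, which simultaneously yields the "estimates on the $N$-particle system" advertised after the theorem statement.

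Finally, for uniqueness in law, I would proceed jump-by-jump. Between consecutive jump times the law of $\vec X$ is pinned down: on $[\tau_k, \tau_{k+1})$ it is the law of the SDE $dX^i = b(\vartheta^N(\vec X),X^i)dt + dW^i$ with the given starting configuration — but one must know \emph{this} SDE has a unique weak solution. Since the drift depends on the empirical measure, which depends on all coordinates, this is a genuinely coupled $(Nd)$-dimensional SDE with bounded measurable drift; uniqueness in law for bounded measurable drift follows from Girsanov's theorem (the law is mutually absolutely continuous with respect to $N$-dimensional Brownian motion started at the configuration, with an explicit density), so the law on $[\tau_k,\tau_{k+1}]$ together with the identity of the hitting particle $\ell(k+1)$ is determined. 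Then the relocation rule \eqref{eq:newindexunif} specifies the conditional law of $\vec X_{\tau_{k+1}}$ given $\mathcal F_{\tau_{k+1}^-}$. Inducting on $k$ and using $\tau_\infty = \infty$ a.s. (so the construction exhausts $[0,\infty)$) gives that all finite-dimensional distributions, and hence the law on path space, agree for any two weak solutions. Throughout, the simultaneous-hit and non-explosion facts ensure $\tau_{\WD}=\tau_\infty$ a.s., so the already-proven $\Pm(\tau_\infty=\infty)=1$ closes the argument.
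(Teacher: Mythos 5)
Your outline of existence (iterate between jump times, Girsanov for the bounded measurable drift), of $\tau_{\ST}$ and $\tau_{\max}$, and of uniqueness in law (change of measure to the driftless system, jump-by-jump) is consistent with what the paper does. However, the heart of your argument --- ruling out $\tau_\infty<\infty$ --- rests on a claim that is false: that a particle started \emph{anywhere} in $U$ survives a fixed duration $\delta_0$ with probability bounded below by some uniform $p_0(r,B,d)>0$. Condition \ref{cond:ball} only guarantees that each $x\in U$ lies in \emph{some} ball $B(y,r)\subseteq U$; it does not place $x$ at the centre, and if $d(x,\partial U)=\epsilon$ then $x$ is within $\epsilon$ of $\partial B(y,r)$ as well. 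A Brownian motion with bounded drift started at distance $\epsilon$ from the boundary hits it before time $\delta_0$ with probability $1-O(\epsilon/\sqrt{\delta_0})$, so no uniform $p_0$ exists. This matters precisely because the relocation rule sends a killed particle to the position of \emph{another particle}, which may itself be arbitrarily close to $\partial U$; the dangerous scenario is exactly that the empirical mass accumulates near the boundary and each relocated particle is killed again almost immediately. Consequently your renewal/domination bound ``expected number of jumps per interval of length $\delta_0$ is at most $N/p_0$'' has no basis, and the contradiction with infinite activity evaporates. (If such a soft per-jump bound were available, the hard part of Problem \ref{prob:obtaining an infinite time horizon for the particle system in the Brownian case for bounded domains with arbitrary boundary} would essentially be trivialized; the difficulty you flag as ``the main obstacle'' is not a technicality to be patched but the actual content of the theorem.)

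The paper's route around this is structural rather than per-jump: in Proposition \ref{prop:dominated by Bessel fns up to time tauinfty wedge taustop} one constructs, via a piecewise Doob--Meyer decomposition along an ordinal-indexed family of stopping times, i.i.d.\ reflected Bessel-type processes $\eta^1,\ldots,\eta^N$ (with drift $B$, reflected at $r$) coupled to the particles so that $d(X^i_t,\partial U)\geq r-\eta^i_t$ up to $\tau_{\WD}$. Lemma \ref{lem:no simultaneous hitting for Bessel processes} shows two independent such processes a.s.\ never sit at $r$ simultaneously. Then in Proposition \ref{prop:general weak solns are global} one argues that on $\{\tau_\infty<\infty\}$ some particle $i$ jumps infinitely often onto a fixed particle $j$, the summed squared displacements force $d(X^j,\partial U)\to 0$ along those jump times, and hence $\eta^i$ and $\eta^j$ would both reach $r$ at time $\tau_\infty$ --- a null event. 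This also yields $\tau_{\ST}\geq\tau_\infty\wedge\tau_{\max}$ for free, replacing your capacity argument. If you want to salvage your plan, you would need to replace the false uniform survival estimate with some comparison of this kind that controls the distance to the boundary of \emph{pairs} of particles (jumper and target) simultaneously; a single-particle, single-jump estimate cannot work.
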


We now address the large time properties of the system for fixed finite $N$. We must impose the additional assumption that the domain $U$ is bounded and path-connected. The boundedness assumption is needed as we do not currently have a good way of preventing the mass "escaping to infinity" over an infinite time horizon when the domain is unbounded. We establish ergodicity of the system for fixed $N$:

\begin{theo}[Ergodicity of the $N$-Particle System \eqref{eq:N-particle system sde}] In addition to Condition \ref{cond:ball}, assume that $U$ is path-connected and bounded. Then we have that for every $N$ fixed, there exists a unique stationary distribution $\psi^N\in \mathcal{P}(U^N)$ of the $N$-particle system (Definition \ref{defin:Fleming-Viot MKV dynamics}). Moreover there exist constants $C_N,\lambda_N>0$ such that for every initial distribution $X_0\sim\upsilon^N$ we have $\lvert\lvert \mathcal{L}(\vec{X}^N_t)- \psi^N\rvert\rvert_{TV}\leq C_Ne^{-\lambda_Nt}$.
\label{theo:Ergodicity of the N-Particle System}
\end{theo}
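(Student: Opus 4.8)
The plan is to combine the global well-posedness already established in Theorem \ref{theo:Global Well-Posedness of the N-Particle System} with a Doeblin-type minorization argument showing that the Markov semigroup of $\vec{X}^N_t$ on $U^N$ is uniformly ergodic. The key point is that, because the drift $b$ is uniformly bounded by $B$, the process $\vec{X}^N_t$ is (between jumps) a diffusion whose law is, by Girsanov, mutually absolutely continuous with respect to $N$ independent Brownian motions, with density bounded above and below on any finite time interval by constants depending only on $B$, $d$, $N$ and the time horizon. This lets one transfer the known lower bounds for the transition density of killed Brownian motion on a bounded smooth domain to the McKean-Vlasov Fleming-Viot system.

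\textbf{Step 1: reduction to a Doeblin condition.} I would first recall the standard fact that if a time-homogeneous Markov process on a Polish state space admits a probability measure $\eta$, a time $t_0>0$ and a constant $\beta\in(0,1]$ such that $\Pm_x(\vec X^N_{t_0}\in\cdot)\geq \beta\,\eta(\cdot)$ uniformly over all starting points $x\in U^N$, then the process has a unique stationary distribution $\psi^N$ and $\lVert \mathcal L_x(\vec X^N_t)-\psi^N\rVert_{\TV}\leq 2(1-\beta)^{\lfloor t/t_0\rfloor}$, which gives the claimed exponential bound with $\lambda_N = -t_0^{-1}\ln(1-\beta)$ and $C_N = 2(1-\beta)^{-1}$. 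Markovianity of $\vec X^N_t$ (in the weak/martingale-problem sense) follows from the uniqueness in law asserted in Theorem \ref{theo:Global Well-Posedness of the N-Particle System} together with the strong Markov property at the jump times; so the whole theorem reduces to producing $t_0,\beta,\eta$.

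\textbf{Step 2: the minorization.} Fix $t_0>0$. Starting from any $\vec x\in U^N$, I would first estimate from below the probability that \emph{no} particle hits $\partial U$ before time $t_0$ and that at time $t_0$ all $N$ particles lie in a fixed small ball $B(y_0,\rho)$ deep inside $U$ (such a ball exists since $U$ is bounded, open, connected, and satisfies Condition \ref{cond:ball}). For a single particle with bounded drift, the event ``stay in $U$ up to $t_0$ and be in $B(y_0,\rho)$ at $t_0$'' has probability bounded below by a positive constant $p_1=p_1(B,d,U,t_0,\rho)$ via Girsanov comparison with Brownian motion, using the known positivity of the killed-Brownian transition density on connected domains and the boundedness of the Radon--Nikodym density $\exp(\int_0^{t_0} b\cdot dW - \tfrac12\int_0^{t_0}|b|^2 ds)$ (whose negative moments are controlled since $|b|\le B$). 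Conditioning successively and using that the jump mechanism only relocates a particle onto one of the others (hence keeps particles in $U$), I get a lower bound $p_1^{N}$ (or a similar explicit expression) for the probability that at some time $s_0<t_0$ all particles are simultaneously inside $B(y_0,\rho/2)$ with no particle currently on the boundary. Then, on this event, running the system for the remaining time $t_0-s_0$ and again using Girsanov plus the interior lower bound on the killed heat kernel, the conditional law of $\vec X^N_{t_0}$ dominates $\beta$ times a fixed nondegenerate measure $\eta$ on $U^N$ (e.g. normalized Lebesgue measure on $B(y_0,\rho/4)^N$), uniformly in the starting configuration. Combining yields the Doeblin condition with $\beta>0$ depending only on $N,B,d,U$.

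\textbf{The main obstacle} is handling the jumps cleanly inside the Girsanov comparison: the process is only càdlàg with boundary-triggered jumps, so one cannot directly compare $\vec X^N$ on $[0,t_0]$ with $N$ free Brownian motions on the same interval. The fix is to do the comparison only on the jump-free stretches and to use the structure of the jump rule — a jumping particle lands exactly on another particle, which by the inductive construction is still strictly inside $U$ — so that after a jump the configuration is again an interior point from which the single-particle estimate can be reapplied via the strong Markov property. One also needs $\Pm_{\vec x}(\tau_{\WD}>t_0)=1$, which is exactly Theorem \ref{theo:Global Well-Posedness of the N-Particle System}, so that the process is genuinely defined on all of $[0,t_0]$ and the above events have meaning. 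Once the Doeblin minorization is in place, the conclusion — existence, uniqueness of $\psi^N$, and geometric TV convergence with explicit $C_N,\lambda_N$ — is the standard consequence recalled in Step 1.
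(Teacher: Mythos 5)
Your overall architecture (reduce to a Doeblin minorization, then compare with killed Brownian motion in the interior using boundedness of the drift and connectedness of $U$) is the same as the paper's, and the second half of your Step 2 is essentially the paper's Steps 3--4 (comparison with the Dirichlet heat kernel on a compact connected $K\subset\subset U^N$, plus Lemma \ref{lem:upper and lower bounds on density of diffusions}). The problem is the first half of your Step 2. The claim that for a single particle with bounded drift the event ``stay in $U$ up to $t_0$ and be in $B(y_0,\rho)$ at $t_0$'' has probability bounded below by a constant $p_1$ \emph{uniformly over the starting point} is false: for a starting point at distance $\epsilon$ from $\partial U$ the survival probability up to time $t_0$ is $O(\epsilon)$ for Brownian motion, and a Girsanov change of measure with bounded drift cannot repair this (by Cauchy--Schwarz the drifted probability is controlled by a power of the Brownian one, which still vanishes as $\epsilon\to 0$). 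Since the Doeblin bound must hold uniformly over \emph{all} configurations $\vec x\in U^N$, including those with particles arbitrarily close to $\partial U$, the event ``no particle hits $\partial U$ before $t_0$'' cannot carry a uniform lower bound, and your fallback --- restart via the strong Markov property each time a particle jumps --- does not close the gap either: the landing site is another particle which may itself be near the boundary, the number of boundary hits in $[0,t_0]$ is not uniformly controlled over starting configurations, and the successive conditioning does not produce a clean product bound of the form $p_1^N$.

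What is missing is precisely the configuration-uniform mechanism that pushes all particles a fixed distance from the boundary in a fixed time. The paper obtains this from the coupling of Proposition \ref{prop:dominated by Bessel fns up to time tauinfty wedge taustop}: each particle satisfies $d(X^i_t,\partial U)\geq r-\eta^i_t$ with $\eta^1,\dots,\eta^N$ i.i.d. reflected Bessel-type processes started at $r$, a domination that is valid across the jump times and whose law does not depend on the initial configuration at all. Hence $\Pm_{\vec x}\big(d(X^i_1,\partial U)\geq 2\epsilon_1 \text{ for all } i\big)\geq \Pm\big(\eta^i_1\leq r-2\epsilon_1 \text{ for all } i\big)=p_1>0$ uniformly in $\vec x\in U^N$, which is the paper's Step 2 and the input your Doeblin scheme needs. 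Once you have that uniform entrance estimate into $G_{2\epsilon_1}=\{d(x_i,\partial U)\geq 2\epsilon_1\ \forall i\}$, your remaining steps (interior Girsanov comparison, positivity of the killed heat kernel on a connected compact exhaustion, and a Lebesgue lower bound as in Lemma \ref{lem:upper and lower bounds on density of diffusions}) go through as you describe and match the paper's argument.
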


We now turn to the question of extracting a hydrodynamic limit. We no longer need to impose the assumption that the domain $U$ is bounded or path-connected. Our hydrodynamic limit will be given by the flow of conditional laws \eqref{eq:flow of conditional laws} corresponding to solutions of \eqref{eq:MKV sde}, and so before stating our hydrodynamic limit theorem we firstly give the properties of \eqref{eq:MKV sde}. We recall that where a weak solution $(X,\tau,W)$ to \eqref{eq:MKV sde} satisfies $\Pm(\tau>t)>0$ for all $t\in [0,\infty)$, we say it is a global weak solution.

\begin{prop}[Properties of the McKean-Vlasov Process \eqref{eq:MKV sde}] Assume Condition \ref{cond:blip}. For every $\nu\in\mathcal{P}(U)$ there exists a unique in law weak solution $(X,\tau,W)$ to \eqref{eq:MKV sde} with initial condition $X_0\sim\nu$. Moreover this weak solution to \eqref{eq:MKV sde} is a global weak solution and satisfies:
\begin{itemize}
\item[(i)] $\Pm(\tau>t)$ is continuous and positive on $[0,\infty)$.
\item[(ii)] $\mathcal{L}(X_t\lvert \tau>t) \in C([0,\infty);\mathcal{P}_{\Wah}(U)) \cap C((0,\infty);L^1(U))$.
\end{itemize}

\label{prop:Properties of the McKean-Vlasov Process}
\end{prop}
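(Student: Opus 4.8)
The plan is to build the solution by a Picard-type fixed-point iteration on the flow of conditional laws, exploiting the Lipschitz dependence of $b$ on the measure argument (Condition \ref{cond:blip}). Given a fixed continuous flow $t\mapsto m_t\in\mathcal{P}_{\Wah}(U)$ one substitutes it into the drift, obtaining a \emph{linear} (non-McKean-Vlasov) SDE $dX_t=b(m_t,X_t)\,dt+dW_t$ killed on hitting $\partial U$. Since $b$ is bounded and $\partial U$ is $C^\infty$ (hence satisfies the interior ball condition), this killed diffusion is well-posed: one can construct it by Girsanov's theorem starting from killed Brownian motion, so weak existence and uniqueness in law hold, $\Pm(\tau>t)>0$ for all $t$ (the Radon--Nikodym density is bounded away from $0$ and $\infty$ on bounded time intervals, and killed Brownian motion has positive survival probability for all $t$ on a domain with interior ball condition), and $t\mapsto\Pm(\tau>t)$ is continuous. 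The conditional law $\mathcal{L}(X_t\mid\tau>t)$ then defines a map $\Gamma:(m_t)_{0\le t\le T}\mapsto(\mathcal{L}(X_t\mid\tau>t))_{0\le t\le T}$ on $C([0,T];\mathcal{P}_{\Wah}(U))$, and a fixed point of $\Gamma$ is exactly a global weak solution of \eqref{eq:MKV sde} on $[0,T]$.

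The key estimate is a contraction (or Gr\"onwall) bound: if $m^1,m^2$ are two input flows agreeing at time $0$, couple the two SDEs with the \emph{same} Brownian motion. The difference of the unkilled processes satisfies $|X^1_t-X^2_t|\le\int_0^t|b(m^1_s,X^1_s)-b(m^2_s,X^2_s)|\,ds$, and splitting this via the triangle inequality into a Lipschitz-in-measure term $C\Wah(m^1_s,m^2_s)$ and a Lipschitz/boundedness term one would normally need spatial regularity for. Here, since $b$ need only be bounded (and continuous) in $x$, one instead controls the difference of the \emph{killed conditional laws} using Girsanov: both laws are absolutely continuous with respect to killed Brownian motion with bounded log-densities, and $\|\mathcal{L}(X^1_t\mid\tau>t)-\mathcal{L}(X^2_t\mid\tau>t)\|_{\TV}$ can be bounded by (a constant times) $\int_0^t\Wah(m^1_s,m^2_s)\,ds$ via a computation on the relative densities, using that $\Pm(\tau>t)$ is bounded below on $[0,T]$. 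Iterating on short intervals and then concatenating gives existence on $[0,T]$ for every $T$; letting $T\to\infty$ yields a global weak solution. Uniqueness in law of global weak solutions follows from the same Gr\"onwall estimate: any two global weak solutions with the same initial condition induce flows $m^1,m^2$ with $\Wah(m^1_t,m^2_t)=0$ for all $t$ by Gr\"onwall, hence (by uniqueness in law for the linear killed SDE with a now-common drift) have the same law.

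For the regularity claims (i)--(ii): positivity and continuity of $\Pm(\tau>t)$ come from the Girsanov comparison with killed Brownian motion as above. Continuity of $t\mapsto\mathcal{L}(X_t\mid\tau>t)$ in $\mathcal{P}_{\Wah}(U)$ follows from path-continuity of $X$ and continuity of $\Pm(\tau>t)$ (dominated convergence, using that the jump in the conditional law over a small time interval is controlled by the modulus of continuity of Brownian paths plus the small change in the normalizing survival probability). The $C((0,\infty);L^1(U))$ regularity is where I expect the \textbf{main obstacle}: one must show the sub-probability density $u_t=\mathcal{L}(X_t)$ is genuinely an $L^1$ function for $t>0$ and depends continuously on $t$ in $L^1$. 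This should follow from the smoothing of the heat semigroup: writing $u_t$ via Girsanov against killed Brownian motion, $u_t$ is an integral against the Dirichlet heat kernel $p^U_t(x,y)$, which for $t>0$ maps measures to $L^1(U)$ (indeed to bounded functions), and a short-time estimate plus the semigroup property upgrades this to continuity in $L^1$ on $(0,\infty)$; the boundedness of the Girsanov density on compact time intervals keeps all constants finite. The delicate point is handling the killing/normalization interplay near $t=0$ and near $\partial U$, for which the $C^\infty$ boundary and interior ball condition (giving good Dirichlet heat kernel bounds) are exactly what is needed.
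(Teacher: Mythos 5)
Your proposal takes a genuinely different route from the paper for existence, essentially the same route for uniqueness, and a sketchier route for the $L^1$ regularity. The paper does \emph{not} construct the solution by Picard iteration: it obtains existence as a by-product of the hydrodynamic limit machinery (Theorem \ref{theo:N ra infty theorem}), by running the $N$-particle system from initial conditions $\nu^{\otimes N}$, extracting a subsequential limit, and identifying it as an element of $\Xi(\{\nu\})$ via the martingale problem, Lemma \ref{lem:char-almost sure abs cty}, and the uniqueness theorem of Porretta for the linear Fokker--Planck equation; the regularity statements then come from Lemma \ref{lem:MKV Solns compact for compact set of ics} (built on the density bounds of Lemma \ref{lem:upper and lower bounds on density of diffusions} and Corollary \ref{cor:hitting time Corollary}) and from \cite[Theorem 3.6]{Porretta2015a}. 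Your uniqueness argument --- Girsanov change of drift, an $L^2$ estimate on the exponential martingale, and Gr\"onwall on $\sup_{s\le t}\lVert u^1_s-u^2_s\rVert_{\TV}$ --- is precisely the paper's argument (modelled on \cite[Proposition C.1]{Campi2017}). Your fixed-point existence argument is a legitimate and more self-contained alternative; what the paper's route buys is that the compactness/identification machinery is needed anyway for Theorem \ref{theo:Hydrodynamic Limit Theorem}, so existence comes for free, and Remark \ref{rmk:uniqueness remark} can then decouple the Lipschitz hypothesis from everything except uniqueness.

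Two soft spots you should repair if you pursue your route. First, the space on which you run the fixed point: $(\mathcal{P}(U),\Wah)$ is \emph{not} complete when $U$ is open (Cauchy sequences can leak mass to $\partial U$), so $C([0,T];\mathcal{P}_{\Wah}(U))$ is not a complete metric space; since your contraction estimate is really a total-variation estimate, you should either iterate in the TV metric on measurable flows with fixed initial datum, or check that the image of $\Gamma$ lies in a closed subset of flows charging no mass to $\partial U$ (which the Girsanov comparison with killed Brownian motion does give). Second, part (ii): absolute continuity of $\mathcal{L}(X_t)$ for $t>0$ does follow from your Girsanov--Cauchy--Schwarz comparison with the Dirichlet heat kernel, but \emph{continuity in $L^1$} does not follow merely from the observation that the heat semigroup maps measures into $L^1$; you need strong $L^1(U)$-continuity of the time-inhomogeneous drifted killed evolution, which is exactly the content of \cite[Theorem 3.6]{Porretta2015a} that the paper invokes. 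As written, "a short-time estimate plus the semigroup property" is the one step of your proposal that is not yet a proof.
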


We let $(\vec{X}^N_t:0\leq t<\infty)=((X^{N,1}_t,\ldots,X^{N,N}_t):0\leq t<\infty)$ be a sequence of weak solutions to \eqref{eq:N-particle system sde} with initial conditions $\vec{X}^N_0\sim\upsilon^N$. We define $m^N_t$, $\xi^N$ and $J^N_t$ as in \eqref{eq:defin mN}, \eqref{eq:initial dist mN0} and \eqref{eq:defin JN}. We have the following hydrodynamic limit theorem:
\begin{theo}[Hydrodynamic Limit Theorem]
Assume Condition \ref{cond:blip}. Let $\nu \in \mathcal{P}(U)$ and assume that $\Wah(m^N_0,\nu)\ra 0$ in probability as $N\ra\infty$. Let $(X,\tau,W)$ be a (unique in law) global weak solution to \eqref{eq:MKV sde} with initial distribution $X_0\sim \nu$, and define as in \eqref{eq:mdef} and \eqref{eq:Jdef}:
\[
m_t = \mathcal{L}(X_t\lvert \tau>t),\quad
J_t = -\ln \Pm (\tau > t).
\]
Then, as $N \to \infty$, we have uniform convergence on compacts in probability:
\[
\begin{split}
(m_t^N,J_t^N)_{0\leq t<\infty}\ra (m_t,J_t)_{0\leq t<\infty}\quad\text{in }d^{\infty}\text{ in probability.}
\end{split}
\]
\label{theo:Hydrodynamic Limit Theorem}
\end{theo}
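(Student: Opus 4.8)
The plan is to use the standard compactness-plus-identification strategy for hydrodynamic limits. First I would establish tightness of the laws of $(m^N_t, J^N_t)_{0 \le t < \infty}$ on $D([0,\infty); \mathcal{P}_{\Wah}(U) \times \Rm_{\ge 0})$. Since $\mathcal{P}_{\Wah}(U)$ is compact (as $U \subseteq \Rm^d$ and we use the bounded metric $d^1 = 1 \wedge d$, so that $\mathcal{P}_{\Wah}(U)$ is a compact metric space — or at worst precompact, with tightness of $m^N_t$ following from the a priori estimates in Theorem \ref{theo:Global Well-Posedness of the N-Particle System}), the spatial marginal is automatically tight; what needs work is temporal regularity. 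For $m^N_t$ I would test against smooth compactly-supported $f$ and apply Itô's formula to $\langle f, m^N_t \rangle$, obtaining a drift term bounded by $B \norm{\nabla f}_\infty$, a martingale term of order $N^{-1/2}$ (so that its quadratic variation over $[s,t]$ is $O(|t-s|/N)$, giving an Aldous-type modulus-of-continuity estimate), and a jump contribution controlled by $J^N_t - J^N_s$. The jump correction per relocation is $O(1/N)$ in the value of $\langle f, m^N \rangle$, but one must control $J^N_t$ itself: the key a priori estimate — which I expect is part of the "estimates on the $N$-particle system" promised alongside Theorem \ref{theo:Global Well-Posedness of the N-Particle System} — is that $\expE[J^N_t]$ is bounded uniformly in $N$ on compact time intervals (roughly, the expected number of boundary hits per particle per unit time is bounded, using the interior ball condition to compare with a reflected/absorbed Brownian motion with bounded drift). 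Monotonicity of $t \mapsto J^N_t$ then gives tightness of $(J^N_t)$ in $D([0,\infty);\Rm_{\ge 0})$ directly from the uniform integrability of $J^N_T$, and tightness of $(m^N_t)$ follows by the Aldous criterion once the jump terms are absorbed into the (tight) $J^N$.

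Next I would pass to a subsequential limit $(\bar m_t, \bar J_t)$ and identify it. The limiting $\bar J_t$ should be continuous (the jumps of $J^N$ are of size $1/N \to 0$, and one rules out accumulation using the uniform bound on $\expE[J^N_T]$). Testing against $f \in C_c^\infty(U)$ and taking limits in the Itô expansion, the martingale term vanishes and one obtains that $u_t := \bar m_t e^{-\bar J_t}$ is a weak solution of the nonlinear transport–diffusion equation $\partial_t u + \nabla\cdot(b(u/|u|_*, x)u) = \tfrac12 \Delta u$, $u|_{\partial U} = 0$, with $u_0 = \nu$; here joint continuity of $b$ in Condition \ref{cond:blip} is what lets me pass to the limit inside $b(\vartheta^N(\vec X^N_s), X^i_s)$ since $m^N_s = \vartheta^N(\vec X^N_s) \to \bar m_s$. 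The total mass identity $|u_t|_* = e^{-\bar J_t} = \Pm(\tau > t)$ and the normalized measure $\bar m_t = u_t/|u_t|_*$ should follow from choosing $f \equiv 1$ near $\partial U$, or more carefully by a separate argument tracking the mass flux across the boundary (which is exactly the jump rate, i.e. $d\bar J_t$). This identifies $(\bar m_t, \bar J_t)$ with the flow of conditional laws and $-\ln\Pm(\tau>t)$ of the McKean–Vlasov SDE \eqref{eq:MKV sde}.

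Since Proposition \ref{prop:Properties of the McKean-Vlasov Process} gives uniqueness in law (hence uniqueness of $(m_t, J_t)$) for the limit, every subsequence has the same limit, so the full sequence converges in distribution to the deterministic limit $(m_t, J_t)$; convergence in distribution to a constant upgrades to convergence in probability, in the metric $d^\infty$. The main obstacle I anticipate is the tightness and identification of $J^N$ — equivalently, obtaining the uniform-in-$N$ bound on the expected number of boundary absorptions and showing the mass carried across the boundary in the limit is accounted for exactly by $dJ_t$ with no loss. This is delicate because $b$ is merely bounded and measurable (no regularity beyond Condition \ref{cond:blip}'s joint continuity), so one cannot use PDE regularity of $u$; instead one must argue probabilistically, controlling boundary local time / hitting rates via comparison arguments exploiting the uniform interior ball condition, and must ensure no mass "disappears" in the limit (e.g. by a uniform-integrability / no-mass-at-the-boundary argument for $m^N_t$), which is the standard subtlety in hydrodynamic limits of Fleming–Viot systems.
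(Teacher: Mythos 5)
Your overall architecture (tightness, martingale identification of limit points as weak solutions of the nonlinear Fokker--Planck equation, then uniqueness of the McKean--Vlasov solution plus a subsequence argument upgrading to convergence in probability in $d^\infty$) matches the paper's strategy, but there are two genuine gaps where the hardest work actually lies.

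First, tightness is not ``automatic'': $\mathcal{P}_{\Wah}(U)$ is \emph{not} compact when $U$ is open or unbounded --- mass can concentrate on $\partial U$ (e.g.\ $\delta_{x_n}$ with $x_n \to \partial U$) or escape to infinity, and these are precisely the degeneracies one must exclude. Theorem \ref{theo:Global Well-Posedness of the N-Particle System} alone does not supply the needed estimates; the paper's tightness argument rests on the Bessel-process coupling of Proposition \ref{prop:dominated by Bessel fns up to time tauinfty wedge taustop}, which yields the uniform-in-$N$ control of the mass near $\partial U$ (Proposition \ref{prop:bound on mass near bdy}), the stochastic bound on the per-particle jump counts by sums of geometric random variables up to a stopping time (Proposition \ref{prop:bound on number of jumps by particle} --- note this bound is conditional on a positive fraction of particles staying away from the boundary, not a simple single-particle comparison), and, on unbounded domains, Lemma \ref{lem:tight after bounded times} preventing escape to infinity. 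You acknowledge this subtlety in your closing sentence, but as written your tightness step does not go through.

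Second, and more seriously, the identification step ``this identifies $(\bar m_t,\bar J_t)$ with the flow of conditional laws of \eqref{eq:MKV sde}'' is asserted without an argument, and this is where the paper's main technical machinery is needed. Knowing that $u_t = \bar m_t e^{-\bar J_t}$ is a measure-valued weak solution of the nonlinear equation does not by itself say it equals $\mathcal{L}(X_t)$ for the SDE: with $b$ merely bounded and measurable in $x$, one must invoke a uniqueness theorem for the \emph{linear} Fokker--Planck equation (with frozen drift $b(\bar m_s,\cdot)$), namely \cite[Theorem 1.1]{Porretta2015a}, and this requires (i) that the limit measure almost surely has a density --- proved in the paper via the Dynamical Historical Process analysis of Lemma \ref{lem:char-almost sure abs cty}, a substantial argument in its own right; (ii) a bounded $C^\infty$ domain, so that for unbounded $U$ the paper must couple to a particle system on a truncated domain $U_R$ (Proposition \ref{prop:coupling to system on bounded domain}) and work with the approximable-solution class $\mathcal{S}$; and (iii) an argument starting from a small positive time $t_0$ (since $\bar m_0$ need not have a density), removed afterwards via the compactness of $\Xi(\kappa)$ in Lemma \ref{lem:MKV Solns compact for compact set of ics}. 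Relatedly, your appeal to Proposition \ref{prop:Properties of the McKean-Vlasov Process} presupposes the limit is already known to be the law of a solution of \eqref{eq:MKV sde}; uniqueness in law of the SDE cannot by itself exclude spurious measure-valued solutions of the PDE arising as subsequential limits. Without supplying this identification machinery, the compactness--uniqueness argument does not close.
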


The existence part of Proposition \ref{prop:Properties of the McKean-Vlasov Process} and theorems \ref{theo:Hydrodynamic Limit Theorem}, \ref{theo:main t theo conv of stat dists} and \ref{theo:main t theo jt conv} are essentially all corollaries of the following generalised hydrodynamic limit theorem - Theorem \ref{theo:N ra infty theorem}. Relying on the machinery of sections \ref{section:N-particle Estimates}, \ref{section:density Estimates} and \ref{section:coupling}, this theorem will be proven in Section \ref{Section:Hydrodynamic Limit Theorem}. 

This hydrodynamic limit theorem is valid when the initial conditions are only known to constitute a tight family of random measures (as opposed to convergent weakly in probability to a deterministic initial profile as in Theorem \ref{theo:Hydrodynamic Limit Theorem}). We define:
\begin{equation}
\begin{split}
\Xi=\{(\mathcal{L}(X_t\lvert \tau>t), -\ln\Pm(\tau>t))_{0\leq t<\infty}\in C([0,\infty);\mathcal{P}_{\Wah}(U)\times \Rm_{\geq 0}):\\
(X,\tau,W)\;\text{is a }\text{global weak solution of }\eqref{eq:MKV sde}\}.
\end{split}
\label{eq:xiinfty}
\end{equation}

For $T<\infty$ we define $d^D_{[0,T]}$ to be the Skorokhod metric on $D([0,T];\mathcal{P}_{\Wah}(U)\times \Rm_{\geq 0})$. We then define the following metric:
\begin{equation}
\begin{split}
d^D:D([0,\infty);\mathcal{P}_{\Wah}(U)\times \Rm_{\geq 0})\times D([0,\infty);\mathcal{P}_{\Wah}(U)\times \Rm_{\geq 0})\ra \Rm_{\geq 0}\\
d^D(f,g)=\sum_{T=1}^{\infty}2^{-T}(d^D_{[0,T]}((f_t)_{0\leq t\leq T},(g_t)_{0\leq t\leq T})\wedge 1).
\end{split}
\label{eq:Skorokhod metric on integer times}
\end{equation}

Note that convergence with respect to $d^D$ to a continuous function implies convergence with respect to $d^{\infty}$ to the same continuous function. 

\begin{theo}
\sloppy Assume Condition \ref{cond:blip} and that $\{\xi^N\}$ is a tight family of measures in $\mathcal{P}(\mathcal{P}_{\Wah}(U))$. Then the laws of the processes $\{(m^N_t,J^N_t)_{0\leq t\leq T}\}$ are a tight family of measures on $(D([0,\infty);\mathcal{P}_{\Wah}(U)\times \Rm_{\geq 0}),d^{D})$. Moreover if along some subsequence $(m^N_t,J^N_t)_{0\leq t<\infty}\overset{d}{\ra}(m_t,J_t)_{0\leq t<\infty}$, then 
\[
(m_t,J_t)_{0\leq t<\infty}\in \Xi\cap C((0,\infty);L^1(U)\times \Rm_{\geq 0})\subseteq C([0,\infty);\mathcal{P}_{\Wah}(U)\times \Rm_{\geq 0})
\]
holds almost surely.
\label{theo:N ra infty theorem}
\end{theo}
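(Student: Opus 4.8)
The plan is to establish this as a standard martingale-problem / tightness argument adapted to the conditioned (renormalized) setting. First I would set up the state space: each $(m^N_t, J^N_t)$ is a càdlàg process with values in $\mathcal{P}_{\Wah}(U) \times \Rm_{\geq 0}$, and since $U \subseteq \Rm^d$ is fixed and we use the bounded metric $d^1 = 1 \wedge d$, the space $\mathcal{P}_{\Wah}(U)$ is compact whenever $U$ is, and in general its closure is manageable; $J^N_t$ is nondecreasing with $J^N_0 = 0$. The first task is \textbf{tightness}. For the $J^N$ component, tightness follows from the uniform bound on the jump rate: using the $N$-particle estimates of Section \ref{section:N-particle Estimates} (which bound the expected number of boundary hits on $[0,T]$, via the interior ball condition and the bounded drift), one gets $\expE[J^N_t] \leq C(1+t)$ uniformly in $N$ together with a modulus-of-continuity estimate on the increments of $J^N$, yielding tightness in the Skorokhod topology. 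For the $m^N$ component, I would test against a sufficiently rich class of smooth functions $f \in C^2_c$ (or $C^\infty$ with appropriate boundary behaviour) and write down the semimartingale decomposition of $\langle m^N_t, f\rangle$: the drift part involves $\langle m^N_s, \tfrac12\Delta f + b(m^N_s,\cdot)\cdot\nabla f\rangle$, the jump part contributes a boundary term proportional to $\dot J^N$ times a boundary average of $f$ (which vanishes if $f|_{\partial U}=0$), and the martingale part has quadratic variation of order $1/N$. Aldous–Rebolledo then gives tightness of $\langle m^N_\cdot, f\rangle$ for each $f$, and since $\mathcal{P}_{\Wah}(U)$ is (essentially) compact, tightness of $m^N$ as a measure-valued process follows. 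Taking the product and then the countable product over $T \in \mathbb{N}$ with weights $2^{-T}$ gives tightness in $(D([0,\infty); \mathcal{P}_{\Wah}(U)\times\Rm_{\geq 0}), d^D)$.

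Next, \textbf{identification of the limit}. Suppose $(m^N, J^N) \xrightarrow{d} (m,J)$ along a subsequence. Because the martingale part vanishes in $L^2$ as $N\to\infty$ (quadratic variation $O(1/N)$) and the jump sizes of $m^N$ are $O(1/N)$, the limit $(m,J)$ is continuous — here I would invoke the standard fact that if jump sizes go to zero then any weak limit has continuous paths a.s., upgrading $d^D$-convergence to $d^\infty$-convergence. Passing to the limit in the semimartingale identity for $f$ with $f|_{\partial U}=0$ gives that $t \mapsto \langle m_t, f\rangle e^{-J_t}$ (equivalently the unnormalized density $u_t = m_t e^{-J_t}$) is a weak solution of the nonlinear Dirichlet transport–diffusion equation $\partial_t u + \nabla\cdot(b(u/|u|_*,\cdot)u) = \tfrac12\Delta u$, $u|_{\partial U}=0$; while testing with $f \equiv 1$ on $U$ (approximated appropriately) identifies $J_t = -\ln|u_t|_*$ as the mass-loss. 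The regularization statement $(m,J) \in C((0,\infty); L^1(U)\times\Rm_{\geq 0})$ would come from parabolic smoothing for the Dirichlet heat kernel with bounded drift, exactly as in Proposition \ref{prop:Properties of the McKean-Vlasov Process}(ii); I would lean on the density estimates of Section \ref{section:density Estimates} here. Finally, to conclude $(m,J) \in \Xi$ I need to exhibit a global weak solution $(X,\tau,W)$ of \eqref{eq:MKV sde} whose conditional-law flow is exactly $(m,J)$: given the deterministic (once conditioned on the limit, which by uniqueness of the PDE solution started from the — possibly random — limiting initial measure is actually measurable in the initial data) profile $(m_t)$, the linear SDE $dX_t = b(m_t, X_t)\,dt + dW_t$ killed at $\partial U$ has a weak solution by Girsanov (bounded drift), its survival probability is positive for all $t$ by the strong Feller / interior-ball lower bound (this is where Condition \ref{cond:blip} and boundedness are used — it gives $\Pm(\tau>t)>0$, i.e. a \emph{global} weak solution), and a fixed-point/consistency check shows $\mathcal{L}(X_t|\tau>t) = m_t$, so $(m,J)\in\Xi$ by definition \eqref{eq:xiinfty}.

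The \textbf{main obstacle} I anticipate is handling the boundary jump term when passing to the limit — i.e., showing that the relocation mechanism contributes in the limit \emph{exactly} the conditioning/renormalization, with no anomalous boundary layer. Concretely, one must control $\int_0^t \langle m^N_s, \nabla f \cdot n\rangle_{\partial U}\,dJ^N_s$-type terms and show the empirical measure does not concentrate mass near $\partial U$ in a way that survives the limit; equivalently, that the boundary local-time of the particle system is captured entirely by $J^N$ and matches the PDE's Dirichlet flux. This requires the uniform-in-$N$ control of mass near the boundary provided by the interior-ball condition and the estimates of sections \ref{section:N-particle Estimates}–\ref{section:density Estimates}, together with the coupling of Section \ref{section:coupling} to pass from the $N$-particle boundary behaviour to that of the limiting killed diffusion. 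A secondary technical point is that the initial data $\xi^N$ is only tight (not convergent), so the limiting initial measure $m_0$ may be random; uniqueness in law of global weak solutions of \eqref{eq:MKV sde} for each fixed initial condition (Proposition \ref{prop:Properties of the McKean-Vlasov Process}, using the Lipschitz Condition \ref{cond:blip}) is what lets us still conclude $(m,J) \in \Xi$ almost surely by conditioning on $m_0$.
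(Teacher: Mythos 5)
Your overall architecture (martingale decomposition for test functions vanishing on $\partial U$, Aldous-type tightness, passage to the limit in the weak formulation, then construction of a killed diffusion with frozen drift $b(m_t,\cdot)$ via Girsanov) matches the paper's, but the step you dismiss as ``a fixed-point/consistency check shows $\mathcal{L}(X_t\lvert\tau>t)=m_t$'' is exactly where the real difficulty sits, and as written it is a genuine gap. The subsequential limit $m_te^{-J_t}$ and the law flow $\mathcal{L}(X_t)$ of your constructed diffusion both solve the \emph{same linear} Fokker--Planck equation with Dirichlet boundary condition and a drift that is merely bounded; to conclude they coincide you need a uniqueness theorem for weak, a priori only measure-valued, solutions of that linear equation. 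The paper invokes \cite[Theorem 1.1]{Porretta2015a}, and meeting its hypotheses is what most of the surrounding machinery is for: (i) the limit must possess an $L^1$ density --- this is Lemma \ref{lem:char-almost sure abs cty}, proved via the Dynamical Historical Process analysis of Section \ref{section:density Estimates}, not by ``parabolic smoothing'', since before uniqueness is available the limit is just a random measure flow with no equation one is entitled to smooth with; (ii) the initial datum of the linear equation must be in $L^1$, which the paper handles by running the identification from a positive time $t_0$ and letting $t_0\downarrow 0$ using the compactness Lemma \ref{lem:MKV Solns compact for compact set of ics}; and (iii) the uniqueness theorem requires a \emph{bounded} smooth domain, which for unbounded $U$ is circumvented by the coupling of Proposition \ref{prop:coupling to system on bounded domain} together with the ``approximable solution'' class $\mathcal{S}$. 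You do mention that coupling, but only for boundary-flux control; without using it (or supplying an alternative uniqueness statement valid on unbounded domains, plus the absolute-continuity input) your identification step does not go through.

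Two secondary points. First, $\mathcal{P}_{\Wah}(U)$ is not ``essentially compact'': mass can concentrate on $\partial U$ or, for unbounded $U$, escape to infinity, so tightness of the measure-valued component in $D([0,T];\mathcal{P}_{\Wah}(U))$ (rather than $\mathcal{P}_{\Wah}(\bar U)$) genuinely requires the boundary estimate of Proposition \ref{prop:bound on mass near bdy} and, for unbounded domains, Lemma \ref{lem:tight after bounded times}; you gesture at these, so this is a matter of making the dependence explicit. Second, your anticipated ``boundary layer'' obstacle largely dissolves in the paper's formulation: rather than testing with an approximation of $f\equiv 1$ to recover $J$, the factor $(1-\tfrac1N)^{NJ^N_t}$ is built directly into the martingale \eqref{eq:equation for Martingales MphiNR}, so with test functions vanishing on $\partial U$ the relocation jumps enter only through that prefactor and no boundary flux term needs to be controlled.
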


Proposition \ref{prop:Properties of the McKean-Vlasov Process} guarantees for us that the semigroup $G_t$ on $\mathcal{P}_{\Wah}(U)$ given in \eqref{eq:MKV SDE semigroup} is well-defined. We will establish in Section \ref{section:Further Estimates} the following properties of the semigroup $G_t$:
\begin{prop}[Properties of the Semigroup $G_t$]
Assume Condition \ref{cond:blip}. Then the semigroup $G_t$ is jointly continuous in $[0,\infty)\times \mathcal{P}_{\Wah}(U)$:
\begin{equation}
[0,\infty)\times \mathcal{P}_{\Wah}(U)\ni (t,\nu)\mapsto G_t(\nu)\in \mathcal{P}_{\Wah}(U)\quad\text{is continuous.}
\label{eq:joint continuity of semigroup}
\end{equation}
Furthermore if the domain $U$ is bounded, then for all $t_0>0$, $G_{t_0}$ has pre-compact image $\text{Image}(G_{t_0})\subset\subset \mathcal{P}_{\Wah}(U)$.
\label{prop:basic properties of Gt}
\end{prop}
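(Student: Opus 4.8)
The plan is to deduce both assertions from the preceding structural results rather than re-run the coupling machinery. For joint continuity of $(t,\nu)\mapsto G_t(\nu)$, I would argue by sequences: suppose $(t_n,\nu_n)\to(t_0,\nu_0)$ in $[0,\infty)\times\mathcal{P}_{\Wah}(U)$ and show $G_{t_n}(\nu_n)\to G_{t_0}(\nu_0)$ in $\mathcal{P}_{\Wah}(U)$. Take the $N$-particle system with deterministic initial empirical profile converging to $\nu_0$ — more precisely, for each $n$ pick $N(n)\uparrow\infty$ and initial configurations whose empirical measures converge (in probability, hence one may extract along a subsequence a coupling) to $\nu_n$, then to $\nu_0$; by a diagonal argument one can arrange a single sequence of $N$-particle systems whose time-$t_n$ empirical measures $m^{N(n)}_{t_n}$ have law close to $G_{t_n}(\nu_n)$ (by Theorem~\ref{theo:Hydrodynamic Limit Theorem} applied with initial law $\nu_n$, using uniform-on-compacts convergence to kill the $t$-dependence since $t_n\to t_0$ lies in a compact interval), while also converging in distribution to $G_{t_0}(\nu_0)$ (by Theorem~\ref{theo:Hydrodynamic Limit Theorem} applied with the limiting initial law $\nu_0$, using continuity in $t$ of $m_t$ from Proposition~\ref{prop:Properties of the McKean-Vlasov Process}(ii) at the level of $\mathcal{P}_{\Wah}(U)$). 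Uniqueness of the limit then forces $G_{t_n}(\nu_n)\to G_{t_0}(\nu_0)$. Alternatively, and perhaps more cleanly, I would combine: (a) continuity in $t$ for fixed $\nu$, which is exactly Proposition~\ref{prop:Properties of the McKean-Vlasov Process}(ii); (b) continuity in $\nu$ for fixed $t$, obtained by coupling two global weak solutions of \eqref{eq:MKV sde} started from $\nu_1,\nu_2$ driven by the same Brownian motion, using the Lipschitz condition \eqref{eq:b Lipschitz with respect to Wasserstein} on $b$ in the measure argument together with continuity of $\Pm(\tau>t)$, to get a modulus of continuity for $\nu\mapsto G_t(\nu)$ that is locally uniform in $t$; then (a) and (b) together give joint continuity. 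I would lean toward the coupling route for (b) since it also quantifies the dependence, but the hydrodynamic-limit route is available as a fallback if the direct coupling of conditioned McKean--Vlasov laws proves delicate because of the conditioning.

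For the pre-compactness of $\mathrm{Image}(G_{t_0})$ when $U$ is bounded and $t_0>0$: since $U$ is bounded, $\overline{U}$ is compact, so $\mathcal{P}_{\Wah}(U)$ is itself (pre)compact — every sequence of probability measures on a bounded set is tight, and with the bounded metric $d^1=1\wedge d$ weak convergence coincides with $\Wah$-convergence. Thus in the bounded case the pre-compactness statement is essentially automatic; the content I should still record is that $G_{t_0}(\nu)$ lies in $\mathcal{P}_{\Wah}(U)$ (i.e. charges $U$, not just $\overline{U}$), which follows from Proposition~\ref{prop:Properties of the McKean-Vlasov Process}(ii) asserting $m_{t_0}\in L^1(U)$, in particular $m_{t_0}$ has a density and so assigns no mass to $\partial U$. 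I would phrase this as: $\mathrm{Image}(G_{t_0})\subseteq\mathcal{P}(U)\cap\{$measures absolutely continuous w.r.t.\ Lebesgue$\}$, and this set has compact closure in $\mathcal{P}_{\Wah}(U)$ because its closure in $\mathcal{P}_{\Wah}(\overline{U})$ is compact and the smoothing estimate from Section~\ref{section:density Estimates} (the density bounds used to prove Proposition~\ref{prop:Properties of the McKean-Vlasov Process}) prevents escape of mass to $\partial U$ in the limit — giving a closed subset of a compact set.

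The main obstacle is part (b) of the first assertion: continuity in the initial condition $\nu$. The difficulty is that $G_t(\nu)=\mathcal{L}_\nu(X_t\mid\tau>t)$ is a \emph{conditioned} law, so a direct synchronous coupling of the two McKean--Vlasov SDEs controls $\mathcal{L}_\nu(X_t)=m_te^{-J_t}$ (the sub-probability measure) in Wasserstein or total variation, but to pass to the normalized conditional law one must also control $\Pm(\tau>t)=e^{-J_t}$ from below uniformly, and show it varies continuously with $\nu$. Here I would use that $\Pm_\nu(\tau>t)$ is positive and continuous (Proposition~\ref{prop:Properties of the McKean-Vlasov Process}(i)) and, crucially, bounded below locally uniformly in $\nu$ — which should come out of the density estimates of Section~\ref{section:density Estimates} since those give lower bounds on surviving mass that depend only on $B$, $r$, $t_0$ and not on the fine structure of $\nu$. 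Once $\Pm_\nu(\tau>t)\geq c(t_0,B,r)>0$ for $t\in[t_0,T]$ uniformly over all $\nu$, dividing the coupled sub-probability measures by their (close) normalizing constants yields the desired $\Wah$-estimate on $G_t(\nu_1)-G_t(\nu_2)$ with a modulus uniform in $t\in[t_0,T]$; the small-time regime $t\in[0,t_0]$ is then absorbed using continuity of $G_0=\mathrm{id}$ and joint continuity near $t=0$, which is where I would, if needed, fall back on the hydrodynamic-limit argument to close the gap at $t=0$.
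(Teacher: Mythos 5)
Your preferred route for continuity in the initial condition $\nu$ --- a synchronous coupling of two conditioned McKean--Vlasov solutions plus the Lipschitz condition on $b$ --- is the part that does not close. The lower bound $\Pm_\nu(\tau>t)\geq c(t_0,B,r)>0$ you invoke is \emph{not} uniform over all $\nu\in\mathcal{P}_{\Wah}(U)$: by the Bessel comparison of Proposition \ref{prop:dominated by Bessel fns up to time tauinfty wedge taustop}, an initial law concentrated at distance $\delta$ from $\partial U$ survives to time $t_0$ with probability tending to $0$ as $\delta\to0$. A lower bound that is merely local around $\nu_0$ is what you actually need, but extracting it ``from the density estimates'' without already knowing that $\nu\mapsto\Pm_\nu(\tau>t)$ is continuous is circular; and even granting it, the coupling must contend with the two processes being killed at different times, which is not controlled by $\Wah(\nu_1,\nu_2)$. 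The paper sidesteps all of this with a soft compactness--uniqueness argument: $\kappa=\{\nu_n\}_n\cup\{\nu\}$ is compact, so $\Xi(\kappa)$ is compact in $d^\infty$ by Lemma \ref{lem:MKV Solns compact for compact set of ics}, and uniqueness in law (Proposition \ref{prop:Properties of the McKean-Vlasov Process}) identifies the only possible limit point of $(G_t(\nu_n))_{0\leq t\leq T}$ as $(G_t(\nu))_{0\leq t\leq T}$; locally uniform-in-$t$ convergence plus continuity in $t$ of the limit then gives joint continuity by the triangle inequality. Your fallback route through Theorem \ref{theo:Hydrodynamic Limit Theorem} and a diagonal extraction is essentially this same compactness--uniqueness argument performed at the particle level; it can be made to work, but it is strictly more laborious than quoting Lemma \ref{lem:MKV Solns compact for compact set of ics} directly.

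The pre-compactness argument has a genuine gap. Knowing that each $G_{t_0}(\nu)$ is absolutely continuous does not make $\text{Image}(G_{t_0})$ pre-compact in $\mathcal{P}_{\Wah}(U)$: the normalised densities $\Ind(d(x,\partial U)<1/n)$ all lie in $L^1(U)$ yet converge weakly to a measure charging $\partial U$. What is required is a bound $G_{t_0}(\nu)(\{x:d(x,\partial U)<c(\epsilon,t_0)\})\leq\epsilon$ that is \emph{uniform in $\nu$}, and the appendix density estimate (Lemma \ref{lem:upper and lower bounds on density of diffusions}) does not supply it: that lemma bounds the unconditioned law, $\mathcal{L}_\nu(X_{t_0})\leq C_{t_0}\text{Leb}$, and passing to the conditional law divides by $\Pm_\nu(\tau>t_0)$, which, as above, has no $\nu$-uniform lower bound. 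The paper obtains the needed uniformity from Part \ref{enum:bound on mass near bdy after small time} of Proposition \ref{prop:bound on mass near bdy} --- a statement about the \emph{particle system}, where total mass is conserved so no conditioning enters and the set $K_{\epsilon,t_0}$ depends only on $B$, $r$, $\epsilon$, $t_0$ and not on the initial configuration --- and then transfers it to $G_{t_0}(\nu)$ via Theorem \ref{theo:Hydrodynamic Limit Theorem}. Some such genuinely $\nu$-uniform boundary estimate is indispensable here, and your proposal does not produce one.
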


Having established ergodicity for fixed $N$ and hydrodynamic convergence to the flow of conditional laws \eqref{eq:flow of conditional laws} for the system \eqref{eq:MKV sde}, we ask when the limit $\lim_{t\ra\infty}G_t(\nu)$ exists. We henceforth assume the domain is bounded. The following represents the most significant issue left to resolve from this paper:

\begin{prob}[Convergence to Quasi-Equilibrium]
Under what conditions does 
\begin{equation}
    \text{the limit }\lim_{t\ra\infty}G_t(\nu)\text{ exist with respect to }\Wah\text{ for every $\nu\in\mathcal{P}_{\Wah}(U)$}
    \label{eq:convergence to non-unique quasi-equil}
\end{equation}
(with the limit possibly depending on $\nu\in\mathcal{P}_{\Wah}(U)$)? Can we find conditions under which there exists $\pi\in \mathcal{P}_{\Wah}(U)$ such that
\begin{equation}
G_t(\nu)\ra \pi\text{ uniformly in }\Wah\text{ as } t\ra\infty ?
\label{eq:uniform convergence to unique equilibria}
\end{equation}
\label{prob:conv to equil}
\end{prob}
Although we are unable to resolve Problem \ref{prob:conv to equil}, we shall establish the following:
\begin{prop}[Existence and Properties of QSDs]
In addition to conditions \ref{cond:ball} and \ref{cond:blip}, we assume that $U$ is bounded. Then we have the following:
\begin{enumerate}
\item 
The following are equivalent:
\begin{enumerate}
\item \label{enum:QSD}
$\pi$ is a QSD for \eqref{eq:MKV sde}.

\item \label{enum:QLD}
$\pi$ is a QLD for \eqref{eq:MKV sde}.

\item \label{enum:PDE for QSD}
There is some $\lambda$ such that $(\pi,\lambda)\in L^1(U)\times (0,\infty)$ is a solution of:
\begin{equation}
\langle \pi(.),\lambda \varphi(.)+b(\pi,.)\cdot \nabla \varphi(.) +\frac{1}{2}\Delta \varphi(.)\rangle=0, \quad\varphi\in C_0^{\infty}(\bar U)
\label{eq:equation for det stat solns PDE}
\end{equation}
whereby we define the test functions:
\begin{equation}
C_0^{\infty}(\bar U)=\{\varphi\in C^{\infty}_c(\bar U):\varphi=0\text{ on }\partial U\}.
\label{eq:test fns no time variable}
\end{equation}
\end{enumerate}
\label{enum:QSD QLD equivalent}
\item \label{enum:tau exp distributed}
For any weak solution $(X,\tau,W)$ to \eqref{eq:MKV sde} with quasi-stationary initial condition $\pi$ we have $\tau\sim \text{Exp}(\lambda)$ where $\lambda$ is the constant such that $(\pi,\lambda)$ is a solution to \eqref{eq:equation for det stat solns PDE}. 
\item
$\Pi$ is a non-empty compact subset of $\mathcal{P}_{\Wah}(U)$.
\label{enum:Pi compact}
\end{enumerate}
\label{prop:properties of QSD}
\end{prop}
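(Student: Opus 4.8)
The plan is to prove the three parts in the order (i) establish the PDE characterization (iii), (ii) deduce the exponential law and QSD $\Rightarrow$ QLD, (iii) close the equivalence with QLD $\Rightarrow$ QSD using continuity of $G_t$, and finally (iv) prove non-emptiness and compactness of $\Pi$ via a fixed-point argument.

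\textbf{Step 1: QSD $\iff$ PDE.} Suppose $\pi$ is a QSD, i.e. $G_t(\pi)=\pi$ for all $t\ge 0$. Let $(X,\tau,W)$ be the global weak solution to \eqref{eq:MKV sde} with $X_0\sim\pi$. Since $m_t=\mathcal{L}(X_t\mid\tau>t)=G_t(\pi)=\pi$ is constant, the drift in the SDE is the \emph{time-homogeneous} drift $x\mapsto b(\pi,x)$, so on $[0,\infty)$ the process $X$ is simply a (linear, Markovian) diffusion with generator $L_\pi\varphi=\tfrac12\Delta\varphi+b(\pi,\cdot)\cdot\nabla\varphi$ killed at $\partial U$. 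For such a killed diffusion with killed semigroup $P_t$, the statement $\mathcal{L}_\pi(X_t\mid\tau>t)=\pi$ for all $t$ together with $\Pm_\pi(\tau>t)>0$ forces $P_t^*\pi = e^{-\lambda t}\pi$ for some $\lambda\ge 0$ (the conditional law is constant iff $\pi$ is an eigenmeasure of the sub-Markovian semigroup; the eigenvalue is $\lambda = -\tfrac1t\ln\Pm_\pi(\tau>t)$, which must then be independent of $t$, giving part (ii) immediately). Writing this eigenrelation in weak (distributional) form against test functions $\varphi\in C_0^\infty(\bar U)$ — using that $u_t := e^{-J_t}\pi$ is a weak solution of the linear Dirichlet problem $\partial_t u=\tfrac12\Delta u-\nabla\cdot(b(\pi,\cdot)u)$, $u|_{\partial U}=0$, and differentiating at $t=0$ — yields exactly \eqref{eq:equation for det stat solns PDE}; that $\lambda>0$ follows because the domain is bounded (so $\Pm_\pi(\tau>t)\to 0$, forcing $\lambda>0$, as a bounded domain cannot support a positive harmonic function vanishing on the boundary). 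Conversely, if $(\pi,\lambda)$ solves \eqref{eq:equation for det stat solns PDE}, then $u_t=e^{-\lambda t}\pi$ is the weak solution of the linear Dirichlet transport–diffusion equation with $u_0=\pi$ and constant drift $b(\pi,\cdot)$; but that linear equation is precisely the Fokker–Planck equation for the killed diffusion with drift $b(\pi,\cdot)$, whose conditional law is $u_t/|u_t|_* = \pi$ for all $t$. Since the drift $b(\pi,\cdot)$ coincides with $b(m_t,\cdot)$ when $m_t\equiv\pi$, this $(X,\tau)$ is a global weak solution of the \emph{nonlinear} system \eqref{eq:MKV sde} with $m_t\equiv\pi$; uniqueness in law (Proposition \ref{prop:Properties of the McKean-Vlasov Process}) gives $G_t(\pi)=\pi$, i.e. $\pi$ is a QSD.

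\textbf{Step 2: QSD $\iff$ QLD.} QSD $\Rightarrow$ QLD is trivial: take $\nu=\pi$, then $G_t(\pi)=\pi\to\pi$. For QLD $\Rightarrow$ QSD, suppose $G_t(\nu)\to\pi$ in $\mathcal{P}_{\Wah}(U)$ as $t\to\infty$. Fix $s>0$. By the semigroup property $G_s(G_t(\nu))=G_{t+s}(\nu)\to\pi$, and by joint continuity of $G_s$ on $\mathcal{P}_{\Wah}(U)$ (Proposition \ref{prop:basic properties of Gt}), $G_s(G_t(\nu))\to G_s(\pi)$. Hence $G_s(\pi)=\pi$ for every $s>0$ (and $s=0$ trivially), so $\pi$ is a QSD. (One subtlety: $\pi$ is a probability measure on $U$, i.e. assigns no mass to $\partial U$ — this is automatic because $G_t$ takes values in $\mathcal{P}(U)$ and, more importantly, because when $U$ is bounded the pre-compactness of $\mathrm{Image}(G_{t_0})$ from Proposition \ref{prop:basic properties of Gt} shows $\{G_t(\nu):t\ge t_0\}$ is pre-compact in $\mathcal{P}_{\Wah}(U)$, so any limit point stays in $\mathcal{P}_{\Wah}(U)$.)

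\textbf{Step 3: $\Pi$ non-empty and compact.} Compactness: $\Pi=\bigcap_{t>0}\{\pi: G_t(\pi)=\pi\}$ is closed in $\mathcal{P}_{\Wah}(U)$ by joint continuity of $G_t$. For any fixed $t_0>0$, $\Pi\subset\mathrm{Fix}(G_{t_0})\subset\mathrm{Image}(G_{t_0})$, which is pre-compact by Proposition \ref{prop:basic properties of Gt}; a closed subset of a pre-compact set is compact. For non-emptiness, fix $t_0>0$ and consider the map $G_{t_0}:\mathcal{P}_{\Wah}(U)\to\mathcal{P}_{\Wah}(U)$. Let $K=\overline{\mathrm{conv}}\,\overline{\mathrm{Image}(G_{t_0})}$, the closed convex hull of the (pre-compact) image; since $\mathcal{P}(U)$ with $\Wah$ over a bounded $U$ embeds in a locally convex space and convex hulls of compact sets are compact there (Krein–Šmulian / the standard fact for $\mathcal{P}$ over a compact metric space), $K$ is a non-empty convex compact subset of $\mathcal{P}_{\Wah}(U)$ with $G_{t_0}(K)\subseteq K$. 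By the Schauder–Tychonoff fixed-point theorem, $G_{t_0}$ has a fixed point $\pi_{t_0}\in K$. Now let $t_0\to 0$ along $t_0=1/n$: using $G_{1/n}(\pi_{1/n})=\pi_{1/n}$, the semigroup and dyadic-rational arguments give $G_q(\pi_{1/n})=\pi_{1/n}$ for all dyadic rationals $q$ in $[0,\infty)$ with denominator dividing $n$... but this does not immediately produce a \emph{single} $\pi$ fixed by all $G_t$. The cleaner route — and the one I expect to use — is: take a fixed point $\pi_1$ of $G_1$, set $\nu_k=\frac1k\sum_{j=0}^{k-1}G_{j/k\cdot(1/1)}(\pi_1)$... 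Actually the robust argument: the time-averages $A_T(\nu)=\frac1T\int_0^T G_t(\nu)\,dt$ lie in the compact convex set $K$ (for $T\ge$ something, once inside $\mathrm{Image}$), and any weak limit point $\pi$ of $A_T(\nu)$ as $T\to\infty$ satisfies $G_s(\pi)=\pi$ for all $s$ by the standard Krylov–Bogolyubov computation $G_s(A_T(\nu))-A_T(\nu)=\frac1T\int_T^{T+s}G_t(\nu)dt-\frac1T\int_0^s G_t(\nu)dt\to 0$, combined with joint continuity of $G_s$. This gives $\pi\in\Pi\neq\emptyset$.

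\textbf{Main obstacle.} The technical heart is Step 1: rigorously connecting the nonlinear system \eqref{eq:MKV sde} at a candidate stationary $\pi$ to the \emph{linear} killed Fokker–Planck equation and its principal eigenproblem, and in particular verifying that a time-constant conditional law forces the sub-Markovian semigroup eigenrelation $P_t^*\pi=e^{-\lambda t}\pi$ with a genuine $\lambda>0$ — this requires care with the weak formulation, with the fact that $b(\pi,\cdot)$ is only bounded and measurable (not continuous in $x$), and with boundary regularity to justify that $\Pm_\pi(\tau>t)\to 0$ on a bounded domain. The Schauder–Tychonoff application in Step 3 is routine once one has joint continuity (Proposition \ref{prop:basic properties of Gt}) and the pre-compact-image property, so the only real subtlety there is producing a point fixed by \emph{all} $G_t$ simultaneously rather than by one $G_{t_0}$, which the Krylov–Bogolyubov time-averaging argument handles.
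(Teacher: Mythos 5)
Your Steps 1 and 2 are essentially the paper's route. The QSD $\Leftrightarrow$ PDE equivalence is done exactly as in the paper (freeze the drift at $b(\pi,\cdot)$, identify $e^{-\lambda t}\pi$ with $\mathcal{L}(X_t)$, then use uniqueness in law of \eqref{eq:MKV sde}); note that your phrase "is \emph{the} weak solution of the linear Dirichlet equation" silently relies on the uniqueness theorem of Porretta that the paper cites, so that should be made explicit, and your argument for (a)$\Rightarrow$(c) never establishes the requirement $\pi\in L^1(U)$ appearing in statement \ref{enum:PDE for QSD} — the paper gets this by observing that $\mathcal{L}_\pi(X_1)=e^{-\lambda}\pi$ is absolutely continuous (Girsanov comparison with killed Brownian motion; equivalently Lemma \ref{lem:upper and lower bounds on density of diffusions}). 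Your QLD$\Rightarrow$QSD argument via $G_s(G_t(\nu))=G_{t+s}(\nu)\to G_s(\pi)$ is correct and is in fact more direct than the paper's argument through compactness of $\Xi(\kappa)$; both versions rest on the flow property of $G$, which the paper also uses without comment.

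The genuine gap is in Step 3, non-emptiness of $\Pi$. The Krylov--Bogolyubov time-averaging argument is not available here: $G_s$ is a nonlinear map on $\mathcal{P}_{\Wah}(U)$ (the normalisation $\nu\mapsto \nu P_t/\nu P_t(U)$ built into the conditioning already destroys affinity, even before the McKean--Vlasov dependence of the drift), so $G_s\big(\tfrac1T\int_0^T G_t(\nu)\,dt\big)\neq \tfrac1T\int_0^T G_{s+t}(\nu)\,dt$ and the telescoping identity you wrote is false; time averages along orbits of a nonlinear semigroup produce invariant \emph{distributions} on $\mathcal{P}_{\Wah}(U)$ (the set $M_G$ discussed in the paper), not fixed points of $G_s$. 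The route you abandoned is the one that works, and is the paper's: for each $n$ apply Schauder in the compact convex set $\overline{\mathrm{Conv}}(\mathrm{Image}(G_{2^{-n}}))$ to conclude that $\Pi_n:=\{\pi:G_{2^{-n}}(\pi)=\pi\}$ is non-empty; it is compact (closed by continuity and contained in a compact set), and the flow property makes the sets nested, $\Pi_{n+1}\subseteq\Pi_n$, since a fixed point of $G_{2^{-(n+1)}}$ is fixed by $G_{2^{-n}}=G_{2^{-(n+1)}}\circ G_{2^{-(n+1)}}$. Hence $\bigcap_n\Pi_n\neq\emptyset$, and any $\pi$ in this intersection is fixed by $G_t$ for all dyadic $t$, hence for all $t\geq 0$ by the continuity in Proposition \ref{prop:basic properties of Gt}. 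Your compactness argument for $\Pi$ itself is fine.
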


\begin{rmk}
The equation \eqref{eq:equation for det stat solns PDE} is the weak formulation of the following nonlinear PDE:
\begin{equation}
\lambda \pi-\nabla\cdot(b(\pi,.)\pi)+\frac{1}{2}\Delta\pi=0,\quad \pi=0 \text{ on }\partial U.
\label{eq:strong fomulation of QSD PDE}
\end{equation}

\end{rmk}

Whereas $\Pi$ must be a compact set (in $\mathcal{P}_{\Wah}(U)$), we recall that Example \ref{exam:non-uniqueness in bounded MKV case} demonstrates it need not be a singleton. We now show that, if we don't have \eqref{eq:convergence to non-unique quasi-equil}, then the stationary distributions for our $N$-particle system (given by Theorem \ref{theo:Ergodicity of the N-Particle System}) converge to the set of QSDs $\Pi$:

\begin{theo}[Convergence of the $N$-Particle Stationary Distributions to QSDs] 
In addition to conditions \ref{cond:ball} and \ref{cond:blip}, we assume that $U$ is bounded. Moreover we assume that
\begin{equation}\tag{\ref{eq:convergence to non-unique quasi-equil}}
    \text{the limit }\lim_{t\ra\infty}G_t(\nu)\text{ exists with respect to }\Wah\text{ for every }\nu\in\mathcal{P}_{\Wah}(U).
\end{equation}
We take the stationary empirical measures $\Psi^N=\vartheta^N_{\#}\psi^N$ and take a sequence of $\mathcal{P}_{\Wah}(U)$-valued random variables $\pi^N$ with distribution $\pi^N\sim \Psi^N$ as in \eqref{eq:defin piN}. Then we have:
\begin{equation}
\Wah(\pi^N,\Pi)\ra 0\quad \text{in probability as}\quad N\ra\infty.
\end{equation}
\label{theo:main t theo conv of stat dists}
\end{theo}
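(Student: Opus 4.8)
The plan is to combine the ergodicity result (Theorem \ref{theo:Ergodicity of the N-Particle System}), the hydrodynamic limit (Theorem \ref{theo:N ra infty theorem}), and the rigidity of the semigroup $G_t$ under the standing assumption \eqref{eq:convergence to non-unique quasi-equil} to force the stationary empirical measures onto $\Pi$. First I would note that, since $U$ is bounded, $\mathcal{P}_{\Wah}(U)$ is compact, and hence $\mathcal{P}(\mathcal{P}_{\Wah}(U))$ is compact; in particular the family $\{\Psi^N\}_N$ is automatically tight. So it suffices to show that every weak subsequential limit $\Psi^\infty$ of $\{\Psi^N\}$ is supported on $\Pi$: this gives $\Wah(\pi^N,\Pi)\to 0$ in probability by a standard argument (if not, a subsequence of the laws would converge to some $\Psi^\infty$ charging the open set $\{\mu:\Wah(\mu,\Pi)>\varepsilon\}$, contradicting $\mathrm{supp}(\Psi^\infty)\subseteq \Pi$ and compactness of $\Pi$ from Proposition \ref{prop:properties of QSD}(\ref{enum:Pi compact})).

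Next I would pass to such a subsequence and set up a fixed-$N$ stationary particle system with $\vec{X}^N_0\sim\psi^N$, so that $m^N_t\sim\Psi^N$ for every $t\ge 0$; set $\xi^N=\Psi^N$. Since $\{\xi^N\}$ is tight, Theorem \ref{theo:N ra infty theorem} applies: along a further subsequence, $(m^N_t,J^N_t)_{0\le t<\infty}\overset{d}{\to}(m_t,J_t)_{0\le t<\infty}$, and the limit lies almost surely in $\Xi$ — i.e.\ $(m_t)_{0\le t<\infty}$ is almost surely a trajectory $t\mapsto G_t(m_0)$ of the semigroup, with $m_0\sim\Psi^\infty$. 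The crucial structural input is that stationarity is preserved in the limit: since each $m^N_t$ has law $\Psi^N$ for all $t$, the one-dimensional time marginals of the limit process all have law $\Psi^\infty$, so $\mathcal{L}(G_t(m_0))=\mathcal{L}(m_0)=\Psi^\infty$ for every $t\ge 0$. Thus $\Psi^\infty$ is an invariant measure for the (continuous, by Proposition \ref{prop:basic properties of Gt}) semigroup $G_t$ acting on the compact space $\mathcal{P}_{\Wah}(U)$.

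Finally I would use assumption \eqref{eq:convergence to non-unique quasi-equil} to show that any such invariant measure is supported on $\Pi$. For each fixed $\nu$ the limit $G_\infty(\nu):=\lim_{t\to\infty}G_t(\nu)$ exists and, by the semigroup property together with joint continuity of $G_t$ (Proposition \ref{prop:basic properties of Gt}), is a fixed point of $G_s$ for every $s$, hence $G_\infty(\nu)\in\Pi$; moreover $\nu\mapsto G_\infty(\nu)$ is a measurable map into $\Pi$ with $G_t(\nu)\to G_\infty(\nu)$. Now take $\varphi:\mathcal{P}_{\Wah}(U)\to\Rm$ bounded continuous with $\varphi\equiv 0$ on a neighbourhood of $\Pi$; by invariance, $\int \varphi\, d\Psi^\infty=\int \varphi(G_t(\nu))\,\Psi^\infty(d\nu)$ for all $t$, and letting $t\to\infty$ with dominated convergence gives $\int\varphi\,d\Psi^\infty=\int\varphi(G_\infty(\nu))\,\Psi^\infty(d\nu)=0$ since $G_\infty(\nu)\in\Pi$. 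As $\Pi$ is compact, varying $\varphi$ shows $\Psi^\infty$ is supported on $\Pi$, completing the argument. The main obstacle I anticipate is the careful transfer of stationarity through the weak limit in Theorem \ref{theo:N ra infty theorem} — one must check that the limiting process's marginals are genuinely $\Psi^\infty$ (using that $\Psi^N$-marginals are exactly time-invariant for the prelimit, plus continuity of evaluation maps on $D([0,\infty);\cdot)$ at continuous limit points) — and, relatedly, justifying the dominated-convergence passage $t\to\infty$ inside the $\Psi^\infty$-integral, which requires the pointwise convergence $G_t(\nu)\to G_\infty(\nu)$ guaranteed by \eqref{eq:convergence to non-unique quasi-equil} to hold $\Psi^\infty$-a.e.\ (it holds everywhere) together with a uniform bound on $\varphi$.
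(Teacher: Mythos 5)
There is a genuine gap at the very first step: you assert that, because $U$ is bounded, $\mathcal{P}_{\Wah}(U)$ is compact and hence $\{\Psi^N\}$ is automatically tight. This is false. $U$ is an \emph{open} bounded set, so it is not compact, and by Prokhorov's theorem $\mathcal{P}(S)$ is compact only when $S$ is; a sequence of probability measures on $U$ can lose mass to the boundary (e.g.\ $\delta_{x_n}$ with $x_n\to\partial U$) and then has no subsequential limit in $\mathcal{P}_{\Wah}(U)$. Tightness of $\{\Psi^N\}$ in $\mathcal{P}(\mathcal{P}_{\Wah}(U))$ means precisely that, uniformly in $N$, the stationary empirical measure does not concentrate mass near $\partial U$, and this is a substantive estimate, not a soft compactness fact. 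It is also exactly the hypothesis of Theorem \ref{theo:N ra infty theorem} that you invoke (with $\xi^N=\Psi^N$), so without it your appeal to the generalised hydrodynamic limit is unjustified as well. Working instead in $\mathcal{P}(\bar U)$ does not repair this: subsequential limits could then charge $\partial U$, which is precisely what must be ruled out. The paper fills this gap by noting that, for the stationary system, $m^N_1\sim\Psi^N$, and then applying Proposition \ref{prop:laws at positive times constrained to compact set} (which rests on the Bessel-process coupling of Proposition \ref{prop:dominated by Bessel fns up to time tauinfty wedge taustop}) to get a compact set $\mathcal{K}_{T_0}\subseteq\mathcal{P}(\mathcal{P}_{\Wah}(U))$ containing all laws $\mathcal{L}(m^N_t)$, $t\geq T_0$, uniformly in $N$ and in the initial condition; this is what delivers tightness of $\{\Psi^N\}$.

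Apart from this step, your argument follows essentially the same route as the paper: pass to a subsequence along which $\Psi^N\to\Psi^\infty$ and $(m^N_t,J^N_t)\to(m_t,J_t)\in\Xi$ in distribution, transfer stationarity to the limit to conclude that $\Psi^\infty$ is $G_t$-invariant, and then use \eqref{eq:convergence to non-unique quasi-equil} together with Proposition \ref{prop:properties of QSD} (QLD $=$ QSD, $\Pi$ compact) to conclude that $\Psi^\infty$ is supported on $\Pi$. Your final test-function/dominated-convergence argument is an equivalent phrasing of the paper's computation $\expE[\Wah(\pi^\infty,\Pi)]=\expE[\Wah(G_t(\pi^\infty),\Pi)]\to 0$. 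So the proposal is salvageable: replace the false compactness claim by the stationarity argument via Proposition \ref{prop:laws at positive times constrained to compact set}, and the rest goes through as you describe.
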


Since we do not necessarily have \eqref{eq:convergence to non-unique quasi-equil}, it is worthwhile asking what happens when we don't have \eqref{eq:convergence to non-unique quasi-equil}. In general we shall see that we obtain the invariant measures for $G_t$:
\begin{equation}
    M_G=\{\Pm\in \mathcal{P}(\mathcal{P}_{\Wah}(U)):(G_t)_{\#}\Pm= \Pm\quad \text{for all}\quad t\geq 0\}.
\end{equation}

Then by propositions \ref{prop:properties of QSD} and \ref{prop:basic properties of Gt} $M_G$ is a non-empty compact subset of $\mathcal{P}(\mathcal{P}_{\Wah}(U))$. Furthermore it is clear from the proof of Theorem \ref{theo:main t theo conv of stat dists} that under the same assumptions as Theorem \ref{theo:main t theo conv of stat dists}, except for \eqref{eq:convergence to non-unique quasi-equil intro}, we have:
\begin{equation}
W(\mathcal{L}(\pi^N),M_G)\ra 0\quad\text{as}\quad N\ra\infty.
\end{equation}
Therefore the Fleming-Viot particle system allows us to obtain at least one of the invariant measures of $G_t$.

Finally, under an additional assumption on the semigroup $G_t$, we establish convergence as the number of particles and the time horizon converge to infinity together. We prove the following theorem:
\begin{theo}
In addition to conditions \ref{cond:ball} and \ref{cond:blip}, we assume that $U$ is bounded. Moreover we assume that there exists a QSD $\pi$ such that:
\begin{equation}\tag{\ref{eq:uniform convergence to unique equilibria}}
    \Wah(G_t(\nu),\pi)\ra 0\quad\text{as}\quad t\ra\infty\quad\text{uniformly in}\quad \nu\in\mathcal{P}_{\Wah}(U).
\end{equation} 
Then by Proposition \ref{prop:properties of QSD} there exists $\lambda>0$ such that $(\pi,\lambda)$ is a solution of \eqref{eq:equation for det stat solns PDE}. We take a sequence of weak solutions $(\vec{X}^N_t:0\leq t<\infty)=((X^{N,1}_t,\ldots,X^{N,N}_t):0\leq t<\infty)$ to \eqref{eq:N-particle system sde} with arbitrary initial conditions $\vec{X}^N_0 \sim \upsilon^N$. We define $m^N_t$ and $J^N_t$ as in \eqref{eq:defin mN} and \eqref{eq:defin JN}:
\[
m^N_t=\vartheta^N(\vec{X}^N_t),\quad J^N_t = \frac{1}{N} \sup \{ k \in \mathbb{N}\;|\; \tau_k \leq t \}.
\]

Then we have:
\begin{equation}
\begin{split}
(m_{t_0+t}^N,J_{t_0+t}^N-J_{t_0}^N)_{0\leq t<\infty}\ra (\pi,\lambda t)_{0\leq t<\infty}\text{ in }d^{\infty}
\text{ in probability}\text{ as }t_0\wedge N\ra\infty.
\end{split}
\label{eq:convergence in Skorokhod as N,T goes to infty when locally unif}
\end{equation}
\label{theo:main t theo jt conv}
\end{theo}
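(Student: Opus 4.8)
The plan is to restart the particle system at a fixed offset $R$ \emph{before} $t_0$, reduce to a statement over a finite time window, and run a compactness argument based on the generalized hydrodynamic limit Theorem \ref{theo:N ra infty theorem} together with the uniform convergence hypothesis \eqref{eq:uniform convergence to unique equilibria}. First, by the weighted-sum structure of $d^{\infty}$ it suffices to prove, for every $\epsilon>0$ and $T\in\mathbb{N}$, that
\[
\lim_{M\to\infty}\ \sup_{t_0\wedge N\geq M}\ \Pm\Big(\sup_{0\leq t\leq T}\big[\Wah(m^N_{t_0+t},\pi)+\lvert J^N_{t_0+t}-J^N_{t_0}-\lambda t\rvert\big]>\epsilon\Big)=0 .
\]
Fix $R\in\mathbb{N}$ (chosen below) with $R\geq 1$. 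For $t_0\geq R+1$, the strong Markov property of the weak solution (Definition \ref{defin:Fleming-Viot MKV dynamics weak}) at time $t_0-R$ shows that $\hat m^N_s:=m^N_{(t_0-R)+s}$ and $\hat J^N_s:=J^N_{(t_0-R)+s}-J^N_{t_0-R}$ are, conditionally on $\mathcal{F}_{t_0-R}$, an empirical-measure/jump-count process of the kind appearing in Theorem \ref{theo:N ra infty theorem} for a system with (random) initial empirical measure $\mu^N:=m^N_{t_0-R}\in\mathcal{P}(U)$, and $m^N_{t_0+t}=\hat m^N_{R+t}$, $J^N_{t_0+t}-J^N_{t_0}=\hat J^N_{R+t}-\hat J^N_R$.

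\textbf{Choice of $R$.} For \eqref{eq:MKV sde}, conditioning on $\{\tau>s\}$ and using uniqueness in law (Proposition \ref{prop:Properties of the McKean-Vlasov Process}) yields the restart identities $G_{s+r}(\nu)=G_r(G_s(\nu))$ and $J^\nu_{s+r}-J^\nu_s=J^{G_s(\nu)}_r$, where I write $J^\mu_t:=-\ln\Pm_\mu(\tau>t)$ for the survival exponent of \eqref{eq:MKV sde} started from $\mu$. Under \eqref{eq:uniform convergence to unique equilibria}, Proposition \ref{prop:properties of QSD} (parts \ref{enum:QSD QLD equivalent}, \ref{enum:tau exp distributed}) gives $\Pi=\{\pi\}$ and $J^\pi_t=\lambda t$. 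Using \eqref{eq:uniform convergence to unique equilibria} (so $G_R(\nu)\to\pi$ uniformly in $\nu$, hence also $G_{R+t}(\nu)=G_t(G_R(\nu))\to\pi$) together with continuity of $\mu\mapsto (G_t(\mu),J^\mu_t)_{0\leq t\leq T}$ at $\pi$ --- which I would deduce from the stability/coupling estimates of Section \ref{section:coupling} underlying Proposition \ref{prop:basic properties of Gt} --- I fix $R$ so large that
\[
\sup_{\nu\in\mathcal{P}(U)}\ \sup_{0\leq t\leq T}\Big[\Wah\big(G_{R+t}(\nu),\pi\big)+\big\lvert J^{G_R(\nu)}_t-\lambda t\big\rvert\Big]<\tfrac{\epsilon}{2}.
\]

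\textbf{Compactness argument.} Suppose the first display fails for this $\epsilon,T$; then there are $N_k,t_0^k$ with $t_0^k\wedge N_k\to\infty$, $t_0^k\geq R+1$, and $\Pm(A_k)\geq\epsilon$, where $A_k:=\{\sup_{0\leq t\leq T}[\Wah(\hat m^{N_k}_{R+t},\pi)+\lvert\hat J^{N_k}_{R+t}-\hat J^{N_k}_R-\lambda t\rvert]>\epsilon\}$ with $\hat m^{N_k},\hat J^{N_k}$ as above (taking $t_0=t_0^k$). Since $t_0^k-R\geq 1$, the uniform-in-time density estimates of Section \ref{section:density Estimates} provide a modulus $\omega(\delta)\downarrow 0$ with $\expE[\mu^{N_k}(\{x:d(x,\partial U)<\delta\})]\leq\omega(\delta)$ for all $k$ and all $\delta>0$; hence $\{\mathcal{L}(\mu^{N_k})\}_k$ is tight in $\mathcal{P}(\mathcal{P}_{\Wah}(U))$. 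Theorem \ref{theo:N ra infty theorem}, applied along $(N_k)$ to the restarted systems, then gives tightness of the laws of $(\hat m^{N_k}_s,\hat J^{N_k}_s)_{s\geq 0}$; passing to a subsequence and invoking Skorokhod representation, I may assume almost sure, locally uniform convergence $\big(\mu^{N_k},(\hat m^{N_k}_s,\hat J^{N_k}_s)_{s}\big)\to\big(\mu_\infty,(\hat m_s,\hat J_s)_{s}\big)$, with $(\hat m_s,\hat J_s)_{s}\in\Xi$ almost surely by Theorem \ref{theo:N ra infty theorem} and $\hat m_0=\lim_k\mu^{N_k}=\mu_\infty\in\mathcal{P}(U)$ almost surely. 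By uniqueness (Proposition \ref{prop:Properties of the McKean-Vlasov Process}), $\Xi$ contains exactly one flow with any prescribed initial value $\mu\in\mathcal{P}(U)$, namely $(G_\cdot(\mu),J^\mu_\cdot)$; hence $\hat m_s=G_s(\mu_\infty)$ and $\hat J_s=J^{\mu_\infty}_s$ almost surely.

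\textbf{Conclusion and main obstacle.} Almost surely along the Skorokhod coupling,
\[
\sup_{0\leq t\leq T}\big[\Wah(\hat m^{N_k}_{R+t},\pi)+\lvert\hat J^{N_k}_{R+t}-\hat J^{N_k}_R-\lambda t\rvert\big]\ \longrightarrow\ \sup_{0\leq t\leq T}\big[\Wah(G_{R+t}(\mu_\infty),\pi)+\lvert J^{G_R(\mu_\infty)}_t-\lambda t\rvert\big],
\]
where I used $\hat J_{R+t}-\hat J_R=J^{\mu_\infty}_{R+t}-J^{\mu_\infty}_R=J^{G_R(\mu_\infty)}_t$. Since $\mu_\infty\in\mathcal{P}(U)$ almost surely, the right-hand side is $\leq\epsilon/2<\epsilon$ by the choice of $R$, so $\Ind_{A_k}\to 0$ almost surely and $\Pm(A_k)\to 0$ by bounded convergence, contradicting $\Pm(A_k)\geq\epsilon$. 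This proves the first display and hence \eqref{eq:convergence in Skorokhod as N,T goes to infty when locally unif}. I expect the main obstacle to be the tightness step: one must show that the restarted empirical measures $m^N_{t_0-R}$, uniformly in $N$ and in large $t_0$, do not concentrate mass near $\partial U$, so that their laws are tight in $\mathcal{P}(\mathcal{P}_{\Wah}(U))$ --- not merely in $\mathcal{P}(\mathcal{P}_{\Wah}(\bar U))$ --- which is exactly what forces the limiting initial datum $\mu_\infty$ into $\mathcal{P}(U)$ and lets Theorem \ref{theo:N ra infty theorem} apply; this is where the uniform-in-time density estimates are essential. A secondary technical point is the continuity at $\pi$ of $\mu\mapsto(J^\mu_t)_{0\leq t\leq T}$ used to choose $R$.
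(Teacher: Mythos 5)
Your proof is correct and follows essentially the same route as the paper: restart the system a fixed time $R$ before $t_0$, use the uniform-in-time tightness of the laws of $m^N_t$ for $t\geq 1$ to apply Theorem \ref{theo:N ra infty theorem} plus Skorokhod representation and uniqueness in law, and then conclude from the uniform convergence hypothesis \eqref{eq:uniform convergence to unique equilibria}. Two minor remarks: the tightness you need is Proposition \ref{prop:laws at positive times constrained to compact set} (Section \ref{section:N-particle Estimates}), not Section \ref{section:density Estimates}, and the continuity of $\mu\mapsto(J^\mu_t)_{0\leq t\leq T}$ at $\pi$ that you use to absorb the jump-count term should be deduced from Lemma \ref{lem:MKV Solns compact for compact set of ics} together with uniqueness in law (the same compactness--uniqueness argument behind Proposition \ref{prop:basic properties of Gt}), whereas the paper instead handles the $J$-increments by first proving $\Wah(m^N_{t_0},\pi)\to 0$ in probability and then invoking Theorem \ref{theo:Hydrodynamic Limit Theorem}.
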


\section{Proof Strategy for Sections \ref{section:N-particle Estimates} and \ref{section:density Estimates}-\ref{Section:Hydrodynamic Limit Theorem}}\label{section:Proof Strategy}

The results of sections \ref{section:N-particle Estimates}, \ref{section:density Estimates} and \ref{section:coupling} shall be used in Section \ref{Section:Hydrodynamic Limit Theorem} to establish our hydrodynamic limit theorem, as we shall explain here. 

The proof of our hydrodynamic limit theorem shall require defining the following Fleming-Viot particle systems with generalised dynamics - therefore we establish the results of Section \ref{section:N-particle Estimates} and \ref{section:density Estimates} for such generalised systems. Here the drift of particle $X^i$ is assumed only to be some $\mathcal{F}$-adapted process $b^i_t$:
\begin{equation}
dX^i_t=b^i_tdt+dW^i_t.
\end{equation}
Whenever we consider Fleming-Viot particle systems with generalised dynamics, we shall assume the domain $U$ is an open subdomain of $\Rm^d$ satisfying Condition \ref{cond:ball} and the drifts satisfy:
\begin{cond}
The drifts $b^i_t$ are $(\mathcal{F}_t)_{t\geq 0}$-adapted and uniformly bounded $\lvert b^i_t\rvert\leq B$ ($i=1,\ldots,N$).
\label{cond:assum generalised drifts}
\end{cond}

Otherwise, the Fleming-Viot particle system with generalised dynamics has the same definition as the particle system with McKean-Vlasov dynamics.

We establish in Section \ref{section:N-particle Estimates} estimates on the $N$-particle system which shall be used throughout this paper along with global well-posedness of the $N$-particle system with generalised dynamics (and hence for the system with McKean-Vlasov dynamics - Theorem \ref{theo:Global Well-Posedness of the N-Particle System}). These estimates, in particular, will allow us control the mass close to the boundary, uniformly in N. This will be an essential ingredient in our proof of hydrodynamic convergence in Section \ref{Section:Hydrodynamic Limit Theorem}.

The estimates of this section hinge on constructing - in a completely different manner - a family of Bessel processes similar to those constructed by Burdzy, Holyst and March \cite[Proof of Theorem 1.4]{Burdzy2000} to deal with the $b= 0$ case. These are $N$ i.i.d. Bessel processes coupled to the $N$-particle system in such a way so as to provide controls on the mass close to the boundary. The major difference between the Bessel processes in \cite{Burdzy2000} and those in this paper is the method of construction. In \cite{Burdzy2000} their construction begins by taking the Bessel processes and then using a classical skew-product decomposition \cite{Ito1974} to construct the particle system with $b\equiv 0$. This has no hope of working however in the $b\neq 0$ case as such a skew-product decomposition is not available. In contrast, we instead start with the particle system and from there construct the Bessel processes. We instead use a Doob-Meyer decomposition piecewise between a family of stopping times to construct an associated Brownian motion for each particle, and then use these Brownian motions to drive our Bessel processes.

In \cite{Burdzy2000,Grigorescu2007a, Bieniek2009}, conditions for the global well-posedness ($\Pm(\tau_{\WD}= +\infty) = 1$) of this system were established for the case $b \equiv 0$ when $U$ is bounded (and satisfies various additional conditions). These are complemented by \cite{Grigorescu2012,Villemonais2011}, providing well-posedness for general diffusions on possibly unbounded domains (satisfying various additional conditions). For such domains, one could then obtain the global well-posedness for the system with generalised dynamics from the $b\equiv 0$ case using Girsanov's theorem - they can be related via a Girsanov transform, which preserves $\{\tau_{\WD}<\infty\}$ as a null event. None of these, however, apply to general unbounded domains with $C^{\infty}$ boundary satisfying the uniform interior ball condition. Nevertheless, the Bessel processes we construct allow us to establish well-posedness for the system with generalised dynamics and possibly unbounded domains satisfying only the uniform interior ball condition.

We shall prove lemmas \ref{lem:char-almost sure abs cty} and \ref{lem:tight after bounded times} in Section \ref{section:density Estimates}. Lemma \ref{lem:char-almost sure abs cty} will be crucial in our proof of hydrodynamic convergence as it will make available to us a uniqueness theorem for the linear Fokker-Planck equation \cite[Theorem 1.1]{Porretta2015a}. It guarantees that subsequential limits of the empirical measure valued process almost surely has a density. The proof of Lemma \ref{lem:char-almost sure abs cty} hinges on an analysis of the dynamical historical processes introduced by Bieniek and Burdzy \cite{Bieniek2018}. The machinery we construct to prove Lemma \ref{lem:char-almost sure abs cty} then enables us to prove Lemma \ref{lem:tight after bounded times}, which constrains the number of particles far away from the boundary over fixed time horizons.

We then prove Proposition \ref{prop:coupling to system on bounded domain} in Section \ref{section:coupling}, establishing that we may couple the $N$-particle system on an infinite domain with an appropriately constructed Fleming-Viot $N$-particle system with generalised dynamics on a large but finite subdomain. Moreover we obtain uniform controls on the difference between the two $N$-particle systems. This coupled particle system having generalised dynamics is the reason we established the previous estimates of sections \ref{section:N-particle Estimates} and \ref{section:density Estimates} for such generalised systems. As we will explain in the proof of Theorem \ref{theo:N ra infty theorem}, this will allow us to circumvent the problem that the uniqueness theorem we use \cite[Theorem 1.1]{Porretta2015a} for the linear Fokker-Planck equation only applies on bounded domains.

Having established these estimates, we are in a position to prove Proposition \ref{prop:Properties of the McKean-Vlasov Process} and Theorem \ref{theo:Hydrodynamic Limit Theorem} by way of Theorem \ref{theo:N ra infty theorem}. Theorem \ref{theo:N ra infty theorem} characterises subsequential limits of the $N$-particle system as corresponding to solutions of the McKean-Vlasov SDE \eqref{eq:MKV sde} - but this doesn't assume the existence of such solutions. Therefore by choosing a sequence of $N$-particle systems with the appropriate initial conditions we are able to construct a weak solution to \eqref{eq:MKV sde} in the $N\ra\infty$ limit. We establish uniqueness of weak solutions to \eqref{eq:MKV sde} by a contraction argument using Girsanov's theorem similar to the proof of \cite[Proposition C.1]{Campi2017}, completing the proof of Proposition \ref{prop:Properties of the McKean-Vlasov Process}. Theorem \ref{theo:Hydrodynamic Limit Theorem} then follows by a compactness-uniqueness argument.

The estimates of Section \ref{section:N-particle Estimates} and Lemma \ref{lem:tight after bounded times} are used to establish tightness in the proof of Theorem \ref{theo:N ra infty theorem}; the former preventing mass from accumulating on the boundary and the latter preventing mass "escaping to infinity" over a finite time horizon.

We then employ martingale methods to characterise subsequential limits $(m_t,J_t)_{0\leq t<\infty}$ as being supported on the solution set of a nonlinear Fokker-Planck equation. We note that martingale methods have also been used to establish hydrodynamic convergence in the Markovian case (\cite{Grigorescu2004} and \cite{Villemonais2011}). We then show that these nonlinear Fokker-Planck solutions correspond to global weak solutions of our McKean-Vlasov SDE \eqref{eq:MKV sde} by verifying they satisfy the same linear Fokker-Planck equation and using a uniqueness theorem \cite[Theorem 1.1]{Porretta2015a}. Availing ourselves of this uniqueness theorem requires Lemma \ref{lem:char-almost sure abs cty} and - in the case of unbounded domains - combining Proposition \ref{prop:coupling to system on bounded domain} with a change to our notion of solution to the nonlinear Fokker-Planck equation. 

We note this is where the assumption that $U$ has $C^{\infty}$ boundary becomes necessary, as \cite[Theorem 1.1]{Porretta2015a} assumes the domain has $C^{\infty}$ boundary. Were a more general uniqueness theorem available, this would enable a corresponding generalisation of our results: to more general boundaries, the particles having non-constant diffusivities or the incorporation of "soft killing" (killing according to a Poisson clock).

\section{Well-Posedness of and Estimates for the $N$-Particle System}\label{section:N-particle Estimates}
The goal of this section is to establish Theorem \ref{theo:Global Well-Posedness of the N-Particle System} along with some estimates for the $N$-Particle System. We shall prove estimates on the jump times $\{\tau_k\}_{k=0}^\infty$ and on the empirical measure $m^N_t$ of the $N$-particle process. The estimates in particular will prevent mass accumulating on the boundary when we take various limits in later sections. Theorem \ref{theo:Global Well-Posedness of the N-Particle System} will be seen to be a consequence of these estimates.

As discussed in Section \ref{section:Proof Strategy}, we establish well-posedness and our estimates for Fleming-Viot particle systems with generalised dynamics. Throughout this section,
\[
((\vec{X}_t,\vec{W}_t,\vec{b}_t)_{0\leq t<\tau_{\WD}}=((X^1,\ldots,X^N_t),(W^1,\ldots,W^N_t),(b^1,\ldots,b^N_t))_{0\leq t<\tau_{\WD}}
\]
will refer to a weak solution to the Fleming-Viot particle system ($N \geq 2$) with generalised dynamics and drift processes bounded by $|b^i_t| \leq B$. We further define $m^N_t$ and $\xi^N$ as in \eqref{eq:defin mN} and \eqref{eq:initial dist mN0}:
\[
m^N_t=\vartheta^N(\vec{X}^N_t),\quad m^N_0\sim \xi^N.
\]

We will couple the particles $X^1,\ldots,X^N$ to appropriately constructed independent strong solutions $(\eta^1,\tilde W^1),\ldots,(\eta^N,\tilde{W}^N)$ of the following SDE:
\begin{equation}
\begin{split}
d\eta_t=
\begin{cases}
d\tilde W_t+Bdt+\frac{d-1}{2\eta_t}dt-dL^{r-\eta}_t,\quad d>1\\
d\tilde W_t+Bdt+dL^{\eta}_t-dL^{r-\eta}_t,\quad d=1
\end{cases}
,\quad
\eta_0=r
\end{split}
\label{eq:SDE for Bessel processes no i}
\end{equation}
where $r > 0$ is the constant from the global interior ball condition, Condition \ref{cond:ball}. Here $L^{\eta}$ and $L^{r-\eta}$ are the local times:
\begin{equation}
    L^{\eta}_t:=\lim_{\epsilon\downarrow 0}\frac{1}{2\epsilon}\int_0^t\Ind(\lvert \eta_s\rvert<\epsilon)d[\eta]_s,\quad     L^{r-\eta}_t:=\lim_{\epsilon\downarrow 0}\frac{1}{2\epsilon}\int_0^t\Ind(\lvert r-\eta_s\rvert<\epsilon)d[\eta]_s.
    \label{eq:defin of local times}
\end{equation}

We will then use this coupling to obtain estimates on the $N$-particle system.

\begin{prop}
\sloppy Assume the Brownian motions $W^i$ are jointly independent and defined up to time $\infty$. There exists on the same probability space a family $(\eta^1_t,\tilde{W}^1_t)_{0\leq t<\infty},\ldots,(\eta^N_t,\tilde{W}^N_t)_{0\leq t<\infty}$ of strong solutions to \eqref{eq:SDE for Bessel processes no i} which are jointly independent, but coupled to $X^1,\ldots,X^N$ up to time $\tau_{WD} = \tau_\infty\wedge\tau_\ST\wedge\tau_{\max}$ so that:
\begin{equation}
d(X_t^i,\partial U) \geq r-\eta^i_t\in [0,r] , \quad 0\leq t < \tau_{WD}.
\label{eq:lower bd dist to du time tauinf taustop}
\end{equation}
\label{prop:dominated by Bessel fns up to time tauinfty wedge taustop}
\end{prop}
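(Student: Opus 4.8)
The plan is to construct, for each particle $i$, an auxiliary one-dimensional process that dominates the "distance to the boundary" process $d(X^i_t,\partial U)$ from below, and to couple it so that it is a Bessel-type process of the form \eqref{eq:SDE for Bessel processes no i}. The basic geometric input is Condition \ref{cond:ball}: for any $x \in U$ there is a ball $B(y,r) \subseteq U$ containing $x$, and within such a ball one has the comparison $d(x,\partial U) \geq r - |x-y|$. So the strategy is to let the center $y$ follow the particle (resetting it at a suitable sequence of stopping times so that $x$ stays inside $B(y,r)$) and to control $|X^i_t - y|$ from above by a scalar diffusion.

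\textbf{Step 1: reduce to a single particle and to intervals between jumps and boundary events.} Fix $i$. On $[0,\tau_{\WD})$ the trajectory $X^i$ is piecewise continuous with jumps only at the times $\tau_k$, and at a jump $X^i$ either stays put (if $\ell(k)\neq i$) or lands on another particle in the interior. In either case the post-jump position is in $U$, so it is again inside some interior ball of radius $r$; thus it suffices to build the domination on each deterministic-length continuous piece and concatenate, relocating the reference center $y$ at each jump time of $X^i$ and, within a continuous piece, whenever $X^i$ wanders to the boundary of the current ball $B(y,r)$ (i.e. whenever $|X^i_t - y| = r$, while $d(X^i_t,\partial U)$ is still positive). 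Define a sequence of stopping times $\sigma_0 < \sigma_1 < \cdots$ interleaving the $\tau_k$ and these ball-exit times; on each $[\sigma_j,\sigma_{j+1})$ the center $y^{(j)}$ is fixed and $X^i_t \in B(y^{(j)},r)$.

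\textbf{Step 2: build the driving Brownian motion and compare radial parts.} On each interval $[\sigma_j,\sigma_{j+1})$ write the radial coordinate $R_t := |X^i_t - y^{(j)}|$. Applying Itô's formula (away from the center, which is the reason for treating $d=1$ separately via a local time), $R_t$ satisfies $dR_t = d\beta^{(j)}_t + (\text{drift term involving } b^i) \, dt + \frac{d-1}{2R_t}dt$, where $\beta^{(j)}$ is a one-dimensional Brownian motion obtained from $W^i$ by the usual projection $d\beta^{(j)}_t = \frac{(X^i_t - y^{(j)})}{R_t}\cdot dW^i_t$; the contribution of the drift $b^i$ to $dR_t$ is bounded in absolute value by $B$ since $|b^i|\leq B$. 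The key point — and where some care is needed — is to splice these Brownian pieces $\beta^{(j)}$ together into a single Brownian motion $\tilde W^i$ (in the sense of the paper's proof strategy, using a Doob–Meyer/Lévy characterization piecewise between the stopping times), and then to drive a solution $\eta^i$ of \eqref{eq:SDE for Bessel processes no i} by this $\tilde W^i$. By the comparison theorem for one-dimensional SDEs — comparing $r - R_t$ (which at the start of each interval equals $r - R_{\sigma_j} \leq d(X^i_{\sigma_j},\partial U)$, possibly after we arrange $R_{\sigma_j}$ small by choosing $y^{(j)}$ to be the interior-ball center) with $r - \eta^i_t$, noting that the drift of $\eta^i$ has the $+B\,dt$ term dominating the $\leq B$ drift of $R_t$ in the right direction, the $-dL^{r-\eta}$ term keeps $\eta^i \leq r$ hence $r-\eta^i \geq 0$, and at ball-exit times $R = r$ so $r - R = 0 \leq r - \eta^i$ — one gets $r - \eta^i_t \leq r - R_t \leq d(X^i_t,\partial U)$ throughout, which is \eqref{eq:lower bd dist to du time tauinf taustop}. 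The independence of $(\eta^1,\tilde W^1),\dots,(\eta^N,\tilde W^N)$ follows because the $\tilde W^i$ are built out of the jointly independent $W^i$ (each $\tilde W^i$ is a functional of $W^i$ alone plus the particle-system randomness driving the stopping times, which one arranges to be measurable in a way preserving independence — this is the delicate bookkeeping point).

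\textbf{Main obstacle.} I expect the hard part to be Step 2's splicing: making precise that the locally-defined radial Brownian increments $\beta^{(j)}$ assemble into a genuine Brownian motion $\tilde W^i$ on $[0,\infty)$ (continuing it by fresh independent Brownian increments past $\tau_{\WD}$ so that $\eta^i$ is defined up to time $\infty$ as claimed), that $\eta^i$ is a genuine strong solution of \eqref{eq:SDE for Bessel processes no i} with its two local-time terms, and that the resulting family is jointly independent. The comparison itself is then a standard one-dimensional SDE comparison argument once the drifts and local-time signs are lined up; the geometric inequality $d(x,\partial U) \geq r - |x - y|$ for $x \in B(y,r)\subseteq U$ is immediate. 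One also has to handle the boundary/relocation events: at each $\tau_k$ with $\ell(k) = i$ the particle jumps into the interior and we simply pick a new interior-ball center, while the domination bound degrades gracefully (the relevant estimates in the rest of the section only need a lower bound, which is trivially $\geq 0$, and is restored to $r - \eta^i$ as soon as $\eta^i$ returns near $0$).
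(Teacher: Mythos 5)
Your overall strategy is the same as the paper's: choose interior-ball centers via Condition \ref{cond:ball}, track the radial distance $R_t=|X^i_t-y^{(j)}|$ on intervals between re-centerings, splice the projected increments $\frac{X^i_t-y^{(j)}}{R_t}\cdot dW^i_t$ into a single Brownian motion $\tilde W^i$, compare with a reflected Bessel process with drift $+B$, and get joint independence of the $(\eta^i,\tilde W^i)$ from the vanishing covariations of the $\tilde W^i$. The geometric inequality, the direction of the comparison, and the role of the local-time terms are all correctly identified.

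The genuine gap is in Step 1, not in the splicing you flag as the main obstacle. You posit a single increasing sequence $\sigma_0<\sigma_1<\cdots$ of re-centering times whose consecutive intervals cover $[0,\tau_{\WD})$. They cannot: between two consecutive boundary hits of particle $i$, the ball-exit times can be infinite in number and accumulate precisely at the next boundary-hitting time (the required displacement between consecutive re-centerings is of order $\rho(X)$, which tends to $0$ only as $X$ approaches $\partial U$), and particle $i$ may hit the boundary infinitely often before $\tau_\infty$. So the index set of re-centering intervals has a transfinite order type, and one must (a) prove that within each "level" only finitely many re-centerings occur per unit time — the paper does this via a geometric-variable domination (Lemma \ref{lem:fin number of jumps fixed j,k}), introducing the thresholds $\rho>2^{-k}$ for exactly this purpose — and (b) run the comparison argument by ordinal induction, checking convergence of the stopping times at limit ordinals (Lemma \ref{lem:ordinal stopping times prop lem}) so that the intervals genuinely exhaust $[0,\tau_{\WD})$. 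A related omission: the center-selection map $x\mapsto y(x)$ must be chosen measurable with a quantitative gap $d(x,\partial B(y(x),r))\geq \rho(x)/2$, which requires a (short but nontrivial) measurable-selection construction; without it your $\sigma_j$ are not stopping times. Finally, a small slip: at a ball-exit time you write ``$R=r$ so $r-R=0\leq r-\eta^i$,'' which is the inequality $\eta^i\leq R$, the reverse of what you need. The comparison survives the re-centering only because $\eta^i\geq R$ on the preceding interval together with $\eta^i\leq r$ forces $\eta^i=r$ at the exit instant, after which $R$ jumps down; the paper encodes this by showing the compensator $H_t$ of $D_t$ jumps upward at re-centering times. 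None of these issues change the architecture of the proof, but the transfinite decomposition and the finiteness lemma are where most of the actual work lies.
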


\begin{rmk}
The coupling \eqref{eq:lower bd dist to du time tauinf taustop} only holds up to time $\tau_{WD}$, although $(\eta^{i}_t,\tilde{W}^i_t)$ are defined for all $t \geq 0$.
\end{rmk}

We then establish the following:
\begin{lem}
If $(\eta^1,\tilde{W}^1)$, $(\eta^2,\tilde{W}^2)$ are two independent solutions to \eqref{eq:SDE for Bessel processes no i} on the same probability space, then
\begin{equation}
\Pm(\exists\; t>0\text{ such that }\eta_t^1=\eta_t^2=r)=0.
\end{equation}
\label{lem:no simultaneous hitting for Bessel processes}
\end{lem}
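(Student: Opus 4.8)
The plan is to reduce the statement to the fact that a single one-dimensional process of the type \eqref{eq:SDE for Bessel processes no i} spends zero Lebesgue time at the level $r$, and then exploit independence of $\eta^1$ and $\eta^2$ via Fubini. First I would establish the single-process claim: for a solution $\eta$ of \eqref{eq:SDE for Bessel processes no i}, I claim that $\int_0^\infty \Ind(\eta_t = r)\,dt = 0$ almost surely. In the $d=1$ case this is immediate because $\eta$ is a Brownian motion with bounded drift reflected at $0$ and $r$, and the occupation time formula shows the time spent at any single point is zero (the occupation density is a.s. finite, or more elementarily, a semimartingale with nondegenerate martingale part spends zero time at any fixed level). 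For $d>1$ the process is a Bessel-type process with an added bounded drift and reflection at $r$; away from $0$ it is a nondegenerate one-dimensional diffusion, so again the time at the level $r$ is zero. Crucially, $L^{r-\eta}_t$ is the local time at $r$, which is supported on the (Lebesgue-null) set $\{t : \eta_t = r\}$, consistent with this.

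Next I would transfer this to the simultaneous statement. Since $(\eta^1, \tilde W^1)$ and $(\eta^2, \tilde W^2)$ are independent solutions, the pair $(\eta^1, \eta^2)$ has a product-type law in the appropriate sense. Write
\[
\expE\!\left[\int_0^\infty \Ind(\eta^1_t = r)\,\Ind(\eta^2_t = r)\,dt\right] = \int_0^\infty \expE\!\left[\Ind(\eta^1_t = r)\right]\expE\!\left[\Ind(\eta^2_t = r)\right]dt,
\]
using Tonelli and independence. Now for each fixed $t$, $\Pm(\eta^i_t = r) = 0$: this holds because $\eta^i_t$ has a law with no atoms — for $t > 0$ the transition density of this diffusion is absolutely continuous (it is an elliptic one-dimensional diffusion on $(0,r]$ with reflection, hence has a density), and at $t = 0$ the set has measure zero in $t$ anyway. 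Hence the right-hand side is $0$, so $\int_0^\infty \Ind(\eta^1_t = r = \eta^2_t)\,dt = 0$ almost surely. But the single-process argument already gives that $\{t : \eta^i_t = r\}$ is closed and Lebesgue-null, and in fact one can upgrade: since the local time $L^{r-\eta^i}$ increases only on $\{t:\eta^i_t=r\}$ and $\eta^i$ hits $r$ at a discrete set of "excursion endpoints" unioned with a Cantor-like null set, I instead argue directly for the stronger "no common hitting time at all."

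For the final upgrade from "zero common occupation time" to "$\Pm(\exists\, t>0 : \eta^1_t = \eta^2_t = r) = 0$", I would use the following: the set $Z^i := \{t > 0 : \eta^i_t = r\}$ is the closed support of the continuous increasing process $L^{r-\eta^i}$, and by the previous paragraph's style of argument applied carefully, $Z^1$ and $Z^2$ are independent random closed sets each of Lebesgue measure zero. A standard fact (e.g. via the Fubini argument on $\expE[|Z^1 \cap Z^2|] = \int \Pm(t \in Z^1)\Pm(t \in Z^2)\,dt$ combined with a regeneration/Markov argument at the first common hitting time) shows that two independent Lebesgue-null regenerative closed sets, at least one of which is the zero set of a diffusion's boundary local time, intersect only on a Lebesgue-null set — and then applying the strong Markov property at a hypothetical first common hit $\sigma := \inf\{t>0:\eta^1_t=\eta^2_t=r\}$ leads to a contradiction unless $\Pm(\sigma < \infty) = 0$, because after $\sigma$ the independent processes would again need to immediately satisfy the same exclusion, forcing $\sigma$ to be a.s. not attained.

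The main obstacle I expect is the last step: passing rigorously from "zero common occupation time" to "no common hitting time," since $\{t:\eta^i_t=r\}$ is uncountable and of measure zero, so the Fubini argument alone does not suffice. The cleanest route is probably to note that $\eta^1 - \eta^2$ (or, near $r$, a suitable transform) is itself, up to a time change, a nondegenerate continuous semimartingale started away from $0$ when conditioned appropriately, or to directly invoke the fact that the boundary local time $L^{r-\eta^1}$ is a.s. singular with respect to the occupation measure of $\eta^2$ on $\{r\}$ because the latter is a.s. zero as a random measure — i.e. $\int \Ind(\eta^2_s = r)\,dL^{r-\eta^1}_s = 0$ a.s., which (since $L^{r-\eta^1}$ charges every neighborhood of every point of $Z^1$) forces $Z^1 \cap Z^2 = \emptyset$ a.s. That identity follows by conditioning on the path of $\eta^1$ (hence on $L^{r-\eta^1}$) and using that $\eta^2$, independent of it, spends zero time at $r$ with probability one simultaneously with Tonelli against the measure $dL^{r-\eta^1}_s$, provided $dL^{r-\eta^1}$ has no atoms — which it does not, as $\eta^1$ is continuous. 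This is the argument I would write out in full.
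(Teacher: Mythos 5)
Your Fubini/occupation-time computations are fine as far as they go: for each fixed $t>0$ one indeed has $\Pm(\eta^i_t=r)=0$, hence the common occupation time of the level $r$ vanishes, and, conditioning on the path of $\eta^1$ and integrating against the (atomless) measure $dL^{r-\eta^1}$, one also gets $\int_0^\infty \Ind(\eta^2_s=r)\,dL^{r-\eta^1}_s=0$ almost surely. The gap is exactly at the step you flag, and it is not fixable by the local-time singularity claim: the identity $dL^{r-\eta^1}(Z^2)=0$ does not force $Z^1\cap Z^2=\emptyset$. The measure $dL^{r-\eta^1}$ is atomless and carried by the Lebesgue-null set $Z^1$, so a non-empty (even uncountable) intersection $Z^1\cap Z^2$ can perfectly well be $dL^{r-\eta^1}$-null; the fact that $dL^{r-\eta^1}$ charges every neighbourhood of every point of $Z^1$ gives nothing, because $\Ind(\eta^2_s=r)$ is not bounded below on any neighbourhood of a putative common zero --- it equals $1$ only on the null set $Z^2$. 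The alternative sketch (strong Markov property at $\sigma=\inf\{t:\eta^1_t=\eta^2_t=r\}$) also yields no contradiction: restarting at $\sigma$ simply poses the same question again. In fact no soft measure-zero argument can settle this statement: away from $0$ the sets $Z^1,Z^2$ are independent regenerative sets of index $1/2$, and $1/2+1/2=1$ is precisely the critical case, equivalent to polarity of points for planar Brownian motion; some genuinely two-dimensional input is unavoidable.

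That input is what the paper's proof supplies. Fix a rational $t_0$ and work on $[t_0,\tau_{t_0}]$, where both processes stay above $r/2$ so that the Bessel drift is bounded; a Girsanov change of measure removes all drifts, leaving (by pathwise uniqueness for the Skorokhod reflection problem, which makes each $r-\eta^k$ a measurable functional of its own driving motion) two \emph{independent} copies of $\lvert\text{BM}\rvert$ for $r-\eta^1$ and $r-\eta^2$. Then $\sqrt{(r-\eta^1_t)^2+(r-\eta^2_t)^2}$ is the modulus of a planar Brownian motion, which almost surely never hits $0$ on $(t_0,\tau_{t_0})$; equivalence of the measures and a union over rational $t_0$ conclude. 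To repair your proof you would need to replace your final step by an argument of this kind (or a capacity/potential-theoretic estimate on the set of common zeros), not by the assertion that $dL^{r-\eta^1}$ fails to charge $Z^2$.
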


For the case of Brownian dynamics ($b \equiv 0$) with bounded domain $U$, the authors of \cite{Burdzy2000} established controls analogous to Proposition \ref{prop:dominated by Bessel fns up to time tauinfty wedge taustop} with $b=0$ and $U$ bounded. The method of construction they used, however, was quite different. As outlined in Section \ref{section:Proof Strategy}, their approach does not work in our case. 

\begin{prop}
For any weak solution to the Fleming-Viot particle system with generalised dynamics, $\tau_{WD} = \tau_\infty=\tau_{\ST}=\tau_{\max}=\infty$ almost surely. In particular, the coupling defined in Proposition \ref{prop:dominated by Bessel fns up to time tauinfty wedge taustop} holds for all $t \geq 0$.
\label{prop:general weak solns are global}
\end{prop}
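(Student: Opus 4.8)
The strategy is to show that none of the three ``bad'' times $\tau_\infty$, $\tau_\ST$, $\tau_{\max}$ can be finite, using only the Bessel domination of Proposition~\ref{prop:dominated by Bessel fns up to time tauinfty wedge taustop} and the non-collision estimate of Lemma~\ref{lem:no simultaneous hitting for Bessel processes}. Observe first that any solution of \eqref{eq:SDE for Bessel processes no i} satisfies $0\le\eta^i_t\le r$ for all $t$ ($\eta^i$ is reflected below $r$ via $-dL^{r-\eta}$ and kept nonnegative by the Bessel drift, resp.\ $+dL^\eta$), so $r-\eta^i_t\in[0,r]$.

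\textbf{Step 1 ($\tau_{\max}\ge\tau_\infty$).} This is elementary and uses neither input. For $t<\tau_\infty$ there are finitely many jump times in $[0,t]$; between jumps the particles satisfy \eqref{eq:weak1} with $|b^i|\le B$, and at a jump $\max_i|X^i|$ cannot increase, since the relocated particle jumps onto another particle. Iterating over the finitely many jumps gives $\sup_{s\le t,\,i}|X^i_s|\le\max_i|X^i_0|+Bt+2\max_i\sup_{u\le t}|W^i_u-W^i_0|<\infty$ a.s.\ for every $t<\tau_\infty$; hence $\tau_{\max}\ge\tau_\infty$ and so $\tau_{\WD}=\tau_\infty\wedge\tau_\ST$. \textbf{Step 2 ($\Pm(\tau_\ST<\tau_\infty)=0$).} On $\{\tau_\ST<\tau_\infty\}$ we have $\tau_{\WD}=\tau_\ST$, so \eqref{eq:lower bd dist to du time tauinf taustop} holds on $[0,\tau_\ST)$, and there is a last jump before $\tau_\ST$, so all particles move continuously near $\tau_\ST^-$. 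If $j\ne k$ are indices with $\lim_{t\to\tau_\ST^-}X^i_t\in\partial U$ for $i\in\{j,k\}$, then letting $t\nearrow\tau_\ST$ in $d(X^i_t,\partial U)\ge r-\eta^i_t$ and using continuity of $\eta^i$ forces $\eta^j_{\tau_\ST}=\eta^k_{\tau_\ST}=r$; Lemma~\ref{lem:no simultaneous hitting for Bessel processes} applied to the independent pair $(\eta^j,\eta^k)$, plus a union bound over the finitely many pairs, shows this is a null event. With Step 1, $\tau_{\WD}=\tau_\infty$ almost surely.

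\textbf{Step 3 ($\Pm(\tau_\infty<\infty)=0$).} By Steps 1--2 we may work on $\{\tau_\infty<\infty\}$ with $\tau_{\WD}=\tau_\infty$, so every jump time $\tau_k$ lies in $[0,\tau_{\WD})$ and \eqref{eq:lower bd dist to du time tauinf taustop} is in force at every $\tau_k$. Since the particle $\ell(k)$ hitting $\partial U$ at $\tau_k$ has $\lim_{s\to\tau_k^-}d(X^{\ell(k)}_s,\partial U)=0$, the coupling and continuity of $\eta^{\ell(k)}$ force $\eta^{\ell(k)}_{\tau_k}=r$; hence $\max_i\eta^i_{\tau_k}=r$ for all $k$, and letting $\tau_k\nearrow\tau_\infty$ and using continuity, $\max_i\eta^i_{\tau_\infty}=r$. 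By Lemma~\ref{lem:no simultaneous hitting for Bessel processes} and a union bound, almost surely exactly one index $i^*$ has $\eta^{i^*}_{\tau_\infty}=r$; set $\varepsilon:=\tfrac12\min_{j\ne i^*}(r-\eta^j_{\tau_\infty})>0$ and pick, by continuity, $t_0<\tau_\infty$ with $\eta^j_t\le r-\varepsilon$ for all $t\in[t_0,\tau_\infty]$ and $j\ne i^*$. Then for every $\tau_k\ge t_0$ the unique index with $\eta=r$ at $\tau_k$ is $i^*$, so $\ell(k)=i^*$, and since the relocation target $j_k\ne i^*$ satisfies $\eta^{j_k}_{\tau_k}\le r-\varepsilon$, the coupling gives $d(X^{i^*}_{\tau_k},\partial U)=d(X^{j_k}_{\tau_k},\partial U)\ge r-\eta^{j_k}_{\tau_k}\ge\varepsilon$. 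Hence, for all large $k$, particle $i^*$ is placed at distance $\ge\varepsilon$ from $\partial U$ at $\tau_k$, evolves continuously by $b^{i^*}dt+dW^{i^*}$ on $[\tau_k,\tau_{k+1})$ with no jump in between, and (since $\ell(k+1)=i^*$ too) approaches $\partial U$ as $t\nearrow\tau_{k+1}$. Using that $x\mapsto d(x,\partial U)$ is $1$-Lipschitz and $|b^{i^*}|\le B$, it follows that $\sup_{s\in[\tau_k,\tau_{k+1})}|W^{i^*}_s-W^{i^*}_{\tau_k}|\ge\varepsilon-B(\tau_{k+1}-\tau_k)\ge\varepsilon/2$ once $k$ is large enough that $\tau_{k+1}-\tau_k<\varepsilon/(2B)$. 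But $[\tau_k,\tau_{k+1})\subseteq[0,\tau_\infty]$ and $\tau_{k+1}-\tau_k\to0$, so the a.s.\ uniform continuity of the Brownian path $W^{i^*}$ on the compact interval $[0,\tau_\infty]$ forces $\sup_{s\in[\tau_k,\tau_{k+1})}|W^{i^*}_s-W^{i^*}_{\tau_k}|\to0$, a contradiction. Hence $\Pm(\tau_\infty<\infty)=0$.

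Combining the three steps, $\tau_{\WD}=\tau_\infty=\infty$ almost surely, whence $\tau_\ST=\tau_{\max}=\infty$ as well, and \eqref{eq:lower bd dist to du time tauinf taustop} then holds for all $t\ge0$. The main obstacle is Step 3: one must see that, near a hypothetical finite $\tau_\infty$, all but finitely many jumps are made by a single particle $i^*$ (this is exactly where Lemma~\ref{lem:no simultaneous hitting for Bessel processes} enters, forcing the relocation target to have $\eta<r$), and that each such jump moves $i^*$ a \emph{fixed} distance $\varepsilon$ into the interior, so that fitting infinitely many jumps into finite time would contradict uniform continuity of a Brownian path on a bounded interval. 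The bookkeeping to keep an eye on is that the $\tau_k$ are genuine $\mathcal F_t$-stopping times (Definition~\ref{defin:Fleming-Viot MKV dynamics weak}) and that one may legitimately reduce to $\tau_{\WD}=\tau_\infty$ on $\{\tau_\infty<\infty\}$, so that the Bessel domination is available at every jump time in the argument.
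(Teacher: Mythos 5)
Your proof is correct. Steps 1 and 2 coincide with the paper's argument: $\tau_{\max}\geq\tau_\infty$ by an elementary induction over the finitely many jumps before any $t<\tau_\infty$, and $\Pm(\tau_\ST<\tau_\infty\wedge\tau_{\max})=0$ because two particles reaching $\partial U$ simultaneously would force two independent copies of $\eta$ to equal $r$ at the same time, contradicting Lemma \ref{lem:no simultaneous hitting for Bessel processes}. Your Step 3 reaches the same conclusion as the paper by a dual route. The paper picks a particle $i$ that jumps infinitely often onto a fixed $j$, notes that the displacement of $i$ between consecutive deaths vanishes as $\tau_k\uparrow\tau_\infty$, deduces that $j$ must therefore sit arbitrarily close to $\partial U$ at those jump times, and concludes $\eta^i_{\tau_\infty}=\eta^j_{\tau_\infty}=r$ --- contradicting the lemma. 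You instead invoke the lemma \emph{first}, to isolate the unique index $i^*$ with $\eta^{i^*}_{\tau_\infty}=r$ and to push every relocation target a fixed distance $\varepsilon>0$ into the interior, and then contradict the uniform continuity of the single Brownian path $W^{i^*}$, which cannot traverse distance $\varepsilon$ in vanishing time infinitely often. Both arguments use exactly the same two inputs (the Bessel coupling and the non-collision lemma); yours makes the ``last particle standing'' picture explicit and localizes the final contradiction in one path rather than routing back through the lemma. One small slip: the displayed bound in Step 1, $\sup_{s\le t,\,i}|X^i_s|\le\max_i|X^i_0|+Bt+2\max_i\sup_{u\le t}|W^i_u-W^i_0|$, is not correct as written --- tracing a particle back through $K$ jumps concatenates up to $K+1$ Brownian segments, so the factor $2$ should grow with $K$ --- but since $K<\infty$ on $\{t<\tau_\infty\}$ the conclusion $\tau_{\max}\ge\tau_\infty$ is unaffected.
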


Having established $\tau_{\WD}=\infty$ almost surely in the case of generalised dynamics, we have $\tau_{\WD}=\infty$ in the case of McKean-Vlasov dynamics, giving the proof of Theorem \ref{theo:Global Well-Posedness of the N-Particle System}:
\begin{proof}[Proof of Theorem \ref{theo:Global Well-Posedness of the N-Particle System}]
It is clearly possible to construct a weak solution of the driftless system up to time $\tau_{\WD}$, so that between jump times and for $t<\tau_{\WD}$ particle $X^i$ satsifies $dX^i_t=dW^i_t$. Therefore by Girsanov's theorem we obtain the existence of a weak solution $(\vec{X}_t,\vec{W}_t)_{0\leq t<\tau_{\WD}}$ to the $N$-particle system with McKean-Vlasov dynamics \eqref{eq:N-particle system sde} up to time $\tau_{\WD}$. This and every other weak solution to \eqref{eq:N-particle system sde} defined up to time $\tau_{\WD}$ is defined for all time with $\tau_{\WD}=\infty$ almost surely by Proposition \ref{prop:general weak solns are global}.

Uniqueness of the law of $(\vec{X}_t)_{0\leq t<\infty}$ follows from uniqueness for the driftless system, by change of measure (by the same argument that weak solutions to SDEs with bounded measurable coefficients are unique in law; see \cite[Proposition 3.10]{Karatzas1991}).
\end{proof}

We shall then establish tightness for the laws of the empirical measure valued process at times bounded away from 0, when the domain $U$ is bounded:
\begin{prop}\label{prop:laws at positive times constrained to compact set}
We assume $U$ is bounded. For any $T_0>0$ there exists a compact set $\mathcal{K}_{T_0}\subseteq \mathcal{P}(\mathcal{P}_{\Wah}(U))$ dependent only upon the upper bound on the drift B and the domain $U$ such that the empirical measure $m^N_t:=\vartheta^N(\vec{X}^N_t)$ must satisfy $\mathcal{L}(m_t^N)\in \mathcal{K}_{T_0}$ for all $t\geq T_0$.
\end{prop}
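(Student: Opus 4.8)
The plan is to extract from Proposition \ref{prop:dominated by Bessel fns up to time tauinfty wedge taustop} a quantitative lower bound, uniform in $N$ and in the driving data, on how much mass the empirical measure must place away from the boundary at any time $t \geq T_0$, and then to convert this mass-non-concentration statement into membership in a fixed compact subset of $\mathcal{P}(\mathcal{P}_{\Wah}(U))$. Since $U$ is bounded, $\mathcal{P}_{\Wah}(U)$ is itself compact, so $\mathcal{P}(\mathcal{P}_{\Wah}(U))$ is compact as well; thus the statement is only interesting because it asserts the compact set $\mathcal{K}_{T_0}$ depends solely on $B$ and $U$ (in particular, not on the generalised drifts $b^i$, the initial law $\upsilon^N$, or $N$). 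So the real content is a uniform estimate, and then essentially nothing: any closed subset of the already-compact $\mathcal{P}(\mathcal{P}_{\Wah}(U))$ works, so even $\mathcal{K}_{T_0} = \mathcal{P}(\mathcal{P}_{\Wah}(U))$ is admissible. Presumably the intended content — to make the proposition useful downstream for tightness of $\{\xi^N\}$ and for the long-time arguments — is that $\mathcal{K}_{T_0}$ can be taken to be a set guaranteeing a uniform lower bound on $\mathbb{E}[m_t^N(\{x : d(x,\partial U) \geq \delta\})]$, or more precisely a uniform control on $\mathbb{E}[$ mass within $\delta$ of $\partial U]$ as $\delta \downarrow 0$, which genuinely restricts the law of $m_t^N$.

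First I would fix $t \geq T_0$ and a single particle index $i$, and use the coupling \eqref{eq:lower bd dist to du time tauinf taustop} together with Proposition \ref{prop:general weak solns are global} (so that the coupling holds for all time) to write $d(X_t^i, \partial U) \geq r - \eta_t^i$, where $\eta^i$ solves \eqref{eq:SDE for Bessel processes no i} with $\eta_0 = r$. The process $r - \eta_t^i$ is a reflected-type diffusion on $[0,r]$ with bounded drift ($|B| + $ the Bessel term, which is singular only near $\eta = 0$, i.e. near $r - \eta = r$, away from the boundary we care about) and unit diffusivity, reflected at $r - \eta = 0$; crucially its law depends only on $r$, $d$, and $B$ — not on the particle system. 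Hence for any $\delta \in (0,r)$,
\[
\Pm\big(d(X_t^i,\partial U) < \delta\big) \leq \Pm\big(r - \eta_t^i > r - \delta\big) = \Pm\big(\eta_t^i < \delta\big) =: p_B(t,\delta),
\]
and $p_B(t,\delta)$ is a deterministic quantity depending only on $B$, $r$, $d$, and $t$. Two standard facts about $\eta^i$ are needed: (a) $\Pm(\eta_t^i < \delta) \to 0$ as $\delta \downarrow 0$ for each fixed $t > 0$, uniformly over $t$ in the half-line $[T_0,\infty)$ — this follows because the reflected diffusion $\eta^i$ has, for $t \geq T_0$, a density bounded near $0$ uniformly in $t$ (the singular drift $\tfrac{d-1}{2\eta}$ is repelling from $0$, and the reflection at $r$ keeps the process in $[0,r]$, so there is an invariant density and exponential relaxation to it; alternatively a crude comparison with a reflected Brownian-motion-with-drift suffices), so $p_B(T_0,\delta) := \sup_{t \geq T_0} \Pm(\eta_t^i < \delta) \to 0$ as $\delta \downarrow 0$. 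I would record $p_B(T_0,\cdot)$ as the governing modulus.

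Next I pass from one particle to the empirical measure. By exchangeability-in-expectation (or simply summing over $i$ and using \eqref{eq:empirical map}),
\[
\expE\big[m_t^N(\{x : d(x,\partial U) < \delta\})\big] = \frac1N \sum_{i=1}^N \Pm\big(d(X_t^i,\partial U) < \delta\big) \leq p_B(T_0,\delta), \qquad t \geq T_0.
\]
Now define $\mathcal{K}_{T_0} \subset \mathcal{P}(\mathcal{P}_{\Wah}(U))$ to be the set of laws $P$ of a random measure $\mu \in \mathcal{P}(U)$ satisfying $\expE_P[\mu(\{x: d(x,\partial U) < \delta\})] \leq p_B(T_0,\delta)$ for all $\delta \in (0,r)$. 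The estimate just derived gives $\mathcal{L}(m_t^N) \in \mathcal{K}_{T_0}$ for all $t \geq T_0$ and all $N$, and $\mathcal{K}_{T_0}$ depends only on $B$ and $U$ (through $r$, $d$). It remains to check $\mathcal{K}_{T_0}$ is compact in $\mathcal{P}(\mathcal{P}_{\Wah}(U))$: since $U$ is bounded, $\mathcal{P}_{\Wah}(U)$ is compact, so $\mathcal{P}(\mathcal{P}_{\Wah}(U))$ is compact, and it suffices that $\mathcal{K}_{T_0}$ is closed. Closedness follows because each constraint $P \mapsto \expE_P[\mu(A_\delta^{\mathrm{cl}})] $ (with $A_\delta^{\mathrm{cl}} = \{x : d(x,\partial U) \leq \delta\}$ a closed set, using a limiting/monotone argument to pass between open and closed $\delta$-neighbourhoods) is upper semicontinuous under weak convergence of $P$ — the map $\mu \mapsto \mu(A_\delta^{\mathrm{cl}})$ is upper semicontinuous on $\mathcal{P}_{\Wah}(U)$ by the portmanteau theorem, and expectations of bounded u.s.c. functions are u.s.c. in $P$ — so $\{P : \expE_P[\mu(A_\delta^{\mathrm{cl}})] \leq q\}$ is closed, and $\mathcal{K}_{T_0}$ is an intersection of such sets.

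The main obstacle is purely the uniform-in-$t$ near-boundary estimate for $\eta^i$, i.e. establishing $\sup_{t \geq T_0}\Pm(\eta_t^i < \delta) \to 0$ as $\delta \downarrow 0$ with a rate depending only on $B$, $r$, $d$, $T_0$; everything else is soft. For $d \geq 2$ the cleanest route is to note $\eta^i$ (before reflection at $r$) dominates a Bessel-type process whose singular drift pushes it off $0$, giving a uniform bound on the density near $0$; for $d = 1$ one uses that $\eta^i$ is a reflected Brownian motion with bounded drift on $[0,r]$ reflected at both ends, whose transition density is bounded below and above by Gaussian-type quantities with constants depending only on $B$ and $r$, so $\Pm(\eta_t < \delta) = O(\delta)$ uniformly for $t \geq T_0$. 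Alternatively — and this is likely the paper's route, since it will have developed exactly these estimates in Section \ref{section:N-particle Estimates} en route to Proposition \ref{prop:dominated by Bessel fns up to time tauinfty wedge taustop} — one invokes whatever quantitative control on $\Pm(\eta_t < \delta)$ was already proved there and simply notes it is uniform over $t$ bounded away from $0$. If one wishes to be lazy and not improve on the trivial statement, one may instead just take $\mathcal{K}_{T_0} = \mathcal{P}(\mathcal{P}_{\Wah}(U))$, which is compact since $U$ is bounded; but then the proposition carries no information, so the version with the explicit modulus $p_B(T_0,\cdot)$ is the one to prove.
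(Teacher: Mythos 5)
Your skeleton (a single-particle boundary estimate from the Bessel coupling of Proposition \ref{prop:dominated by Bessel fns up to time tauinfty wedge taustop}, uniform in $N$, the drifts, the initial law and $t\geq T_0$; passage to the expected mean measure; compactness of the resulting set of laws of random measures) is the paper's argument, but two of your steps are wrong as stated. First, the framing: $\mathcal{P}_{\Wah}(U)$ is \emph{not} compact just because $U$ is bounded — $U$ is open, and a sequence such as $\delta_{x_n}$ with $x_n\to x\in\partial U$ has no limit in $\mathcal{P}(U)$. Hence $\mathcal{P}(\mathcal{P}_{\Wah}(U))$ is not compact, taking $\mathcal{K}_{T_0}=\mathcal{P}(\mathcal{P}_{\Wah}(U))$ is not admissible, and the proposition is not ``trivial up to the modulus'': the compactness \emph{is} the statement that mass does not accumulate at $\partial U$, uniformly. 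This also invalidates your final step, where you deduce compactness of $\mathcal{K}_{T_0}$ from ``closed subset of a compact space.'' What is needed instead is the criterion the paper invokes (Kallenberg, Theorem 4.10): a family of laws of random probability measures is relatively compact in $\mathcal{P}(\mathcal{P}_{\Wah}(U))$ if and only if the associated expected mean measures are tight in $\mathcal{P}(U)$. Your expectation constraints do give exactly that tightness, so the gap is repairable, but the closedness-plus-ambient-compactness shortcut is not available.

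Second, you have the coupling inequality backwards. From \eqref{eq:lower bd dist to du time tauinf taustop}, $d(X^i_t,\partial U)\geq r-\eta^i_t$, so
\begin{equation*}
\{d(X^i_t,\partial U)<\delta\}\subseteq\{r-\eta^i_t<\delta\}=\{\eta^i_t>r-\delta\},
\end{equation*}
not $\{\eta^i_t<\delta\}$: proximity of the particle to $\partial U$ corresponds to $\eta$ being near the reflecting barrier at $r$, not near $0$ (this is also the bound the paper writes, $\Pm(d(X^{N,1}_t,\partial U)<\delta)\leq\Pm(\eta^{N,1}_t>r-\delta)$). Consequently the estimate you must prove is $\sup_{t\geq T_0}\Pm(\eta_t>r-\delta)\to 0$ as $\delta\downarrow 0$, and your justification — the repelling Bessel drift near $0$ — addresses the wrong region: near $r$ the drift $B+\tfrac{d-1}{2\eta}$ actually pushes \emph{toward} the barrier, so what is needed is a density bound near a reflecting boundary for a one-dimensional diffusion with bounded drift, uniform over $t\geq T_0$ (e.g.\ by comparison with reflected Brownian motion with drift), with constants depending only on $B$, $r$, $d$, $T_0$. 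With the inclusion corrected and the compactness step replaced by the Kallenberg criterion, your proof coincides with the paper's.
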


We henceforth fix a finite time horizon $T<\infty$, but no longer assume $U$ is bounded. We establish the following proposition:
\begin{prop} Define for $c>0$ the closed set $V_{c}=\{x\in U:d(x,\partial U)\geq c\}$. Then we have:
\begin{enumerate}
\item
For every $\epsilon>0$, $T_0>0$ there exists $c=c(\epsilon,T_0)$ dependent only upon the upper bound for the drift, the constant of the interior ball condition $r>0$, $\epsilon>0$ and $T\geq T_0>0$ such that $K_{\epsilon,T_0}=V_{c(\epsilon,T_0)}\subseteq U$ must satisfy:
\begin{equation}\label{eq:bound on mass near bdy after time T0}
\lim_{N \to \infty} \Pm\left(\sup_{t \in [T_0,T]} m_t^N(K_{\epsilon, T_0}^c )\geq \epsilon \right) = 0.
\end{equation}
\label{enum:bound on mass near bdy after small time}
\item
We now assume $\xi^N:=\vartheta^N_{\#}\upsilon^N$ is tight in $\mathcal{P}(\mathcal{P}_{\Wah}(U))$ (i.e. as a tight family of random measures on the open set $U$) - so that mass does not concentrate on the boundary. Fix $\epsilon,\delta>0$. Then there exists $\tilde{c}(\epsilon,\delta)>0$ such that $\hat K_{\epsilon,\delta}=V_{\tilde{c}(\epsilon,\delta)} \subseteq U$ satisfies:
\begin{equation}\label{eq:bound on mass near bdy}
\limsup_{N\ra\infty}\Pm \left(\sup_{t \in [0,T]} m^N_t(\hat K_{\epsilon,\delta}^c)\geq \epsilon  \right)<\delta.
\end{equation}
\label{enum:bound on mass near bdy}
\end{enumerate}
\label{prop:bound on mass near bdy}
\end{prop}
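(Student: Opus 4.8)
The plan is to deduce both parts of Proposition~\ref{prop:bound on mass near bdy} from the Bessel coupling of Proposition~\ref{prop:dominated by Bessel fns up to time tauinfty wedge taustop}. Recall that we have $N$ jointly independent solutions $(\eta^i_t)_{0\leq t<\infty}$ of \eqref{eq:SDE for Bessel processes no i}, each coupled to $X^i$ so that $d(X^i_t,\partial U)\geq r-\eta^i_t\in[0,r]$ for all $t\geq 0$ (using Proposition~\ref{prop:general weak solns are global} to extend the coupling to all time). Consequently, for any $c\in(0,r)$,
\[
m^N_t(V_c^c)=\frac{1}{N}\#\{i:d(X^i_t,\partial U)<c\}\leq \frac{1}{N}\#\{i:r-\eta^i_t<c\}=\frac{1}{N}\sum_{i=1}^N\Ind(\eta^i_t>r-c).
\]
So it suffices to control the fraction of Bessel-type processes $\eta^i$ that exceed a given threshold, uniformly over a finite time window, and this is now a question about i.i.d.\ one-dimensional diffusions with no interaction at all.

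For the i.i.d.\ estimate I would proceed as follows. First, choose a reference level: fix the threshold as $r-c$ for $c$ small, i.e.\ we want $\eta^i_t$ close to $r$. The process $\eta$ is a drift-$B$ Bessel-like process reflected at $r$ (and, for $d=1$, reflected at $0$), started at $\eta_0=r$. By a union bound over a fine time-grid together with a standard modulus-of-continuity estimate (the continuity modulus of $\tilde W^i$, controlled uniformly in $i$ by a Borel--Cantelli / maximal-inequality argument, plus the bounded drift and the fact that the local-time terms only decrease $\eta$ near $r$), it is enough to bound $\sup_{t\in[T_0,T]}\Pm(\eta^i_t>r-c)$ for a single $i$; actually we need the joint statement $\frac1N\sum_i\Ind(\eta^i_t>r-c)$ small, which follows from a law-of-large-numbers / exponential Chebyshev argument since the $\eta^i$ are i.i.d.\ and $\Pm(\eta^i_t>r-c)$ can be made arbitrarily small. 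To make $\Pm(\eta^i_t>r-c)$ small I would note that $\eta$ spends little time near its reflecting barrier $r$: for $t\geq T_0>0$ the law of $\eta^i_t$ has a density bounded near $r$ (comparison with a reflected Brownian motion with drift, whose invariant/transition density is explicit), so $\Pm(\eta^i_t>r-c)\leq \kappa(T_0)\,c$ for a constant $\kappa(T_0)$ depending only on $B,r,d$. Part~\ref{enum:bound on mass near bdy after small time} then follows: given $\epsilon>0$ and $T_0>0$, take $c=c(\epsilon,T_0)$ with $\kappa(T_0)\,c$ and the grid error both $\leq \epsilon/2$ (after adjusting the grid spacing $\sim c^2$), then let $N\to\infty$ so the empirical-average fluctuation vanishes; since the bound involves only $B,r,d,\epsilon,T_0,T$, the constant $c$ has the claimed dependence.

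For Part~\ref{enum:bound on mass near bdy}, near $t=0$ we can no longer use the regularising effect of the diffusion, so instead we split $[0,T]=[0,T_0]\cup[T_0,T]$ and choose $T_0$ small depending on $\delta$. On $[T_0,T]$ apply Part~\ref{enum:bound on mass near bdy after small time} with parameter $\epsilon$. On $[0,T_0]$, we use two facts: (a) at time $0$ the initial masses near the boundary are small with high probability because $\{\xi^N\}$ is tight as a family of random measures on the open set $U$, so there is a compact $V_{c_0}\subset U$ with $\Pm(m^N_0(V_{c_0}^c)\geq\epsilon/2)<\delta/3$ uniformly in $N$; and (b) on the short interval $[0,T_0]$ no $X^i$ can travel far from where it started before time $0$ plus the displacement incurred at jumps. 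The jumps are the delicate point: a relocated particle can land anywhere an interior particle sits, which could be close to the boundary. However, it lands on an existing interior particle, so the event ``many particles near the boundary at some $t\leq T_0$'' still forces ``many particles near the boundary at $t=0$, or the Bessel processes $\eta^i$ rose above $r-c$ during $[0,T_0]$''. Quantitatively, $m^N_t(V_c^c)\leq m^N_0(V_{c_0}^c)+\frac1N\sum_i\Ind(\sup_{s\leq T_0}\eta^i_s>r-c)$ is not literally true because of jumps, so instead I would run the argument directly on the Bessel side: the coupling gives $m^N_t(V_c^c)\leq \frac1N\sum_i\Ind(\eta^i_t>r-c)$ for \emph{every} $t$ including small $t$, and since $\eta^i_0=r$ we genuinely need $\eta^i$ to come back down, i.e.\ we need $\eta^i_t<r-c$ by time $t$. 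Here we use that $\eta^i-r$ behaves like $\tilde W^i$ plus bounded drift minus the downward local time at $r$, so $\eta^i_t-r$ is dominated by $-|\tilde W^i_t|+Bt$-type quantities; thus $\Pm(\sup_{s\leq T_0}\eta^i_s>r-c)$ — i.e.\ $\eta^i$ stays within $c$ of $r$ for the whole interval — is small once $c^2\ll T_0$, again by a Gaussian small-ball estimate uniform over $i$. Taking first $T_0=T_0(\delta)$ small, then $c=c(\epsilon,\delta)$ small, then $N\to\infty$, and combining via a union bound over the two time intervals and the empirical-average concentration, yields \eqref{eq:bound on mass near bdy}.

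The main obstacle I anticipate is making the time-discretisation/modulus-of-continuity step uniform in $N$ and in the particle index $i$ while the underlying processes have reflecting local-time terms: one must ensure that the grid-refinement error in passing from $\sup_{t\in[T_0,T]}$ to a maximum over grid points is controlled by a single modulus of continuity for the driving Brownian motions $\tilde W^i$ that holds simultaneously for all $i=1,\dots,N$ with probability $\to1$ as $N\to\infty$, which requires a maximal inequality (or Lévy modulus) summed over $i$ with the grid spacing chosen $\sim c^2$ small relative to the target probability. A secondary subtlety is the precise near-boundary density bound for $\eta^i_t$ at fixed $t\geq T_0$: one should not need the full transition density, only the linear-in-$c$ bound $\Pm(\eta^i_t>r-c)\leq \kappa(B,r,d,T_0)\,c$, which can be obtained by comparison with a reflected Brownian motion with drift $B$ on $(0,r)$ (for $d\geq 2$ the extra repulsive $\tfrac{d-1}{2\eta}$ term only pushes $\eta$ away from $0$, hence does not increase $\Pm(\eta_t$ near $r)$ beyond a controllable factor), for which the relevant one-sided density bound near the reflecting barrier is elementary.
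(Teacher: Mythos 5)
Your reduction of both parts to the i.i.d.\ processes $\eta^i$ via Proposition \ref{prop:dominated by Bessel fns up to time tauinfty wedge taustop} is exactly the paper's starting point, but the two key steps you build on it have genuine problems. For Part \ref{enum:bound on mass near bdy after small time}, the passage from a finite time-grid to $\sup_{t\in[T_0,T]}$ as you describe it fails: you ask for a modulus-of-continuity bound on the driving Brownian motions holding \emph{simultaneously for all} $i=1,\dots,N$ with probability $\to 1$ as $N\to\infty$, obtained by a maximal inequality summed over $i$ with grid spacing $\sim c^2$. For fixed $c$ and fixed grid spacing $\kappa c^2$ the probability that a single Brownian motion oscillates by more than $c/2$ in some grid interval of $[T_0,T]$ is a fixed positive constant (independent of $N$), so the union bound over $i$ is of order $N\cdot\mathrm{const}\to\infty$ and the simultaneous event in fact has probability tending to $0$, not $1$. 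What is true, and what would repair your argument, is a fraction statement: for each of the finitely many grid intervals, the probability that a \emph{given} $\eta^i$ enters $(r-c,r]$ during that interval is bounded by $\sup_{t\in[T_0,T]}\Pm(\eta^i_t>r-2c)$ plus a Gaussian oscillation term $e^{-\mathrm{const}/\kappa}$, so independence across $i$ (Chernoff) plus a union bound over the intervals gives that the offending fraction is $<\epsilon$ on every interval with probability $\to1$. Even then you still owe a proof that $\sup_{t\in[T_0,T]}\Pm(\eta^i_t>r-2c)\to0$ as $c\to0$; your justification of the linear-in-$c$ bound is itself shaky (the $\frac{d-1}{2\eta}$ term pushes $\eta$ \emph{towards} $r$, not away from it, and is unbounded near $0$, so the comparison needs localisation). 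The paper avoids the grid and any marginal density estimate altogether: it applies Kingman's subadditive ergodic theorem to $g_n=\sup_{T_0\le t\le T}\sum_{i\le n}\Ind(\eta^i_t\ge r-c)$, chooses $n>2/\epsilon$, and bounds $\expE[\tfrac1n g_n]\le \tfrac1n+\Pm(g_n\ge2)$, where $\{g_n\ge 2\}=\{\sup_{T_0\le t\le T}\eta^{(2)}_t\ge r-c\}$ involves only the second-ranked process and has probability $\to0$ as $c\to0$ by Lemma \ref{lem:no simultaneous hitting for Bessel processes}.

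For Part \ref{enum:bound on mass near bdy}, your plan to ``run the argument directly on the Bessel side'' on $[0,T_0]$ cannot work. The coupled processes all start at the barrier, $\eta^i_0=r$, so $\Pm(\sup_{s\le T_0}\eta^i_s>r-c)=1$ (you have also conflated this event with ``$\eta^i$ stays within $c$ of $r$ on all of $[0,T_0]$''), and at $t=0$ the Bessel-side bound on $m^N_0(V_c^c)$ is the trivial bound $1$: the coupling carries no information about the initial configuration, which is precisely why Part \ref{enum:bound on mass near bdy after small time} is restricted to $t\ge T_0>0$ and why Part \ref{enum:bound on mass near bdy} needs the tightness hypothesis on $\xi^N$. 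The correct route is the one you abandoned, and it is what the paper does: use tightness to find $c'$ with $\Pm(m^N_0(V_{c'}^c)\ge\tfrac{\epsilon}{10})<\delta$ uniformly in $N$, control $\sup_{t\le T_0}m^N_t(V_{c''}^c)$ for $T_0$ small by bounding particle displacements over $[0,T_0]$, and then apply Part \ref{enum:bound on mass near bdy after small time} on $[T_0,T]$ with $\hat c=c(\epsilon,T_0)\wedge c''$. The jump issue you raise does not force a retreat to the Bessel processes: a jump relocates a particle onto the current position of another (interior) particle, so tracing a particle back through its jumps (in the spirit of the dynamical historical processes of Section \ref{section:density Estimates}) its position at time $t\le T_0$ differs from the time-$0$ position of some particle by a displacement driven by a single Brownian motion plus $BT_0$, which is exactly the short-time estimate needed.
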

\begin{rmk}
In Part \ref{enum:bound on mass near bdy after small time} of Proposition \ref{prop:bound on mass near bdy}, we do not assume that the initial random measures $\xi^N:=\vartheta^N_{\#}\upsilon^N$ are tight as random measures on $U$. In particular we may have $\xi^N$ converging weakly in probability to an atom on $\partial U$ or the mass could escape to infinity.
\end{rmk}

\begin{rmk}
There are two conventions as to the definition of a Geometric Random Variable. Throughout we use the definition in which the distribution is supported on $\{1,2,\ldots\}$, with distribution given by:
\[
\Pm(G \geq k) = (1-p)^{k-1}.
\]
\label{rmk:convention for Geometric distribution}
\end{rmk}

Our final estimate controls the number of jumps by any particle over a finite time horizon:
\begin{prop}
Assume that $\{\mathcal{L}(m^N_0)\}$ is tight in $\mathcal{P}(\mathcal{P}_{\Wah}(U))$. Let $J^{N,i}_t$ be the number of jumps of the i-th particle in the $N$-particle system up to time t. Then for every $\epsilon>0$, there exists a stopping time $\tau_{\epsilon}^N$ and constants $M_{\epsilon}<\infty$, $p_{\epsilon}>0$ (all dependent upon T) such that for all $N$ large enough:
\begin{enumerate}
\item
The number of jumps $J^{N,i}_{\tau_{\epsilon}^N\wedge T}$ by particle i up to time $T\wedge \tau_{\epsilon}^N$ is stochastically bounded by the sum of $M_{\epsilon}$ i.i.d. $\text{Geom}(p_{\epsilon})$ distributions.
\item
\[
\limsup_{N\ra\infty}\Pm(\tau_{\epsilon}^N\leq T)\leq \epsilon.
\]
\end{enumerate}
\label{prop:bound on number of jumps by particle}
\end{prop}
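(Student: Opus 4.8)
The plan is to bootstrap from Proposition \ref{prop:bound on mass near bdy}, which tells us that with probability close to $1$ the empirical measure $m^N_t$ keeps almost all of its mass in a compact set $\hat K_{\epsilon,\delta} = V_{\tilde c}$ bounded away from $\partial U$, uniformly over $t \in [0,T]$. Fix $\epsilon > 0$. Using Part \ref{enum:bound on mass near bdy} of Proposition \ref{prop:bound on mass near bdy} with the $\epsilon$ and $\delta$ there chosen appropriately small (say both equal to $\epsilon/4$), define the stopping time
\[
\tau^N_\epsilon = \inf\{t > 0 : m^N_t(\hat K^c_{\epsilon,\delta}) \geq \tfrac{1}{2}\}\wedge T',
\]
or some similar threshold; Proposition \ref{prop:bound on mass near bdy} gives $\limsup_N \Pm(\tau^N_\epsilon \leq T) \leq \epsilon$, which is Part 2. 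The point of working up to $\tau^N_\epsilon$ is that, on $[0,\tau^N_\epsilon]$, whenever particle $i$ is killed at $\partial U$ it jumps onto one of the other $N-1$ particles chosen uniformly, and at least (roughly) half of those particles lie in $\hat K_{\epsilon,\delta}$, hence at distance $\geq \tilde c$ from the boundary. So each jump of particle $i$ lands it, with probability bounded below by some $q_\epsilon > 0$ (uniformly in $N$ large and in the configuration), at distance $\geq \tilde c$ from $\partial U$.

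Next I would show that from a position at distance $\geq \tilde c$ from $\partial U$, the particle $X^i$ (whose drift is bounded by $B$) takes at least an exponential-type amount of time to return to $\partial U$, and in particular has probability bounded below, say $p'_\epsilon > 0$, of surviving without hitting $\partial U$ for the remainder of $[0,T]$ — or more simply, the number of returns to $\partial U$ within a fixed time is stochastically dominated by a geometric count. This is a one-particle estimate: conditional on landing in $V_{\tilde c}$, the hitting time of $\partial U$ by a Brownian motion with bounded drift started at distance $\tilde c$ stochastically dominates the hitting time of $0$ by the one-dimensional comparison process, so the probability of not hitting $\partial U$ in one "attempt" over $[0,T]$ is at least some $p'_\epsilon>0$ depending only on $B$, $\tilde c = \tilde c(\epsilon)$, and $T$. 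Combining this with the previous paragraph: each jump of particle $i$ is, with probability $\geq p_\epsilon := q_\epsilon p'_\epsilon$, the last jump before time $T$. Hence $J^{N,i}_{\tau^N_\epsilon \wedge T}$ is stochastically dominated by a single $\text{Geom}(p_\epsilon)$ variable — and taking $M_\epsilon$ as a cushion (or to absorb the boundary-effect bookkeeping) one gets domination by a sum of $M_\epsilon$ i.i.d.\ $\text{Geom}(p_\epsilon)$'s, which is Part 1.

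To make the "each jump is the last with probability $\geq p_\epsilon$" argument rigorous one must handle the dependence carefully: after particle $i$ lands in $V_{\tilde c}$, the event that it avoids $\partial U$ up to time $T$ should be estimated conditionally on $\mathcal{F}_{\sigma}$ where $\sigma$ is the landing (jump) time, using the strong Markov property and the Bessel-type comparison of Proposition \ref{prop:dominated by Bessel fns up to time tauinfty wedge taustop} (or a direct Girsanov/comparison argument) applied to the single coordinate $X^i$, noting that the drift bound $|b^i| \le B$ is all that is used. The main obstacle is precisely this: obtaining a lower bound on the survival probability that is genuinely uniform in $N$ and in the (random, interacting) positions of the other particles — this is exactly what the stopping time $\tau^N_\epsilon$ is engineered to secure, by guaranteeing a uniform positive fraction of "good" targets for every relocation up to that time. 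Once that uniformity is in hand, the stochastic-domination bookkeeping (counting jumps as a sequence of Bernoulli trials, each with success probability $\geq p_\epsilon$ of being terminal) is routine.
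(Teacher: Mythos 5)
Your Part 2 is exactly the paper's: the paper sets $\tau^N_{\epsilon}=\inf\{t\geq 0:m^N_t(\hat K_{\frac12,\epsilon}^c)\geq \frac12\}$ with $\hat K_{\frac12,\epsilon}$ from Part \ref{enum:bound on mass near bdy} of Proposition \ref{prop:bound on mass near bdy}, and your stopping time is the same device. For Part 1, however, you take a genuinely different route. The paper never estimates the probability of surviving the whole remainder of $[0,T]$ after a relocation; instead it splits $[0,T]$ into $M_{\epsilon}$ sub-intervals of length $h<\delta/B$ (where $3\delta=d(\hat K,\partial U)$), and observes that once a particle lands on a target in $\hat K$, a further death \emph{within the same sub-interval} forces the driving Brownian motion to move at least $2\delta$ in time $h$, an event of probability $p<1$ uniformly in $\mathcal{F}_{\tau^i_k}$. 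This gives a geometric bound on the number of jumps per sub-interval, and summing over the $M_{\epsilon}$ sub-intervals is precisely what produces the ``sum of $M_{\epsilon}$ i.i.d.\ $\mathrm{Geom}(p_{\epsilon})$'' form of the statement. Your scheme — each relocation lands in $V_{\tilde c}$ with conditional probability $\gtrsim 1/2$, and from distance $\tilde c$ the particle avoids $\partial U$ for the rest of $[0,T]$ with conditional probability $\geq p'_{\epsilon}$ — would, if completed, give domination by a \emph{single} geometric (i.e.\ $M_{\epsilon}=1$), which is stronger and cleaner; the price is that it needs a quantitative lower bound on survival over a macroscopic time, whereas the paper's per-window estimate is an elementary Gaussian tail bound.

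The weak point is precisely that survival lower bound, and neither of the two justifications you lean on works as stated. The crude one-dimensional comparison (``the Brownian motion must travel at least $\tilde c$'') is vacuous once $BT\geq \tilde c$, and the distance-to-boundary process is not a semimartingale with drift bounded by $B$ without the ball construction of Section \ref{section:N-particle Estimates}. The appeal to Proposition \ref{prop:dominated by Bessel fns up to time tauinfty wedge taustop} goes the wrong way: the coupling gives $d(X^i_t,\partial U)\geq r-\eta^i_t$, so hitting of $r$ by $\eta^i$ is only \emph{necessary} for a death, and immediately after a jump one has $\eta^i=r$ with reflection there, so $\Pm(\eta^i\text{ avoids }r\text{ on }(\sigma,T])=0$ and no survival bound can be read off. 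What does work is the third option you mention only in passing: conditionally on $\mathcal{F}_{\sigma}$, the event $\{\sup_{\sigma\leq s\leq T}\lvert X^i_s-X^i_{\sigma}\rvert<\tilde c\}$ entails no further deaths, and since the post-$\sigma$ increments of $W^i$ are a Brownian motion independent of $\mathcal{F}_{\sigma}$ and $\lvert b^i\rvert\leq B$ is adapted, a Girsanov change of measure together with a Cauchy--Schwarz (or Jensen) lower bound gives $\Pm(\,\cdot\mid\mathcal{F}_{\sigma})\geq q(\tilde c,T,d)^2e^{-cB^2T}=:p'_{\epsilon}>0$, where $q$ is the driftless small-ball probability. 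With that estimate in place, your conditional-Bernoulli bookkeeping ($\Pm(J^{N,i}_{\tau^N_{\epsilon}\wedge T}\geq k)\leq (1-p_{\epsilon})^{k-1}$, matching the convention of Remark \ref{rmk:convention for Geometric distribution}) is valid and yields the proposition, indeed with a single geometric; so the proposal is correct once the survival step is argued via Girsanov rather than via the comparisons you cite.
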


\subsection{Proof of Proposition \ref{prop:dominated by Bessel fns up to time tauinfty wedge taustop}}
The proof proceeds in the follow steps: 
\begin{enumerate}
\item
We fix for the time being $X^i_t$ (with driving Brownian motion $W^i$) and seek to construct $(\eta^i_t,\tilde{W}^i)_{0\leq t<\infty}$ satisfying:
\begin{equation}\tag{\ref{eq:SDE for Bessel processes no i}}
\begin{split}
d\eta_t=
\begin{cases}
d\tilde W_t+\frac{d-1}{2\eta_t}dt+Bdt-dL^{r-\eta}_t,\quad d>1\\
d\tilde W_t+Bdt+dL^{\eta}_t-dL^{r-\eta}_t,\quad d=1
\end{cases}
,\quad
\eta_0=r
\end{split}
\end{equation}
and some Cadlag process $D^i_t$ such that:
\begin{equation}
\eta^i_t\geq D^i_t\geq r-d(X^i_t,\partial U),\quad 0\leq t<\tau_{\WD}.
\label{eq:inequality to induct Bessel proof}
\end{equation}

For clarity, we will usually drop the superscript $i$ in what follows: $\eta_t$, $\tilde W_t$, $D_t$, $\tau_\omega$ will refer to quantities that depend on the particle index $i$. Our construction of $(\eta_t,\tilde W_t)_{0\leq t<\infty}$ proceeds as follows:
\begin{enumerate}
\item \label{enum:construction of stopping times}
We define stopping times $\tau_{(j,k,\ell)}$ for every triple $(j,k,\ell)\in \mathbb{N}_0^3$, thereby obtaining a collection of random subintervals $[\tau_{(j,k,\ell)},\tau_{(j,k,\ell+1)})$ of $[0,\tau_\infty\wedge\tau_\ST)$. We write $\omega_0$ for the order-type of the natural numbers, associate to the ordinal $\omega=j\omega_0^2+k\omega_0+\ell<\omega_0^3$ the triple $(j,k,\ell)$ and write $\tau_{\omega}$ for the stopping time $\tau_{(j,k,\ell)}$. The use of ordinals will enable us to use ordinal induction. Moreover we write $\tau_{\omega_0^3}:=\tau_{\WD}$ and write $I_{\omega}$ for the interval $[\tau_{\omega},\tau_{\omega+1})=[\tau_{(j,k,\ell)},\tau_{(j,k,\ell+1)})$ (whereby $[t,t):=\emptyset$). The following property shall be immediate from the construction:
\begin{equation}
\tau_{\omega_1}\leq \tau_{\omega_2}\text{ for }\omega_1\leq\omega_2\leq\omega_0^3.
\end{equation}
Moreover we shall establish the following lemma:
\begin{lem}
For limit ordinals $\omega\leq \omega_0^3$ we have:
\begin{equation}
\tau_{\omega'}\uparrow \tau_{\omega}\quad\text{as}\quad\omega'\uparrow\omega\quad \text{for every }\omega\leq \omega_0^3\text{ a limit ordinal.}
\label{eq:stopping times converge limit ordinals}
\end{equation}
\label{lem:ordinal stopping times prop lem}
\end{lem}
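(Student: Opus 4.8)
The plan is to prove, for each limit ordinal $\omega\le\omega_0^3$, the equality $\sup_{\omega'<\omega}\tau_{\omega'}=\tau_\omega$. The inequality $\sup_{\omega'<\omega}\tau_{\omega'}\le\tau_\omega$ is already contained in the monotonicity $\tau_{\omega_1}\le\tau_{\omega_2}$ for $\omega_1\le\omega_2\le\omega_0^3$ noted above, so the whole content is the reverse bound: that the stopping times indexed below $\omega$ are not bounded away from $\tau_\omega$. I would run a transfinite induction on $\omega$, splitting the limit ordinals $\omega\le\omega_0^3$ into three families according to the canonical form $\omega=j\omega_0^2+k\omega_0+\ell$: (i) $\omega=j\omega_0^2+k\omega_0$ with $k\ge1$, whose canonical cofinal sequence below it is $(j\omega_0^2+(k-1)\omega_0+\ell)_{\ell\ge0}$; (ii) $\omega=j\omega_0^2$ with $j\ge1$, with cofinal sequence $((j-1)\omega_0^2+k\omega_0)_{k\ge0}$; and (iii) $\omega=\omega_0^3$, with cofinal sequence $(j\omega_0^2)_{j\ge0}$. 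Since the cofinal sequence in (ii) consists of type-(i) limit ordinals and that in (iii) of type-(ii) limit ordinals, the inductive hypothesis reduces (ii) to $\sup_k\tau_{(j-1,k,0)}=\tau_{(j,0,0)}$ and (iii) to $\sup_j\tau_{(j,0,0)}=\tau_{\WD}$, while (i) is the statement $\sup_\ell\tau_{(j,k-1,\ell)}=\tau_{(j,k,0)}$.

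For (i), the idea is that the construction below terminates the $\ell$-chain $(\tau_{(j,k-1,\ell)})_{\ell}$ at the stopping time $\tau_{(j,k,0)}$ defined as the minimum of the stopping time governing that stage and $\lim_{\ell\to\infty}\tau_{(j,k-1,\ell)}$, so the claim reduces to showing that the $\ell$-indexed times increase either to that governing stopping time or to $+\infty$. That is arranged by taking the $\tau_{(j,k-1,\ell)}$ to be successive hitting times of an increasing, range-exhausting family of thresholds, punctuated by an intervening event (a jump of the particle in question, or a unit increment of the local time of the coupled one-dimensional process \eqref{eq:SDE for Bessel processes no i}) which on $[0,\tau_{\WD})$ occurs only finitely often over any finite horizon; hence before any time strictly below $\tau_{(j,k,0)}$ only finitely many intervals $I_{(j,k-1,\ell)}$ are nonempty, which forces $\sup_\ell\tau_{(j,k-1,\ell)}\ge\tau_{(j,k,0)}$. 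Case (ii) and the interior part of (iii) are then handled identically one level up. For the top level I would argue by contradiction: were $\tau^\ast:=\sup_{\omega<\omega_0^3}\tau_\omega$ strictly less than $\tau_{\WD}$ on an event of positive probability, then $\tau^\ast$ is an $\mathcal F_t$-stopping time (a countable supremum of $\mathcal F_t$-stopping times, using right-continuity of the filtration) lying in $[0,\tau_{\WD})$, and by its maximality no level-advance from $\tau^\ast$ is possible — contradicting the fact that the construction always produces a further nonempty interval with left endpoint in $[0,\tau_{\WD})$. Thus $\bigcup_{\omega<\omega_0^3}I_\omega=[0,\tau_{\WD})$ and $\sup_\omega\tau_\omega=\tau_{\WD}$.

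The main obstacle — and the only step that is not pure bookkeeping — is excluding the Zeno/accumulation phenomenon at the limit stages, i.e. showing that the countably many consecutive sub-intervals between two successive level stopping times genuinely fill the parent interval rather than piling up strictly inside it. This is precisely what the explicit recipe for the $\ell$- and $k$-indexed stopping times in the construction has to deliver, and the probabilistic input it needs is the a.s. local finiteness over finite time of the punctuating events on $[0,\tau_{\WD})$: between jump times the $N$-particle system is an ordinary It\^o diffusion with bounded drift, and the coupled one-dimensional process in \eqref{eq:SDE for Bessel processes no i} is a well-defined semimartingale with continuous local time, so neither its jumps nor the increments of its local time can accumulate before $\tau_{\WD}$. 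A secondary, routine point — that each limit-stage $\tau_\omega$ obtained as a supremum is again an $\mathcal F_t$-stopping time — again follows from right-continuity of the filtration.
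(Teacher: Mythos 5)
Your skeleton — transfinite induction over the three families of limit ordinals, with monotonicity supplying one inequality — is the same as the paper's, but the mechanism you propose for the reverse inequality rests on a misreading of the construction. The $\ell$-indexed times are not successive hitting times of an increasing, range-exhausting family of thresholds, and no local time of the coupled process \eqref{eq:SDE for Bessel processes no i} enters their definition (the Bessel processes are built \emph{afterwards}, out of these stopping times): $\tau_{(j,k,\ell+1)}$ is the exit time from the radius-$r$ ball $B(v(X_{\tau_{(j,k,\ell)}}),r)$, and the chain is frozen as soon as $\rho(X_{\tau_{(j,k,\ell)}})\leq 2^{-k}$ or $\tau_{(j,k,\ell)}=\tau_{(j+1,0,0)}$. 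Because of this freezing, your key inference for the ordinals $j\omega_0^2+k\omega_0$ — ``only finitely many intervals $I_{(j,k-1,\ell)}$ are nonempty before any time strictly below $\tau_{(j,k,0)}$, which forces $\sup_\ell\tau_{(j,k-1,\ell)}\geq\tau_{(j,k,0)}$'' — is not valid: local finiteness of the nonempty intervals does not preclude the chain terminating strictly early. What actually settles this case is Lemma \ref{lem:fin number of jumps fixed j,k} together with the definition of $\tau_{(j,k,0)}$: either the chain stabilises at a finite index and $\tau_{(j,k,0)}$ is \emph{defined} to be the stabilised value, or $\tau_{(j+1,0,0)}=\infty$ and the chain increases to it; no exhaustion argument is needed, nor would yours supply one.

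The genuinely substantive case, $\omega=(j+1)\omega_0^2$, is exactly where ``handled identically one level up'' fails. There, infinitely many nonempty intervals $I_{(j,k,\ell)}$ can (and in general do) accumulate from the left: each scale $k$ contributes finitely many, but all scales together contribute infinitely many, so the premise ``finitely many before any earlier time'' is false, and the whole content of the lemma at this stage is to rule out accumulation \emph{strictly before} $\tau_{(j+1,0,0)}$. The argument the paper uses, and which your proposal neither supplies nor replaces, is: if $\tau_{(j,k,0)}<\tau_{(j+1,0,0)}<\infty$ for all $k$, then Lemma \ref{lem:fin number of jumps fixed j,k} forces $\rho(X_{\tau_{(j,k+1,0)}})\leq 2^{-k}\to 0$, hence $d(X_{\tau_{(j,k,0)}},\partial U)\to 0$; by continuity of the path and finiteness of $\sup_k\tau_{(j,k,0)}$ the limit point lies on $\partial U$, so the first boundary hit after $\tau_{(j,0,0)}$ occurs no later than $\sup_k\tau_{(j,k,0)}$, i.e. $\tau_{(j+1,0,0)}\leq\sup_k\tau_{(j,k,0)}$. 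Some argument of this type is indispensable and is missing from your sketch. Finally, at the top level your contradiction invokes an unestablished ``fact'' that the construction always produces a further nonempty interval; the correct (and shorter) route is that if every $\tau_{(j,0,0)}<\tau_{\WD}$, then each $\tau_{(j+1,0,0)}$ is a genuine jump time of the particle under consideration, and infinitely many of its jump times cannot accumulate strictly before $\tau_{\infty}\wedge\tau_{\ST}\wedge\tau_{\max}=\tau_{\WD}$, by the definition of $\tau_{\infty}$.
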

By ordinal induction the random subintervals $I_{\omega}$ form a disjoint cover of $[0,\tau_{\omega_0^3})=[0,\tau_{\WD})$. Moreover on each interval $I_{\omega}$, $X_t$ will be contained in the ball $B(v_{\omega},r)$ (where $r>0$ is the constant we assume to exist in the interior ball condition and $v_{\omega}=X_{\tau_{\omega}}$).

\item \label{enum:Brownian construction step}
We use our construction in part \ref{enum:construction of stopping times} to define:
\begin{equation}
\begin{split}
D_t=\sum_{\omega}\Ind_{I_{\omega}}(t)d(X_t,v_{\omega}),\quad 0\leq t<\tau_{\WD},\\
D^{\omega}_t=(d(X_{t\wedge \tau_{\omega+1}-},v_{\omega})-d(X_{t\wedge \tau_{\omega}},v_{\omega})\big)\Ind(\tau_\omega<\tau_{\omega+1}).
\end{split}
\label{eq:defin of Dt and Domegat}
\end{equation}

We observe that $D^{\omega}$ is a continuous semimartingale, with $dD^{\omega}_t=dD_t$ for $t\in I_{\omega}$. We employ a Doob-Meyer decomposition of $D_t^{\omega}$ on each interval $I_\omega$ to construct a Brownian motion $(\tilde W_t)_{0\leq t<\infty}$ such that $(D_t)_{0\leq t<\tau_{\WD}}$ is a $[0,r]$-valued process which satisfies:
\begin{equation}
dD_t =\begin{cases}
d\tilde W_t+Bdt+ \frac{d-1}{2D_s} dt   - dH_t,\quad d>1\\
d\tilde W_t + B dt+ dL^D_s ds - dH_t,\quad d=1
\end{cases}
\label{eq:sde for Dt}
\end{equation}
where $H_t$ is a non-decreasing, adapted process. Moreover there exists a Cadlag adapted process $n_t$ such that:
\begin{equation}
\tilde{W}_t=\int_0^tn_s\cdot dW_s,\quad 0\leq t<\infty.
\label{eq:driving BM for bessel in terms of for particle}
\end{equation}

\item

We establish that \eqref{eq:SDE for Bessel processes no i} has strong solutions for this driving motion $\tilde W_t$, and that $\eta_t = \eta_t^i$ satisfies \eqref{eq:inequality to induct Bessel proof}. 
\label{enum:exist of strong solutions}
\item \label{enum:eta dominates D}
We then compare \eqref{eq:sde for Dt} and \eqref{eq:SDE for Bessel processes no i} establish:
\begin{equation}
D_t\leq \eta_t,\quad 0\leq t<\tau_{\WD}
\label{eq:Dt at most eta t}
\end{equation}
and therefore we have \eqref{eq:inequality to induct Bessel proof}.
\end{enumerate}

\item \label{enum:Bessel construction independence}

We repeat the above construction for each $X^i$, writing $(\eta^i,\tilde{W}^i)$ for the strong solutions we construct. By examining the quadratic covariation of the Brownian motions $\tilde{W}^i$ (using \eqref{eq:driving BM for bessel in terms of for particle}) we establish the $(\eta^i,\tilde{W}^i)$ are jointly independent.

\end{enumerate}

\subsubsection*{Step \ref{enum:construction of stopping times}}

We now define functions $\rho$ and v as in \cite{Burdzy2000}. With $r>0$ being the constant assumed to exist by the interior ball condition (Condition~\ref{cond:ball}), define
\[
\rho(x)=\sup_{\substack{B(y,r)\text{ such that}\\ U\supseteq B(y,r)\ni x}}d(x,\partial B(y,r)).
\]
We claim there exists $v:U\ra U$ measurable such that:
\begin{enumerate}
\item \label{enum:Ball around v(x) in U}
$B(v(x),r)\subseteq U$ for every $x\in U$.
\item \label{enum:distance to ball around v close to max}
$d(x,\partial B(v(x),r))\geq \frac{\rho(x)}{2}$.
\end{enumerate}

The construction of v is fairly elementary. We firstly take an ascending sequence of compact sets $K_1,K_2,\ldots$ with union $U$. We fix $K_i$ and seek to define on $K_i$ a suitable function $v^i$ satisfying \ref{enum:Ball around v(x) in U} and \ref{enum:distance to ball around v close to max}. It is easy to see that for every $x\in K_i$ we can choose $y(x)$	 such that $d(x,\partial B(y,r))> \frac{\rho(x)}{2}$. Then on an open neighbourhood $V_x\ni x$ we have $d(x',\partial B(y,r))>\frac{\rho(x')}{2}$ as both $x'\mapsto d(x',\partial B(y(x),r))$ and $x'\mapsto \rho(x')$ are continuous functions. We may cover $K_i$ with open sets $V_x$, $x\in K_i$, and take a finite subcover $V_{x_1},\ldots,V_{x_n}$ (for some $n$). We now define:
\[
v^i(x'):=
\begin{cases}
x_1,\quad x'\in K_i\cap V_{x_1}\\
x_j,\quad x'\in K_i\cap (V_{x_j}\setminus (V_{x_1}\cup\ldots\cup V_{x_{j-1}})).
\end{cases}
\]
Then $v^i$ is piecewise constant (and hence measurable) and satisfies \ref{enum:Ball around v(x) in U} and \ref{enum:distance to ball around v close to max} on $K^i$. Therefore defining v as follows we are done:
\[
v(x'):=
\begin{cases}
v^1(x'),\quad x'\in K_1\\
v^i(x'),\quad x'\in K_i\setminus (K_{1}\cup\ldots\cup K_{{j-1}})).
\end{cases}
\]

We now turn to the construction of the stopping times $\tau_{(j,k,\ell)}$, for triples $(j,k,\ell)\in \mathbb{N}_0^3$. 
\begin{enumerate}
\item
$\tau_{(0,0,0)}:=0$.
\item
$\tau_{(j+1,0,0)}:=\inf\{t>\tau_{(j,0,0)}:X_{t-}\in\partial U\}\wedge\tau_{\WD}$, for all $j \in \mathbb{N}_0$. 
\item\label{enum:defin of l stopping times k=0 step}
With $j \in \mathbb{N}_0$ fixed, we now define $\tau_{(j,0,\ell)} \in [\tau_{(j,0,0)}\,, \,\tau_{(j+1,0,0)}]$ for every $\ell \in \mathbb{N}$. We proceed inductively, having already defined $\tau_{(j,0,\ell)}$ for $\ell = 0$ in the previous step. We suppose that $\tau_{(j,0,\ell)} \in [\tau_{(j,0,0)}\,, \,\tau_{(j+1,0,0)}]$ has been defined for some $\ell \in \mathbb{N}_0$. If $\tau_{(j,0,\ell)} = \tau_{(j+1,0,0)}$, we set $\tau_{(j,0,\ell+1)}:=\tau_{(j,0,\ell)}$. Otherwise, $\tau_{(j,0,\ell)} < \tau_{(j+1,0,0)}$ holds and $X_{\tau_{(j,0,\ell)}}\in U$. Therefore, we may define $X_{(j,0,\ell)}:=X_{\tau_{(j,0,\ell)}}$ and $v_{(j,0,\ell)}:=v(X_{(j,0,\ell)})$ which satisfies:
\begin{equation}
    B(v_{(j,0,\ell)},r)\subseteq U.
    \label{eq:Ball around vj0l in U}
\end{equation} 
We then define:
\[
\tau_{(j,0,\ell+1)}=
\tau_{\WD}\wedge\begin{cases}
\inf\{t>\tau_{(j,0,\ell)}:d(X_{t-},v_{(j,0,\ell)})\geq r\},\quad \text{if} \;\;\rho(X_{\tau_{(j,0,\ell))}})>2^{-0}\\
\tau_{(j,0,\ell)},\quad \text{if}\;\;\rho(X_{\tau_{(j,0,\ell)}})\leq 2^{-0}\quad\text{or}\quad\tau_{(j,0,\ell)}=\tau_{(j+1,0,0)}
\end{cases}
\]
which satisfies $\tau_{(j,0,\ell+1)}\leq \tau_{(j+1,0,0)}$ by \eqref{eq:Ball around vj0l in U}. By induction on $\ell$, this defines $\tau_{(j,0,\ell)}$ for all $\ell \in \mathbb{N}_0$ and we have $\tau_{(j,0,0)} \leq \tau_{(j,0,\ell)}\leq \tau_{(j,0,\ell+1)}\leq \ldots\leq \tau_{(j+1,0,0)}$.

\item\label{enum:defin of k=1 stopping time step}
We then establish (Lemma \ref{lem:fin number of jumps fixed j,k}) that either $\tau_{(j+1,0,0)}=\infty$ and $\tau_{(j,0,\ell)}\uparrow \tau_{(j+1,0,0)}$ as $\ell\uparrow \infty$, or else $\tau_{(j+1,0,0)}<\infty$ and there exists some random $\ell_{(j,0)}<\infty$ such that either $\rho(X_{(j,0,\ell_{(j,0)})})\leq 2^{-0}$ or $\tau_{(j,0,\ell(j,0))}=\tau_{(j+1,0,0)}$. In the former case ($\tau_{(j+1,0,0)}=\infty$) we define:
\[
\tau_{(j,1,0)}:=\tau_{(j+1,0,0)}.
\]
Otherwise we have $\tau_{(j,0,\ell)}:=\tau_{(j,0,\ell_{(j,0)})}$ for all $\ell \geq \ell_{(j,0)}$ so that we may define:
\[
    \tau_{(j,1,0)}:=\tau_{(j,0,\ell_{(j,0)})}\leq \tau_{(j+1,0,0)}.
\]

\item
We repeat the above inductively. We fix k and assume we have defined $\tau_{(j,0,0)}\leq \tau_{(j,k,0)}\leq \tau_{(j+1,0,0)}$. We seek to define:
\[
\tau_{(j,0,0)}\leq \tau_{(j,k,0)}\leq \tau_{(j,k,1)}\leq\ldots\leq \tau_{(j,k+1,0)}\leq \tau_{(j+1,0,0)}.
\]

Proceeding as in Step \ref{enum:defin of l stopping times k=0 step}, if $\tau_{(j,k,\ell)}=\tau_{(j+1,0,0)}$ we define:
\[
\tau_{(j,k,\ell+1)}=\tau_{(j+1,0,0)}.
\]
Otherwise $\tau_{(j,k,\ell)}< \tau_{(j+1,0,0)}$ so we may define as before $v_{(j,k,\ell)}=v(X_{\tau_{(j,k,\ell)}})$ and $X_{(j,k,\ell)}:=X_{\tau_{(j,k,\ell)}}$. We may then define:
\[
\tau_{(j,k,\ell+1)}=
\tau_{\WD}\wedge\begin{cases}
\inf\{t>\tau_{(j,k,\ell)}:d(X_t,v_{(j,k,\ell)})\geq r\},\quad \text{if}\;\; \rho(X_{\tau_{(j,k,\ell))}})>2^{-k}\\
\tau_{(j,k,\ell)},\quad \text{if}\;\; \rho(X_{\tau_{(j,k,\ell)}})\leq 2^{-k}\quad\text{or}\quad\tau_{(j,k,\ell)}=\tau_{(j+1,0,0)}
\end{cases}.
\]

Having defined $\tau_{(j,k,\ell)}$ for $\ell=0,1,\ldots$ we now turn to defining $\tau_{(j,k+1,0)}$. We establish the following lemma:
\begin{lem}
Either $\tau_{(j+1,0,0)}=\infty$ and $\tau_{(j,k,\ell)}\uparrow \tau_{(j+1,0,0)}$ as $\ell\uparrow \infty$, or else $\tau_{(j+1,0,0)}<\infty$ and there exists some random $\ell_{(j,k)}<\infty$ such that either $\rho(X_{(j,k,\ell_{(j,k)})})\leq 2^{-k}$ or $\tau_{(j,k,\ell(j,k))}=\tau_{(j+1,0,0)}$. 
\label{lem:fin number of jumps fixed j,k}
\end{lem}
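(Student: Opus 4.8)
The plan is to extract from the construction a uniform lower bound on the displacement of $X$ (shorthand for the fixed particle $X^i$) across every nontrivial sub-interval, and then play this against a compactness argument on finite time horizons. First I would record the trichotomy built into the definition of $\tau_{(j,k,\ell+1)}$: this stopping time equals $\tau_{(j,k,\ell)}$ precisely when $\rho(X_{\tau_{(j,k,\ell)}}) \le 2^{-k}$ or $\tau_{(j,k,\ell)} = \tau_{(j+1,0,0)}$, because in the remaining case ($\rho(X_{\tau_{(j,k,\ell)}}) > 2^{-k}$ and $\tau_{(j,k,\ell)} < \tau_{(j+1,0,0)} \le \tau_{\WD}$) right-continuity of $X$ forces the exit time of $B(v_{(j,k,\ell)},r)$ to be strictly later. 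Hence the nondecreasing sequence $(\tau_{(j,k,\ell)})_{\ell \ge 0}$ either stabilises at some first index $\ell_{(j,k)}<\infty$ — which immediately gives $\rho(X_{\tau_{(j,k,\ell_{(j,k)})}}) \le 2^{-k}$ or $\tau_{(j,k,\ell_{(j,k)})} = \tau_{(j+1,0,0)}$, the second alternative — or it is strictly increasing, i.e. every $I_{(j,k,\ell)}$ is nontrivial, which I will show forces the first alternative (and in particular $\tau_{(j+1,0,0)}=\infty$).

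For the strictly increasing case I would use the displacement bound. On a nontrivial interval $I_{(j,k,\ell)} = [\tau_{(j,k,\ell)}, \tau_{(j,k,\ell+1)})$ we have $\rho(X_{\tau_{(j,k,\ell)}}) > 2^{-k}$, so property \ref{enum:distance to ball around v close to max} of $v$ gives
\[
|X_{\tau_{(j,k,\ell)}} - v_{(j,k,\ell)}| = r - d\big(X_{\tau_{(j,k,\ell)}}, \partial B(v_{(j,k,\ell)},r)\big) \le r - \tfrac12\rho\big(X_{\tau_{(j,k,\ell)}}\big) < r - 2^{-k-1},
\]
while at the exit time $|X_{\tau_{(j,k,\ell+1)}-} - v_{(j,k,\ell)}| = r$; here I use that $X^i$ is continuous on $[\tau_{(j,0,0)},\tau_{(j+1,0,0)})$ — a Fleming--Viot particle jumps only when it itself reaches $\partial U$, so the only discontinuity of $X^i$ on this epoch is at $\tau_{(j+1,0,0)}$ — and that in the strictly increasing case no interval is truncated by $\tau_{\WD}$ (if $\tau_{(j,k,\ell+1)}=\tau_{\WD}$ then $\tau_{(j,k,\ell+1)}=\tau_{(j+1,0,0)}$ and the sequence would stabilise there). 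By the triangle inequality the oscillation of $X$ over $I_{(j,k,\ell)}$ is at least $2^{-k-1}$ for every $\ell$. Now $\tau_{(j,k,\ell)} \uparrow T_\ast := \sup_\ell \tau_{(j,k,\ell)} \le \tau_{(j+1,0,0)}$. If $T_\ast < \infty$, then $X^i$ extends continuously to the compact interval $[\tau_{(j,k,0)}, T_\ast]$ — its left limit at $T_\ast$ exists and lies in $\overline U$ — hence is uniformly continuous there; but $\sum_\ell(\tau_{(j,k,\ell+1)} - \tau_{(j,k,\ell)}) = T_\ast - \tau_{(j,k,0)} < \infty$ forces the lengths $|I_{(j,k,\ell)}|\to 0$, contradicting the oscillation bound $2^{-k-1}$. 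Therefore $T_\ast = \infty$, whence also $\tau_{(j+1,0,0)} = \infty$ and $\tau_{(j,k,\ell)} \uparrow \tau_{(j+1,0,0)}$.

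The only delicate points are the book-keeping at the seams: confirming $X^i$ does not jump anywhere on $[\tau_{(j,0,0)},\tau_{(j+1,0,0)})$ (which rests purely on the definition of $\tau_{(j+1,0,0)}$ as the first boundary hit of $X^i$ after $\tau_{(j,0,0)}$, capped at $\tau_{\WD}$, together with the fact that an interior particle is never displaced when another particle jumps onto it), that no interval in the strictly increasing case is cut short by $\tau_{\WD}$, and that the left limit $X^i_{T_\ast-}$ exists so the continuous extension used in the compactness step is legitimate — immediate from the càdlàg property. Measurability of $\ell_{(j,k)}$ is automatic since it is defined through the stopping times $\tau_{(j,k,\ell)}$. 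I expect essentially no obstacle beyond phrasing the stabilisation dichotomy cleanly and pinning down these endpoint issues; the displacement bound and the uniform-continuity contradiction are then routine.
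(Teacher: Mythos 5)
Your proof is correct, but it takes a genuinely different route from the paper's. Both arguments rest on the same displacement bound: property 2 of $v$ gives $|X_{\tau_{(j,k,\ell)}}-v_{(j,k,\ell)}|\le r-2^{-k-1}$ whenever $\rho(X_{\tau_{(j,k,\ell)}})>2^{-k}$, so every nontrivial interval $I_{(j,k,\ell)}$ carries an oscillation of at least $2^{-k-1}$. The paper then argues probabilistically: it cuts time into windows of length $h<2^{-(k+2)}/B$, notes that a crossing of size $2^{-k-1}$ inside one window forces the driving Brownian motion to move at least $2^{-k-1}-Bh$ (an event of conditional probability at most some $p<1$), and stochastically dominates the number of crossings per window by a geometric random variable. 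You argue deterministically: infinitely many crossings before a finite accumulation time $T_\ast$ would force interval lengths to shrink to zero while oscillations stay bounded below, contradicting uniform continuity of the continuously extended path on $[\tau_{(j,k,0)},T_\ast]$. Your route is more elementary and dispenses with the conditional-probability bookkeeping; the paper's route yields quantitative geometric control on the number of crossings per unit time, which is not needed for this lemma but is the same device reused in the proof of Proposition \ref{prop:bound on number of jumps by particle}. Your stabilise-versus-strictly-increasing dichotomy is also exactly the content the construction needs, matching what the paper's proof actually delivers.

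One justification is too quick, though the conclusion survives. You assert that the existence of the left limit $X^i_{T_\ast-}$ is ``immediate from the c\`adl\`ag property.'' The process is c\`adl\`ag only on $[0,\tau_{\WD})$, which supplies left limits at interior points but not at $\tau_{\WD}$ itself; since in the strictly increasing case one can a priori have $T_\ast=\tau_{\WD}<\infty$, this does not cover all cases. The limit does exist, for a different reason: particle $i$ does not jump on $[\tau_{(j,k,0)},T_\ast)$, so there its path equals $W^i$ plus a $B$-Lipschitz drift (this is \eqref{eq:displacement of X approximately displacement of B}), and $W^i$ is continuous on all of $[0,\infty)$; hence $X^i_t$ converges as $t\uparrow T_\ast$. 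With that substitution your compactness step closes and the proof is complete.
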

In the former case ($\tau_{(j+1,0,0)}=\infty$) we define:
\[
\tau_{(j,k+1,0)}:=\tau_{(j+1,0,0)}.
\]
Otherwise we have $\tau_{(j,k,\ell)}:=\tau_{(j,k,\ell_{(j,k)})}$ for all $\ell \geq \ell_{(j,k)}$ so that we may define:
\[
    \tau_{(j,k+1,0)}:=\tau_{(j,k,\ell_{(j,k)})}\leq \tau_{(j+1,0,0)}.
\]

Repeating inductively in k we have defined $\tau_{(j,k,\ell)}$ for $(j,k,\ell)\in\mathbb{N}_0^3$, subject to proving Lemma \ref{lem:fin number of jumps fixed j,k}.
\end{enumerate}

\begin{proof}[Proof of Lemma \ref{lem:fin number of jumps fixed j,k}]

We consider sub-intervals $[rh,(r+1)h]$ ($r=0,1,\ldots$) of length $h>0$ to be determined, over each of which the diffusion term dominates the drift term. We write $N_r:=\lvert \{\ell':\tau_{(j,k,\ell')}\in [rh,(r+1)h]\cap [0,\tau_{\WD})\text{ and }\tau_{(j,k,\ell')}<\tau_{(j+1,0,0)}\}\rvert$. Then it is sufficient to show that $h>0$ may be chosen so that:
\[
\Pm(N_{r}=\infty)=0\quad \text{for all}\quad 0\leq r<\infty.
\]

We recall that we have fixed i, and moreover $X_t=X^i_t$ has driving Brownian motion $W_t=W^i_t$ which satisfies:
\begin{equation}
\lvert (X_{t_2}-X_{t_1})-(W_{t_2}-W_{t_1})\rvert\leq B(t_2-t_1)
\label{eq:displacement of X approximately displacement of B}
\end{equation}
if $X_t$ does not hit the boundary during the time interval $[t_1,t_2]$. 

We observe therefore that if our distance to the boundary is bounded from below then in order for our particle to die within a sufficiently small time interval, the driving Brownian motion $W_t$ must travel a distance bounded from below in this small time interval. In particular we suppose that we have $\tau_{(j,k,\ell)}\in [rh,(r+1)h]\cap [0,\tau_{\WD})$ with $\tau_{(j,k,\ell)}< \tau_{(j+1,0,0)}$ and $\rho(v_{(j,k,\ell)})\geq 2^{-k}$. Then in order to also have $\tau_{(j,k,\ell+1)}\leq (r+1)h$ it must be the case that $X_t$ hits $\partial B(v(X_{\tau_{(j,k,\ell)}}),r))$ before time $(r+1)h$. We now recall:
\[
d(X_{\tau_{(j,k,\ell)}},\partial B(v(X_{\tau_{(j,k,\ell)}}),r))\geq \frac{\rho(X_{\tau_{(j,k,\ell)}})}{2}\geq 2^{-k-1}.
\]
Therefore if $\tau_{(j,k,\ell)}\in [rh,(r+1)h]\cap [0,\tau_{\WD})$ with $\tau_{(j,k,\ell)}< \tau_{(j+1,0,0)}$ and $\rho(v_{(j,k,\ell)})\geq 2^{-k}$, then in order to also have $\tau_{(j,k,\ell+1)}< \tau_{(j+1,0,0)}$ and $\rho(v_{(j,k,\ell+1)})\geq 2^{-k}$ we must have $\lvert X_{(j,k,\ell+1)}-X_{(j,k,\ell)}\rvert\geq 2^{-k-1}$, which requires the driving Brownian motion satisfy 
\[
\lvert W_{\tau_{(j,k,\ell+1)}}-W_{\tau_{(j,k,\ell)}}\rvert\geq 2^{-k-1}-Bh.
\]
We note that this latter event happening is independent of $\mathcal{F}_{\tau_{(j,k,\ell)}}$, and for $h<\frac{2^{-(k+2)}}{B}$ has probability at most some $p<1$. Therefore at time $\tau_{(j,k,\ell)}\in [rh,(r+1)h]\cap [0,\tau_\infty\wedge\tau_\ST)$, the probability this is the final such stopping time in the interval $[rh,(r+1)h]$ is at least $1-p>0$. Recalling Remark \ref{rmk:convention for Geometric distribution}, we see that $N_r$ is stochastically dominated by a $\text{Geom}(1-p)$ distribution for $h<\frac{2^{-(k+2)}}{B}$.

We have now concluded the proof of Lemma \ref{lem:fin number of jumps fixed j,k}.
\end{proof}

We have left to prove Lemma \ref{lem:ordinal stopping times prop lem}:

\begin{proof}[Proof of Lemma \ref{lem:ordinal stopping times prop lem}] 
We begin with the $\omega=\omega_0^3$ case. This is true by definition.

Next, consider the case that $\omega=(j+1)\omega_0^2$. If $\tau_{(j+1,0,0)}=\infty$ then $\tau_{(j,k,0)}=\infty$ for all $k\geq 1$ and we are done. We may therefore assume $\tau_{(j,k+1,0)}<\tau_{(j+1,0,0)}<\infty$ for all $k\in\mathbb{N}_0$ otherwise we are done. Then Lemma \ref{lem:fin number of jumps fixed j,k} gives that $\rho(X_{\tau_{(j,k+1,0)}})=\rho(X_{\tau_{(j,k,\ell_{(j,k)})}})\leq 2^{-k}\ra 0$ as $k\ra \infty$ and so $d(X_{\tau_{j\omega_0^2+k\omega_0}},\partial U)\ra 0$ as $k\ra \infty$. Therefore by the almost-sure continuity of the path $X_t$ and the fact that $\tau_{(j+1) \omega_0^2} < \infty$ we have $\lim_{k\ra\infty}X_{\tau_{(j,k,0)}}\in \partial U$. Therefore we have $\tau_{j\omega_0^2+k\omega_0}\ra\tau_{(j+1)\omega_0^2}$ as $k\ra\infty$.

Finally, in the case that $\omega=j\omega_0^2+(k+1)\omega_0$, this is an immediate consequence of Lemma \ref{lem:fin number of jumps fixed j,k}.

\end{proof}

\subsubsection*{Step \ref{enum:Brownian construction step}}

We begin by constructing $\tilde{W}_t$ and showing that it can be written in the form \eqref{eq:driving BM for bessel in terms of for particle} for a Cadlag adapted process $n_t$ which we also construct. We recall that we define for $\omega<\omega_0^3$:
\begin{equation}\tag{\ref{eq:defin of Dt and Domegat}}
\begin{split}
D_t=\sum_{\omega}\Ind_{I_{\omega}}(t)d(X_t,v_{\omega}),\quad 0\leq t<\tau_{\WD},\\
D^{\omega}_t=(d(X_{t\wedge \tau_{\omega+1}-},v_{\omega})-d(X_{t\wedge \tau_{\omega}},v_{\omega})\big)\Ind(\tau_\omega<\tau_{\omega+1}).
\end{split}
\end{equation}
After adding a positive drift $B\Ind_{I_{\omega}}(t)$ $D^{\omega}_t$ becomes a submartingale, so we may take the Doob-Meyer decomposition, obtaining a mean zero Martingale term $W^{\omega}_t$ with quadratic variation $\int_0^t\Ind_{I_{\omega}}(s)ds$ (i.e. a Brownian motion started at time $\tau_{\omega}$ and stopped at time $\tau_{\omega+1}$). 

Indeed we can write:
\[
D_t^{\omega}=\sqrt{(X_{t\wedge \tau_{\omega+1}}-v_{\omega})\cdot (X_{t\wedge \tau_{\omega+1}}-v_{\omega})}-\sqrt{(X_{t\wedge \tau_{\omega}}-v_{\omega})\cdot (X_{t\wedge \tau_{\omega}}-v_{\omega})}
\]
so that we have:
\[
dD^{\omega}_t=\Ind_{I_{\omega}}(t)\frac{X_t-v_{\omega}}{\lvert X_t-v_{\omega}\rvert}\cdot dW_t+\text{finite variation terms}
\]
and therefore
\[
d\tilde{W}^{\omega}_t=\Ind_{I_{\omega}}(t)\frac{X_t-v_{\omega}}{\lvert X_t-v_{\omega}\rvert}\cdot dW_t.
\]

We fix $\hat{n}\in \Rm^d$ such that $\lvert \hat{n}\rvert=1$ so that $\hat{n}\cdot W_t$ is a Brownian motion. We now write:
\begin{equation}
\tilde W_t = \int_0^{t\wedge \tau_{\WD}} \sum_{\omega} \Ind_{I_\omega}(s) d\tilde W^\omega_s+\int_{t\wedge \tau_{\WD}}^t\hat{n}\cdot dW_s,\quad 0\leq t<\infty
\end{equation}
which is clearly a Brownian motion, since the $I_\omega$ form a countable partition of $[0,\tau_{\WD})$. We recall that we want to define $\tilde{W}_t$ beyond time $\tau_{\WD}$ if $\tau_{\WD}<\infty$. In particular we can write:
\[
\tilde{W}_t=\int_0^t\underbrace{\big(\sum_{\omega}\Ind_{I_{\omega}}(s)\frac{X_s-v_{\omega}}{\lvert X_t-v_{\omega}\rvert}+\Ind_{[\tau_{\WD},\infty)}(s)\big)\hat{n}}_{=:n_s}\cdot dW_s
\]
and hence we have \eqref{eq:driving BM for bessel in terms of for particle}.

We now claim the following is non-decreasing:
\begin{equation}
H_t:=D_0-D_t+
\int_0^t\begin{cases}
d\tilde W_s+Bds+ \frac{d-1}{2D_s} ds,\quad d>1\\
d\tilde W_s + B ds+ dL^D_s,\quad d=1
\end{cases},\quad 0\leq t<\tau_{\WD}.
\label{eq:formula for Ht}
\end{equation}
\begin{proof}[Proof \eqref{eq:formula for Ht} is non-decreasing]

It will be convenient here to extend the definition of $D_t$ by defining $D_{\tau_{\WD}}=r$ if $\tau_{\WD}<\infty$.

We proceed by ordinal induction. We inductively claim:
\begin{equation}
H_t\text{ is non-decreasing on }[0,\tau_{\omega}]\cap [0,\tau_{\WD})\text{ for }\omega\leq \omega_0^3
\label{eq:Ht nonincreasing induction step}
\end{equation}
which immediately implies \eqref{eq:formula for Ht} by Lemma \ref{lem:ordinal stopping times prop lem}. 

The $\omega=0$ case is immediate.

If $\omega=\omega'+1$ is a successor ordinal, then it is sufficient to show that $H_t$ is non-decreasing on $[\tau_{\omega'},\tau_\omega]$. We may assume $\tau_{\omega'}<\tau_\omega$, otherwise we are done.

We note that $D^{\omega'}_t$ must satisfy the following SDE:
\[
dD^{\omega'}_t=
\Ind(t\in I_{\omega'})\begin{cases}
d\tilde W^{\omega'}_t+\bar b^{\omega'}_tdt+\frac{d-1}{2D_t}dt,\quad d>1\\
d\tilde W^{\omega'}_t+\bar b^{\omega'}_tdt+dL^{D},\quad d=1
\end{cases}
\]
for some process $\bar b^{\omega'}_t\leq B$. Therefore for $\tau_{\omega'}\leq t<\tau_\omega$:
\[
\begin{split}
H_{t}-H_{\tau_{\omega'}}=D^{\omega'}_{\tau_{\omega'}}-D^{\omega'}_{t}+\int_{\tau_{\omega'}}^t\begin{cases}
d\tilde W_s+Bds+ \frac{d-1}{2D_s} ds,\quad d>1\\
d\tilde W_s + B ds+ dL^D_s,\quad d=1
\end{cases}
=\int_{\tau_{\omega'}}^t(B-\bar b^{\omega'}_s)ds
\end{split}
\]
which is non-increasing.

Moreover we note by construction that $\limsup_{t\uparrow \tau_{\omega}}D_t=r$ so that if $\tau_{\omega}<\tau_{\WD}$, $\limsup_{t\uparrow \tau_{\omega}}(H_{\tau_{\omega}}-H_t)\geq 0$. Thus we have dealt with the case where $\omega$ is a successor ordinal.

We finally consider the case whereby $\omega\leq \omega_0^3$ is a limit ordinal. If $\tau_{\omega'}=\tau_{\omega}$ for some $\omega'<\omega$ we are done by our induction hypothesis. Moreover $(H_t)_{0\leq t<\tau_{\omega}}$ is non-decreasing by our induction hypothesis. Therefore if $\tau_{\omega}=\tau_{\WD}$ we are done.

We now assume otherwise, so that for $\omega'<\omega$ we have $\tau_{\omega'}<\tau_{\omega}<\tau_{\WD}$. It is sufficient to show that $\limsup_{t\uparrow\tau_{\omega}}H_t\leq H_{\tau_\omega}$. We take a sequence of successor ordinals $\omega_n\uparrow\omega$ with $\omega_n<\omega$. For each n we have some $\omega_n\leq \omega_n'<\omega$ such that $\tau_{\omega'_n}<\tau_{\omega'_n+1}$. However we know by construction that $D_{\tau_{\omega'_n+1}-}=r$ so by the same calculation as in the case of successor ordinals, $\limsup_{t\uparrow \tau_{\omega}}(H_{\tau_{\omega}}-H_t)\geq 0$ hence we are done.

\end{proof}

Thus we have established \eqref{eq:driving BM for bessel in terms of for particle} whereby $H_t$ defined in \eqref{eq:defin of Dt and Domegat} is a non-decreasing adapted process.

\subsubsection*{Step \ref{enum:exist of strong solutions}}

Theorem 1.3 of \cite{Andres} gives the existence and uniqueness of strong solutions to reflected SDEs in convex domains where the drift is $C^1$ and Lipschitz. That theorem applies directly to \eqref{eq:SDE for Bessel processes no i} in the $d=1$ case. In the $d > 1$ case, the only issue is that the drift is locally Lipschitz but not globally Lipschitz. Here we must stop the process $\eta_t$ when it hits $\epsilon>0$, then take $\epsilon$ to zero and note that on any fixed finite time horizon the probability of hitting this barrier goes to zero as $\epsilon\ra 0$.

\subsubsection*{Step \ref{enum:eta dominates D}}

We have constructed a solution $(\eta,\tilde W)$ to \eqref{eq:SDE for Bessel processes no i} and claim that:
\begin{equation}\tag{\ref{eq:inequality to induct Bessel proof}}
\eta_t\geq D_t\geq r-d(X_t,\partial U),\quad 0\leq t<\tau_{\WD}.
\end{equation}
The second inequality is obvious, we now establish the first.

We recall that $(\eta,\tilde W)$ satisfies:
\begin{equation}\tag{\ref{eq:SDE for Bessel processes no i}}
\begin{split}
d\eta_t=
\begin{cases}
d\tilde W_t+\frac{d-1}{2\eta_t}dt+Bdt-dL^{r-\eta}_t,\quad d>1\\
d\tilde W_t+Bdt+dL^{\eta}-dL^{r-\eta}_t,\quad d=1
\end{cases},\quad 0\leq t<\infty
,\quad
\eta_0=r,
\end{split}
\end{equation}
whereas $D_t$ is a $[0,r]$-valued process which satisfies:
\begin{equation}\tag{\ref{eq:sde for Dt}}
dD_t =\begin{cases}
d\tilde W_t+ \frac{d-1}{2D_s} dt+Bdt  + dH_t,\quad d>1\\
d\tilde W_t + B dt+ dL^D_s ds + dH_t,\quad d=1
\end{cases},\quad 0\leq t<\tau_{\WD},
\end{equation}
for some non-decreasing adapted process $H_t$. Therefore we have:
\begin{equation}
\begin{split}
d(\eta_t-D_t)=dH_t+\Ind_{d>1}\big(\frac{d-1}{2\eta_t}-\frac{d-1}{2D_t}\big)dt\\+\Ind_{d=1}\big(dL^{\eta}_t-dL^{D}_t\big)-dL^{r-\eta}_t,\quad 0\leq t<\tau_{\WD}.
\end{split}
\label{eq:equation for eta-D on interval}
\end{equation}

We fix $\delta>0$ and assume for contradiction there exists $t_1<\tau_{\WD}$ such that $\eta_{t_1}-D_{t_1}\leq -2\delta$. We define $t_0=\sup\{t'<t_1:\eta_t'-D_{t'}\geq -\delta\}$. Then since $H_t$ is non-decreasing we have $(\eta_{t_0}-D_{t_0})\geq \limsup_{t'\uparrow t_1}(\eta_t'-D_{t'})\geq -\delta$. Therefore $t_0<t_1$ and we must have $\eta_{t}<D_t-\delta$ for $t\in (t_0,t_1]$. Thus as $H_t$ is non-decreasing $(\eta_{t_0}-D_{t_0})\leq \liminf_{t'\downarrow t_1}(\eta_t'-D_{t'})\leq -\delta$ and therefore $(\eta_{t_0}-D_{t_0})=-\delta$. Therefore we must have:
\[
\begin{split}
L_{t_1}^{r-\eta}-L_{t_0}^{r-\eta}=0,\quad L_{t_1}^{D}-L_{t_0}^{D}=0,\quad \Ind_{d>1}\big(\frac{d-1}{2\eta_t}-\frac{d-1}{2D_t}\big)dt\geq 0\quad\text{for}\quad t\in [t_0,t_1].
\end{split}
\]
Therefore we have:
\[
\begin{split}
-\delta\geq (\eta_{t_1}-D_{t_1})-(\eta_{t_0}-D_{t_0})=\underbrace{\int_{t_0}^{t_1}\Ind_{d>1}\big(\frac{d-1}{2\eta_s}-\frac{d-1}{2D_s}\big)ds}_{\geq 0}\\
+\underbrace{H_{t_1}-H_{t_0}}_{\geq 0}+\underbrace{\Ind_{d=1}(L^{\eta}_{t_1}-L^{\eta}_{t_0})}_{\geq 0}-\underbrace{(L^{D}_{t_1}-L^{D}_{t_0})}_{= 0}-\underbrace{(L_{t_1}^{r-\eta}-L_{t_0}^{r-\eta})}_{= 0}\geq 0.
\end{split}
\]
This is a contradiction, hence we must have $\eta_t\geq D_t$ for $t<\tau_{\WD}$.

We have now completed Step \ref{enum:eta dominates D}.
\subsubsection*{Step \ref{enum:Bessel construction independence}}

From \eqref{eq:driving BM for bessel in terms of for particle} we can write:
\[
d\tilde{W}^i_t=n^i_t\cdot dW^i_t
\]
for some processes $n^i_t$. We write $n^i_t(k)$ and $W^i_t(k)$ for the component of $n^i$ and $W^i$ in the $k^{\thh}$ dimension respectively. Therefore we have for $i\neq j$:
\[
d[\tilde{W}^i, \tilde{W}^j]_t=\sum_{k,l}n^i_t(k)n^j_t(l)d[W^i(k),W^j(l)]_t=0.
\]

Thus the Brownian motions $\tilde{W}^i$ and $\tilde{W}^j$ have zero covariance, so $\tilde{W}^1,\ldots,\tilde{W}^N$ are jointly independent. Since each $(\eta^i,\tilde{W}^i)$ is a measurable function of $\tilde{W}^i$, they must also be independent.

Thus we have constructed independent identically distributed strong solutions \newline $(\eta^1,\tilde{W}_1),\ldots,(\eta^N,\tilde{W}^N)$ of \eqref{eq:SDE for Bessel processes no i} satisfying \eqref{eq:lower bd dist to du time tauinf taustop} so have established Proposition \ref{prop:dominated by Bessel fns up to time tauinfty wedge taustop}.

\qed

\subsection{Proof of Lemma \ref{lem:no simultaneous hitting for Bessel processes}}
We consider on the same probability space $(\Omega,\mathcal{F},\Pm)$ two independent strong solutions $(\eta^k,\tilde W^k)$ ($k=1,2$) to \eqref{eq:SDE for Bessel processes no i}:
\begin{equation}\tag{\ref{eq:SDE for Bessel processes no i}}
\begin{split}
d\eta_t=
\begin{cases}
d\tilde W_t+\frac{d-1}{2\eta_t}dt+Bdt-dL^{r-\eta}_t,\quad d>1\\
d\tilde W_t+Bdt+dL^{\eta}_t-dL^{r-\eta}_t,\quad d=1
\end{cases}
,\quad
\eta_0=r
\end{split}
\end{equation}
such that $(\eta^1,\tilde{W}^1)$ and $(\eta^2,\tilde{W}^2)$ are independent of each other. Given $t_0\in \Qm_{\geq 0}$ we define:
\[
\tau_{t_0}=\inf\{t> t_0:\text{min}(\eta_t^1,\eta_t^2)\leq \frac{r}{2}\}.
\]
We write for $k=1,2$:
\[
\overline{W}^k_t=\tilde W^k_t+
\begin{cases}
0,\quad t\leq t_0\\
B(t-t_0)+\int_{t_0}^t\frac{d-1}{2\tilde \eta_s}ds,\quad t_0\leq t\leq \tau_{t_0}\\
B(\tau_{t_0}-t_0)+\int_{t_0}^{\tau_{t_0}}\frac{d-1}{2\tilde \eta_s}ds,\quad t\geq \tau_{t_0}
\end{cases}.
\]

By Girsanov's theorem there is an equivalent probability measure $\overline{\Pm}$ under which $\overline{W}^1$ and $\overline{W}^2$ are Brownian motions, which by examining the covariation we see must be independent. Now we observe that $(\eta^k_t)_{t_0\leq t\leq \tau_{t_0}}$ must satisfy:
\[
\begin{split}
d\eta^k_t=d\overline{W}^k_t-dL^{r-\eta^k}_t,\quad t_0\leq t\leq \tau_{t_0},\quad
\eta^k_0=r.
\end{split}
\]
We have the existence of a strong solution $\hat\eta^k_t=\eta^k_{t_0}+\overline W^k_t-\sup_{t_0\leq t'\leq t}\overline{W}^k_{t'}$ which by computing $d(\hat{\eta}^k-\eta^k)^2\leq 0$ we see must be equal to $\eta^k$ (i.e. we have pathwise uniqueness). Therefore $\eta^k$ is a measurable function of $\overline{W}^k$, hence $r-\eta^1$ and $r-\eta^2$ are independent and distributed under $\overline\Pm$ like the absolute value of a 1-dimensional Brownian motion. Therefore by Pythagoras $\sqrt{(r-\eta^1_t)^2+(r-\eta^2_t)^2}$ must be distributed under $\overline{\Pm}$ like the absolute value of a 2-dimensional Brownian motion. Therefore $\overline{\Pm}(\exists t_0< t<\tau_{t_0}\text{ such that }\eta^1_t=\eta^2_t=r)=0$ hence $\Pm(\exists t_0< t<\tau_{t_0}\text{ such that }\eta^1_t=\eta^2_t=r)=0$. Taking the union over $t_0\in \Qm_{\geq 0}$ we are done.

\qed

\subsection{Proof of Proposition \ref{prop:general weak solns are global}}

We now use Proposition \ref{prop:dominated by Bessel fns up to time tauinfty wedge taustop} and Lemma \ref{lem:no simultaneous hitting for Bessel processes} to establish that $\tau_{\WD}=\infty$. The main idea is that by Proposition \ref{prop:dominated by Bessel fns up to time tauinfty wedge taustop} the event $\tau_{\WD}<\infty$ corresponds to the event that two of the $\eta^i$ hit r at the same time, which almost surely doesn't happen by Lemma \ref{lem:no simultaneous hitting for Bessel processes}.

In \cite{Burdzy2000} they justified that $\tau_{\ST}\geq \tau_{\infty}$ on the basis of the hitting time of a Brownian motion in an arbitrary domain having a continuous density. However, \eqref{eq:lower bd dist to du time tauinf taustop} and Lemma \ref{lem:no simultaneous hitting for Bessel processes} give us that $\tau_{\ST}\geq\tau_{\infty}\wedge \tau_{\max}$ for free. Indeed, if $\tau_{\ST}<\tau_{\infty}\wedge \tau_{\max}$ then two particles (say $X^i$ and $X^j$) hit the boundary at time $\tau_{\ST}$, so that by \eqref{eq:lower bd dist to du time tauinf taustop} $\eta_\ST^i=\eta^j_\ST=r$. Therefore by Lemma \ref{lem:no simultaneous hitting for Bessel processes}, $\Pm(\tau_{\ST}<\tau_{\infty}\wedge\tau_{\max})=0$.

We now have $\tau_\ST\geq \tau_\infty\wedge\tau_{\max}$ almost surely. Since between killing times $\tau_k$, the particles can't travel an infinite distance over a finite time horizon $T<\infty$, we may inductively in $k$ see that $\tau_{\max}\geq \tau_k\wedge T$. Since $T<\infty$ is arbitrary, $\tau_{\max}\geq \tau_{\infty}$.

Thus $\tau_{\infty}\leq \tau_{\max}\wedge \tau_{\ST}$, so we now seek to show $\tau_{\infty}=\infty$ almost surely. We assume for the sake of contradiction $\tau_{\infty}<\infty$ with positive probability. We write $\tau_k^i$ for the $k^{\thh}$ jump time of particle $i$. Then there exists $1\leq i\leq N$ such that $\tau_k^i\uparrow \tau_\infty<\infty$ as $k\ra\infty$ with positive probability. If this is the case, then $i$ must jump an infinite number of times up to time $\tau_\infty$. Therefore by the pigeonhole principle, for some $j\neq i$, $i$ jumps infinitely many times onto $j$ before time $\tau_\infty<\infty$ with positive probability 

We therefore assume $i$ jumps onto $j$ infinitely many times up to time $\tau_\infty<\infty$. Since the drift is bounded and $\tau_{\infty}=\tau_{\WD}<\infty$ we almost surely have:
\begin{equation}
\sum_{\substack{k\text{ such that}\\
\text{i jumps onto j}}}(X^i_{{\tau^i_{k+1}}-}-X^i_{\tau^i_k})^2<\infty.
\label{eq:sum of squares of i to j jump times to boundary hitting position}
\end{equation}
We write $\tau^{i,j}_k$ for the $k^{\thh}$ time particle $i$ hits the boundary and jumps to particle $j$. Then by \eqref{eq:sum of squares of i to j jump times to boundary hitting position} we have:
\[
d(X^j_{\tau^{i,j}_k},\partial U)=d(X^i_{\tau^{i,j}_k},\partial U)\ra 0\quad\text{as}\quad k\ra\infty.
\]
Thus $\limsup_{t\uparrow \tau_\infty}\eta^i_t=\limsup_{t\uparrow \tau_\infty}\eta^j_t=r$ by \eqref{eq:lower bd dist to du time tauinf taustop}. Thus if $\tau_\infty=\tau_{\WD}<\infty$ with positive probability then $\eta^i_{\tau_{\infty}}=\eta^j_{\tau_{\infty}}=r$ with positive probability, which is not the case by Lemma \ref{lem:no simultaneous hitting for Bessel processes}.

Therefore $\tau_{\WD}=\infty$ almost surely.\qed

\subsection{Proof of Proposition \ref{prop:laws at positive times constrained to compact set}}
It is sufficient by \cite[Theorem 4.10]{Kallenberg2017} to show that the expected mean measures,
\[
\begin{split}
\{\chi_t:\chi_t(A):=\expE[m^N_t(A)]\text{ whereby }m^N_t:=\vartheta^N(\vec{X}^N_t)\text{ for some weak solution }\vec{X}^N_t\text{ to }\eqref{eq:N-particle system sde}\\
\text{for any initial condition }\vec{X}^N_0\sim \upsilon^N \in\mathcal{P}(U^N)\text{, any $N$ and any }t\geq T_0\},
\end{split}
\]
are tight. We define $V_{\delta}=\{x\in U:d(x,\partial U)\geq \delta\}$. Then we have $\chi_t(V_{\delta}^c)=\Pm(d(X^{N,1}_t,\partial U)< \delta)\leq \Pm(r-\eta^{N,1}_t< \delta)=\Pm(\eta^{N,1}_t>r-\delta)$ by Tonelli's theorem and Proposition \ref{prop:dominated by Bessel fns up to time tauinfty wedge taustop}. This bound is uniform over all weak solutions for all N, all initial conditions, and all $T\geq T_0$, hence we are done.

\qed

\subsection{Proof of Part \ref{enum:bound on mass near bdy after small time} of Proposition \ref{prop:bound on mass near bdy}}

We henceforth fix $T_0>0$ and $\epsilon>0$. We shall take $K_{\epsilon,T_0}=V_c=\{x:d(x,\partial U)\geq c\}$ for $c>0$ to be determined. We may by Proposition \ref{prop:dominated by Bessel fns up to time tauinfty wedge taustop} construct i.i.d. solutions of \eqref{eq:SDE for Bessel processes no i} $(\eta^{1},\tilde{W}^1),\ldots,(\eta^N,\tilde{W}^N)$ such that:
\[
\begin{split}
\{m_t^N(K^c)\geq \epsilon\text{ for some }T_0\leq t\leq T\}\subseteq\{\sup_{T_0\leq t\leq T}\frac{1}{N}\sum_{j=1}^N\Ind(d(X^j,\partial U)\leq c)\geq \epsilon\}\\
\subseteq \{\sup_{T_0\leq t\leq T}\frac{1}{N}\sum_{j=1}^N\Ind(\eta^j_t\geq r-c)\geq \epsilon\}.
\end{split}
\]
Therefore it is sufficient to show that we may take $c>0$ small enough such that:
\begin{equation}
\limsup_{N\ra\infty}\Pm(\sup_{T_0\leq t\leq T}\frac{1}{N}\sum_{j=1}^N\Ind(\eta^j_t\geq r-c)\geq \epsilon)=0.
\label{eq:Bound on bessel processes for bound on mass in probability}
\end{equation}

Our strategy will be to implement Kingman's Subadditive Ergodic Theorem. We will establish \eqref{eq:Bound on bessel processes for bound on mass in probability} with $\eta^1,\ldots,\eta^N$ constructed on a different probability space (which is sufficient). We consider a strong solution $(\eta,W)$ of \eqref{eq:SDE for Bessel processes no i} on the probability space $(\Omega,\mathcal{F},\Pm)$. We thereby, by taking an infinite product, construct i.i.d. solutions $\eta^i$ of \eqref{eq:SDE for Bessel processes no i} on the probability space $(\Omega,\mathcal{F},\Pm)^{\otimes \infty}$. It is classical that the following map is ergodic:
\[
\mathcal{T}:(\Omega,\mathcal{F},\Pm)^{\otimes \infty}\ni (\omega_1,\omega_2,\ldots)\mapsto (\omega_2,\omega_3,\ldots)\in (\Omega,\mathcal{F},\Pm)^{\otimes\infty}.
\]

For $c>0$ to be determined and every $n\in\mathbb{N}$ we let $g_n(\omega)=\sup_{T_0\leq t\leq T}\sum_{1\leq i\leq n}\Ind(\eta_t^i\geq r-c)$. Then it is easy to see $g_n$ satisfies:
\begin{equation}
g_{n+m}(\omega)\leq g_n(\omega)+g_m(\mathcal{T}^n(\omega)).
\label{eq:ineq for kingman sub for close count}
\end{equation}
Therefore by Kingman's Subadditive Ergodic Theorem we have:
\[
\lim_{n \to \infty} \frac{1}{n}g_n(\omega) = \inf_{m \geq 1} \expE[\frac{1}{m}g_m] \quad \quad \Pm^{\otimes \infty} \text{-almost surely.}
\]
Thus it is sufficient to establish that there exists $c>0$ and $n<\infty$ such that $\expE[\frac{1}{n}g_n]< \epsilon$. We fix $n>\frac{2}{\epsilon}$ and note that:

\[
\begin{split}
\expE[\frac{1}{n}g_n]\leq \frac{1}{n}\Pm(g_n\leq 1)+\Pm(g_n\geq 2)\leq \frac{\epsilon}{2}+\Pm(g_n\geq 2).
\end{split}
\]
Therefore it is sufficient to show $\Pm(g_n\geq 2)<\frac{\epsilon}{2}$ for some $c>0$ small enough. We may consider the ranked particles $\eta^{(1)}_t\geq \eta^{(2)}_t\geq \ldots\geq \eta^{(n)}_t$, in particular we consider the second ranked particle:
\[
\eta^{(2)}_t=\sup\{\eta^i_t:\exists j\neq i\text{ such that }\eta^j_t\geq \eta^i_t\}
\]
which we note has continuous sample paths. Then we have:
\[
\{g_n\geq 2\}=\{\sup_{T_0\leq t\leq T}\eta^{(2)}\geq r-c\}.
\]
Our goal is to show the probability of this event is less than $\frac{\epsilon}{2}$ for $c>0$ small enough. Since $\eta^{(2)}$ has continuous sample paths and $[T_0,T]$ is compact we have:
\[
\begin{split}
\{\eta^i_t=\eta^j_t=r\text{ for some }i\neq j\text{ and }T_0\leq t\leq T\}\\
=\{\eta^{(2)}_t=r\text{ for some }T_0\leq t\leq T\}=\cap_{c>0}\{\eta^{(2)}_t\geq r-c\text{ for some }T_0\leq t\leq T\}.
\end{split}
\]
The probability of this event is zero by Lemma \ref{lem:no simultaneous hitting for Bessel processes} hence we have
\[
\lim_{c\ra 0}\Pm(\sup_{T_0\leq t\leq T}\eta^{(2)}\geq r-c)=0.
\]
Therefore $\Pm(g_n\geq 2)=\Pm(\sup_{T_0\leq t\leq T}\eta^{(2)}\geq r-c)<\frac{\epsilon}{2}$ for $c>0$ small enough. Therefore we have:
\[
\lim_{N\ra\infty}\sup_{T_0\leq t\leq T}\frac{1}{N}\sum_{j=1}^N\Ind(\eta^j_t\geq r-c)<\epsilon\quad \Pm^{\otimes\infty}\text{-almost surely.}
\]

Thus we have \eqref{eq:Bound on bessel processes for bound on mass in probability} on our original probability space. We finally note that the choice of $c>0$ is dependent only upon the parameters of the Bessel processes, hence dependent only upon $T_0,\; T,\;\epsilon,\;B$ and $r$.

\qed

\subsection{Proof of Part \ref{enum:bound on mass near bdy} of Proposition \ref{prop:bound on mass near bdy}}

We recall $\xi^N=\vartheta^N_{\#}\upsilon^N$. Since $\{\xi^N\}$ are tight as a family of random measures, for every $\epsilon$, $\delta>0$ there exists $c'>0$ such that:
\[
\Pm(\xi^N(V_{c'}^c)\geq \frac{\epsilon}{10})<\delta.
\] 

So, by bounding the distance travelled by a particle in time $T_0$ for small enough $T_0>0$, we have that for some smaller $c''>0$ and all $N$ large enough:
\[
\Pm(m^N_t(V_{c''}^c)\geq \epsilon\text{ for some }t\leq T_0)<\delta.
\]

We now take $\hat{c}(\epsilon,\delta)=c(\epsilon,T_0)\wedge c''$ so that $\hat K_{\epsilon,\delta}=V_{\hat{c}}$ satisfies \eqref{eq:bound on mass near bdy}.

\qed

\subsection{Proof of Proposition \ref{prop:bound on number of jumps by particle}}

Here we adopt a strategy similar to Part 1 of the proof of \cite[Theorem 1.3]{Burdzy2000} (where they considered the Brownian case). There they argued that a positive proportion of specially selected particles stay within a given set with probability converging to 1. Then they argued that each time some particle dies there is a probability bounded away from 0 of this particle jumping onto one of these specially selected particles. If that is the case, then the probability of not dying off is bounded away from 0 as the distance between the given set and the boundary is bounded away from 0. Thus each time a particle hits the boundary, there is a probability bounded away from 0 of this being the last death time of the particle so long as the specially selected particles are within the given set.

Their proof that a positive proportion of specially selected particles stay within a given set with probability converging to 1 relies on the independence of the particles in the Brownian case. This does not apply in our case, so instead we must use the closed set we constructed in Part \ref{enum:bound on mass near bdy} of Proposition \ref{prop:bound on mass near bdy}. Moreover we break $[0,T]$ into a large number of sub-intervals, over each of which the diffusive term dominates the drift term (this is not necessary in the $b=0$ case as there is no drift).

We recall that $\text{Geom}($p$)$ refers to the geometric distribution on $\{1,2,\ldots\}$ with distribution given by $\Pm(G \geq k) = (1-p)^{k-1}$ (Remark \ref{rmk:convention for Geometric distribution}). We now set 
\begin{equation}\label{eq:formula for stopping time}
\tau^N_{\epsilon}=\inf\{t\geq 0:m^N_t(\hat K_{\frac{1}{2},\epsilon}^c)\geq \frac{1}{2}\},
\end{equation}
so that we have $\limsup_{N\ra\infty}\Pm(\tau^N_{\epsilon}\leq T)\leq \epsilon$. We break $[0,T]$ into M to be determined sub-intervals $[rh,(r+1)h]$ ($r=0,\ldots,M-1$) of length $h=\frac{T}{M}$ and define 
\[
J_r:=\lvert \{k:\tau_k^i\in [rh,(r+1)h]\text{ and }\tau_i^i\leq \tau_\epsilon^N\}\rvert.
\]

We recall that $X^i_t$ has driving Brownian motion $W^i_t$ and satisfies:
\begin{equation}\tag{\ref{eq:displacement of X approximately displacement of B}}
\lvert (X^i_{t_2}-X^i_{t_1})-(W^i_{t_2}-W^i_{t_1})\rvert\leq B(t_2-t_1)
\end{equation}
if $X^i_t$ does not hit the boundary during the time interval $[t_1,t_2]$. We recall the observation that if our distance to the boundary is bounded from below then in order for our particle to die within a sufficiently small time interval, the driving Brownian motion $W_t$ must travel a distance bounded from below in this small time interval. Using Part \ref{enum:bound on mass near bdy} of Proposition \ref{prop:bound on mass near bdy} take $\delta=\frac{\hat{c}(\epsilon,\delta)}{3}>0$ so that $d(\hat K_{\frac{1}{2},\epsilon},\partial U)= 3\delta$, and further take $M>\frac{TB}{\delta}$. Thus if $rh\leq \tau^i_k\leq \tau^i_{k+1}\leq (r+1)h$ and $X^i_{\tau^i_k}\in \hat K_{\frac{1}{2},\epsilon}$ we must have:
\[
\lvert W_{(r+1)h\wedge \tau^i_{k+1}-}- W_{\tau^i_{k}}\rvert \geq 3\delta - Bh\geq 2\delta.
\]

Moreover $\Pm(\lvert W_{(r+1)h\wedge \tau^i_{k+1}-}- W_{\tau^i_{k}}\rvert \geq 2\delta \lvert \mathcal{F}_{\tau_k^i})<p$ for some $p<1$. Therefore at each death time $\tau^i_k\in [rh,(r+1)h]$ with $\tau^i_k\leq \tau_\epsilon^N$ there is a probability at least $\frac{1}{2}$ of jumping to a particle in $\hat{K}_{\frac{1}{2},\epsilon}$ and if this is the case there is then a probability of at least $1-p>0$ of this being the final time particle i jumps during the interval $[rh,(r+1)h]$. Therefore $J_r$ can be coupled to a Geometric random variable of success probability $(1-p)\times \frac{1}{2}$ which is independent of $\mathcal{F}_{hr}$ and dominates $J_r$.

\qed

\section{Ergodicity of the $N$-particle System \eqref{eq:N-particle system sde} - Theorem \ref{theo:Ergodicity of the N-Particle System}}\label{section:ergodicity}

The goal of this section is to establish Theorem \ref{theo:Ergodicity of the N-Particle System}, giving ergodicity of the particle system for fixed N. We recall that in Theorem \ref{theo:Ergodicity of the N-Particle System} we assume $U$ is bounded and path-connected, which we therefore assume in this section. Since $N$ is fixed, we neglect to write it for convenience. We write $G=U^N$ and $P_t$ for the transition semigroup for $\vec{X}$. We recall the Doeblin condition in continuous time:
\begin{defin}[Doeblin Condition, \cite{Locherbach2015}]
There exists $t_*>0$, $\alpha>0$ and a probability measure $\nu$ such that for any $x\in G$, $P_{t_*}(x,dy)\geq \alpha \nu(dy)$.
\end{defin}

We now recall \cite[Corollary 2.7]{Locherbach2015}:
\begin{theo}[\cite{Locherbach2015}]
Assume that the Doeblin condition in continuous time holds. Then there exists a unique invariant distribution $\psi$, and moreover we have:
\[
\lvert\lvert P_t(x,\cdot)-\psi(\cdot)\rvert\rvert_{\TV}\leq (1-\alpha)^{\lfloor \frac{t}{t_*}\rfloor},\quad \forall x\in G.
\]
\end{theo}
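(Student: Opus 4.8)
The plan is to run the classical Doeblin argument on the discrete-time skeleton chain with kernel $Q:=P_{t_*}$ and then transfer the conclusion to the full semigroup $(P_t)_{t\geq 0}$. I would work in the complete metric space $(\mathcal{P}(G),\|\cdot\|_{\TV})$, using the convention $\|\mu_1-\mu_2\|_{\TV}=\sup_A|\mu_1(A)-\mu_2(A)|$ (so that $\|\mu_1-\mu_2\|_{\TV}\le 1$ for probability measures, matching the stated bound) and the standard fact that every Markov kernel is nonexpansive in this norm. Assuming $\alpha\in(0,1)$ (the case $\alpha=1$ is immediate, since then $Q(x,\cdot)\equiv\nu$), the Doeblin condition lets one split $Q(x,\cdot)=\alpha\,\nu+(1-\alpha)R(x,\cdot)$, where $R(x,dy):=(Q(x,dy)-\alpha\,\nu(dy))/(1-\alpha)$ is again a Markov kernel. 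For probability measures $\mu_1,\mu_2$ the $\alpha\nu$ parts cancel, giving $\mu_1Q-\mu_2Q=(1-\alpha)(\mu_1R-\mu_2R)$ and hence $\|\mu_1Q-\mu_2Q\|_{\TV}\le(1-\alpha)\|\mu_1-\mu_2\|_{\TV}$: the map $\mu\mapsto\mu Q$ is a strict contraction. By the Banach fixed point theorem it has a unique fixed point $\psi\in\mathcal{P}(G)$, and iterating the contraction, $\|P_{nt_*}(x,\cdot)-\psi\|_{\TV}=\|\delta_xQ^n-\psi Q^n\|_{\TV}\le(1-\alpha)^n$ for all $x\in G$ and $n\ge 0$.

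Next I would upgrade $\psi$ to an invariant measure for the whole semigroup and deduce uniqueness. For any $s\ge 0$ the measure $\psi P_s$ is again fixed by $Q$: since $P_sP_{t_*}=P_{t_*}P_s$ by the semigroup property, $(\psi P_s)Q=\psi P_sP_{t_*}=\psi P_{t_*}P_s=(\psi Q)P_s=\psi P_s$, so by uniqueness of the skeleton's fixed point $\psi P_s=\psi$, i.e. $\psi$ is $(P_t)$-invariant. Any other $P_t$-invariant probability measure is in particular $Q$-invariant, hence equals $\psi$, which gives uniqueness. Finally, for general $t\ge 0$ write $t=nt_*+s$ with $n=\lfloor t/t_*\rfloor$ and $s\in[0,t_*)$; then $P_t=P_{nt_*}P_s$ and, using $\psi=\psi P_s$, one has $P_t(x,\cdot)-\psi=(P_{nt_*}(x,\cdot)-\psi)P_s$, so applying the Markov kernel $P_s$ does not increase total variation and $\|P_t(x,\cdot)-\psi\|_{\TV}\le\|P_{nt_*}(x,\cdot)-\psi\|_{\TV}\le(1-\alpha)^{\lfloor t/t_*\rfloor}$, as asserted.

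The only genuinely substantive point is the minorization/contraction step — equivalently, the one-step coupling in which two copies of the skeleton chain coalesce with probability at least $\alpha$ — and everything after it is bookkeeping with the semigroup identity. The minor things to be careful about are that $(\mathcal{P}(G),\|\cdot\|_{\TV})$ is complete (automatic since $G=U^N$ is Polish; alternatively one can bypass the abstract fixed point theorem by observing directly that $Q^n(x,\cdot)$ is uniformly Cauchy in total variation and taking $\psi$ to be its common limit), and that the commutation $P_sP_{t_*}=P_{t_*}P_s$ used above is exactly the Chapman--Kolmogorov relation for $(P_t)_{t\ge 0}$.
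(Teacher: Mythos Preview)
Your proof is correct and is precisely the standard Doeblin argument one would expect here. Note, however, that the paper does not actually give its own proof of this statement: it is quoted verbatim as \cite[Corollary 2.7]{Locherbach2015} and used as a black box, with the paper's effort going entirely into verifying the Doeblin condition for the particular Fleming--Viot system. So there is no ``paper's proof'' to compare against; what you have written is the classical argument that underlies the cited result, and it is sound in all details (the residual kernel $R$ is genuinely Markov by the minorization, the skeleton contraction and Banach fixed point are correct, the commutation $P_sP_{t_*}=P_{t_*}P_s$ upgrades $\psi$ to full invariance, and the nonexpansiveness of $P_s$ handles the fractional remainder).
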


Thus it is sufficient to establish the Doeblin condition holds. 

\underline{Step 1}

We define $V_{\epsilon}=\{x\in U: d(x,\partial U)\geq \epsilon\}$ and $G_{\epsilon}=V_{\epsilon}^N$. We fix $0<\epsilon_1<\frac{r}{2}$. We shall construct K compact, smooth and path connected such that:
\[
G_{{\epsilon_1}}\subseteq K\subset\subset G.
\]
In particular if $\vec{x}=(x_1,\ldots,x_N)\in U^N$ then $d(x_i,\partial U)\geq {\epsilon_1}$ for all i implies $\vec{x}\in K$.

We fix $\vec{x}^{\ast}\in G$ and define the following function:
\[
p(\vec{x})=\sup_{\substack{\gamma:\vec{x}^{\ast}\ra \vec{x}\\ \text{a path in }U}}d(\gamma,\partial U).
\]
Then since p is continuous and positive, there exists $\epsilon'>0$ such that $p\geq \epsilon'>0$ on $G_{\epsilon_1}$. We then define the compact, path-connected set:
\[
K'=\{x\in U:p(x)\geq \epsilon'\}^N\supseteq G_{\epsilon_1}.
\]
We now expand $K'$ a bit to obtain a smooth domain $K$. There exists $\epsilon''>0$ such that $d(K',\partial U)>\epsilon''$. We take $\varphi\in C_c^\infty(\Rm^{Nd})$ a mollifier supported on $B(0,\frac{\epsilon''}{4})$ so that by Sard's theorem there exists $0<c<1$ such that
\[
K''=\{x:\varphi\ast\Ind_{K'+B(0,\frac{\epsilon''}{2})}> c\}\supseteq K'\supseteq G_{\epsilon_1}
\]
is a compact domain with smooth boundary. Thus taking $K$ to be the path-connected component of $K''$ containing $K'$, we obtain our desired domain.

\underline{Step 2}

We recall that $U$ satisfies the interior ball condition with radius r. We may by Proposition \ref{prop:dominated by Bessel fns up to time tauinfty wedge taustop} define $N$ i.i.d. Bessel processes, with positive drift B, $\eta^1,\ldots,\eta^N$, such that $r-\eta^i\leq d(X^i,\partial U)$ for each i. Then with probability at least $p_1$ for some $p_1>0$, $\eta^1_1,\ldots,\eta^N_1\leq r-2\epsilon_1$. This gives us that there exists $p_1>0$ such that $P_1(\vec{x}, G_{2{\epsilon_1}})\geq p_1$ for all $\vec{x}\in G= U^N$.

\underline{Step 3}

For $\vec{u}=(u_1,\ldots,u_{Nd})\in G$ and $\epsilon>0$ we define $D(\vec{u},\epsilon)=\{(u'_1,\ldots,u'_{Nd}):\lvert u'_i-u_i\rvert<\epsilon\}$. We take $\vec{u}\in G$ and $\epsilon_2>0$ such that $D(\vec{u},5\epsilon_2)\subseteq G_{2\epsilon_1}$ and fix $C=D(\vec{u},\epsilon_2)$. We claim that there exists $\delta_2>0$ and $p_2>0$ such that $P_{\delta_2}(\vec{x},C)\geq p_2$ for all $\vec{x}\in G_{2\epsilon_1}$.

We let $Q_t(x,.)$ be the transition kernel for Brownian motion started at $\vec{x}\in K$ and killed when it hits $\partial K$. We can write the SDE for $\vec{X}_t$ between jump times as:
\[
d\vec{X}_t=\vec{b}(\vec{X}_t)dt+d\vec{W}_t.
\]
Since the drift is bounded, and both $d(C^c,D(\vec{u},\frac{\epsilon_2}{2}))$ and $d(K,\partial G)$ are bounded away from 0, there exists $\delta_2>0$ small enough such that for all $x\in K$:
\[
\begin{split}
\vec{x}+\vec{W}_{\delta}\in D(\vec{u},\frac{\epsilon_2}{2})\text{ and } \vec{x}+\vec{W}_{t'}\text{ does not leave }K\text{ for }t'\leq \delta\\
\Rightarrow \vec{X}_{\delta}\in C\text{ and }\vec{X}_{t'}\text{ does not hit }\partial G\text{ for }t'\leq \delta.
\end{split}
\]
Therefore we have:
\[
P_{\delta_2}(\vec{x},C)\geq Q_{\delta_2}(\vec{x},D(\vec{u},\frac{\epsilon_2}{2}))\quad\text{for all}\quad x\in G_{2\epsilon_1}.
\]
Taking a smooth function $\Ind_{D(\vec{u},\frac{\epsilon_2}{4})}\leq \Phi\leq \Ind_{D(\vec{u},\frac{\epsilon_2}{2})}$ we see $(t,\vec{x})\mapsto Q_t(\vec{x},\Phi)$ is a smooth solution of the heat equation on K with Dirichlet boundary conditions, so by the Maximum principle $\vec{x}\mapsto Q_1(\vec{x},\Ind_{D(\vec{u},\frac{\epsilon_2}{2})})$ is bounded away from 0 on $G_{2\epsilon_1}$. 
Thus $P_{\delta_2}(\vec{x},C)$ is bounded away from 0 on $G_{2\epsilon_1}$.

\underline{Step 4}

Lemma \ref{lem:upper and lower bounds on density of diffusions} then implies there exists $p_3>0$ such that $P_{1}(\vec{x},.)\geq p_3\text{Leb}_{\lvert_{C}}(.)$ for all $\vec{x}\in C$. Setting $t_*=1+\delta_2+1$, $\alpha=p_1p_2p_3\text{Leb}(C)$ and $\nu=\frac{1}{\text{Leb}(C)}\text{Leb}_{\lvert_{C}}$ we have established Doeblin's condition.

This completes our proof of Theorem \ref{theo:Ergodicity of the N-Particle System}.

\qed

\section{Density Estimate for the Proof of Theorem \ref{theo:Hydrodynamic Limit Theorem}}\label{section:density Estimates}
Unlike the previous section, we no longer assume $U$ is path-connected or bounded; here we assume only that $U$ is an open subdomain of $\Rm^d$ satisfying the uniform interior ball condition - Condition \ref{cond:ball}. We take a sequence of weak solutions to the Fleming-Viot particle system with generalised dynamics
\[
(\vec{X}^N_t,\vec{W}^N_t,\vec{b}^N_t)_{0\leq t<\infty}=((X_t^{N,1},\ldots,X^{N,N}_t),(W_t^{N,1},\ldots,W^{N,N}_t),(b_t^{N,1},\ldots,b^{N,N}_t))_{0\leq t<\infty}
\]
and with initial conditions $\vec{X}^N_0\sim\upsilon^N$. Moreover the drifts $b^{N,i}_t$ are uniformly bounded with $\lvert b^{N,i}_t\rvert\leq B<\infty$. We define $m^N_t$ and $J^N_t$ as in \eqref{eq:defin mN} and \eqref{eq:defin JN}:
\[
m^N_t=\vartheta^N(\vec{X}^N_t),\quad J^N_t = \frac{1}{N} \sup \{ k \in \mathbb{N}\;|\; \tau^N_k \leq t \}.
\]

The goal of this section is to establish the following lemma, which provides for controls on possible sub-sequential limits:
\begin{lem}
For fixed $T<\infty$ we assume that laws of $\{ (m^N_t,J^N_t)_{0\leq t\leq T})\}_{N \geq 2}$ are a tight family of measures on $D([0,T];\mathcal{P}_{\Wah}(U)\times \Rm_{\geq 0})$ with limit distributions supported on $C([0,T];\mathcal{P}_{\Wah}(U)\times \Rm_{\geq 0})$. Then for every subsequential limit in distribution $(m_t,J_t)_{0\leq t\leq T}\in C([0,T];\mathcal{P}_{\Wah}(U)\times \Rm_{\geq 0})$ we have:
\begin{enumerate}
\item
The random measure m defined by $dm=dm_tdt$ is almost surely absolutely continuous with respect to $\text{Leb}_{U\times [0,T]}$.
\label{enum:char-almost sure abs cty of m}
\item
For every $0<t\leq T$ we almost surely have $m_t$ is absolutely continuous with respect to $\text{Leb}_U$.
\label{enum:char-almost sure abs cty of marginals}
\end{enumerate}
\label{lem:char-almost sure abs cty}
\end{lem}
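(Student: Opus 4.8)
The plan is to lift the problem to the level of \emph{historical} (genealogical) empirical measures, following Bieniek and Burdzy \cite{Bieniek2018}. For each $N$, $t$ and particle $i$, let $\widehat X^{N,i}_{[0,t]}\in C([0,t];\overline U)$ be the historical path of particle $i$ at time $t$: on the interval since particle $i$ last jumped it agrees with $X^{N,i}$, and before that it agrees with the historical path of the particle that $i$ jumped onto, recursively. Since a jump places the jumping particle on top of a particle lying in the interior $U$, each historical path is in fact \emph{continuous}; it is a finite concatenation of diffusion segments glued continuously at interior points (one per jump of $i$ before $t$), and concatenating the driving Brownian increments along the lineage, $\widehat X^{N,i}_{[0,t]}$ solves $d\widehat X_s=\beta_s\,ds+d\widehat W_s$ on $[0,t]$ for some adapted drift $\beta$ with $|\beta|\le B$ and a Brownian motion $\widehat W$ (the concatenated increments form a Brownian motion by L\'evy's characterisation, since the quadratic variation accumulates at unit rate throughout $[0,t]$). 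In particular the laws $\{\mathcal L(\widehat X^{N,i}_{[0,t]})\}$ are tight on $C([0,t];\overline U)$ uniformly in $N,i$, and hence — by the standard mean-measure criterion for random measures — the laws $\{\mathcal L(\widehat m^N_t)\}$ of the historical empirical measures $\widehat m^N_t:=\tfrac1N\sum_{i=1}^N\delta_{\widehat X^{N,i}_{[0,t]}}$ are tight on $\mathcal P(\mathcal P(C([0,t];\overline U)))$; along a further subsequence we may assume $\widehat m^N_t\overset{d}{\to}\widehat m_t$. Because $\widehat X^{N,i}_t=X^{N,i}_t$, the time-$t$ marginal of $\widehat m^N_t$ is $m^N_t$, so $m_t$ is the time-$t$ marginal of $\widehat m_t$. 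Thus Part \ref{enum:char-almost sure abs cty of marginals} reduces to showing that, for each fixed $t>0$, the time-$t$ marginal of $\widehat m_t$ is almost surely absolutely continuous; and Part \ref{enum:char-almost sure abs cty of m} follows from this, since a measure of the form $m_t\,dt$ on $U\times[0,T]$ is absolutely continuous whenever $m_t\ll \mathrm{Leb}_U$ for Lebesgue-a.e.\ $t$.

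The core of the argument is a splitting according to the time elapsed since the last branching. Fix $t>0$ and $0<\delta<t$ and write $\widehat m^N_t=\mu^{N,\delta}_{\mathrm g}+\mu^{N,\delta}_{\mathrm b}$, where $\mu^{N,\delta}_{\mathrm b}$ is the restriction of $\widehat m^N_t$ to those historical paths whose particle jumps at least once on $(t-\delta,t]$; its total mass is at most $J^N_t-J^N_{t-\delta}$. Since $\{\mu^{N,\delta}_{\mathrm b}\}$ is tight (uniformly bounded total mass), we may pass to the limit and obtain $\mu^{N,\delta}_{\mathrm g}\to\mu^{\delta}_{\mathrm g}$ and $\mu^{N,\delta}_{\mathrm b}\to\mu^{\delta}_{\mathrm b}$ with $\widehat m_t=\mu^{\delta}_{\mathrm g}+\mu^{\delta}_{\mathrm b}$; by lower semicontinuity of total mass and continuity of the limiting $J$, $\mu^{\delta}_{\mathrm b}$ has total mass $\le J_t-J_{t-\delta}$, which tends to $0$ a.s.\ as $\delta\downarrow0$. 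On the other hand, every path carried by $\mu^{N,\delta}_{\mathrm g}$ agrees on $[t-\delta,t]$ with a diffusion $dX_s=b^{N,i}_s\,ds+dW^{N,i}_s$ that does not hit $\partial U$; since the local martingale problem ``$X$ is a continuous semimartingale on $[t-\delta,t]$ with quadratic variation $(s-(t-\delta))\,\mathrm{Id}$ and drift bounded by $B$, with values in $\overline U$'' is stable under weak limits, $\mu^{\delta}_{\mathrm g}$ is a.s.\ carried by paths with this property on $[t-\delta,t]$. Disintegrating $\mu^{\delta}_{\mathrm g}$ over the value of the path at time $t-\delta$, the conditional law of the value at time $t$ is then, by Girsanov's theorem, absolutely continuous with respect to $\mathrm{Leb}_U$; hence its time-$t$ marginal $\nu^{\delta}_{\mathrm g}$ is an average of absolutely continuous measures, so $\nu^{\delta}_{\mathrm g}\ll\mathrm{Leb}_U$. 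We conclude $m_t=\nu^{\delta}_{\mathrm g}+\nu^{\delta}_{\mathrm b}$ with $\nu^{\delta}_{\mathrm g}\ll\mathrm{Leb}_U$ and $\lVert\nu^{\delta}_{\mathrm b}\rVert_{\TV}\le\mu^{\delta}_{\mathrm b}(C([0,t];\overline U))\to0$ a.s.\ as $\delta\downarrow0$; since the absolutely continuous measures form a total-variation-closed subspace (isometric to $L^1(U)$), $m_t$ is a.s.\ absolutely continuous, giving Part \ref{enum:char-almost sure abs cty of marginals} and then Part \ref{enum:char-almost sure abs cty of m}.

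The main obstacle lies in making the two ``passing to the limit'' steps rigorous: establishing that the dynamical historical process is well-defined and that its limit $\widehat m_t$ is genuinely carried, on branching-free subintervals, by diffusion paths with drift bounded by $B$ that remain in $\overline U$ — this is where the Bieniek--Burdzy analysis \cite{Bieniek2018} is needed, together with Proposition \ref{prop:bound on number of jumps by particle} to keep the number of branchings controlled over $[0,T]$ (so that the historical paths form a tight family and the stopping-time truncation required by that proposition is harmless). A secondary point is that ``having a branching on $(t-\delta,t]$'' need not be a continuity set for $\widehat m^N_t\overset{d}{\to}\widehat m_t$, so the bound $\lVert\mu^{\delta}_{\mathrm b}\rVert\le J_t-J_{t-\delta}$ is obtained by combining lower semicontinuity of total mass with the convergence $J^N\to J$ on a common probability space (Skorokhod coupling). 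Finally, Proposition \ref{prop:bound on mass near bdy} is used to rule out accumulation of mass on $\partial U$, and — when $U$ is unbounded — to confine the historical paths to a controlled region over $[0,T]$, so that the disintegration and the Girsanov estimate above are legitimate.
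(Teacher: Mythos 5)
Your overall decomposition is attractive and genuinely different from the paper's: splitting $m^N_t$ according to whether a particle has jumped in $(t-\delta,t]$, bounding the ``bad'' mass by $J^N_t-J^N_{t-\delta}$ (which vanishes as $\delta\downarrow0$ by continuity of the limiting $J$), treating the ``good'' particles as plain diffusions with drift bounded by $B$ over the window, and then deducing Part \ref{enum:char-almost sure abs cty of m} from Part \ref{enum:char-almost sure abs cty of marginals} by a Fubini argument. The bad-mass bound, the reduction of Part \ref{enum:char-almost sure abs cty of m} to Part \ref{enum:char-almost sure abs cty of marginals}, and the per-process Girsanov observation are all fine. However, there is a genuine gap at the central step, namely the assertion that ``since the local martingale problem \dots is stable under weak limits, $\mu^{\delta}_{\mathrm g}$ is a.s.\ carried by paths with this property.'' Being an It\^o process with drift bounded by $B$ is a property of a \emph{law} on path space, not of individual paths, so ``carried by paths with this property'' has no meaning; and the empirical measures $\mu^{N,\delta}_{\mathrm g}$ are atomic measures on path space which do \emph{not} themselves solve any such martingale problem, so closedness of the solution set under weak convergence cannot be applied to them. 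What you actually need is that almost surely the (normalized) limiting random measure, as a law on $C([t-\delta,t];\bar U)$, makes the canonical process a semimartingale with unit quadratic variation and $B$-Lipschitz finite-variation part. Proving this requires showing that empirical averages of the martingale increments vanish (e.g.\ in $L^2$), which uses the independence of the driving Brownian motions across particles, and it must also contend with two sources of bias that your sketch does not address: the selection event ``no jump in $(t-\delta,t]$'' is not adapted at intermediate times, so it can correlate with the Brownian increments appearing in the test functionals; and, if one works with full historical paths as you propose, distinct DHPs share segments, so their driving Brownian motions are not independent. These are precisely the difficulties that occupy the bulk of the paper's proof: the decomposition into $S_1^{N,\ell,n}$ and $S_2^{N,\ell,n}$, the tree parametrisation $\hat{\alpha}^{N,\ell,n}_i$ with the disjoint-genealogy events $A^{N,\ell,n}_{i,j}$, the covariance Lemma \ref{lem:weak convergence lemma}, the domination Lemma \ref{lem:dominated family of Ys}, and the jump-count/supermartingale estimates feeding \eqref{S2Neps}. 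Your closing paragraph acknowledges that ``the two passing-to-the-limit steps'' are the main obstacle, but no argument is offered for them, and the qualitative route you propose (martingale-problem stability plus disintegration) does not supply one.

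A repair that stays close to your window idea would replace the qualitative identification of $\mu^\delta_{\mathrm g}$ by a quantitative bound: on the good event, dominate each particle over $[t-\delta,t]$ by a reflected process driven only by its own Brownian motion (exactly as in Lemma \ref{lem:dominated family of Ys}, applied on the window), so that the dominating variables are genuinely independent across particles; then for each rectangle in a countable family obtain $\limsup_N \frac1N\sum_i \Ind(\cdot)\le C_\delta\,\mathrm{Leb}$ in probability by a law of large numbers (Lemma \ref{lem:weak convergence lemma}), transfer this to the limit via Portmanteau/Skorokhod as in Lemma \ref{lem:mmN2}, and conclude with an outer-measure argument in the spirit of Lemma \ref{lem:condition to prove for sets in R for abs cty lemma}, finally letting $\delta\downarrow0$ to kill the singular part of total mass at most $J_t-J_{t-\delta}$. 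With that machinery supplied, your decomposition would indeed avoid the genealogical tree construction; without it, the proof as written does not go through.
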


Note that we are not claiming here that almost surely $m_t$ is absolutely continuous with respect to $\text{Leb}_U$ for all $0<t\leq T$.

We focus on the proof of Part \ref{enum:char-almost sure abs cty of m} of Lemma~\ref{lem:char-almost sure abs cty} - the proof of Part \ref{enum:char-almost sure abs cty of marginals} is the same. We then use the machinery we construct to prove Lemma \ref{lem:char-almost sure abs cty} to prove the following lemma:
\begin{lem}
We assume that $\{\mathcal{L}(m^N_0)\}$ is tight in $\mathcal{P}(\mathcal{P}_{\Wah}(U))$. Then for any $T<\infty$ we have:
\begin{equation}
\limsup_{N\ra\infty}
\expE[\sup_{t\leq T}m^N_t(B(0,R)^c)]\ra 0\quad\text{as}\quad R\ra\infty.
\label{eq:tight ics then mass stays within ball}
\end{equation}
\label{lem:tight after bounded times}
\end{lem}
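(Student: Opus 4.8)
\emph{Proof plan.} The plan is to reduce the statement to a single-particle estimate and then exploit the ancestral (historical) structure of the particle system, using the same dynamical historical processes of Bieniek--Burdzy that underlie Lemma~\ref{lem:char-almost sure abs cty}. First I would reduce: since $\sup_{t\le T}\frac1N\sum_{i=1}^N\Ind(|X^{N,i}_t|>R)\le \frac1N\sum_{i=1}^N\Ind(\exists\,t\le T:\,|X^{N,i}_t|>R)$, it suffices to show that $\limsup_{R\ra\infty}\limsup_{N\ra\infty}\Phi_N(R)=0$, where $\Phi_N(R):=\frac1N\sum_{i=1}^N\Pm(\exists\,t\le T:\,|X^{N,i}_t|>R)$.

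The key structural input is that, along its ancestral lineage, each particle moves as a Brownian motion with bounded drift started from the time-$0$ position of its ancestor. Concretely, for particle $i$ at time $t$ one concatenates the diffusive segments between the jumps of the lineage into a continuous path on $[0,t]$ of the form (initial position of the ancestor of $(i,t)$) $+$ (a drift of supremum norm $\le BT$) $+$ (a continuous martingale with quadratic variation $s\,I_d$ at time $s$, hence by Dambis--Dubins--Schwarz a time-changed Brownian motion). Splitting particle $i$'s own trajectory at its jump times and applying this decomposition to the particle it lands on at each jump, one obtains, on an event on which particle $i$ makes at most $C_\epsilon$ jumps before $T$ (provided by Proposition~\ref{prop:bound on number of jumps by particle}, with the complement having probability $\le 2\epsilon+o(1)$), that $\sup_{t\le T}|X^{N,i}_t|$ is dominated by $\max_{0\le k\le C_\epsilon}|X^{A_{i,k}}_0|+2BT+(\text{running maxima of }\le C_\epsilon+1\text{ time-changed Brownian motions and of }W^{N,i})$, where $A_{i,k}$ ranges over the ancestors of particle $i$ at its successive jump times.

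It then remains to control the two resulting terms. For the Brownian part, reflection together with Dambis--Dubins--Schwarz gives, uniformly in $N$ and $i$, a bound of the form $C_d\,e^{-c^2/(2dT)}$ on each of the finitely many running maxima exceeding $c$; summing over the $O(C_\epsilon)$ of them still vanishes as $c\to\infty$. For the ancestral term, fix $y>0$ and call particle $i$ ``$y$-tainted by time $t$'' if some ancestor of $i$ at a time $\le t$ started outside $B(0,y)$; off the event ``$i$ is $y$-tainted by $T$'' the ancestral term is $\le y$. The $y$-tainted property is monotone in $t$ and is acquired only at a jump, by jumping onto a particle whose current ancestor started outside $B(0,y)$; hence, writing $E^N_t$ for the fraction of $y$-tainted particles, $E^N_0=m^N_0(B(0,y)^c)$ and at each jump time $t$ one has $\expE[\Delta(NE^N_t)\mid \mathcal{F}_{t-}]\le \tfrac{N}{N-1}E^N_{t-}$. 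Since, off the small bad event, there are at most $N C_\epsilon$ jumps before $T$, iterating this estimate (with a standard truncation at the stopping time that caps $J^N$ at $C_\epsilon$) gives $\expE[E^N_T]\le \big(\tfrac{N}{N-1}\big)^{NC_\epsilon}\expE[m^N_0(B(0,y)^c)]+\epsilon+o(1)\le 2e^{C_\epsilon}\expE[m^N_0(B(0,y)^c)]+\epsilon+o(1)$ for $N$ large. Collecting the pieces,
\[
\Phi_N(R)\ \le\ 3\epsilon + o(1) + 2e^{C_\epsilon}\,\expE[m^N_0(B(0,y)^c)] + (C_\epsilon+2)\,C_d\,e^{-(R-y-2BT)^2/(32dT)},
\]
and by the assumed tightness of $\{\mathcal{L}(m^N_0)\}$ in $\mathcal{P}(\mathcal{P}_{\Wah}(U))$ one has $\sup_N\expE[m^N_0(B(0,y)^c)]\to 0$ as $y\to\infty$. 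Taking $\limsup$ in $N$, then $R\to\infty$, then $y\to\infty$, then $\epsilon\to 0$ gives $\limsup_{R\ra\infty}\limsup_{N\ra\infty}\Phi_N(R)=0$, which is the claim.

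The main obstacle is the second step: rigorously constructing the dynamical historical process and justifying, simultaneously over $t\le T$ and with a controlled number of lineage segments, that $|X^{N,i}_t|$ is dominated by its ancestor's initial position plus a bounded drift plus the running maximum of a time-changed Brownian motion --- in particular that these ``future-dependent'' historical martingale parts are genuine time-changed Brownian motions. This is exactly the filtration engineering developed for Lemma~\ref{lem:char-almost sure abs cty}. A secondary subtlety, already visible above, is that lineages coalesce, so the ancestral contribution has to be controlled by the global Gr\"onwall-type accounting of $E^N$ rather than by unwinding the lineage of each particle separately, which would lose a factor $C_\epsilon\gg 1$ at each level.
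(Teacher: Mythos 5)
Your overall architecture is sound and close in spirit to the paper's: the paper also reduces the claim to the historical-process machinery of Lemma \ref{lem:char-almost sure abs cty}, bounding $\sup_{t\le T}m^N_t(B(0,R)^c)$ as in \eqref{mN2sums} by $\frac1N\sum_i\sum_{v\in\Pi^{\ell,n}}\Ind\big(\sup_{t\le T}\lvert X^{\hat\alpha^{N,\ell,n}_i(v)}\rvert\ge R\big)+S_2^{N,\ell,n}$ via \eqref{eq:relationship between Gt and hat alpha}, choosing $\ell(\epsilon),n(\epsilon)$ through \eqref{S2Neps}, and then using that each $X^{\hat\alpha^{N,\ell,n}_i(v)}$ is a bounded-drift diffusion driven by a genuine $(\bar{\mathcal{F}}_t)_{t\ge0}$-Brownian motion (filtration \eqref{eq:initially enlarged filtration by jumps}) started from an initial particle position, whose law is controlled by the tightness of $\{\mathcal{L}(m^N_0)\}$. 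Your tainted-fraction Gr\"onwall estimate for the ancestral initial positions and your use of Proposition \ref{prop:bound on number of jumps by particle} are both fine and mirror arguments the paper uses elsewhere (the supermartingale for $L^N_t$, the red-particle count in the coupling section).

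The genuine gap is the step you flag as the ``main obstacle'' but then dismiss: the claim that, at each jump time $\tau^i_k$, the martingale part of the realized lineage of the landed-on particle is a (time-changed) Brownian motion to which a reflection-principle tail bound applies. The realized chain $\alpha^{N,j}_{\tau^i_k}$ is selected using information up to time $\tau^i_k$ -- it depends on which jump times have occurred, not only on the jump-target variables $U^i_k$ -- so it is not $\bar{\mathcal{F}}_0$-measurable, and the concatenated noise $W^{\alpha^{N,j}_{\tau^i_k}}$ is not adapted to any filtration constructed in the paper; having pathwise quadratic variation $s\,I_d$ does not by itself give Gaussian tails, because the chain is selected by the dynamics (L\'evy/DDS need a local martingale in some filtration). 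The filtration engineering of Lemma \ref{lem:char-almost sure abs cty} only makes $W^{\hat\alpha^{N,\ell,n}_i(v)}$ a Brownian motion for the finitely many $\bar{\mathcal{F}}_0$-measurable chains indexed by $\Pi^{\ell,n}$; to use it you must restrict to the events $G^{\ell,n,i}_t$ -- which control the lineage length and the jump counts of \emph{every} particle along the lineage, not just the $C_\epsilon$ jumps of particle $i$ furnished by Proposition \ref{prop:bound on number of jumps by particle} -- and union-bound over the tree, with the complement handled by \eqref{S2Neps}. At that point your bookkeeping collapses into the paper's proof. Alternatively, you can repair your route without the tree by propagating badness through jumps directly: declare a particle bad once its \emph{own} diffusive segment (driven by its own, genuinely adapted $W^{N,i}$, per \eqref{eq:SDE for Xalpha} restricted to single segments) exits a large ball or once it jumps onto a bad particle, and run your Gr\"onwall-over-jumps estimate on the bad fraction; then no non-adapted concatenated martingale is ever needed. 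As written, however, the bound ``running maxima of $\le C_\epsilon+1$ time-changed Brownian motions'' is not justified.
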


The proofs of this section shall rely on an analysis of the "Dynamical Historical Processes" defined in \cite{Bieniek2018}.

\subsection{Dynamical Historical Processes}

We provide a definition of "Dynamical Historical Process" (DHP) which is equivalent to that found in \cite{Bieniek2018} but will be more useful for our purposes. The Dynamical Historical Process $(\mathcal{H}^{N,i,t}_s)_{0\leq s\leq t}$ is the unique continuous path from time $0$ to time $t$ which is equal to one of the particles at all times and equal to $X^i_t$ at time $t$.

\begin{figure}[ht]
  \begin{center}
    \begin{tikzpicture}
      \foreach \y [count=\n]in {0,2}{	
			\draw[->,snake=snake,ultra thick,red](0,\y) -- ++(1,1);
			\draw[->,snake=snake,ultra thick,blue](1,1+\y) -- ++(-1,1);
			\draw[->,snake=snake,red](1,1+\y) -- ++(1,1);
      }
       \foreach \y [count=\n]in {0,2}{	
			\draw[-,dashed,blue](4,1+\y) -- ++(-3,0);
			\draw[->,snake=snake,blue](3,\y) -- ++(1,1);
      }
 
       \foreach \y [count=\n]in {2}{	
			\draw[-,dashed,red](-4,\y) -- ++(4,0);
			\draw[->,snake=snake,red](-3,-1+\y) -- ++(-1,1);
			\draw[->,snake=snake,blue](0,\y) -- ++(-1,1);
      }
			\draw[-,thick,black](4,0) -- ++(0,4); 
			\draw[-,thick,black](-4,0) -- ++(0,4); 	
	\draw node[draw,circle,inner sep=2pt,fill]  at (0,4){};

	\draw node at (4.4,3) {$\tau^{i}_{k_0}$};
	\draw node at (-0.25,4) {$i$};
	\draw node at (2.6,2.5) {$i = j_0$};
	
	\draw node at (3,0.5) {$j_2$};
	\draw node at (4.4,1) {$\tau^{j_2}_{k_2}$};
	
	\draw node at (-3,1.5) {$j_1$};
	\draw node at (0.9,2.5) {$j_1$};
	\draw node at (1.9,3.5) {$j_1$};
	\draw node at (-4.4,2) {$\tau^{j_1}_{k_1}$};
	
	\draw node at (0.9,0.5) {$j_3$};
	\draw node at (1.9,1.5) {$j_3$};
	\draw node at (5.75,3.5) {$t$};
    \end{tikzpicture}
    \caption{The continuous thick path denotes the path of the DHP up to time $t$.}  \label{fig:DHP example}
  \end{center}
\end{figure}

We shall define the set of "Chains" $\mathcal{C}^N$ and associate to each $\alpha\in \mathcal{C}^N$ a solution $(X^{\alpha},W^{\alpha})$ of:
\[
dX^{\alpha}_t=b(X^{\alpha}_t,m^N_t)dt+dW^{\alpha}_t,\quad 0\leq t<\tau=\inf\{t:X^{\alpha}_{t-}\in\partial U\}.
\]
Each $\alpha\in\mathcal{C}^N$ provides a recipe for a continuous path made from the trajectories of the particle system, killed at the first time it hits $\partial U$. 

We shall then define for each $1\leq i\leq N$ a Cadlag $\mathcal{C}^N$-valued process $\alpha^{N,i}_t$ which provides a recipe for the unique continuous path made from the trajectories of the particles finishing with $X^{N,i}_t$ at time t.

\begin{defin}[Set of Chains $\mathcal{C}^N$]
We define $\mathcal{C}^N$ to be the collection of all "Chains", which we define as follows:
\[
\begin{split}
\mathcal{C}^N=\{((j_{\ell},0),(j_{\ell-1},k_{\ell-1}),\ldots,(j_1,k_1),(j_0,k_0)):j_{\ell'}\in \{1,\ldots,N\}\text{ for }\ell'\leq \ell,\\ k_{\ell'}\in \mathbb{N},j_{\ell'}\neq j_{\ell'+1}\text{ for }\ell'<\ell\text{ and }0\leq \ell<\infty\}.
\end{split}
\]
\label{defin:chains}
Given $\alpha=((j_{\ell},0),(j_{\ell-1},k_{\ell-1}),\ldots,(j_0,k_0))\in \mathcal{C}^N$ we write $\lvert \alpha\rvert=\ell$ for the "length" of the chain. Thus $\alpha=((i,0))$ is defined to have length $\lvert \alpha\rvert=0$.
\end{defin}

We now construct $(X^{\alpha},W^{\alpha})$ for $\alpha\in\mathcal{C}^N$ as follows. We firstly define the Cadlag processes $(\mathcal{I}^{\alpha}_t,\Lambda^{\alpha}_t)_{0\leq t<\infty}$ for $\alpha=((j_{\ell},0),(j_{\ell-1},k_{\ell-1}),\ldots,(j_0,k_0))$:
\[
\begin{cases}
\text{Initial Condition:}\quad (\mathcal{I}^{\alpha}_0,\Lambda^{\alpha}_0)=(j_{\ell},\ell)\\
(\mathcal{I}^{\alpha}_t,\Lambda^{\alpha}_t):(j_{r},r)\mapsto (j_{r-1},r-1)\quad \text{if }t=\tau^{j_{r-1}}_{k_{r-1}}\text{ and }j_r=U^{j_{r-1}}_{k_{r-1}}\\
(\mathcal{I}^{\alpha}_t,\Lambda^{\alpha}_t)\quad\text{is constant otherwise.}
\end{cases}
\]
We then define:
\[
\begin{split}
X^{\alpha}_t=X^{\mathcal{I}^{\alpha}_t}_t,\quad 0\leq t<\tau^{\alpha}=\inf\{t>0: X^{\alpha}_{t-}\in \partial U\}\\
dW^{\alpha}_t:=dW^{\mathcal{I}^{\alpha}_t}_t,\quad 0\leq t<\infty,\quad W^{\alpha}_0=0.
\end{split}
\]

We see that $X^{\alpha}$ must satisfy the following SDE:
\begin{equation}
\begin{split}
dX^{\alpha}_t=b(m^N_t,X^{\alpha}_t)dt+dW^{\alpha}_t,\quad 0\leq t<\tau^{\alpha}=\inf\{t>0:X^{\alpha}_{t-}\in\partial U\}.
\end{split}
\label{eq:SDE for Xalpha}
\end{equation}
We now define the Dynamical Historical Process:

\begin{defin}[Dynamical Historical Processes]
Given $\alpha=((j_{\ell},0),(j_{\ell-1},k_{\ell-1}),\ldots,(j_1,k_1)) \in\mathcal{C}^N$ and $(j_0,k_0)$ with $j_0\neq j_1$ we set:
\[
\alpha\oplus (j_0,k_0)=((j_{\ell},0),(j_{\ell-1},k_{\ell-1}),\ldots,(j_1,k_1),(j_0,k_0)).
\]

We then define the $\mathcal{C}^N$-valued processes $\alpha^{N,i}_t$ ($i=1,\ldots,N$):
\begin{enumerate}
\item
At time 0 we define:
\[
\alpha^{N,i}_0=(i,0).
\]
\item
Between death times of $X^i$, $\alpha^{N,i}_t$ is constant:
\[
\alpha^{N,i}_t=\alpha^{N,i}_{\tau^i_k},\quad \tau^i_k\leq t<\tau^i_{k+1}.
\]
\item
At time $\tau^{i}_k$ if $U^i_k=j$ then we set:
\begin{equation}
\alpha^{N,i}_{\tau^i_k}=\alpha^{N,j}_{\tau^i_k-}\oplus (i,k).
\label{eq:formula for alpha N,i after jump}
\end{equation}
\end{enumerate}
Then we note that by construction $\tau^{\alpha^{N,i}_t}>t$. We may now define the Dynamical Historical Processes of $X^{N,1},\ldots,X^{N,N}$:
\begin{equation}
\mathcal{H}^{N,i,t}_s:=X^{\alpha^{N,i}_t}_s,\quad 0\leq s\leq t.
\end{equation}
\label{defin:Dynamical Historical Processes Equivalent Definition}
\end{defin}
We say that the DHP $\mathcal{H}^{N,i,t}$ follows particle $j$ at time $s$ if $\mathcal{I}^{\alpha^{N,i}_t}_s = j$. Thus in Figure \ref{fig:DHP example} the DHP $\mathcal{H}^{N,i,t}$ follows particle $j_3$ at time 0 and particle i at time t. We let $R^i_t \geq 0$ be the index of the most recent jump time of particle $i$:
\[
R^i_t = \max \{ k \geq 0\;|\; \tau^i_k \leq t \},
\]
with the convention that $\tau^i_0 = 0$ so that $\mathcal{H}^{N,i,t}_s$ follows particle i at time s for $s\in [\tau^i_{R^i_t}, t]$.

\subsection{Proof of Part \ref{enum:char-almost sure abs cty of m} of Lemma \ref{lem:char-almost sure abs cty}}

\sloppy Without loss of generality, suppose that $(m^N_t,J^N_t)_{0\leq t\leq T}$ converges in distribution on $D([0,T];\mathcal{P}_{\Wah}(U)\times \Rm_{\geq 0})$ to $(m_t,J_t)_{0\leq t\leq T} \in C([0,T];\mathcal{P}_{\Wah}(U)\times \Rm_{\geq 0})$, as $N \to \infty$ (or along a subsequence). We will write:
\begin{equation}
m=m_t\otimes dt,\quad dm=dm_tdt,\quad m^N=m^N_t\otimes dt,\quad dm^N=dm^N_tdt.
\end{equation}
Our goal is to show that, $\Pm$-almost surely, the random measure $m = m_t\otimes dt$ is absolutely continuous with respect to $\text{Leb}_{U \times [0,T]}$. 

For $\vec{h}=(h_1,\ldots,h_d)\in \Rm_{>0}^d$ and $\vec{x}=(x_1,\ldots,x_d)\in \Rm^d$ we define the rectangle:
\begin{equation}
R_{\vec{x}}(\vec{h})=(x_1-h_1,x_1+h_1)\times\ldots\times(x_d-h_d,x_d+h_d).
\label{eq:h rectangle}
\end{equation}
Define $\mathcal{A}=\{R_{\vec{x}}(\vec{h})\times [t_0,t_1]:t_0,t_1\in \Qm,\;0<t_0\leq t_1,\;\vec{x}\in \Qm^d,\;\vec{h}\in \Qm_{>0}^d\}$ and take $\mathcal{R}$ to be the set of finite unions of sets in $\mathcal{A}$ (note that $\mathcal{R}$ is a countable collection of sets). For $E\in\mathcal{B}(U\times (0,T])\setminus\{\emptyset\}$, define $T_{\min}(E)=\inf\{t:(x,t)\in E\}$. For $\rho\in\mathcal{R}$ we define 
\[
\rho_t:=\{x:(x,t)\in\rho\}.
\]

Our proof of the almost-sure absolute continuity of the random measure $m$ begins with the following two lemmas:

\begin{lem}
Fix $T<\infty$ and suppose that we have a random measure $m\in \mathcal{P}(U\times [0,T])$ defined on a probability space $(\Omega,\mathcal{F},\Pm)$ such that $m(U\times \{0\})=0$ holds $\Pm$-almost surely. We further assume that for every $\epsilon>0$ there exists a non-increasing function $C_{\epsilon}:(0,T]\ra \Rm_{\geq 0}$ such that
\begin{equation} \label{eq:supmC}
\expE\left[ 0\vee \sup_{\rho \in \mathcal{R}} \big( m(\rho) - C_{\epsilon}(T_{\min}(\rho))\text{Leb}(\rho) \big) \right] \leq \epsilon.
\end{equation}
Then $m<\!<\text{Leb}_{U\times [0,T]}$ holds $\Pm$-almost surely.
\label{lem:condition to prove for sets in R for abs cty lemma}
\end{lem}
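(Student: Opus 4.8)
The plan is to show that, on an event of full probability, the random finite Borel measure $m$ carries no part singular with respect to $\text{Leb}_{U\times[0,T]}$; equivalently, that almost surely every Lebesgue-null Borel subset of $U\times[0,T]$ is $m$-null.

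First I would fix $\epsilon>0$, take the associated non-increasing $C_\epsilon:(0,T]\to\Rm_{\geq 0}$ from the hypothesis, and introduce the random variable
\[
Z_\epsilon:=0\vee\sup_{\rho\in\mathcal{R}}\big(m(\rho)-C_\epsilon(T_{\min}(\rho))\,\text{Leb}(\rho)\big),
\]
which is measurable because $\mathcal{R}$ is countable and $\omega\mapsto m(\rho)$ is measurable for each fixed $\rho$, and which satisfies $\expE[Z_\epsilon]\leq\epsilon$ by assumption \eqref{eq:supmC}. Directly from the definition of $Z_\epsilon$ one has the pathwise bound $m(\rho)\leq Z_\epsilon+C_\epsilon(T_{\min}(\rho))\,\text{Leb}(\rho)$ for every $\rho\in\mathcal{R}$.

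The main step is to upgrade this from the countable class $\mathcal{R}$ to all Borel sets sitting at times bounded away from $0$: namely, for every $s\in(0,T)$ and every Borel $A\subseteq U\times(s,T]$ with $\text{Leb}(A)=0$, I claim the pathwise inequality $m(A)\leq Z_\epsilon$. To see this, for $\delta>0$ use outer regularity of Lebesgue measure on $\Rm^{d+1}$ to pick an open $V\subseteq\Rm^d\times(s,\infty)$ with $A\subseteq V$ and $\text{Leb}(V)<\delta$. Every point of $V\cap(U\times[0,T])$ lies in the interior (relative to $U\times[0,T]$) of some set $\rho\cap(U\times[0,T])$ with $\rho\in\mathcal{A}$ and $\rho\subseteq V$; covering a compact $K\subseteq V\cap(U\times[0,T])$ by finitely many such and setting $\rho^{\ast}=\bigcup_i\rho_i\in\mathcal{R}$ gives $K\subseteq\rho^{\ast}\cap(U\times[0,T])$, $\rho^{\ast}\subseteq V$, and $T_{\min}(\rho^{\ast})>s$, so that, since $m$ is supported on $U\times[0,T]$ and $C_\epsilon$ is non-increasing,
\[
m(K)\leq m(\rho^{\ast})\leq Z_\epsilon+C_\epsilon(T_{\min}(\rho^{\ast}))\,\text{Leb}(\rho^{\ast})\leq Z_\epsilon+C_\epsilon(s)\,\text{Leb}(V)\leq Z_\epsilon+C_\epsilon(s)\,\delta.
\]
Taking the supremum over compact $K$ (inner regularity of the finite Borel measure $m$ on the Polish space $U\times[0,T]$) and letting $\delta\downarrow 0$ yields $m(A)\leq Z_\epsilon$. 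Note the non-increasing hypothesis on $C_\epsilon$ is used precisely to replace $C_\epsilon(T_{\min}(\rho^{\ast}))$ by $C_\epsilon(s)$, and that letting the $\rho_i$ protrude slightly past $t=T$ is harmless since $m$ ignores that part while its Lebesgue measure is already bounded by $\text{Leb}(V)$.

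It then remains to remove the parameter $s$ and conclude. For a general Borel $A\subseteq U\times[0,T]$ with $\text{Leb}(A)=0$, splitting $A$ across $U\times[0,s]$ and $U\times(s,T]$ gives $m(A)\leq m(U\times[0,s])+Z_\epsilon$ for every $s\in(0,T)$ and every $\epsilon>0$; letting $s\downarrow 0$ and using continuity of $m$ from above together with the standing hypothesis $m(U\times\{0\})=0$ gives $m(A)\leq Z_\epsilon$ pathwise, for every $\epsilon>0$. Since $\Pm(Z_\epsilon>\lambda)\leq\epsilon/\lambda$ by Markov's inequality, for each fixed $\lambda>0$ the event $\bigcup_{n\geq1}\{Z_{1/n}\leq\lambda\}$ has probability $1$, hence
\[
E^{\ast}:=\{m(U\times\{0\})=0\}\cap\bigcap_{k\geq1}\bigcup_{n\geq1}\{Z_{1/n}\leq 1/k\}
\]
has probability $1$; and on $E^{\ast}$ we get $m(A)\leq 1/k$ for every $k$, i.e. $m(A)=0$, for every Lebesgue-null Borel set $A$. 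Thus $m<\!<\text{Leb}_{U\times[0,T]}$ almost surely.

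The step I expect to be the real work is the regularity upgrade in the third paragraph — passing from a bound that only constrains $m$ on the countable class $\mathcal{R}$ of finite unions of rational boxes to a bound valid on arbitrary Borel sets — which combines inner regularity of $m$ (to reduce to compacts and then to finite $\mathcal{R}$-subcovers inside an open neighbourhood) with outer regularity of Lebesgue measure (to make the containing open set thin), plus a little bookkeeping near the endpoint $t=T$. Everything else is routine: the construction of $Z_\epsilon$, the use of $m(U\times\{0\})=0$ to absorb the possible blow-up of $C_\epsilon$ near $0$, and the final Markov/Borel–Cantelli extraction of a full-probability event.
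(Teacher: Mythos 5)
Your proof is correct and follows essentially the same route as the paper's: a pathwise bound on the countable class $\mathcal{R}$, upgraded to arbitrary Borel sets by regularity (the paper phrases this as the outer-measure inequality $m^{\ast}(E)\leq C_{\epsilon}(T_{\min}(E))\text{Leb}^{\ast}(E)+G_{\epsilon}$, which you instead establish explicitly via inner regularity of $m$, outer regularity of Lebesgue measure, and finite subcovers by elements of $\mathcal{A}$), followed by Markov's inequality in $\epsilon$ and removal of the time-zero slice using $m(U\times\{0\})=0$, exactly paralleling the paper's sets $\mathcal{N}_{\delta,T_0}$. If anything, your covering step --- in particular taking a single finite union $\rho^{\ast}\in\mathcal{R}$ so that only one $Z_{\epsilon}$ term appears --- supplies detail that the paper's terse outer-measure assertion leaves implicit.
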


The proof of Lemma~\ref{lem:condition to prove for sets in R for abs cty lemma} is given later in the appendix. We note that \eqref{eq:supmC} is a property of the law of the random measure $m$. Therefore, by Skorokhod's representation theorem, we could assume the convergence of $(m^N_t,J^N_t)_{0\leq t\leq T}$ to $(m_t,J_t)_{0\leq t\leq T}$ holds almost surely on a possibly different probability space $(\Omega^{a.s.},\mathcal{F}^{a.s.},\Pm^{a.s.})$. 

\begin{lem} \label{lem:mmN2}
Suppose that, on the probability space $(\Omega^{a.s.},\mathcal{F}^{a.s.},\Pm^{a.s.})$, some random variables $\{(m_t^N)_{0 \leq t \leq T}\}$ converge in $D([0,T]; \mathcal{P}_{\Wah}(U))$ as $N \to \infty$, $\Pm^{a.s.}$-almost surely, to $(m_t)_{0\leq t\leq T}\in C([0,T];\mathcal{P}_{\Wah}(U))$. Then, for all $\rho \in \mathcal{R}$, we $\Pm^{a.s.}$-almost surely have:
\[
\int_0^T m_t(\rho_t) dt \leq \liminf_{N \to \infty} \int_0^T m_t^N(\rho_t) \,dt.
\]

\end{lem}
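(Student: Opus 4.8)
The plan is to reduce the statement to the portmanteau inequality for open sets applied slice-by-slice in time, followed by Fatou's lemma in the time variable. First I would unpack the structure of $\rho$: by definition of $\mathcal{R}$ we may write $\rho = \bigcup_{j=1}^{n} R_{\vec{x}^j}(\vec{h}^j) \times [t_0^j, t_1^j]$ with $t_0^j, t_1^j \in \Qm$, so that the slice $\rho_t = \bigcup_{j\,:\,t \in [t_0^j,t_1^j]} R_{\vec{x}^j}(\vec{h}^j)$ is a finite union of open rectangles — in particular $\rho_t \cap U$ is open in $U$ — and, as $t$ ranges over $[0,T]$, the map $t \mapsto \rho_t$ takes only finitely many distinct values, being constant on each of the finitely many subintervals into which $[0,T]$ is partitioned by the points $\{t_0^j, t_1^j\}_{j=1}^n$. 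Since $\mu \mapsto \mu(O)$ is lower semicontinuous (hence Borel) on $\mathcal{P}_{\Wah}(U)$ for every open $O \subseteq U$, and the paths $t \mapsto m_t^N$, $t \mapsto m_t$ are measurable, it follows that $t \mapsto m_t^N(\rho_t)$ and $t \mapsto m_t(\rho_t)$ are Borel measurable: on each interval of constancy of $\rho_\cdot$ each is the composition of a measurable path with a fixed Borel functional.

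Next I would pass to pointwise-in-$t$ convergence. Since, $\Pm^{a.s.}$-almost surely, $(m_t^N)_{0\le t\le T} \to (m_t)_{0\le t\le T}$ in the Skorokhod space $D([0,T];\mathcal{P}_{\Wah}(U))$ and the limit path lies in $C([0,T];\mathcal{P}_{\Wah}(U))$, the convergence is in fact uniform on $[0,T]$ (the standard fact that Skorokhod convergence to a continuous limit is uniform, the time changes being endpoint-preserving); hence $\Wah(m_t^N, m_t) \to 0$ for every $t \in [0,T]$, $\Pm^{a.s.}$-almost surely. As $\Wah$ metrizes weak convergence on $\mathcal{P}(U)$, the portmanteau theorem applied to the open set $\rho_t \cap U \subseteq U$ gives, for each $t$,
\[
m_t(\rho_t) \;=\; m_t(\rho_t \cap U) \;\le\; \liminf_{N\to\infty} m_t^N(\rho_t \cap U) \;=\; \liminf_{N\to\infty} m_t^N(\rho_t), \qquad \Pm^{a.s.}\text{-a.s.}
\]

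Finally I would integrate over $t \in [0,T]$ and invoke Fatou's lemma: since $0 \le m_t^N(\rho_t) \le 1$ for all $N$ and all $t$,
\[
\int_0^T m_t(\rho_t)\, dt \;\le\; \int_0^T \liminf_{N\to\infty} m_t^N(\rho_t)\, dt \;\le\; \liminf_{N\to\infty} \int_0^T m_t^N(\rho_t)\, dt,
\]
$\Pm^{a.s.}$-almost surely, which is the assertion. I do not expect any serious obstacle: the only points needing a line of justification are the passage from Skorokhod convergence to a continuous limit to uniform (hence pointwise-in-$t$) convergence, and the measurability of the time-dependent integrands $t \mapsto m_t^N(\rho_t)$ and $t\mapsto m_t(\rho_t)$, both of which follow from the finite, piecewise-constant structure of the slices $\rho_t$.
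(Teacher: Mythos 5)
Your proof is correct and follows essentially the same route as the paper's: uniform convergence of $m^N_\cdot$ to the continuous limit $m_\cdot$ gives pointwise-in-$t$ convergence in $\Wah$, the Portmanteau theorem applied to the open slices $\rho_t$ yields $m_t(\rho_t)\leq\liminf_N m_t^N(\rho_t)$, and Fatou's lemma finishes. The extra remarks on the piecewise-constant structure of $t\mapsto\rho_t$ and the measurability of the integrands are harmless additional detail that the paper leaves implicit.
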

{\bf Proof:} Since $(m_t)_{0\leq t\leq T}\in C([0,T];\mathcal{P}_{\Wah}(U))$, by assumption, we know that $(m^{N}_t)_{0\leq t\leq T}$ converges to $(m_t)_{0\leq t\leq T}$ with respect to the uniform (in $\Wah$) metric. So, $\Pm^{a.s.}$-almost surely we have 
\[
m_t(\rho_t) \leq \liminf_{N\ra\infty}m^N_t(\rho_t)
\]
for every $t > 0$ by the Portmenteau Theorem and the fact $\rho_t$ is an open set. From this fact and Fatou's lemma, we infer that, $\Pm^{a.s}$-almost surely,

\begin{align}
\int_0^Tm_t(\rho_t)dt  \leq \int_0^T \liminf_{N\ra\infty}m^N_t(\rho_t)dt \leq \liminf_{N\ra\infty}\int_{0}^Tm^N_t(\rho_t)dt. 
\end{align}
\hfill \qed

So, to verify the condition \eqref{eq:supmC} for the limit measure $m$, we turn our attention to estimating $m^N(\rho)$. Whereas Lemma \ref{lem:mmN2} requires almost-sure convergence, the construction we will use to obtain controls on $m^N(\rho)$ doesn't necessarily make sense on such a new probability space obtained with Skorokhod's representation theorem. We will therefore obtain controls on $m^N(\rho)$ working on our original filtered probability space $(\Omega,\mathcal{F},(\mathcal{F}_t)_{t\geq 0},\Pm)$. We will then transfer these controls to controls on the limit by way of Skorokhod's representation theorem and Lemma \ref{lem:mmN2}.

Working for the time being on $(\Omega,\mathcal{F},(\mathcal{F}_t)_{t\geq 0},\Pm)$, we now turn our attention to estimating:
\[
m^N(\rho)=\int_0^T m^N_t(\rho_t) \,dt = \frac{1}{N} \sum_{i=1}^N \int_0^T \Ind(X^{N,i}_t \in \rho_t )\,dt.
\]  

Estimating this quantity involves bounding the number of particles in a given set $\rho_t$ at time $t$. It is straightforward to do this with pure diffusions. In our system, however, the jumps make this estimate more difficult.

Recalling the definition of the Dynamical Historical Process $\mathcal{H}_s^{N,i} = X^{\alpha_t^{N,i}}_s$, for $s \in [0,t]$, we let $G^{\ell,n,i}_t$ be the event that 
\begin{equation}
\lvert \alpha^{N,i}_t\rvert \leq \ell
\label{eq:no more than l disc}
\end{equation}
and
\begin{equation}
J^{N,\mathcal{I}^{\alpha^{N,i}_t}_s}_s \leq n, \quad \forall \;\; s \in [0,t].
\label{eq:jumps by particles in the path bounded}
\end{equation}
The first condition says that the DHP makes no more than $\ell$ "transfers", and the second says that if the DHP $\mathcal{H}^{N,i,t}_s$ is following particle $j$ at time $s$, then particle $j$ has made no more than $n$ jumps up to time $s$. We recall that:
\begin{equation}
X^{N,i}_t=\mathcal{H}^{N,i,t}_t,\quad 0\leq t\leq T,\quad 1\leq i\leq N.
\label{eq:X equal to its historical process}
\end{equation}

Now we bound $m^N(\rho)$ by:
\begin{align}
m^N(\rho) & = \frac{1}{N} \sum_{i=1}^N \int_0^T \Ind(X^{N,i}_t \in \rho_t ) \Ind(G^{\ell,n,i}_t) \,dt + \frac{1}{N} \sum_{i=1}^N \int_0^T \Ind(X^{N,i}_t \in \rho_t ) \Ind((G^{\ell,n,i}_t)^C) dt \\
&  \leq \frac{1}{N} \sum_{i=1}^N \int_0^T \Ind(\mathcal{H}^{N,i,t}_t \in \rho_t ) \Ind(G^{\ell,n,i}_t) \,dt + \sup_{t \in [0,T]}  \frac{T}{N} \sum_{i=1}^N    \Ind((G^{\ell,n,i}_t)^C). \label{mN2sums}
\end{align}
Let us write $S_1^{N,\ell,n}(\rho)$ and $S_2^{N,\ell,n}$ for the two terms in \eqref{mN2sums}:
\[
S_1^{N,\ell,n}(\rho) = \frac{1}{N} \sum_{i=1}^N \int_0^T \Ind(\mathcal{H}^{N,i,t}_t \in \rho_t ) \Ind(G^{\ell,n,i}_t) \,dt, \quad \quad S_2^{N,\ell,n} = \sup_{t \in [0,T]}  \frac{T}{N} \sum_{i=1}^N    \Ind((G^{\ell,n,i}_t)^C).
\]
In particular, notice that $S_2^{N,\ell,n}$ does not depend on the set $\rho$. 

For $\ell,n\in \mathbb{N}$ fixed, we will show that there exists $C_{\ell,n}:(0,T]\ra \Rm_{\geq 0}$ a non-increasing function such that for all $\rho\in\mathcal{R}$ we have:
\begin{equation}\label{eq:mass of rho dominated by eta}
C_{\ell,n}(T_{\min}(\rho))\text{Leb}(\rho)\vee S_1^{N,\ell,n}(\rho) \overset{p}{\ra}C_{\ell,n}(T_{\min}(\rho))\text{Leb}(\rho)
\end{equation}
as $N \to \infty$. In addition to this, we will show that for any $\epsilon > 0$, we may choose $\ell = \ell(\epsilon)$ and $n = n(\epsilon)$ such that:
\begin{equation}
\limsup_{N\ra\infty}\expE[ S_2^{N,\ell,n} ] \leq \epsilon. \label{S2Neps}
\end{equation}
Clearly, the random variables $S_2^{N,\ell,n}$ are uniformly bounded: $|S_2^{N,\ell,n}| \leq T$. In particular, for fixed $\ell$ and $n$, the laws of $\{ S_2^{N,\ell,n}\}_{N \geq 2}$ are a tight family. Therefore, there is a random variable $G^\epsilon$ so that along a subsequence, $S_2^{N,\ell,n} \to G^\epsilon$ in distribution as $N \to \infty$. By \eqref{S2Neps}, $\expE[G^\epsilon] \leq \epsilon$ must hold. 

By the Skorokhod representation theorem, we may for fixed $\epsilon>0$ assume that both
\begin{equation}
(m^N_t)_{0\leq t\leq T}\ra (m_t)_{0\leq t\leq T}\quad\text{and}\quad S_2^{N,\ell,n}\ra G^{\epsilon} \label{PasmNG}
\end{equation}
hold almost surely (perhaps on a new probability space $(\Omega^{a.s.},\mathcal{F}^{a.s.},\Pm^{a.s.})$, which does not depend on $\rho$). From \eqref{mN2sums} and \eqref{eq:mass of rho dominated by eta} we have for any $\rho\in\mathcal{R}$ and $\delta>0$:
\begin{equation}
\Pm^{a.s.}(m^N(\rho) - S^{N,\ell,n}_2 \geq C_{\ell,n}\text{Leb}(\rho)+\delta)\ra 0\quad \text{as}\quad N\ra\infty.\label{eq:mNwedge C-S2}
\end{equation}
(The quantities in \eqref{eq:mNwedge C-S2} are all defined on the probability space $(\Omega^{a.s},\mathcal{F}^{a.s.},\Pm^{a.s.})$). Using Lemma \ref{lem:mmN2} and \eqref{PasmNG} we have:
\[
\liminf_{N\ra\infty}\big(m^N(\rho)-S^{N,\ell,n}_2\big)\geq m(\rho)-G^{\epsilon},\quad \Pm^{a.s.}\text{-almost surely.}
\]
Therefore for every $\rho\in\mathcal{R}$ and $\delta>0$, using \eqref{eq:mNwedge C-S2} and Fatou's lemma we have:
\begin{equation}
\begin{split}
    \Pm^{a.s.}(m(\rho) - G^\epsilon >C_{\ell,n}\text{Leb}(\rho)+\delta) \leq \liminf_{N \to \infty} \Pm^{a.s.}(m^N(\rho) - S^{N,\ell,n}_2 >C_{\ell,n}\text{Leb}(\rho) + \delta)=0.
\end{split}
\end{equation}
Therefore, since $\delta>0$ is arbitrary and $\mathcal{R}$ is countable, this implies:
\[
\sup_{\rho\in\mathcal{R}}\big(m(\rho)-C_{\ell,n}\text{Leb}(\rho)\big)\leq G^{\epsilon}\quad\Pm^{a.s.}\text{-almost surely.}
\]
We finally note that
\[
\expE^{\Pm^{a.s.}}\big[\sup_{\rho\in\mathcal{R}}\big(m(\rho) - C_{\ell,n}(T_{\min}(\rho))\text{Leb}(\rho)\big)\big]\leq \epsilon
\]
is a statement about the distribution of $m$, so must also hold true under $\Pm$. Except for the proof of \eqref{eq:mass of rho dominated by eta} and \eqref{S2Neps}, this establishes condition \eqref{eq:supmC} in Lemma~\ref{lem:condition to prove for sets in R for abs cty lemma} and completes the proof of 
of Part 1 of Lemma~\ref{lem:char-almost sure abs cty}. The rest of this section is devoted to the proofs of \eqref{eq:mass of rho dominated by eta} and \eqref{S2Neps}.

\subsubsection*{Proof of \eqref{eq:mass of rho dominated by eta}}

The following lemma will be a key tool for controlling the density of diffusions with bounded drift. We write $\vec{n}(\vec{x})$ ($\vec{x}\in \partial \Rm_{>0}^d$) for the inward normal of the positive orthant $\Rm_{>0}^d$ and consider strong solutions of the following SDE:
\begin{equation}
    dY_t=(-B,\ldots,-B)dt+\vec{n}(Y_t)dL^{Y}_t,\quad Y_0=0
\label{eq:SDE for Y}
\end{equation}
where $L^{Y}_t$ is the local time of $Y_t$ with the boundary $\partial \Rm^d_{>0}$. This is a normally reflected diffusion in $\Rm_{>0}^d$ with constant drift. 
\begin{lem}
Consider on some filtered probability space $(\Omega',\mathcal{F}',(\mathcal{F}'_t)_{t\geq 0},\Pm')$ the family of $\Rm^d$-valued weak solutions $(X^{\gamma},W^{\gamma})$ $(\gamma\in\Gamma)$ of the SDE:
\begin{equation}
    dX^{\gamma}_t=b^{\gamma}_tdt+dW^{\gamma}_t,\quad 0\leq t<\infty
    \label{eq:SDE for Xgamma}
\end{equation}
whereby $\lvert b^{\gamma}\rvert\leq B$ is $(\mathcal{F}'_t)_{t\geq 0}$-adapted. Then there exists on $(\Omega',\mathcal{F}',(\mathcal{F}'_t)_{t\geq 0},\Pm')$ a family of identically distributed strong solutions $(Y^{\gamma},\tilde{W}^{\gamma})$ to \eqref{eq:SDE for Y} which satisfy the following:
\begin{enumerate}
    \item $X^{\gamma}$ dominates $Y^{\gamma}$ so that:
    \begin{equation}
        X^{\gamma}_t\in R_{\vec{h}}(\vec{0})\Rightarrow Y^{\gamma}_t\in R_{\vec{h}}(\vec{0}),\quad t\geq 0,\; \vec{h}\in \Rm_{>0}^d
\label{eq:Ygamma dominated by Xgamma}
    \end{equation}
whereby $\vec{0}=(0,\ldots,0)$.
\item
We have explicit controls on the density of $Y^{\gamma}$ so that there exists $C:(0,\infty)\ra \Rm_{>0}$ non-increasing such that:
\begin{equation}
    \Pm(Y^{\gamma}_t\in R_{\vec{h}}(\vec{0}))\leq C_t\text{Leb}(R_{\vec{h}}(\vec{0})).
\label{eq:density of Ygamma controls}
\end{equation}
\item
For any event $A\in \mathcal{F}'_0$ and $\gamma_1,\gamma_2\in\Gamma$: if conditional upon the event A, $W^{\gamma_1}$ and $W^{\gamma_2}$ are conditionally independent, then so too are $Y^{\gamma_1}$ and $Y^{\gamma_2}$.
\end{enumerate}
\label{lem:dominated family of Ys}
\end{lem}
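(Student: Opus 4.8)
The plan is to construct each $Y^\gamma$ from $X^\gamma$ by a pathwise transformation analogous to the one-dimensional construction of the reflected process $D_t$ in the proof of Proposition \ref{prop:dominated by Bessel fns up to time tauinfty wedge taustop}, carried out coordinate by coordinate. Fix $\gamma$. For each coordinate $k=1,\dots,d$, look at $X^{\gamma}_t(k)$, the $k$-th component, which satisfies $dX^\gamma_t(k) = b^\gamma_t(k)\,dt + dW^\gamma_t(k)$ with $|b^\gamma_t(k)|\le B$. Write $\tilde W^\gamma_t(k)=W^\gamma_t(k)$ (no change of driving noise is needed here, unlike the Bessel case, since the "distance to a face of the orthant" is just a linear function of a coordinate). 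Set $A^\gamma_t(k) = \int_0^t b^\gamma_s(k)\,ds$, so $A^\gamma_t(k) \ge -Bt$ and $dA^\gamma_t(k) \ge -B\,dt$. I would then define $Y^\gamma_t(k)$ to be the Skorokhod reflection at $0$ of the process $-Bt + \tilde W^\gamma_t(k)$; explicitly,
\[
Y^\gamma_t(k) = -Bt + \tilde W^\gamma_t(k) + L^{Y,k}_t,\qquad L^{Y,k}_t = \Bigl(\,0 \vee \sup_{0\le s\le t}\bigl(Bs - \tilde W^\gamma_s(k)\bigr)\Bigr),
\]
which is $[0,\infty)$-valued and solves $dY^\gamma_t(k) = -B\,dt + d\tilde W^\gamma_t(k) + dL^{Y,k}_t$ with $L^{Y,k}$ nondecreasing, increasing only when $Y^\gamma_t(k)=0$. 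Stacking the $d$ coordinates gives a process $Y^\gamma$ solving \eqref{eq:SDE for Y} (each face of the orthant is hit one coordinate at a time almost surely, so the normal-reflection term $\vec n(Y_t)\,dL^Y_t$ decomposes as $\sum_k e_k\,dL^{Y,k}_t$), driven by $\tilde W^\gamma = W^\gamma$; since $W^\gamma$ is a Brownian motion, $Y^\gamma$ is a strong solution and all the $Y^\gamma$ are identically distributed.

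For the domination property \eqref{eq:Ygamma dominated by Xgamma}, I would argue coordinatewise and in fact show the stronger pointwise bound $0\le Y^\gamma_t(k)\le X^\gamma_t(k)$ whenever $X^\gamma_t(k) > 0$ — more precisely, I want: if $X^\gamma_t(k)\in(-h_k,h_k)$ then $Y^\gamma_t(k)\in(-h_k,h_k)$, i.e. $Y^\gamma_t(k) < h_k$. Since $Y^\gamma_t(k)\ge 0 > -h_k$ automatically, it suffices to compare $Y^\gamma_t(k)$ and $|X^\gamma_t(k)|$, or better, to run the standard comparison: $U_t := X^\gamma_t(k)$ satisfies $dU_t = b^\gamma_t(k)\,dt + d\tilde W^\gamma_t(k) \ge -B\,dt + d\tilde W^\gamma_t(k)$, while $V_t := Y^\gamma_t(k)$ satisfies $dV_t = -B\,dt + d\tilde W^\gamma_t(k) + dL^{Y,k}_t$. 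Then $d(V_t - U_t) = (-B - b^\gamma_t(k))\,dt + dL^{Y,k}_t$; the drift term is $\le 0$ and $dL^{Y,k}_t$ is supported on $\{V_t = 0\}$. A routine argument (of exactly the type used to prove $D_t\le\eta_t$ in Step \ref{enum:eta dominates D} of the proof of Proposition \ref{prop:dominated by Bessel fns up to time tauinfty wedge taustop}) shows $V_t \le U_t \vee 0$ cannot fail: if $V_{t_1} - U_{t_1} = \delta > 0$, let $t_0 = \sup\{t<t_1 : V_t - U_t \le \delta/2\}$; on $(t_0,t_1]$ we have $V_t > U_t \ge \ldots$ wait — one must be slightly careful since $X^\gamma_t(k)$ can be negative, but the inequality we need, $V_t < h_k$ whenever $U_t \in (-h_k, h_k)$, follows because $L^{Y,k}$ does not increase while $V_t>0$, so on any excursion of $V$ away from $0$, $V_t - U_t$ is nonincreasing, giving $V_t \le V_{\sigma} - U_\sigma + U_t \le 0 + U_t < h_k$ where $\sigma$ is the start of the excursion — and when $V_t = 0$ the bound $V_t < h_k$ is trivial. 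I will write this comparison out carefully; it is the one genuinely delicate point, analogous to the $\eta_t\ge D_t$ step already in the paper.

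The density control \eqref{eq:density of Ygamma controls} is then a statement about the law of $Y^\gamma$ alone, which does not depend on $\gamma$: $Y^\gamma$ is a normally reflected Brownian motion with constant drift $(-B,\dots,-B)$ in the orthant $\Rm_{>0}^d$ started at $0$, which by the reflection/Girsanov argument (removing the drift via an equivalent change of measure and using that reflected Brownian motion in the orthant from the corner has a transition density bounded by $C_t$ times Lebesgue measure on compacts, with $C_t$ nonincreasing for $t$ bounded away from $0$ by the explicit Gaussian-type heat kernel estimate) yields $\Pm(Y^\gamma_t\in R_{\vec h}(\vec 0))\le C_t\,\mathrm{Leb}(R_{\vec h}(\vec 0))$; I would simply cite the heat kernel bound for reflected diffusions together with a bounded-density change of measure. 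Finally, the conditional-independence statement (3) is immediate from the construction: $Y^\gamma$ is a deterministic measurable functional of the path $W^\gamma$ (through $\tilde W^\gamma = W^\gamma$ and the explicit Skorokhod map), so if $W^{\gamma_1}$ and $W^{\gamma_2}$ are conditionally independent given $A\in\mathcal F_0'$, so are $Y^{\gamma_1}$ and $Y^{\gamma_2}$. The main obstacle is making the coordinatewise comparison \eqref{eq:Ygamma dominated by Xgamma} fully rigorous when $X^\gamma$ may wander to both sides of a face — but since $Y^\gamma$ only needs to be dominated when $X^\gamma$ lies inside the rectangle, and the reflection keeps $Y^\gamma\ge 0$, the two-sided ambiguity does not actually cause trouble, and the argument reduces to the one-sided comparison already in hand.
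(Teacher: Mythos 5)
Your construction of $Y^{\gamma}$ does not satisfy the domination property \eqref{eq:Ygamma dominated by Xgamma}: taking $\tilde W^{\gamma}=W^{\gamma}$ and reflecting $-Bt+W^{\gamma}_t(k)$ at the origin compares $Y^{\gamma}(k)$ with $X^{\gamma}(k)$, whereas membership in the rectangle is a condition on $\lvert X^{\gamma}_t(k)\rvert$. Concretely, take $d=1$, $b^{\gamma}\equiv 0$ and $X^{\gamma}_0=-10$; on the positive-probability event that $W^{\gamma}$ rises to $9.5$ by a small time $t$ while staying above its starting value, your $Y^{\gamma}_t=-Bt+W^{\gamma}_t+L_t\geq 9.5-Bt>1$ while $X^{\gamma}_t=-0.5$, so $X^{\gamma}_t$ lies in $(-1,1)$ but $Y^{\gamma}_t$ does not. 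This is not a gap in the write-up but a failure of the constructed object itself, so no comparison argument can rescue it. The precise point where your excursion argument breaks is the inequality $V_{\sigma}-U_{\sigma}\leq 0$: at the start $\sigma$ of an excursion of $V=Y^{\gamma}(k)$ away from $0$ you have $V_{\sigma}=0$ but $U_{\sigma}=X^{\gamma}_{\sigma}(k)$ may be very negative, so $V_t\leq V_{\sigma}-U_{\sigma}+U_t$ gives nothing. The ``two-sided ambiguity'' you dismiss at the end is exactly the fatal issue; note the lemma places no restriction on $X^{\gamma}_0$ (in its application the processes are recentred at arbitrary points $\vec x_{\beta}$), and even with $X^{\gamma}_0(k)\geq 0$ an excursion of $X^{\gamma}(k)$ below $0$ produces the same failure.

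The repair is the one the paper uses: apply Tanaka's formula to $\lvert X^{\gamma}_t(k)\rvert$, obtaining $d\lvert X^{\gamma}_t(k)\rvert=\bar b^{\gamma,k}_t\,dt+d\tilde W^{\gamma,k}_t+dL^{0,k}_t$ with $\lvert \bar b^{\gamma,k}\rvert\leq B$, $L^{0,k}$ nondecreasing, and $\tilde W^{\gamma,k}_t=\int_0^t\mathrm{sgn}(X^{\gamma}_s(k))\,dW^{\gamma}_s(k)$ a Brownian motion. Driving the reflected equation with this $\tilde W^{\gamma,k}$ (not with $W^{\gamma}(k)$) and running the comparison of Step \ref{enum:eta dominates D} of the proof of Proposition \ref{prop:dominated by Bessel fns up to time tauinfty wedge taustop} gives $0\leq Y^{\gamma}_t(k)\leq \lvert X^{\gamma}_t(k)\rvert$ for all $t$, since $Y^{\gamma}_0(k)=0\leq\lvert X^{\gamma}_0(k)\rvert$ and the difference $\lvert X^{\gamma}(k)\rvert-Y^{\gamma}(k)$ has nonnegative drift with the reflection terms acting in the correct direction; this is exactly \eqref{eq:Ygamma dominated by Xgamma}. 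Your parts (2) and (3) survive with minor edits: the law of $Y^{\gamma}$ is unchanged (one-dimensional reflected Brownian motion with drift $-B$ started at $0$ in each coordinate, whose transition density bound the paper reads off an explicit formula rather than via Girsanov), but for (3) you must replace ``measurable functional of $W^{\gamma}$'' by ``measurable functional of $\tilde W^{\gamma}$'' and observe that $\tilde W^{\gamma_1}$ and $\tilde W^{\gamma_2}$ have zero covariation whenever $W^{\gamma_1}$ and $W^{\gamma_2}$ do, because $d\tilde W^{\gamma}=K^{\gamma}_t\,dW^{\gamma}$ for a diagonal signature-matrix process $K^{\gamma}$.
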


We will use Lemma \ref{lem:dominated family of Ys} in the Appendix to prove Lemma \ref{lem:upper and lower bounds on density of diffusions}, providing controls on the density of a diffusion for generic bounded drift, which shall be used throughout this paper.

We consider the possibilities for $\alpha^{N,i}_t$ given $G^{\ell,n,i}_t$. The condition that $J^{N,\mathcal{I}^{\alpha^{N,i}_t}}_s\leq n$ for all $0\leq s\leq t$ then allows us to see that for each transfer of the DHP from particle $j$ to particle $k$, $j$ is within the first $n$ particles $k$ jumps onto. Therefore to obtain all possibilities for $\alpha^{N,i}_t$ given $G^{\ell,n,i}_t$, it is sufficient to consider the first $n$ particles $i$ jumps onto, the first $n$ particles each of these children jumps onto, and repeating this $\ell$ times to obtain all possibilities for $\alpha^{N,i}_t$; these possibilities form a tree structure. We take $\Pi^{\ell,n}$ to be a perfect n-ary tree of length $\ell$ and construct a random injective function
\[
\hat{\alpha}^{N,\ell,n}_i:\Pi^{\ell,n}\ra \mathcal{C}^N
\]
with image $\mathcal{C}_i^{N,\ell,n}\subseteq \mathcal{C}^{N}$. This random function shall be such that
\begin{equation}
G^{\ell,n,i}_t\subseteq \{\alpha^{N,i}_t=\hat{\alpha}^{N,\ell,n}_i(v)\quad\text{for some}\quad v\in\Pi^{\ell,n}\}
\label{eq:relationship between Gt and hat alpha}
\end{equation}
and such that
\begin{equation}
\hat{\alpha}^{N,\ell,n}_i(v)\quad\text{is}\quad \sigma(U^i_k:1\leq i\leq N,\;k\geq 0)\text{-measurable.}
\end{equation}

We then define $\mathcal{T}_i^{N,\ell,n}$ to be the following $\{1,\ldots,N\}^{\Pi^{\ell,n}}$-valued random variable:
\[
\mathcal{T}_i^{N,\ell,n}(v)=j_r\quad\text{whereby}\quad\hat{\alpha}^{N,\ell,n}_i(v)=((j_r,0),\ldots).
\]
$\mathcal{T}_i^{N,\ell,n}$ assigns the root of $\Pi^{\ell,n}$ to $i$, assigns the $k^{\thh}$ child of the root to the $k^{\thh}$ particle i jumps onto, and so forth. We then define 
\[
\mathcal{G}_i^{N,\ell,n}=\text{Image}(\mathcal{T}_i^{N,\ell,n})
\]
to be the collection of all particles given by $\mathcal{T}_i^{N,\ell,n}$ at some branch of $\Pi^{\ell,n}$. Thus $\mathcal{G}_i^{N,\ell,n}$ is the collection of all particles which may be followed by $X^{\hat{\alpha}^{N,\ell,n}_i(v)}$ for some $v\in \Pi^{\ell,n}$.

We define a new filtered probability space $(\Omega,\mathcal{F},(\bar{\mathcal{F}}_t)_{t\geq 0},\Pm)$ given by the initial enlargement:
\begin{equation}
\bar{\mathcal{F}}_t=\mathcal{F}_t\wedge \sigma(U^i_k:1\leq i\leq N,\;k\geq 0).
\label{eq:initially enlarged filtration by jumps}
\end{equation}
We note the following:
\begin{enumerate}
    \item This new filtered probability space has the same sigma-algebra as our previous probability space $(\Omega,\mathcal{F},\Pm)$. Thus any random variable we define on this new sigma-algebra is defined on our previous probability space and vice-versa - only the adaptedness properties with respect to the filtration may change.
    \item Since $(\mathcal{F}_t)_{t\geq 0}$ is a subfiltration of $(\bar{\mathcal{F}}_t)_{t\geq 0}$ any $(\mathcal{F}_t)_{t\geq 0}$-adapted process is $(\bar{\mathcal{F}}_t)_{t\geq 0}$-adapted.
    \item
    The Brownian motion $W^i$ is independent of $\sigma(U^i_k:1\leq i\leq N,\;k\geq 0)$, hence an $(\bar{\mathcal{F}}_t)_{t\geq 0}$-Brownian motion. Moreover since $\hat{\alpha}^{N,\ell,n}_i(v)$ is $\bar{\mathcal{F}}_0$-measurable we have:
    \[
    W^{\hat{\alpha}^{N,\ell,n}_i(v)}_t:=\int_0^tdW^{\mathcal{I}^{\hat{\alpha}^{N,\ell,n}_i(v)}_s}_s,\quad 0\leq t<\infty,\quad W^{\hat{\alpha}^{N,\ell,n}_i(v)}_0=0
    \]
    is an $(\bar{\mathcal{F}}_t)_{t\geq 0}$-Brownian motion.
    \item 
    If the set of particles $\hat{\alpha}^{N,\ell,n}_i(v)$ follows is disjoint from those followed by $\hat{\alpha}^{N,\ell,n}_j(v')$ - hence if $\mathcal{G}^{N,\ell,n}_i\cap \mathcal{G}^{N,\ell,n}_j=\emptyset$ - then $W^{\hat{\alpha}^{N,\ell,n}_i(v)}$ and $W^{\hat{\alpha}^{N,\ell,n}_j(v')}$ have zero covariation. Therefore conditional on the event \[
    A^{N,\ell,n}_{i,j}=\{\mathcal{G}^{N,\ell,n}_i\cap \mathcal{G}^{N,\ell,n}_j=\emptyset\}\in \bar{\mathcal{F}}_0,
    \]
    $W^{\hat{\alpha}^{N,\ell,n}_i(v)}$ and $W^{\hat{\alpha}^{N,\ell,n}_j(v')}$ must be independent. 
\end{enumerate}

For every $\rho\in\mathcal{R}$ we fix a finite index set $\iota_{\rho}$ such that $\rho$ is given by the union:
\begin{equation}
\rho=\cup_{\beta\in \iota_{\rho}}[t^{\beta}_0,t^{\beta}_1]\times R_{\vec{h}_{\beta}}(\vec{x}_{\beta})
\label{eq:rho in terms of iota}
\end{equation}
whereby:
\begin{equation}
\sum_{\beta\in\iota_\rho}\text{Leb}([t^{\beta}_0,t^{\beta_1}]\times R_{\vec{h}_{\beta}}(\vec{x}_{\beta}))\leq 2\text{Leb}(\rho).
\label{eq:sum of Lebesgue at most twice Lebesgue of union}
\end{equation}

For each $1\leq i\leq N$, $v\in \Pi^{\ell,n}$, $\beta\in \iota_{\rho}$ we apply Lemma \ref{lem:dominated family of Ys} to $\{X^{\hat{\alpha}^{N,\ell,n}_i(v)}-\vec{x}_{\beta}\}$ to construct $Y^{i,v,\beta}_t$ and define:
\begin{equation}
\eta_i^{N,\ell,n,\rho}:=\sum_{\substack{\beta\in\iota_{\rho}\\ v\in \Pi^{\ell,n}}}\int_{t^{\beta}_0}^{t^{\beta}_1}\Ind(Y^{i,v,\beta}_t\in R_{\vec{h}_{\beta}}(\vec{0}))dt,\quad \rho\in\mathcal{R}.\label{eq:defin eta}
\end{equation}

We therefore have:
\[
\begin{split}
S_1^{N,l,n}(\rho)=\frac{1}{N}\sum_{i=1}^N\int_0^T\Ind(X^{\alpha^{N,i}_t}_t\in\rho_t) \Ind_{G^{\ell,n,i}_t}dt
\leq \frac{1}{N}\sum_{i=1}^N\sum_{\beta\in \iota_{\rho}}\int_{t^{\beta}_0}^{t^{\beta}_1}\Ind(X^{\alpha^{N,i}_t}_t\in R_{\vec{h}_{\beta}}(\vec{x}_{\beta})) \Ind_{G^{\ell,n,i}_t}dt
\\\underbrace{\leq}_{\text{by}\;\eqref{eq:relationship between Gt and hat alpha}} \frac{1}{N}\sum_{i=1}^N\sum_{\beta\in \iota_{\rho}}\frac{1}{N}\sum_{i=1}^N\sum_{v\in\Pi^{\ell,n}}\int_{t^{\beta}_0}^{t^{\beta}_1}\Ind(X^{\hat{\alpha}^{N,\ell,n}_i(v)}_t\in R_{\vec{h}_{\beta}}(\vec{x}_{\beta})) \Ind_{G^{\ell,n,i}_t}dt\\
\underbrace{\leq}_{\text{by}\;\eqref{eq:Ygamma dominated by Xgamma}}\frac{1}{N}\sum_{i=1}^N\sum_{\substack{\beta\in\iota_{\rho}\\ v\in \Pi^{\ell,n}}}\int_{t^{\beta}_0}^{t^{\beta}_1}\Ind(Y^{i,v,\beta}_t\in R_{\vec{h}_{\beta}}(\vec{0}))dt=\frac{1}{N}\sum_{i=1}^N\eta^{N,\ell,n,\rho}_i.
\end{split}
\]

We conclude our proof of \eqref{eq:mass of rho dominated by eta} by establishing the following lemma and verifying $\{\eta^{N,l,n,\rho}_i:1\leq i\leq N\}$ satisfies the conditions of this lemma with $M=C_{\ell,n}(T_{\min}(\rho))\text{Leb}(\rho)$:
\begin{lem}
Let $\{\gamma^{N}_k:1\leq k\leq N\in \mathbb{N}\}$ be a triangular array of random variables, and let $S_N=\sum_{k\leq N}\gamma^{N}_k$. We suppose that the $\gamma^{N}_k$ are uniformly bounded, that $\sup_{j\neq k}\text{Cov}(\gamma^{N}_j,\gamma^{N}_k)\ra 0$ as $N\ra \infty$, and that $\limsup_{N\ra\infty}\sup_{1\leq j\leq N}\expE[\gamma^N_j]\leq M$. Then we have $\frac{S_N}{N}\vee M\ra M$ in probability.
\label{lem:weak convergence lemma}
\end{lem}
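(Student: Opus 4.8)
Looking at Lemma~\ref{lem:weak convergence lemma}, this is a standard weak law of large numbers for triangular arrays with weak dependence, combined with the observation that the truncation $S_N/N \vee M$ removes the contribution of fluctuations on the wrong side.

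\textbf{Proof proposal.} The plan is to prove convergence in $L^2$ of $S_N/N$ to its mean $\mu_N := \expE[S_N]/N$ and then handle the truncation separately. First I would write $\text{Var}(S_N/N) = \frac{1}{N^2}\sum_{j,k\leq N}\text{Cov}(\gamma^N_j,\gamma^N_k)$. The diagonal terms contribute at most $\frac{1}{N^2}\sum_{k\leq N}\text{Var}(\gamma^N_k) \leq \frac{C^2}{N}$, where $C$ is the uniform bound on $|\gamma^N_k|$ (so $\text{Var}(\gamma^N_k)\le C^2$); this goes to $0$. The off-diagonal terms number at most $N^2$ and each is bounded by $\sup_{j\neq k}\text{Cov}(\gamma^N_j,\gamma^N_k) =: c_N \to 0$, so their total contribution is at most $c_N \to 0$. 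Hence $\text{Var}(S_N/N) \to 0$, and by Chebyshev $S_N/N - \mu_N \to 0$ in probability.

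Next I would deal with the mean and the truncation. By hypothesis $\limsup_{N\to\infty}\sup_{1\leq j\leq N}\expE[\gamma^N_j] \leq M$, so $\limsup_{N\to\infty}\mu_N \leq M$. Combining this with $S_N/N - \mu_N \to 0$ in probability, for any $\delta > 0$ we get $\Pm(S_N/N \geq M + \delta) \to 0$ as $N\to\infty$: indeed, for $N$ large enough $\mu_N < M + \delta/2$, and then $\{S_N/N \geq M+\delta\} \subseteq \{|S_N/N - \mu_N| \geq \delta/2\}$, whose probability tends to $0$. Now observe that $S_N/N \vee M \geq M$ always, and
\[
0 \leq (S_N/N \vee M) - M = (S_N/N - M)^+ \leq (S_N/N - M)\,\Ind(S_N/N \geq M).
\]
On the event $\{S_N/N < M + \delta\}$ this quantity is at most $\delta$, while on the complement it is bounded by $C + |M|$ (using the uniform bound, so that $|S_N/N| \le C$). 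Therefore for any $\delta>0$,
\[
\Pm\big((S_N/N \vee M) - M \geq \delta\big) \leq \Pm(S_N/N \geq M + \delta) \to 0,
\]
which is precisely the statement that $S_N/N \vee M \to M$ in probability.

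I do not expect any serious obstacle here; the only mild subtlety is that $\mu_N$ need not converge (only $\limsup \mu_N \leq M$), so one must be careful to phrase everything as one-sided bounds rather than trying to show $S_N/N \to M$ directly — which would in general be false if the means oscillate below $M$. The truncation $\cdot \vee M$ is exactly what rescues the statement, since undershooting the mean is washed out by the max with $M$.
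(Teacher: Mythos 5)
Your proposal is correct and follows essentially the same route as the paper: a variance bound splitting diagonal and off-diagonal covariance terms, Chebyshev to get $S_N/N - \mu_N \to 0$ in probability, and then the one-sided mean bound $\limsup_N \mu_N \le M$ to conclude $(S_N/N)\vee M \to M$. Your explicit treatment of the truncation via $(S_N/N - M)^+$ merely spells out a step the paper leaves implicit.
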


\subsubsection*{Proof of Lemma \ref{lem:dominated family of Ys}}\label{proof:dominated family of Ys}

We firstly construct $Y^{\gamma},\tilde{W}^{\gamma}$ for $\gamma\in\Gamma$. We write $X^{\gamma,d'}_t$ for the $d'^{th}$ coordinate of $X^{\gamma,d'}_t$ for $1\leq d'\leq d$. We take the Doob-Meyer decomposition of $\lvert X^{\gamma,d'}_t\rvert$, obtaining it as the sum of a Brownian motion $\tilde{W}^{\gamma,d'}_{t}$, a drift ($\leq B$) term and a local time term up to the time $\tau^{\gamma}$. We then write
\[
\tilde{W}^{\gamma}_t=(\tilde{W}^{\gamma,1}_t,\ldots,\tilde{W}^{\gamma,d}_t)
\]
and continue $\tilde{W}^{\gamma}_{t}$ after the time $\tau^{\gamma}$ by setting $d\tilde{W}^{\gamma}_t=dW^{\gamma}_t$. It is then immediate that there exists an $(\mathcal{F}'_t)_{t\geq 0}$-adapted $d\times d$ signature matrix-valued process $K^{\gamma}_t$ such that $\tilde{W}^{\gamma}$ satisfies:
\begin{equation}
d\tilde{W}^{\gamma}_t=K^{\gamma}_tdW^{\gamma}_t.
\label{eq:Walpha tilde in terms of Walpha}
\end{equation}

Having constructed $(\mathcal{F}'_t)_{t\geq 0}$-Brownian motions $\tilde{W}^{\gamma,d'}$, we have $(\mathcal{F}'_t)_{t\geq 0}$- adapted strong solutions $(Y^{\gamma,d'},\tilde{W}^{\gamma,d'})$ of the following (which exists by \cite[Theorem 1.3]{Andres}):
\begin{equation}
\begin{split}
dY_t=d\tilde{W}_t-Bdt+dL^{Y}_t,\quad Y_0=0.
\end{split}
\label{eq:equation for Y}
\end{equation}

Thus $(Y^{\gamma},W^{\gamma})=((Y^{\gamma,1},\ldots,Y^{\gamma,d}),W^{\gamma})$ is a strong solution to \eqref{eq:SDE for Y}. Now we observe that for some $\lvert b^{\gamma,d'}_t\rvert \leq B$ we have:
\[
d(\lvert X^{\gamma,d'}_{t}\rvert - Y^{\gamma,d'}_t)=(B-b^{\gamma,d'}_t)dt+dL^{\lvert X^{\gamma,d'}\rvert}-dL^{Y^{\gamma,d'}}_t,\quad t<\tau^{\gamma}.
\]

Hence by the same proof that $\eta_t\geq D_t$ in the proof of Step \ref{enum:eta dominates D} of Proposition \ref{prop:dominated by Bessel fns up to time tauinfty wedge taustop} we have $\lvert X^{\gamma,d'}_{t}\rvert \geq Y^{\gamma,d'}_t$ for all $t<\tau^{\gamma}$. This immediately implies \eqref{eq:Ygamma dominated by Xgamma}.

We now control the expectation, showing that there exists $C:(0,T]\ra \Rm_{\geq 0}$ non-increasing such that for all $\vec{h}\in\Rm_{>0}^d$ and $\gamma\in\Gamma$ we have \eqref{eq:density of Ygamma controls}. We have \cite[Equation (1.1)]{Abate1987} an explicit expression for the cumulative density function of reflected Brownian motion with constant negative drift reflected at 0. Differentiating \cite[Equation (1.1)]{Abate1987} in y we have that for some $c<\infty$ the transition density satisfies:
\[
p_t(x,y)\leq \frac{c}{\sqrt{t}}.
\]
Therefore $\Pm(Y^{\gamma,d'}_t\in [0,h])\leq \frac{c}{\sqrt{t} }h$ for $t>0$, $h\geq 0$ and $1\leq d'\leq d$.

We use \eqref{eq:Walpha tilde in terms of Walpha} to see that $\tilde{W}^{\gamma,d_1}$ and $\tilde{W}^{\gamma,d_2}$ are pairwise independent Brownian motions for $d_1\neq d_2$ and hence jointly independent. Therefore $\{Y^{\gamma,d'}:1\leq d'\leq d\}$ are independent as they are measurable functions of independent Brownian motions. Thus we have:
\[
\Pm(Y^{\gamma}_t \in R_{\vec{h}}(\vec{0}))=\prod_{1\leq d'\leq d}\Pm(Y^{\gamma,d'}_t\in [0,h_{d'}])\leq \frac{c^d}{t^{\frac{d}{2}}}\text{Leb}(R_{\vec{h}}(\vec{0})).
\]

Finally we observe that for any event $A\in \mathcal{F}'_0$ and $\gamma_1,\gamma_2\in\Gamma$; if conditional upon the event A, $W^{\gamma_1}$ and $W^{\gamma}$ are conditionally independent; then they must have zero covariation Using \eqref{eq:Walpha tilde in terms of Walpha} we see that $\tilde{W}^{\gamma_1}$ and $\tilde{W}^{\gamma_2}$ must also have zero covariation, hence be conditionally independent. Therefore upon the event A, $Y^{\gamma_1}$ and $Y^{\gamma_2}$ are independent as they are measurable functions of independent Brownian motions.

\qed

\subsubsection*{Construction of $\hat{\alpha}^{N,l,n}_i$}
We define the random function $\hat{\alpha}^{N,l,n}_i$ by firstly defining its image:
\begin{defin}
We define $\mathcal{C}_i^{N,\ell,n}\subseteq \mathcal{C}^{N}$ as follows:
\begin{equation}
\begin{split}
\mathcal{C}^{N,\ell,n}_i=\{((j_{\ell'},0),(j_{\ell'-1},k_{\ell'-1}),\ldots, (j_1,k_1),(j_0,k_0))\in\mathcal{C}^{N}:\\
U^{j_r}_{k_r}=j_{r+1}\text{ whereby }k_r\leq n\text{ for all }r<\ell',\; \ell'\leq \ell,\; j_0=i\}.
\end{split}
\end{equation}
\end{defin}

We now parametrise the elements of $\mathcal{C}^{N,\ell,n}_i$ as follows. We define $\Pi^{\ell,n}$ to be a perfect n-ary tree of length $\ell$:
\begin{defin}[$\Pi^{\ell,n}$]
We define $\Pi^{\ell,n}$ to be a perfect n-ary tree of length $\ell$ (so that each leaf is of depth $\ell$ with the root defined to be of depth $0$). We adopt standard Ulam-Harris notation, writing $\emptyset$ for the root of $\Pi^{\ell,n}$, $(k_0)$ for the $k_0^\thh$ child of $\emptyset$ ($k_0\leq n$) and recursively defining $(k_0,\ldots,k_{r},k_{r+1})$ to be the $k_{r+1}^{\thh}$ child of $(k_0,\ldots,k_{r})$ (for $r\leq \ell-2$ and $k_{r}\leq n$).
\end{defin}
Note that the leaves of this tree terminate with an $\ell-1$ subscript: $(k_0,k_1,\ldots,k_{\ell-1})$. Then we see that the following random map is bijective:
\[
\begin{split}
    \iota^{N,\ell,n}_i:\mathcal{C}^{N,\ell,n}_i\ra \Pi^{\ell,n},\quad
    (i,0)\mapsto \emptyset\\
    ((j_{r},0),(j_{r-1},k_{r-1}),\ldots,(j_1,k_1), (i,k_0))\mapsto (k_0,k_1,\ldots,k_{r-1}),\quad 1\leq r\leq \ell.
\end{split}
\]
To see that $\iota^{N,\ell,n}_i$ is surjective, fix some $(k_0,\ldots,k_{\ell'-1})\in \Pi^{\ell,n}$ and recursively define $j_{r+1}=U^{j_r}_{k_r}$ ($r<\ell'$), $j_0=i$. Then we see $\iota^{N,\ell,n}_i(((j_{\ell'},0),(j_{\ell'-1},k_{\ell'-1}),\ldots,(j_1,k_1),(i,k_0)))=(k_0,k_1,\ldots,k_{\ell'-1})$ whereby $((j_{\ell'},0),(j_{\ell'-1},k_{\ell'-1}),\ldots,(j_1,k_1),(i,0))\in \mathcal{C}^{N,\ell,n}_i$. 

To see that $\iota^{N,\ell,n}_i$ is injective, suppose that $\iota^{N,\ell,n}_i(((j_{\ell'},0),(j_{\ell'-1},k_{\ell'-1}),\ldots,(j_1,k_1),(i,k_0)))=(k_0,k_1,\ldots,k_{\ell'-1})$ $(\ell'\leq \ell)$. Then we must have $j_1=U^{i}_{k_0}$ and $j_{r+1}=U^{j_r}_{k_r}$ for $r<\ell'$. This uniquely defines $((j_{\ell'},0),(j_{\ell'-1},k_{\ell'-1}),\ldots,(j_1,k_1),(i,k_0))$.

Thus we can take the inverse of $\iota^{N,\ell,n}_i$, parametrising the elements of $\mathcal{C}_i^{N,\ell,n}$ with $\Pi^{\ell,n}$:
\[
\begin{split}
    \Pi^{\ell,n}\ra \mathcal{C}^{N,\ell,n}_i,\quad
    v\mapsto \hat{\alpha}^{N,\ell,n}_i(v),\quad
    \hat{\alpha}^{N,\ell,n}_i(\emptyset)=(i,0)\\
    \hat{\alpha}^{N,\ell,n}_i((k_0,k_1,\ldots,k_{\ell'-1}))=((j_{\ell'},0),(j_{\ell'-1},k_{\ell'-1}),\ldots, (j_1,k_1),(i,k_0)).
\end{split}
\]
Therefore we have
\[
\hat{\alpha}^{N,\ell,n}_i:\Pi^{\ell,n}\ra \mathcal{C}^N
\]
is a random injection with image $\mathcal{C}^{N,\ell,n}_i$. The interpretation of this map can be seen from the following example:

\begin{figure}[H]
    \begin{tikzpicture}[scale=0.6, every node/.style={transform shape}]
			\draw[-,snake=snake,red](-2,0) -- ++(-2,2);
	\draw node at (-1.6,0) {$i$};
			\draw[->,snake=snake,thin,black](-2,2) -- ++(-2,2);
			\draw[-,snake=snake,ultra thick,black](0,0) -- ++(-2,2);
			\draw[->,snake=snake,ultra thick,red](-2,2) -- ++(6,3);			
			\draw[-,snake=snake,thin,green](3,0) -- ++(-1,3); 			\draw[->,snake=snake,green](2,3) -- ++(2,1);
			\draw[->,snake=snake,thin,blue](2,3) -- ++(0,3);
			\draw[->,snake=snake,thin,red](2,5) -- ++(-2,1);
			\draw[->,snake=snake,ultra thick,blue](1,0) -- ++(3,3);
			\draw[->,dashed,thin,red](-4,2) -- ++(2,0);
			\draw[->,dashed,thin,red](4,5) -- ++(-2,0);			\draw[->,dashed,blue](4,3) -- ++(-2,0);
			\draw node at (0.4,0) {$j_2$};
			\draw node at (1.8,0) {$j_4$};
			\draw node at (3.4,0) {$j_3$};
			\draw node at (-0.4,6) {$i$};			\draw[-,thick,black](4,0) -- ++(0,6); 
			\draw[-,thick,black](-4,0) -- ++(0,6); 	
	\draw node[draw,circle,inner sep=2pt,fill]  at (0,6){};
	\draw[->] (5.5,0) -- ++(0,6);
	\draw node at (5.75,5.5) {$t$};
    \end{tikzpicture}
    \caption{We have $\hat{\alpha}^{N,\ell,n}_i((1))=((j_2,0),(i,1))$ and $\hat{\alpha}^{N,\ell,n}_i((2))=((j_4,0),(i,2))$, hence $X^{\hat{\alpha}^{N,\ell,n}_i((1))}$ corresponds to the thick path on the left whilst $X^{\hat{\alpha}^{N,\ell,n}_i((2))}$ corresponds to the thick path on the right. Note that since $X^{j_4}$ dies before the second death time of $X^i$, its path does not actually include $X^i$.}
\end{figure}
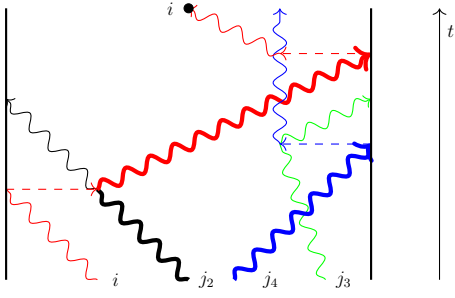

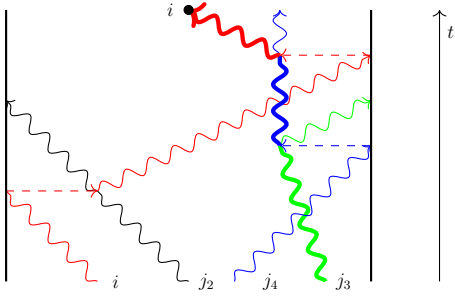
\begin{figure}[H]
    \begin{tikzpicture}[scale=0.6, every node/.style={transform shape}]
			\draw[-,snake=snake,thin,red](-2,0) -- ++(-2,2);
	\draw node at (-1.6,0) {$i$};
			\draw[->,snake=snake,thin,black](-2,2) -- ++(-2,2);
			\draw[-,snake=snake,thin,black](0,0) -- ++(-2,2);
			\draw[->,snake=snake,thin,red](-2,2) -- ++(6,3);	
			\draw[-,snake=snake,ultra thick,green](3,0) -- ++(-1,3); 			\draw[->,snake=snake,green](2,3) -- ++(2,1);
			\draw[-,snake=snake,ultra thick,blue](2,3) -- ++(0,2);
			\draw[->,snake=snake,thin,blue](2,5) -- ++(0,1);
			\draw[->,snake=snake,ultra thick,red](2,5) -- ++(-2,1);
			\draw[->,snake=snake,thin,blue](1,0) -- ++(3,3);
			\draw[->,dashed,thin,red](-4,2) -- ++(2,0);
			\draw[->,dashed,thin,red](4,5) -- ++(-2,0);			\draw[->,dashed,blue](4,3) -- ++(-2,0);
			\draw node at (0.4,0) {$j_2$};
			\draw node at (1.8,0) {$j_4$};
			\draw node at (3.4,0) {$j_3$};
			\draw node at (-0.4,6) {$i$};			\draw[-,thick,black](4,0) -- ++(0,6); 
			\draw[-,thick,black](-4,0) -- ++(0,6); 	
	\draw node[draw,circle,inner sep=2pt,fill]  at (0,6){};
	\draw[->] (5.5,0) -- ++(0,6);
	\draw node at (5.75,5.5) {$t$};
    \end{tikzpicture}
    \caption{We have $\hat{\alpha}^{N,\ell,n}_i((2,1))=((j_3,0),(j_4,1),(i,2))$, hence $X^{\hat{\alpha}^{N,\ell,n}_i((2,1))}$ corresponds to the thick path, and is the DHP $\mathcal{H}^{N,i,t}$.}
    \label{fig:figure for DHP}
\end{figure}

\subsubsection*{Proving and Verifying the Conditions of Lemma \ref{lem:weak convergence lemma}}

\begin{proof}[Proof of Lemma \ref{lem:weak convergence lemma}]
Clearly $\frac{S_N}{N}-\frac{1}{N}\sum_{j=1}^N\expE[\gamma^{N}_{j}]$ has zero expectation, so we now show it has variance converging to zero. Since the $\gamma^N_k$ are uniformly bounded, so are $\text{Var}(\gamma^N_k)$. We have:
\[
\begin{split}
\text{Var}\big(\frac{S_N}{N}-\frac{1}{N}\sum_{j=1}^N\expE[\gamma^{N}_{j}]\big)=\frac{1}{N^2}\sum_{k=1}^N\text{Var}(\gamma_k^N)+\sum_{j\neq k}\text{Cov}(\gamma^N_j,\gamma_k^N)\\
\leq \underbrace{\frac{N}{N^2}\sup_k\text{Var}(\gamma_k^N)}_{\ra 0}+\underbrace{\frac{N^2-N}{N^2}\sup_{j\neq  k}\text{Cov}(\gamma_j^N,\gamma_k^N)}_{\ra 0}\ra 0.
\end{split}
\]
Therefore $\frac{S_N}{N}-\frac{1}{N}\sum_{j=1}^N\expE[\gamma^{N}_{j}]\ra 0$ in probability. We fix $\epsilon>0$. Since $\limsup_{N\ra\infty}\sup_{1\leq j\leq N}\expE[\gamma^N_j]\leq M$ we have $\frac{S_N}{N}\vee M\ra M$ in probability as $N\ra\infty$.

\end{proof}
Clearly the $\eta_i^{N,\ell,n,\rho}$ are uniformly bounded in N, so it is sufficient to control the expectation and covariance as in Lemma \ref{lem:weak convergence lemma}. We do this using Lemma \ref{lem:dominated family of Ys}.

We start by controlling the expectation, using Tonelli's theorem and \eqref{eq:density of Ygamma controls} to see that we have $C_t$ non-increasing such that:
\[
\begin{split}
\expE[\eta^{N,\ell,n,\rho}_i]\leq \sum_{\substack{\beta\in\iota_{\rho}\\ v\in \Pi^{\ell,n}}}C_{t^{\beta}_0}(t^{\beta}_1-t^{\beta}_0)\text{Leb}(R_{\vec{h}_{\beta}}(\vec{0}))\\
\leq C_{T_{\min}(\rho)}\lvert \Pi^{\ell,n}\rvert \sum_{\beta\in \iota_{\rho}}\text{Leb}([t^{\beta}_0,t^{\beta}_1]\times R_{\vec{h}_{\beta}}(\vec{x}_{\beta})) \leq 2C_{T_{\min}(\rho)}\lvert \Pi^{\ell,n}\rvert \text{Leb}(\rho).
\end{split}
\]
We therefore define $C_{\ell,n}(t)=2\lvert \Pi^{\ell,n}\rvert C_t$ so that $\expE[\eta^{N,\ell,n,\rho}_i]\leq 2\lvert \Pi^{\ell,n}\rvert C_{T_{\min}(\rho)}$. We now seek to show that 
\[
\sup_{i\neq j}\text{Cov}(\eta_i^{N,\ell,n,\rho},\eta_j^{N,\ell,n,\rho})\ra 0\quad\text{as}\quad N\ra\infty.
\]
We recall that conditional on the event $A^{N,\ell,n}_{i,j}$ the Brownian motions $W^{\hat{\alpha}^{N,\ell,n}_i(v)}$ and $W^{\hat{\alpha}^{N,\ell,n}_j(v')}$ are independent. Thus using Lemma \ref{lem:dominated family of Ys}, conditional on the event $A^{N,\ell,n}_{i,j}$, $Y^{i,v,\beta_1}$ and $Y^{j,v',\beta_2}$ are independent for $\beta_1,\beta_2\in\iota_{\rho}$. Therefore it is sufficient to show that:
\[
\inf_{i\neq j}\Pm(A^{N,\ell,n}_{i,j})\ra 1\quad\text{as}\quad N\ra\infty.
\]

We calculate:
\[
\Pm(\mathcal{G}^{N,\ell,n}_i\cap\mathcal{G}^{N,\ell,n}_j\neq \emptyset)\leq \frac{\lvert\Pi^{\ell,n}\rvert^2}{N-1}\ra 0\quad \text{ as }\quad N\ra \infty.
\]
To see this, we see that the elements of $\mathcal{G}^{N,\ell,n}_i$ and $\mathcal{G}^{N,\ell,n}_j$ are chosen independently and uniformly at random, so that each element of $\mathcal{G}^{N,\ell,n}_j$ has a probability at most $\frac{\lvert \mathcal{G}^{N,\ell,n}_i\rvert}{N-1}\leq \frac{\lvert\Pi^{\ell,n}\rvert}{N-1}$ of being in $\mathcal{G}^{N,\ell,n}_i$. Therefore by a union bound we are done.

We have concluded our proof of \eqref{eq:mass of rho dominated by eta}.
\qed

\subsubsection*{Proof of \eqref{S2Neps}}

We recall $\tau^N_{\epsilon}$ is the stopping time defined in Proposition \ref{prop:bound on number of jumps by particle}, and $J^{N,i}_t$ is the number of jumps by particle $X^i$ in time t. We shall now bound the probability of $(G^{\ell,n,i}_t)^c$ by decomposing it into events $A_i^{N,\ell,n,\epsilon}$, $B_t^{N,i,\ell,\epsilon}$ and $\{\tau^N_{\epsilon}\leq T\}$:
\begin{align}
   A_i^{N,\ell,n,\epsilon} & =\cup_{v\in \Pi^{\ell,n}}\{J^{N,\mathcal{T}^{N,\ell,n}_i(v)}_{T\wedge\tau^N_{\epsilon}}\geq n+1\}\label{eq:event A},\\
B_t^{N,\ell,\epsilon,i} & =\{\lvert \alpha^{N,i}_{t\wedge \tau_{\epsilon}^N}\rvert \geq \ell+1\}\label{eq:event B}.
\end{align}

\subsubsection*{Step 1}

We begin by decomposing $(G^{\ell,n,i}_t)^c$ into the events
    \begin{equation}\label{eq:decomp of Gc event}
        (G^{\ell,n,i}_t)^c\subseteq A^{N,\ell,n,\epsilon}_i\cup B_t^{N,\ell,\epsilon,i}\cup \{\tau^N_{\epsilon}\leq T\}
    \end{equation}
none of which are dependent upon any choice of $\rho\in\mathcal{R}$ or $t\leq T$ and whereby $B_t^{N,\ell,\epsilon,i}$ is not dependent upon n.

We may decompose $(G^{l,n,i}_t)^c$:
\[
\begin{split}
\{(G^{\ell,n,i}_t)^c\}\subseteq \{\lvert \alpha^{N,i}_{t}\rvert\geq \ell+1\}
\cup\{ J^{N,\mathcal{I}^{N,\alpha^{N,i}_{t}}_s}_s> n\;\text{for some}\; s\in [0,t]\; \text{and}\; \lvert \alpha^{N,i}_{t}\rvert\leq \ell\}.
\end{split}
\]
Since Proposition \ref{prop:bound on number of jumps by particle} gives controls on the number of jumps only up to time $\tau^N_{\epsilon}$, it is necessary to localise up to time $\tau^N_{\epsilon}$:
\[
\begin{split}
\{(G^{\ell,n,i}_t)^c\}\subseteq \{\lvert \alpha^{N,i}_{t\wedge\tau^N_{\epsilon}}\rvert\geq \ell+1\}\cup \{\tau^N_{\epsilon}\leq T\}\\
\cup\{ J^{N,\mathcal{I}^{N,\alpha^{N,i}_{t}}_s}_{T\wedge \tau^N_{\epsilon}}> n\;\text{for some}\; s\in [0,t]\; \text{and}\; \lvert \alpha^{N,i}_{t}\rvert\leq \ell\}.
\end{split}
\]
Focusing on the third term on the right hand side, since $\lvert \alpha^{N,i}_{t}\rvert\leq l$ we can write $\alpha^{N,i}_{t}=((j_{\ell'},0),(j_{\ell'-1},k_{\ell'-1}),\ldots,(i,k_0))$ for some $\ell'\leq \ell$, so that we may take r minimal such that $J^{N,j_r}_{T\wedge\tau^N_{\epsilon}}> n$. Therefore $k_0,\ldots,k_{r-1}\leq n$ and $r\leq \ell$ so that we have:
\[
((j_r,0),(j_{r-1},k_{r-1}),\ldots,(i,k_0))\in \mathcal{C}^{\ell,n}_i.
\]
Thus $j_r=\mathcal{T}^{N,\ell,n}_i(v)$ for $v=(k_0,k_1,\ldots,k_{r-1})\in\Pi^{\ell,n}$. Therefore we have \eqref{eq:decomp of Gc event}:
\[
\begin{split}
\{(G^{\ell,n,i}_t)^c\}
\subseteq \{\lvert \alpha^{N,i}_{t\wedge\tau^N_{\epsilon}}\rvert\geq \ell+1\}\cup \cup_{v\in\Pi^{\ell,n}}\{J^{N,\mathcal{T}^{\ell,n}_i(v)}_{T\wedge\tau_{\epsilon}^N}\geq n+1\}\cup\{\tau^N_{\epsilon}\leq T\}.
\end{split}
\]

\subsubsection*{Step 2}

We now show that we may choose $\ell=\ell(\epsilon)$ large enough so that:
    \begin{equation}
    \limsup_{N\ra\infty}\expE^{\Pm}\big[\sup_{t\leq T}\frac{1}{N}\sum_{i=1}^N\Ind(B_t^{N,\ell,\epsilon,i})dt\big]\leq \epsilon. \label{eq:bounding B in Gc step}
    \end{equation}\label{enum:bounding B in Gc step}

There exists (by Proposition \ref{prop:bound on number of jumps by particle}) $\bar J<\infty$ such that 
\begin{equation}
\limsup_{N\ra\infty}\Pm(J_{T\wedge\tau^N_{\epsilon}}^N\geq \bar J)\leq \frac{\epsilon}{3}.
\label{eq:prob of exceeding bar J}
\end{equation}
We define $\mathcal{S}_{N}=\inf\{t:J_t^N\geq \bar J\}$ and $L^{N}_t:=\frac{1}{N}(1+\sum_i\lvert \alpha^{N,i}_{t}\rvert)$. We fix for the time being $1\leq i\leq N$. We see from \eqref{eq:formula for alpha N,i after jump} that if i jumps at time t, the expected value of $\lvert\alpha^{N,i}_{t}\rvert$ is at most 
\[
\frac{1}{N-1}\sum_{j\neq i}\lvert \alpha^{N,j}_{t-}\rvert+1=\frac{1}{N-1}\sum_{j\neq i}(\lvert \alpha^{N,j}_{t-}\rvert+1)\leq \frac{N}{N-1}L^N_{t-}.
\]
Moreover the length $\lvert\alpha^{N,i}_{t-}\rvert$ immediately prior to the jump must be non-negative, hence the expected increase in $\lvert\alpha^{N,i}_t\rvert$ at time t is at most $\frac{N}{N-1}L^{N}_{t-}$. Therefore the expected value of $L^N_t$ immediatly after the jump at time t is at most $\frac{N}{N-1}L^{N}_{t-}$. Further, the length of $\lvert \alpha^{N,j}_t\rvert$ does not change for $j\neq i$ and the $\lvert\alpha^{N,i}_t\rvert$ are bounded by $N(\bar J+1)+1$ up to time $\mathcal{S}_{N}$. Thus we see that 
\begin{equation}
\big(1+\frac{1}{N-1}\big)^{-NJ^N_{t\wedge\mathcal{S}_{N}}}L^{N}_{t\wedge\mathcal{S}_{N}}=\big(\frac{N}{N-1}\big)^{-NJ^N_{t\wedge\mathcal{S}_{N}}}L^{N}_{t\wedge\mathcal{S}_{N}}
\label{eq:supermartingale expression for L}
\end{equation}
is a supermartingale, which takes the value 1 at time 0. We now observe that
\[
\frac{\ell}{N}\sum_{i=1}^N\Ind(B^{N,\ell,n,\epsilon,i}_t)=\frac{\ell}{N}\sum_{i=1}^N\Ind(\lvert \alpha^{N,i}_{t\wedge\tau^N_{\epsilon}}\rvert\geq \ell+1)\leq L^{N}_{t\wedge \tau^N_{\epsilon}}.
\]
Thus, since \eqref{eq:supermartingale expression for L} is a supermartingale, we have for all $N$ and $t\leq T$:
\[
\begin{split}
\Pm\big(\sup_{t\leq T\wedge \mathcal{S}_N}\frac{1}{N}\sum_{i=1}^N\Ind(B^{N,\ell,n,\epsilon,i}_t)\geq \frac{\epsilon}{3}\big)\leq \Pm(\sup_{t\leq T}L^{N}_{t\wedge \tau^N_{\epsilon}\wedge\mathcal{S}_{N}}\geq \frac{\ell\epsilon}{3})\\
\leq\Pm\big(\sup_{t\leq T}\big(\frac{N}{N-1}\big)^{N(\bar J-J^N_{t\wedge\mathcal{S}_{N}\wedge\tau^N_{\epsilon}})}L^{N}_{t\wedge \tau^N_{\epsilon}\wedge\mathcal{S}_{N}}\geq \frac{\ell\epsilon}{3}\big)\\
=\Pm\big(\sup_{t\leq T}\big(\frac{N}{N-1}\big)^{-NJ^N_{t\wedge\mathcal{S}_{N}\wedge\tau^N_{\epsilon}}}L^{N}_{t\wedge \tau^N_{\epsilon}\wedge\mathcal{S}_{N}}\geq \frac{\ell\epsilon}{3}\big(\frac{N}{N-1}\big)^{-N\bar J}\big)\\
\leq \frac{3}{\epsilon \ell}\big(\frac{N}{N-1}\big)^{N\bar J}\leq \frac{3e^{2\bar J}}{\epsilon \ell}.
\end{split}
\]
Therefore for some $\ell=\ell(\epsilon)$ large enough we have for all N:
\[
\Pm\big(\sup_{t\leq T\wedge \mathcal{S}_N}\frac{1}{N}\sum_{i=1}^N\Ind(B^{N,\ell,n,\epsilon,i}_t)\geq \frac{\epsilon}{3}\big)\leq \frac{\epsilon}{3}.
\]
Combining this with \eqref{eq:prob of exceeding bar J} and observing that $\sup_{t\leq T\wedge \tau^N_{\epsilon}}\frac{1}{N}\sum_{i=1}^N\Ind(B^{N,l,n,\epsilon,i}_t)\leq 1$ we have \eqref{eq:bounding B in Gc step}.

\subsubsection*{Step 3}

Having fixed $\ell=\ell(\epsilon)$ we may choose $n=n(\epsilon)$ large enough such that we have \begin{equation}
        \limsup_{N\ra\infty}\expE^{\Pm}\big[\frac{1}{N}\sum_{i=1}^N\Ind(A^{N,\ell,n,\epsilon}_i)\big]\leq \epsilon .
    \label{eq:bounding A in Gc step}
    \end{equation}
    \label{enum:bounding A in Gc step}

We define the initial enlargement:
\begin{equation}
\mathcal{F}^{v}_t=\mathcal{F}_t\vee\sigma(U^{\mathcal{T}_i^{N,\ell,n}(v')}_k,\; k\geq 0,\; v'\in \Pi^{\ell,n,v}),\quad t\geq 0
\label{eq:filtration Fv}
\end{equation}
whereby we write $\Pi^{\ell,n,v}$ for $\Pi^{\ell,n}$ with all descendents of v removed (we remove v itself). We then observe that:
\begin{enumerate}
    \item $\mathcal{T}^{N,\ell,n}_i(v)$ is $\mathcal{F}^{v}_0$ measurable.
    \item Conditional upon $\mathcal{T}^{N,\ell,n}_i(v')\neq \mathcal{T}^{N,\ell,n}_i(v)$ for $v'\in \Pi^{\ell,n,v}$, the jumps $U^{\mathcal{T}^{N,\ell,n}_i(v)}_k$ are chosen independently and uniformly at random at the times $\tau^{\mathcal{T}^{N,\ell,n}_i(v)}_k$.
    \item $W^{\mathcal{T}^{N,\ell,n}_i(v)}$ is an $(\mathcal{F}^{v}_t)_{t\geq 0}$-Brownian motion as with the argument that $W^{\hat{\alpha}^{N,\ell,n}_i(v)}$ is an $(\bar{ \mathcal{F}}_t)_{t\geq 0}$-Brownian motion in the proof of \eqref{eq:mass of rho dominated by eta}.
\end{enumerate}

We fix for the time being $1\leq i\leq N$ and now work on $(\Omega,\mathcal{F},(\mathcal{F}^{v}_t)_{t\geq 0},\Pm)$. We see that with probability at most $\frac{\lvert \Pi^{\ell,n,v}\rvert}{N-1}\leq \frac{(n+1)^\ell}{N-1}$, $\mathcal{T}^{N,\ell,n}_i(v)\neq \mathcal{T}^{N,\ell,n}_t(v')$ for all $v'\in \Pi^{\ell,n,v}$. Otherwise $W^{\mathcal{T}^{N,\ell,n}_i(v)}$ is an $\mathbb{F}^{v}$-Brownian motion and $U^{\mathcal{T}_i^{N,\ell,n}(v)}_k$ $(k\geq 1)$ are chosen independently and uniformly at random at time $\tau^{\mathcal{T}^{N,\ell,n}_i(v)}_k$, so that we can repeat the argument of the proof of Proposition \ref{prop:bound on number of jumps by particle} in order to obtain:
\[
\begin{split}
\Pm(J^{\mathcal{T}^{N,\ell,n}_i(v)}_{T\wedge\tau_{\epsilon}}\geq n+1)\leq \Pm(J^{\mathcal{T}^{N,\ell,n}_i(v)}_{T\wedge\tau_{\epsilon}}\geq n+1\lvert \mathcal{T}_i(v)\neq \mathcal{T}^{N,\ell,n}_t(v')\text{ for }  v'\in \Pi^{\ell,n,v})\\
+\Pm(\mathcal{T}_i(v)= \mathcal{T}^{N,\ell,n}_t(v')\text{ for some }  v'\in \Pi^{\ell,n,v})
\leq M_{\epsilon}p_{\epsilon}^{\lfloor \frac{n+1}{M_{\epsilon}}\rfloor}+\frac{\lvert \Pi^{\ell,n}\rvert}{N-1}
\end{split}
\]
for some $0<p_{\epsilon}<1$ and $M_{\epsilon}<\infty$. Whereas we may have established this using a new filtration, our probability space $(\Omega,\mathcal{F},\Pm)$ has been kept fixed. Therefore we have:
\[
\Pm(A^{N,\ell,n,\epsilon}_i)\leq \lvert \Pi^{\ell,n}\rvert M_{\epsilon}p_{\epsilon}^{\lfloor \frac{n+1}{M_{\epsilon}}\rfloor}+\frac{\lvert \Pi^{\ell,n}\rvert^2}{N-1}.
\]
Thus we have (using Tonelli's theorem and that $\lvert \Pi^{\ell,n}\rvert$ grows polynomially in $n$ for fixed $\ell$):
\[
\limsup_{N\ra\infty}\expE[\frac{1}{N}\sum_{i=1}^N\Ind(A_i^{N,\ell,n,\epsilon})]\leq \lvert \Pi^{\ell,n}\rvert M_{\epsilon}p_{\epsilon}^{\lfloor \frac{n+1}{M_{\epsilon}}\rfloor}\ra 0\quad\text{as}\quad n\ra\infty.
\]
Having fixed $\ell=\ell(\epsilon)$ we may therefore choose $n=n(\epsilon)$ such that $\limsup_{N\ra\infty}\expE[\frac{1}{N}\sum_{i=1}^N\Ind(A_i^{N,\ell,n,\epsilon})]<\epsilon$, we have \eqref{eq:bounding A in Gc step}.

From \eqref{eq:bounding B in Gc step}, \eqref{eq:bounding A in Gc step} and Proposition \ref{prop:bound on number of jumps by particle} we may conclude that for all $\epsilon>0$ there exists $\ell=\ell(\epsilon),\; n=n(\epsilon)$ such that:
    \begin{equation}
        \limsup_{N\ra\infty}\expE^{\Pm}\big[\sup_{t\leq T}\frac{1}{N}\sum_{i=1}^N\Ind((G^{\ell,n,i}_t)^c)\big]\leq \epsilon .
        \label{eq:prob of Gc can be made small}
    \end{equation}
This completes the proof of \eqref{S2Neps} and therefore of Part \ref{enum:char-almost sure abs cty of m} of Lemma \ref{lem:char-almost sure abs cty}.

\subsection{Proof of Part \ref{enum:char-almost sure abs cty of marginals} of Lemma \ref{lem:char-almost sure abs cty}}

We may observe that the proof of Part \ref{enum:char-almost sure abs cty of m} may be repeated with $\mathcal{A}$ replaced by $\{(a^1,b^1)\times\ldots\times (a^d,b^d):a^i,b^i\in \Qm\}$, and $\mathcal{R}$ adjusted accordingly to obtain a proof of Part \ref{enum:char-almost sure abs cty of marginals}.

We have now concluded our proof of Lemma \ref{lem:char-almost sure abs cty}.
\qed

\subsection{Proof of Lemma \ref{lem:tight after bounded times}}
We now prove Lemma \ref{lem:tight after bounded times} using the machinery we constructed to prove Lemma \ref{lem:char-almost sure abs cty}. We take $R<\infty$ to be determined and write $F_R=B(0,R)^c$. As with \eqref{mN2sums} we have:
\[
\begin{split}
m^N_t(F_{R}) \leq \frac{1}{N} \sum_{i=1}^N  \Ind(\mathcal{H}^{N,i,t}_t\in F_R) \Ind_{G^{\ell,n,i}_t} +\frac{1}{N} \sum_{i=1}^N \Ind((G^{\ell,n,i}_t)^C).
\end{split}
\]
We then use \eqref{eq:relationship between Gt and hat alpha} to see that:
\[
\sup_{t\leq T}m_t^N(F_{R})\leq \frac{1}{N}\sum_{i=1}^N\sum_{v\in \Pi^{\ell,n}}\Ind\big(\sup_{t\leq T}\lvert X^{\hat{\alpha}^{N,\ell,n}_i(v)}\rvert\geq R\big)+S_2^{N,\ell,n}
\]
whereby we replace $T$ with $1$ in the definition of $S_2^{N,\ell,n}$. We now fix $\ell=\ell(\epsilon)$ and $n=n(\epsilon)$ as in \eqref{S2Neps} so that $\limsup_{N\ra\infty}\expE[ S_2^{N,\ell,n} ] \leq \epsilon$.

These are then random variables on the filtered probability space $(\Omega,\mathcal{F},(\bar{\mathcal{F}}_t)_{t\geq 0},\Pm)$ defined in \eqref{eq:initially enlarged filtration by jumps} with respect to which $W^{\hat{\alpha}^{N,\ell,n}_i(v)}$ is an $(\bar{\mathcal{F}}_t)_{t\geq 0}$-Brownian motion, $X^{\hat{\alpha}^{N,\ell,n}_i(v)}$ is adapted and is a solution of the SDE \eqref{eq:SDE for Xalpha}:
\[
\begin{split}
dX^{\hat{\alpha}^{N,\ell,n}_i(v)}_t=b(m^N_t,X^{\hat{\alpha}^{N,\ell,n}_i(v)}_t)dt+dW^{\alpha}_t,\quad 0\leq t<\tau^{\hat{\alpha}^{N,\ell,n}_i(v)}=\inf\{t>0:X^{\hat{\alpha}^{N,\ell,n}_i(v)}_{t-}\in\partial U\}.
\end{split}
\]

Using \eqref{eq:tight ics then mass stays within ball} and the fact the drift is bounded, we have:
\[
\limsup_{R\ra\infty}\limsup_{N\ra\infty}\Big[\frac{1}{N}\sum_{i=1}^N\sum_{v\in \Pi^{\ell,n}}\Ind\big(\sup_{t\leq T}\lvert X^{\hat{\alpha}^{N,\ell,n}_i(v)}\rvert\geq R\big)\Big]=0.
\]
Therefore we have:
\[
\limsup_{R\ra \infty}\limsup_{N\ra\infty}\expE[\sup_{t\leq T}m_t^N(F_{R})]\leq \epsilon.
\]
Since $\epsilon>0$ is arbitrary, we are done.

\qed

\section{Coupling to a Particle System on a Large but Bounded Subdomain}\label{section:coupling}

We construct here a coupling which will allow us in Section \ref{Section:Hydrodynamic Limit Theorem} to establish our hydrodynamic limit theorem on unbounded domains. We prove the following lemma in the Appendix:
\begin{lem}
Let $U\subseteq \Rm^d$ be a non-empty open domain with $C^{\infty}$ boundary $\partial U$. Then for every $R> R_{\min}:=\inf\{R'>0:B(0,R')\cap U\neq \emptyset\}$ there exists a non-empty open bounded domain $U_R$ with $C^{\infty}$ boundary such that $U\cap B(0,R)\subseteq U_R\subseteq U$.
\label{lem:Boundary Truncation Lemma}
\end{lem}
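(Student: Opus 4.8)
The plan is to reduce the problem to producing a single function $F\in C^{\infty}(\Rm^d)$ for which $0$ is a regular value and
\[
U\cap B(0,R)\ \subseteq\ \{F>0\}\ \subseteq\ U\cap B(0,R+2),
\]
and then to take $U_R:=\{F>0\}$. Granting this, $U_R$ is open, bounded (contained in $B(0,R+2)$), and nonempty (it contains $U\cap B(0,R)$, which is nonempty precisely because $R>R_{\min}$); and since $0$ is a regular value, $\partial U_R=\{F=0\}$ is a $C^{\infty}$ hypersurface, so $U_R$ has $C^{\infty}$ boundary and satisfies $U\cap B(0,R)\subseteq U_R\subseteq U$.

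To build a first candidate $F_0$: since $\partial U$ is $C^{\infty}$ and $\partial U\cap \overline{B(0,R+4)}$ is compact, the signed distance $\delta$ to $\partial U$ (positive in $U$) is $C^{\infty}$ with $|\nabla\delta|=1$ on an open set $\mathcal N\subseteq B(0,R+5)$ containing that compact set (local tubular neighbourhoods plus compactness); fix $\epsilon_0\in(0,1)$ small enough that $\{|\delta|\le\epsilon_0\}\cap\overline{B(0,R+4)}\subseteq\mathcal N$ and, by continuity of the unit field $\nabla\delta$, small enough that $\nabla\delta\neq0$ whenever $|\delta|\le\epsilon_0$ and $|x|\le R+3$. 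Let $\chi\in C^{\infty}(\Rm)$ be odd, increasing, with $\chi(s)=s$ for $|s|\le\epsilon_0/2$, $\chi(s)=\pm\epsilon_0$ for $\pm s\ge\epsilon_0$, $\chi'>0$ on $(-\epsilon_0,\epsilon_0)$; let $w\in C^{\infty}(\Rm;[0,1])$ be radial with $w\equiv1$ on $[0,R+2]$ and $w\equiv0$ on $[R+3,\infty)$; and let $\ell\in C^{\infty}(\Rm)$ be radial with $\ell\equiv\epsilon_0$ on $[0,R+1]$, strictly decreasing on $[R+1,R+2]$, and $\ell\equiv-\epsilon_0$ on $[R+2,\infty)$. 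Set $F_0:=\chi(\delta)\,w(|\cdot|)+\ell(|\cdot|)-\epsilon_0\in C^{\infty}(\Rm^d)$. On $B(0,R+1)$ one has $w\equiv1$, $\ell\equiv\epsilon_0$, so $F_0=\chi(\delta)$; hence $\{F_0>0\}\cap B(0,R+1)=U\cap B(0,R+1)\supseteq U\cap B(0,R)$, and $\{F_0=0\}\cap B(0,R+1)=\partial U\cap B(0,R+1)$, where $\nabla F_0=\chi'(0)\nabla\delta=\nabla\delta\neq0$. For $|x|\ge R+2$ one has $F_0=\chi(\delta)w-2\epsilon_0\le-\epsilon_0<0$. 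For $R+1<|x|<R+2$, $F_0(x)>0$ forces $\ell(|x|)>0$ and $\chi(\delta(x))>0$, i.e. $x\in U$. Thus $U\cap B(0,R)\subseteq\{F_0>0\}\subseteq U\cap B(0,R+2)$.

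The remaining, and main, difficulty is that $0$ need not be a regular value of $F_0$: the computation above shows it \emph{is} regular on $\{|x|\le R+1\}$ (there $\{F_0=0\}=\partial U$, $\nabla F_0=\nabla\delta$) and vacuously on $\{|x|\ge R+2\}$ ($F_0<0$), so the only possible failure is in the closing shell $\{R+1<|x|<R+2\}$, where the level surface $\{F_0=0\}$ may meet $\partial U$ non-transversally. I remove this by a Sard-type perturbation localised to the shell: pick $\omega\in C^{\infty}_c((R+1,R+2))$ with $\omega>0$ on all of $(R+1,R+2)$, and set $F_s:=F_0-s\,\omega(|\cdot|)$ for $s>0$. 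Since $\omega\equiv0$ on $\{|x|\le R+1\}$, $F_s=F_0$ there, so $U\cap B(0,R)\subseteq\{F_s>0\}$ and $0$ is still regular for $F_s$ near $\partial U\cap B(0,R+1)$; and since $F_s\le F_0$, $\{F_s>0\}\subseteq\{F_0>0\}\subseteq U\cap B(0,R+2)$. For $G(x,s):=F_0(x)-s\,\omega(|x|)$ the value $0$ is a regular value on $\Rm^d\times(0,\infty)$: at a zero with $\omega(|x|)>0$ one has $\partial_sG\neq0$, and at a zero with $\omega(|x|)=0$ one lies in $\{|x|\le R+1\}\cup\{|x|\ge R+2\}$, where $\nabla_xG=\nabla_xF_0\neq0$ (the latter set carrying no zeros). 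Hence $G^{-1}(0)$ is a $C^{\infty}$ manifold and, applying Sard's theorem to the projection $(x,s)\mapsto s$, there is $s>0$ for which $0$ is a regular value of $F_s$; taking $F:=F_s$ completes the argument.

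I expect the non-transversality inside the shell to be the crux: a bare application of Sard to $F_0$ is useless here, because $\{F_0>c\}$ fails to contain $U\cap B(0,R)$ when $c>0$ and fails to lie inside $U$ when $c<0$, so the regularising perturbation genuinely has to be spatially localised away from $\overline{B(0,R)}$. The only other nontrivial input is the smoothness of the signed distance function near the compact portion $\partial U\cap\overline{B(0,R+4)}$ of the boundary, which is standard given that $\partial U$ is $C^{\infty}$.
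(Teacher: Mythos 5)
Your proposal is correct and takes essentially the same route as the paper: construct a smooth function that coincides with a defining function of $U$ on a neighbourhood of $B(0,R)$, force it negative outside a slightly larger ball by radial cutoffs, invoke Sard's theorem to arrange that $0$ is a regular value, and take $U_R$ to be the resulting superlevel set. The differences are only in implementation — you build the defining function from the capped signed distance and use a parametric-transversality (Sard-on-the-projection) argument localised to the shell $\{R+1<|x|<R+2\}$, whereas the paper starts from a global defining function $\rho$ and chooses a small regular value $\epsilon_n$ of $g$ via Sard together with a compactness argument — and both implementations are valid.
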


For all $R> R_{\min}$ we let $U_R$ be such a subdomain of U. Since $U_R$ is a smooth bounded domain there exists $r_R>0$ such that $U_R$ satisfies the interior ball condition with radius $r>r_R>0$: for every $x\in U_R$ there exists $y\in U_R$ such that $x\in B(y,r_R)\subseteq U_R$. 

Given the Fleming-Viot particle system $\vec{X}^N$ with McKean-Vlasov dynamics constructed on the filtered probability space $(\Omega^N,\mathcal{F}^N,(\mathcal{F}^N_t)_{t\geq 0},\Pm^N)$ and associated empirical measure valued processes:
\[
m_t^N=\frac{1}{N}\sum_i\delta_{X^{N,i}_t}
\]
we couple $\vec{X}^N$ on the enlarged filtered probability space $(\Omega^{N,R},\mathcal{F}^{N,R},(\mathcal{F}^N_t)_{t\geq 0},\Pm^{N,R})$ with another Fleming-Viot particle system with general dynamics $\vec{X}^{N,R}$ on the subdomain $U_R$ with drift $b^{N,R,i}_t=b(m^N_t,X^{N,R,i}_t)$.

In particular we show the following:
\begin{prop}
For $R>R_{\min}+1$ we may couple $\vec{X}^N$ with a Fleming-Viot $N$-particle system $\vec{X}^{N,R}$ with generalised dynamics on the probability space $(\Omega^{N,R},\mathcal{F}^{N,R},(\mathcal{F}^{N,R}_t)_{t\geq 0},\Pm^{N,R})$ with drift $b^{N,R,i}_t=b(m^N_t,X^{N,R,i}_t)$ on the domain $U_R$. That is between jumps $\vec{X}^{N,R,i}_t$ is a solution of the SDE:
\[
dX^{N,R,i}_t=b(m^N_t,X^{N,R,i}_t)dt+dW^{N,R,i}_t.
\]
Moreover each particle is killed when it hits the boundary $\partial U_R$ and chooses another particle $X^{N,R,j}$ independently and uniformly at random. Furthermore $(\vec{X}^N,\vec{W}^N_t)_{0\leq t<\infty}$ remains a Fleming-Viot particle system with McKean-Vlasov dynamics and the same drift on the filtered probability space $(\Omega^{N,R},\mathcal{F}^{N,R},(\mathcal{F}^{N,R}_t)_{t\geq 0},\Pm^{N,R})$. If we now define the empirical measure valued processes:
\[
m_t^{N,R}=\frac{1}{N}\sum_i\delta_{X^{N,R,i}_t},
\]
the jump processes:
\[
J^{N}_t=\#\{\text{jumps upto time t by }\vec{X}^N\},\quad J^{N,R}_t=\#\{\text{jumps upto time t by }\vec{X}^{N,R}\}
\]
and assume that $\{\mathcal{L}(m^N_0)\}$ is tight in $\mathcal{P}(\mathcal{P}_{\Wah}(U))$ then we have the following:
\begin{enumerate}
\item
$\{\mathcal{L}(m^{N,R}_0):N\in\mathbb{N}\}$ is tight in $\mathcal{P}(\mathcal{P}_{\Wah}(U_R))$.
\item 
For $T<\infty$ we have:
\begin{equation}
    \limsup_{N\ra\infty}\expE\Big[\sup_{t\leq T}\lvert\lvert m^{N}_t-m^{N,R}_t\rvert\rvert_{\TV}+1\wedge\sup_{t\leq T}\lvert J^N_t-J^{N,R}_t\rvert \Big]\ra 0\quad\text{as}\quad R\ra\infty.
    \label{eq:controls for coupling}
\end{equation}
    \end{enumerate}
    \label{prop:coupling to system on bounded domain}
\end{prop}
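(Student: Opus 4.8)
The plan is to realise both systems on one probability space, driven by the \emph{same} Brownian motions $\{W^{N,i}\}_{i=1}^N$. For the initial configuration I would set $X^{N,R,i}_0=X^{N,i}_0$ whenever $X^{N,i}_0\in U_R$, and otherwise relocate $X^{N,R,i}_0$ onto the position of a uniformly chosen particle lying in $U_R$ (for $R$ large this is feasible with probability close to $1$, since $U\cap B(0,R)\subseteq U_R$ and, by tightness of $\{\mathcal L(m^N_0)\}$, all but a small fraction of the particles lie in a fixed compact subset of $U$). Between jumps both systems follow $dX_t=b(m^N_t,X_t)\,dt+dW^{N,i}_t$, and when a particle $i$ with $X^{N,i}_t=X^{N,R,i}_t$ hits $\partial U$ — hence also $\partial U_R$, as $\partial U\subseteq\partial U_R$ — both copies jump to the \emph{same} uniformly chosen index $U^i_k$. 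Because only independent relocation randomness has been added, $(\vec X^N,\vec W^N)$ is still a Fleming--Viot system with McKean--Vlasov dynamics and the same drift. Throughout I will use the two elementary consequences of $U\cap B(0,R)\subseteq U_R\subseteq U$: namely $U\setminus U_R\subseteq\{|x|\ge R\}$ and $\partial U_R\cap U\subseteq\{|x|\ge R\}$.

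\textbf{Part 1.}
If $K\subseteq U_R$ is compact then $K\subseteq U\cap B(0,R)$, so particles whose $\vec X^N$-position lies in $K$ are never relocated; hence $\{i:X^{N,R,i}_0\in K^c\}\subseteq\{i:X^{N,i}_0\notin K\}$ and $m^{N,R}_0(K^c)\le m^N_0(K^c)$. Combined with the fact that the relocation targets may be chosen inside a fixed compact subset of $U$ (so that $m^{N,R}_0$ puts no mass near the ``artificial'' part $\partial U_R\cap U$ of the boundary), tightness of $\{\mathcal L(m^N_0)\}$ in $\mathcal P(\mathcal P_{\Wah}(U))$ — which in particular makes the $m^N_0$-mass of small neighbourhoods of $\partial U$ uniformly small — yields tightness of $\{\mathcal L(m^{N,R}_0)\}$ in $\mathcal P(\mathcal P_{\Wah}(U_R))$.

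\textbf{Part 2.}
Since $m^N_t,m^{N,R}_t$ are empirical measures of $N$ points,
\[
\|m^N_t-m^{N,R}_t\|_{\TV}\le\frac{2}{N}\,\#\{i:X^{N,i}_t\ne X^{N,R,i}_t\},\qquad N\,|J^N_t-J^{N,R}_t|\le\sum_{i=1}^N|J^{N,i}_t-J^{N,R,i}_t|,
\]
where $J^{N,i},J^{N,R,i}$ are the individual jump counts. Under the coupling the two copies of particle $i$ can disagree at time $t$ only through the genealogical history of $X^{N,i}_t$: following the dynamical historical process $\mathcal H^{N,i,t}$ of Section~\ref{section:density Estimates} backwards, and at each jump also following backwards the prior histories of the particles jumped from, one obtains a (recursively defined) \emph{historical tree}, and $X^{N,i}_t=X^{N,R,i}_t$ unless some trajectory segment in this tree either leaves $U_R$ — equivalently, by the geometric facts above, reaches $\{|x|\ge R\}$ — or starts at a relocated initial position. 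Here the estimates of Section~\ref{section:density Estimates} enter: on the ``good event'' $G^{\ell,n,i}_t$ the historical process of $i$ is one of the finitely many chains $\hat\alpha^{N,\ell,n}_i(v)$, $v\in\Pi^{\ell,n}$, and iterating that construction controls the whole historical tree. Taking $\ell=\ell(\epsilon)$, $n=n(\epsilon)$ as in \eqref{S2Neps}, the contribution of $\bigcup_i(G^{\ell,n,i}_t)^c$ is $\le\epsilon$ by \eqref{eq:prob of Gc can be made small}; the contribution of segments reaching $\{|x|\ge R\}$ is estimated exactly as in the proof of Lemma~\ref{lem:tight after bounded times} (using the bounded drift) and tends to $0$ as $R\to\infty$; and the contribution of relocated initial positions is a fixed multiple of $\expE[m^N_0(U_R^c)]\le\expE[m^N_0(B(0,R)^c)]$, which tends to $0$ as $R\to\infty$ uniformly in $N$ by tightness. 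This gives
\[
\limsup_{R\to\infty}\limsup_{N\to\infty}\expE\Big[\tfrac1N\#\{i:X^{N,i}_t\ne X^{N,R,i}_t\text{ for some }t\le T\}\Big]\le\epsilon,
\]
and letting $\epsilon\downarrow0$ handles the first term in \eqref{eq:controls for coupling}. For the jump term, $|J^{N,i}_t-J^{N,R,i}_t|=0$ unless $i$ is decoupled by time $T$, while on $G^{\ell,n,i}_T$ one has $J^{N,i}_T\le n(\epsilon)$; localising to the high-probability events $\{\tau^N_\epsilon>T\}$ and $\{\tau^{N,R}_\epsilon>T\}$ (the latter available for $\vec X^{N,R}$ by Part~1 and Proposition~\ref{prop:bound on number of jumps by particle}, since $U_R$ is a smooth bounded domain with an interior ball condition) and using a Cauchy--Schwarz estimate against the second-moment bound for $J^{N,i}_{T\wedge\tau^N_\epsilon}$ furnished by Proposition~\ref{prop:bound on number of jumps by particle}, one bounds $\expE[\sup_{t\le T}(1\wedge|J^N_t-J^{N,R}_t|)]$ in terms of $\epsilon$, $\Pm(\tau^N_\epsilon\le T)$, $\Pm(\tau^{N,R}_\epsilon\le T)$ and $(\expE[\tfrac1N\#\{\text{decoupled}\}])^{1/2}$, each controlled as above. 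Letting $R\to\infty$ and then $\epsilon\downarrow0$ completes the proof.

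\textbf{Main obstacle.}
The hardest part will be the second step of Part~2: making precise that a particle can become decoupled only through its genealogical tree, and bounding the expected number of decoupled particles by the machinery of Section~\ref{section:density Estimates}. The subtlety is that the historical process $\mathcal H^{N,i,t}$ does not record the pre-jump trajectories of the particles that have jumped onto it, so the relevant object is not a single chain but an iterated tree of historical processes; showing that this tree has uniformly controlled size, and that the ``far-field'' events over all of its branches can be summed, re-uses the per-particle jump bound and the supermartingale estimate \eqref{eq:supermartingale expression for L} underlying Proposition~\ref{prop:bound on number of jumps by particle}.
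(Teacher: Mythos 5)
Your coupling is in spirit the paper's (identify the two copies of each particle while they agree, relocate far-out initial particles, and control the decoupled fraction and the extra jumps), but as written it has two genuine gaps. First, the construction: you never specify what a decoupled R-particle does, and the natural reading of ``driven by the \emph{same} Brownian motions'' does not go through. Once $X^{N,R,i}$ differs from $X^{N,i}$ you must produce, on the given space, a pathwise solution of $dY_t=b(m^N_t,Y_t)\,dt+dW^{N,i}_t$ started from the relocation point; this drift is random, adapted, merely bounded and continuous in $x$ (Condition \ref{cond:blip} gives no Lipschitz regularity in $x$), and it is correlated with the driving noise through $m^N_t$, so strong existence is not available. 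You also cannot add the drift by Girsanov, because tilting $W^{N,i}$ would destroy the requirement that $(\vec X^N,\vec W^N)$ remain a solution of \eqref{eq:N-particle system sde} on the enlarged space. This is precisely why the paper lets decoupled (``red'') particles follow \emph{fresh independent} Brownian motions $\tilde W^i$ driftless and pathwise, and adds the drift by a single change of measure at the end: the tilt involves only the $\tilde W^i$, which are independent of the $W^i$, so the original system keeps its law. Hence ``only independent relocation randomness has been added'' cannot be maintained. (A smaller point: $\partial U\subseteq\partial U_R$ is false in general; what you need, and what is true, is $\partial U\cap\overline{U_R}\subseteq\partial U_R$, so a coupled pair hitting $\partial U$ does hit $\partial U_R$ simultaneously.)

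Second, the central estimate --- that the expected fraction of decoupled indices is small --- is not delivered by the Section \ref{section:density Estimates} machinery you cite. The event $G^{\ell,n,i}_t$ controls only the single last-jump chain: \eqref{eq:no more than l disc} bounds $\lvert\alpha^{N,i}_t\rvert$ and \eqref{eq:jumps by particles in the path bounded} constrains only the particles the DHP follows. Decoupling, by contrast, propagates through \emph{every} jump target: $i$ is decoupled at $t$ as soon as at any of its jumps (not just the last) the target was decoupled, and recursively for the targets' earlier jumps; the relevant tree has branches following arbitrary earlier jumps, and its depth is not bounded by $\ell$ on $G^{\ell,n,i}_t$, so \eqref{S2Neps} and \eqref{eq:prob of Gc can be made small} do not control it --- ``iterating the construction'' is exactly the missing argument, and \eqref{eq:supermartingale expression for L} bounds chain lengths, not the decoupled count. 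The paper closes this with a forward-in-time recursion instead: at each jump time the probability that the jumper becomes or stays red is at most $\tfrac{N}{N-1}\,m^N_{\tau_k-}\bigl(B(0,\tfrac{R}{10})^c\bigr)+\lvert\bar{\mathcal R}_{\tau_k-}\rvert/(N-1)$, and after localising by the jump bound of Proposition \ref{prop:bound on number of jumps by particle} and the far-field bound of Lemma \ref{lem:tight after bounded times} this compounds to $E_{N\bar J}\le(1+\tfrac{1}{N-1})^{N\bar J}(\epsilon'+E_0)\le e^{2\bar J}(\epsilon'+E_0)$, which is the estimate your argument needs but does not supply. Your treatment of the jump-count difference (only decoupled particles contribute, then Proposition \ref{prop:bound on number of jumps by particle} after a stopping time) and of Part 1 via mean measures is sound, though relocating onto a randomly chosen particle in $U_R$ makes Part 1 needlessly delicate compared with the paper's fixed interior point $x^*$.
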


We shall firstly construct the coupling before establishing that this coupling satisfies \eqref{eq:controls for coupling}.

\subsection{Construction of the Coupling}

Since $N$ is fixed in this construction, we neglect the $N$ superscript for the sake of notation. We fix a point $x^{\ast}\in U\cap B(0,R-1)$. We then take a filtered probability space $(\tilde{\Omega},\tilde{\mathcal{F}},(\tilde{\mathcal{F}}_t)_{t\geq 0},\tilde{\Pm})$ on which are defined the jointly independent Brownian motions $(\tilde{W}^i_t)_{t\geq 0}$ ($i=1,\ldots,N$) and whereby $(\tilde{\mathcal{F}}_t)_{t\geq 0}$ is the natural filtration of the Brownian motions $\tilde{W}^i$. We then define a probability space $(\Omega^V,\mathcal{F}^V,\Pm^V)$ on which the jointly independent uniform $\{1,\ldots,N\}\setminus\{i\}$-valued random variables $V^i_k$ ($i=1,\ldots,N;\; k\geq 1$) are defined. We shall firstly define our construction on the measrurable space (which we shall later equip with the appropriate filtration and probability measure):
\[
(\Omega^R,\mathcal{F}^R)=(\Omega\times\tilde{\Omega}\times \Omega^V,\mathcal{F}\otimes\tilde{\mathcal{F}}\otimes \sigma^V).
\]

We shall partition $\{1,\ldots,N\}$ into "blue" indices $\mathcal{B}_t$ and "red" indices $\mathcal{R}_t$ at each time t - we shall say the particle $X^{i}/X^{R,i}$ is blue/red at time t if $i\in \mathcal{B}_t/\mathcal{R}_t$. We shall refer to particles in the particle system $\vec{X}^R$ as "R-particles". We will define the times $(\tau^R_k)_{k=0}^{\infty}$ corresponding to the $k^{\thh}$ death time of any of the R-particles. Our coupling is constructed up to time $\tau^R_k$, inductively in k. We proceed as follows:

\underline{Step 1}

At time 0 we assign indices $i\in \{1,\ldots,N\}$ to be blue if $\lvert X^{i}_0\rvert \leq R-1$ and otherwise red:
    \begin{equation}
    \mathcal{B}_0:=\{i:\lvert X^i_0\rvert\leq R-1\},\quad \mathcal{R}_0:=\{i:\lvert X^i_0\rvert > R-1\}.
    \label{eq:initial assignment for B and R coupling}
    \end{equation}
    Our initial condition is given by:
    \begin{equation}
X^{i}_0=\begin{cases}
X^{R,i}_0,\quad i\in \mathcal{B}_0 \\
x^*,\quad i\notin \mathcal{S}^{R}_0
\end{cases}.
\label{eq:initial condition for XR coupling}
\end{equation}
We have therefore defined $\vec{X}^R_t$, $\mathcal{B}_t$ and $\mathcal{R}_t$ for $t\leq \tau^R_0=0$.

\underline{Step 2}

We then proceed inductively. Having defined $0=\tau^R_0<\ldots<\tau^R_k$ and $\vec{X}^R_t$, $\mathcal{B}_t$ and $\mathcal{R}_t$ for $t\leq \tau^R_k$ we define $\tau^R_{k+1}$ and $\vec{X}^R_t$, $\mathcal{B}_t$ and $\mathcal{R}_t$ for $t\leq \tau^R_{k+1}$. We define:
\[
X^{R,i}_t=\begin{cases}
X^{i}_t,\quad i\in \mathcal{B}_{\tau^R_k}\\
X^{R,i}_{\tau^R_k}+\tilde{W}^i_t-\tilde{W}^i_{\tau^R_k},\quad i\in \mathcal{R}_{\tau^R_k}
\end{cases},\quad \tau^R_k:\leq t<\tau^R_{k+1}=\inf\{t>\tau^R_k:X^{R,i}_{t-}\in \partial U_R\}.
\]
That is the R-particles $X^{R,i}$ which are blue at time $\tau^R_k$ track the corresponding $X^i$, whilst those which are red track the path of the corresponding $(\tilde{F}_t)_{t\geq 0}$-Brownian motion $\tilde{W}^i_t$, up to the next time $\tau^R_{k=1}$ one of the R-particles the boundary $\partial U_R$. We define blue R-particles to remain blue and red R-particles to remain red in between hitting times, so that:
\[
\mathcal{B}_t:=\mathcal{B}_{\tau^R_k},\quad \mathcal{R}_t:=\mathcal{R}_{\tau^R_k},\quad \tau^R_k\leq t<\tau^R_{k+1}.
\]

\underline{Step 3}

We now define the construction at time $\tau^R_{k+1}$. It may be the case that two of the R-particles hit the boundary $\partial U_R$ at the same time (when we equip our construction with a probability measure this will turn out to be a null event), if this is the case we halt our construction at the time we call $\tau^R_{\ST}:=\tau^R_{k+1}$. 

Otherwise there is only one R-particle which hits the boundary at time $\tau^R_{k+1}$, with unique index $\ell(k+1)\in \{1,\ldots,N\}$ such that $X^{R,\ell(k+1)}_{\tau^R_{k+1}-}\in \partial U_R$. There are three distinct possibilities:
\begin{enumerate}
    \item It could be that $\ell(k+1)$ is red immediately prior to the hitting time. In this case the index $V^{\ell(k+1)}_{k+1}$ is chosen and $X^{R,\ell(k+1)}$ jumps onto $X^{R,V^{\ell(k+1)}_{k+1}}$:
    \[
    X^{R,\ell(k+1)}_{\tau^R_{k+1}}:=X^{R,V^{\ell(k+1)}_{k+1}}_{\tau^R_{k+1}-}.
    \]
    In this case $\ell(k+1)$ remains red: $\ell(k+1)\in \mathcal{R}_{\tau^R_{k+1}}$, and none of the other indices change colour:
    \[
    \mathcal{R}_{\tau^R_k}:=\mathcal{R}_{\tau^R_k-},\quad \mathcal{B}_{\tau^R_k}:=\mathcal{B}_{\tau^R_k-}.
    \]
    \item It could be the case that $\ell(k+1)$ is blue immediately prior to the hitting time $\tau^R_{k+1}$, and $X^{R,\ell(k+1)}$ hits $\partial U_R\setminus U$ at this time. Thus $X^{R,\ell(k+1)}$ was tracking $X^{\ell(k+1)}$ up to the hitting time, but of course $X^{\ell(k+1)}$ cannot jump at this time as it did not hit the boundary of U. In this case only the R-particle jumps, choosing the index $V^{\ell(k+1)}_{k+1}$ to jump onto, and the index $\ell(k+1)$ switches to red (none of the other indices switch colour):
    \[
    \begin{split}
    X^{R,\ell(k+1)}_{\tau^R_{k+1}}:=X^{R,V^{\ell(k+1)}_{k+1}}_{\tau^R_{k+1}-},\quad 
    \mathcal{R}_{\tau^R_k}:=\mathcal{R}_{\tau^R_k-}\cup \{\ell(k+1)\},\quad \mathcal{B}_{\tau^R_k}:=\mathcal{B}_{\tau^R_k-}\setminus \{\ell(k+1)\}.
    \end{split}
    \]
    \item
    The final possibility is that $\ell(k+1)$ is blue immediately prior to time $\tau^R_{k+1}$, at which time $X^{R,\ell(k+1)}$ hits $\partial U_R\cap U$. If this is the case $X^{\ell(k+1)}$ and $X^{R,\ell(k+1)}$ hit $\partial U\cap \partial U_R$ together, in which case they jump onto the same particle and remain blue (none of the other indices change colour):
    \[
     \begin{split}
    X^{R,\ell(k+1)}_{\tau^R_{k+1}}:=X^{\ell(k+1)}_{\tau^R_{k+1}},\quad
    \mathcal{R}_{\tau^R_k}:=\mathcal{R}_{\tau^R_k-},\quad \mathcal{B}_{\tau^R_k}:=\mathcal{B}_{\tau^R_k-}.
    \end{split}
    \]
\end{enumerate}
In all three cases none of the other R-particles jump at the time $\tau^R_{k+1}$.

\underline{Step 4}

This is well-defined on $(\Omega^R,\mathcal{F}^R)$ up to the time $\tau^R_{\WD}:=\tau^R_{\infty}\wedge\tau^R_{\ST}$ whereby:
\[
\tau^R_{\infty}=\lim_{k\ra\infty}\tau^R_k,\quad \tau^R_{\ST}=\inf\{t>0:\exists\; j\neq k\;\text{such that}\; X^{R,j}_{t-},\; X^{R,k}_{t-}\in\partial U_R\}.
\]
We now equip our measurable space with a filtration and probability measure. We define the filtration:
\[
\mathcal{F}^R_t:=\mathcal{F}_t\otimes \tilde{F}_t\otimes \sigma(V^{\ell(k)}_k:\tau^R_k\leq t),\quad 0\leq t<\infty
\]
and the probability measure:
\[
\hat{\Pm}=\Pm\otimes \tilde{\Pm}\otimes \Pm^V.
\]
We see that on the filtered probability space $(\Omega^R,\mathcal{F}^R,(\mathcal{F}^R_t)_{t\geq 0},\hat{\Pm})$ our original $N$-particle system $(\vec{X}^N,\vec{W}^N)$ has the same distribution, and moreover $\vec{X}^{N,R}$ is a generalised Fleming-Viot particle system with drift:
\[
b^{R,i}_t=\Ind(i\in \mathcal{B}^R_t)b(m^N,X^{R,i}_t).
\]
In particular between jumps $X^{R,i}_t$ satisfies:
\[
dX^{R,i}_t=\Ind(t\in \mathcal{B}_t)\big(b(m^N_t,X^{R,i}_t)dt+dW^{i}_t\big)+\Ind(t\notin \mathcal{R}_t)d\tilde{W}^{i}_t.
\]
Therefore by Proposition \ref{prop:general weak solns are global}, $\hat{\Pm}(\tau_{\WD}=\infty)=1$. Since the Brownian motions $\{\tilde{W}^i\}$ are independent of the Brownian motions $\{W^i\}$, we may use Girsanov's theorem to tilt the probability measure $\hat{\Pm}$, obtaining a probability measure $\Pm^R$ under which both:
\[
W^{R,i}_t:=\int_0^t\Ind(i\in \mathcal{B}_s)ds+\int_0^t\Ind(i\in\mathcal{R}_s)(d\tilde{W}^{R,i}_s-b(m^N,X^{R,i}_s)ds),\quad 1\leq i\leq N
\]
and $W^i$ ($1\leq i\leq N$) are $\Pm^R$-Brownian motions. Since $\hat{\Pm}$ and $\Pm^R$ are equivalent, $\{\tau_{\WD}<\infty\}$ remains a null event. By considering the covariation, we see $\{X^i\}$ and $\{X^{R,i}\}$ both remain families of independent Brownian motions (though not independent of each other). We have therefore finished our construction of the coupling.

\subsubsection*{$\{\mathcal{L}(m^{N,R}_0):N\in\mathbb{N}\}$ is tight in $\mathcal{P}(\mathcal{P}_{\Wah}(U_R))$.}

We note that a family of random measures being tight in $\mathcal{P}(\mathcal{P}_{\Wah}(U_R))$ is equivalent to their mean measures being tight in $\mathcal{P}(U_R)$ \cite[Theorem 4.10]{Kallenberg2017}. Using \eqref{eq:initial assignment for B and R coupling} and \eqref{eq:initial condition for XR coupling} we can write $m^{R,N}(A)=m^N(A\cap \bar B(0,R-1))+m^N((\bar B(0,R-1))^c)\delta_{x^*}(A)$. Therefore the expected mean measures $E^{N,R}(A)=\expE[m^{N,R}(A)]$ and $E^N(A)=\expE[m^N(A)]$ satisfy:
\[
E^{N,R}(A)\leq E^N(A\cap B(0,R-1))+\delta_{x^*}(A).
\]
Since $\{\mathcal{L}(m^N)\}$ is tight in $\mathcal{P}(\mathcal{P}_{\Wah}(U))$, $\{E^N\}$ is tight in $\mathcal{P}(U)$, so that $\{(A\mapsto E^N(A\cap B(0,R-1)))\}$ is tight in $U\cap B(0,R-1)\subseteq U_R$. Therefore $\{E^{N,R}\}$ is tight in $\mathcal{P}(U_R)$ hence $\{\mathcal{L}(m^{N,R}_0)\}$ is tight in $\mathcal{P}(\mathcal{P}_{\Wah}(U_R))$.

\subsection{Proof the Coupling Satisfies \eqref{eq:controls for coupling}}

Our proof is structured as follows:
\begin{enumerate}
    \item We firstly control the number of red particles so that:
\begin{equation}
    \limsup_{R\ra\infty}\limsup_{N\ra\infty}\expE[\sup_{t\leq T}\lvert\lvert m^{N,R}-m^N\rvert\rvert_{\TV}]\leq \limsup_{R\ra\infty}\limsup_{N\ra\infty}\expE\big[\frac{\lvert \mathcal{R}_{T}\rvert }{N}\big]=0.
    \label{eq:convergence of mNR minus mN}
\end{equation}\label{enum:bound on number of red balls}
\item By observing that jumps of blue particles (which stay blue) in our original system coincide with jumps of blue R-particles, we deduce that:
\begin{equation}
\limsup_{N\ra\infty}\expE[1\wedge\sup_{t\leq T}\lvert J^{N,R}-J^N\rvert]\ra 0\quad \text{as}\quad R\ra\infty.
\label{eq:expected diff in jump number goes to 0}
\end{equation}\label{enum:number of jumps in coupling proof step}
\end{enumerate}

\underline{Step \ref{enum:bound on number of red balls}}

We write $\tau^i_k$ for the $k^{\thh}$ death time of particle $X^{N,i}$ - at which time it jumps onto the particle with index $U^i_k$ - and $\tau_k$ for the death time of any of the particles $X^{N,j}$ (for any j). We observe that in order for i to go from blue to red at at time $\tau^i_k$, it is necessary to have at least one of the following:
\[
U^i_k\in \mathcal{R}_{\tau^i_k},\quad \lvert X^{N,i}\rvert\geq \frac{R}{3},\quad\lvert W^{N,i}_{\tau^i_{k+1}}-W^{N,i}_{\tau^i_k}\rvert\geq \frac{R}{3}\quad  \text{or} \quad BT\geq \frac{R}{3}.
\]
We may without loss of generality assume $R>3BT$. We define the initial enlargement:
\[
\mathcal{F}^{W}_t:=\mathcal{F}^{R}_t\wedge \sigma((W^{N,i}_t)_{0\leq t\leq T}:\; i=1,\ldots,N).
\]
We note that under this filtration, the $U^i_k$ are still chosen independently and uniformly at the corresponding hitting times. We then define the $(\mathcal{F}^{W}_t)_{0\leq t<\infty}$-adapted processes:
\[
\begin{split}
\tilde{ \mathcal{B}}_t=\{i\in \mathcal{B}_t:\sup_{t',t''\leq T}\lvert W^{N,i}_{t'}-W^{N,i}_{t''}\rvert\leq \frac{R}{10}\;\text{and}\; \lvert X^{N,i}_{\tau^i_k}\rvert\leq \frac{R}{10}\;\text{for all}\;\tau^i_k\leq t\}\subseteq \mathcal{B}_t,\\ \bar{\mathcal{R}}_t=(\tilde{\mathcal{B}}_t)^c\supseteq \mathcal{R}_t.
\end{split}
\]
Note that $\Ind(i\in \mathcal{B}_t)$ is constant on $[\tau^i_k,\tau^i_{k+1})$ for all $k\geq 0$ and non-increasing on $[0,\infty)$. These processes are progressively measurable with respect to $(\mathcal{F}^{W})_{t\geq 0}$. Moreover if $i\in \tilde{\mathcal{B}}_{\tau^i_k\wedge T}$, then it must be the case that:
\[
\lvert X^{N,i}_{\tau^i_{k+1}}\rvert\leq \lvert X^{N,i}_{\tau^i_{k}}\rvert+ \lvert W^{N,i}_{\tau^i_{k+1}}-W^{N,i}_{\tau^i_{k}}\rvert+B(\tau^i_{k+1}-\tau^i_k)\leq \frac{3R}{10},
\]
so that if $i\in \bar{\mathcal{R}}_{\tau^i_{k+1}\wedge T}$ then $\lvert X^{N,i}_{\tau^i_{k+1}}\rvert >\frac{R}{10}$ or $U^i_{k+1}\in \mathcal{R}_{\tau^i_{k+1}}\subseteq \bar{\mathcal{R}}_{\tau^i_{k+1}}$. This means that the probability of $\bar{\mathcal{R}}_t$ increasing by 1 at each hitting time $\tau_k$ is at most $\frac{N}{N-1}m^N_{\tau_k-}(B(0,\frac{R}{10})^c)+\frac{\bar{\mathcal{R}}_{\tau_k-}}{N-1}$. We define for $\epsilon'>0$ and $\bar J<\infty$ to be determined:
\[
\hat{\tau}=\inf\{t>0:J^N_t\geq \bar J\;\text{or}\; m^N_t(B(0,\frac{R}{10}
)^c)\geq \epsilon'\}\wedge T,\quad 
E_k=\frac{\expE[\lvert \bar{\mathcal{R}}_{\tau_k\wedge \hat{\tau}}\rvert]}{N}
\]
so that we have $E_{k+1}-E_k\leq \frac{\epsilon'+E_k}{N-1}$ and therefore:
\[
E_{N\bar J}\leq \epsilon'+E_{N\bar J}\leq (1+\frac{1}{N-1})^{N\bar J}(\epsilon'+E_0).
\]
We obtain:
\[
\frac{1}{N}\expE[\lvert \mathcal{R}_T\rvert ]\leq  (1+\frac{1}{N-1})^{N\bar J}(\epsilon'+E_0)+\Pm(J^N_{T}\geq \bar J)+\Pm(\sup_{t\leq T}m^N_t(B(0,\frac{R}{10})^c)\geq \epsilon').
\]
Therefore we have:
\[
\begin{split}
\limsup_{R\ra\infty}\limsup_{N\ra\infty}\frac{1}{N}\expE[\vert\mathcal{R}_T\rvert ]
\leq e^{\bar J}\limsup_{R\ra\infty}\limsup_{N\ra\infty}(\epsilon'+E_0)\\
+\limsup_{R\ra\infty}\limsup_{N\ra\infty}\big(\Pm(J_T^N\geq \bar J)+\Pm(\sup_{t\leq T}m^N_t(B(0,\frac{R}{10})^c)\geq \epsilon')\big).
\end{split}
\]
Using Lemma \ref{lem:tight after bounded times} we see that $\limsup_{R\ra\infty}\limsup_{N\ra\infty}\Pm(\sup_{t\leq T}m^N_t(B(0,\frac{R}{10})^c))=0$. Using also \eqref{eq:initial assignment for B and R coupling} and the fact $\sup_{t',t''\leq T}\lvert W_{t'}-W_{t''}\rvert<\infty$ (for Brownian motions $W_t$), we see that $\limsup_{R\ra\infty}\limsup_{N\ra\infty}E_0=0$. From this we conclude:
\[
\begin{split}
\limsup_{R\ra\infty}\limsup_{N\ra\infty}\frac{1}{N}\expE[\vert\mathcal{R}_T\rvert ]
\leq e^{\bar J}\epsilon'
+\limsup_{N\ra\infty}\Pm(J_T^N\geq \bar J).
\end{split}
\]

We now fix $\epsilon>0$ and use Proposition \ref{prop:bound on number of jumps by particle} to take $\bar J<\infty$ such that $\limsup_{N\ra\infty}\Pm(J^N_T\geq \bar J)<\epsilon$. Then taking $0<\epsilon'<\frac{\epsilon}{e^{\bar J}}$ we have $\limsup_{R\ra\infty}\limsup_{N\ra\infty}\frac{1}{N}\expE[\lvert \mathcal{R}_T\rvert]\leq 2\epsilon$ for arbitrary $\epsilon>0$ hence we have \eqref{eq:convergence of mNR minus mN}.

\underline{Step \ref{enum:number of jumps in coupling proof step}}

We return to our original filtered probability space $(\Omega^{N,R},\mathcal{F}^{N,R},(\mathcal{F}^{N,R}_t)_{t\geq 0},\Pm^{N,R})$ and seek to establish \eqref{eq:expected diff in jump number goes to 0}. The idea is that while particle $X^{N,i}$ and $X^{N,R,i}$ are blue, jumps of one are jumps of the other. Therefore we only need to count the jumps once i turns red. For $i\in \{1,\ldots,N\}$ we define the stopping times at which a given index becomes red:
\[
\tau^{\mathcal{R}}_i:=\inf\{t>0:i\in \mathcal{R}_t\}
\]
so that we have:
\[
\sup_{t\leq T}\lvert J^{N,R}_t-J^N_t\rvert\leq \frac{1}{N}\sum_{i=1}^N\big(\lvert J^{N,R,i}_T-J^{N,R,i}_{\tau^{\mathcal{R}}_i\wedge T}\rvert+\lvert J^{N,i}_T-J^{N,i}_{\tau^{\mathcal{R}}_i\wedge T}\rvert\big).
\]

We fix $\epsilon>0$ and write for $c>0$ to be determined as in Proposition \ref{prop:bound on mass near bdy}:
\[
V_c=\{x\in U:d(x,\partial U)\geq c\},\quad V^R_{c}=\{x\in \partial U_R:d(x,\partial U_R)\geq c\}.
\]
We define the stopping times:
\[
\tau^{N}_{c}=\inf\{t>0:m^{N}(V_c)\leq \frac{1}{2}\},\quad\tau^{N,R}_{c}=\inf\{t>0:m^{N,R}(K^R_c)\leq \frac{1}{2}\},\quad \mathcal{T}_c=\tau^N_c\wedge \tau^{N,R}_c.
\]
Proposition \ref{prop:bound on mass near bdy} gives $\tilde{c}=\tilde{c}(\frac{1}{10},\epsilon)>0$ such that
\[
\limsup_{N\ra\infty}\Pm(\sup_{t\in [0,T]}m^N_t(V_{\tilde{c}}^c)\geq \frac{1}{10})<\epsilon.
\]
so that applying \eqref{eq:convergence of mNR minus mN} we have:
\[
\limsup_{R\ra\infty}\limsup_{N\ra\infty}\Pm(\sup_{t\in [0,T]}m^{N,R}_t((V_{\tilde{c}})^c)\geq \frac{2}{10})\leq 2\epsilon.
\]
Therefore $\limsup_{R\ra\infty}\limsup_{N\ra\infty}\Pm(\mathcal{T}_{\tilde{c}}< T)\leq 3\epsilon$. We therefore have:
\begin{equation}
\begin{split}
\expE\big[1\wedge\sup_{t\leq T}\lvert J^{N,R}_t-J^N_t\rvert\big]\leq \Pm(\mathcal{T}_{\tilde{c}}<T)
\\
+\frac{1}{N}\sum_{i=1}^N\expE\big[\lvert J^{N,R,i}_{T\wedge \mathcal{T}_{\tilde{c}}}-J^{N,R,i}_{\tau^{\mathcal{R}}_i\wedge T\wedge \mathcal{T}_{\tilde{c}}}\rvert+\lvert J^{N,i}_{T\wedge \mathcal{T}_{\tilde{c}}}-J^{N,i}_{\tau^{\mathcal{R}}_i\wedge T\wedge \mathcal{T}_{\tilde{c}}}\rvert\big\lvert \tau^{\mathcal{R}}_i<T\wedge\mathcal{T}_{\tilde{c}}\big]\Pm(\tau^{\mathcal{R}}_i<T\wedge\mathcal{T}_{\tilde{c}}).
\end{split}
\label{eq:decom of number of jumps based on condition taui}
\end{equation}

We note that $(\vec{X}^N_{\tau^{\mathcal{R}}_i+t})_{t\geq 0}$ and $(\vec{X}^{N,R}_{\tau^{\mathcal{R}}_i+t})_{t\geq 0}$ are Fleming-Viot particle systems with generalised dynamics. We recall from \eqref{eq:formula for stopping time} that the stopping time $\tau^N_{\epsilon}$ given in Proposition \ref{prop:bound on number of jumps by particle} is given by $\inf\{t>0:m^N_t(V_{\tilde{c}(\frac{1}{2},\epsilon)})< \frac{1}{2}\}$. Moreover the constants $M_{\epsilon}$ and $p_{\epsilon}$ obtained in that proof were dependent only upon the upper bound on the drift $B<\infty$, and the value of $\tilde{c}(\frac{1}{2},\epsilon)$. We therefore see that we may apply (the proof of) Proposition \ref{prop:bound on number of jumps by particle} to see that there exists $C_{\epsilon}<\infty$ dependent only upon $\tilde{c}(\frac{1}{10},\epsilon)$ such that:
\[
\expE\big[\lvert J^{N,R,i}_{T\wedge \mathcal{T}_{\tilde{c}}}-J^{N,R,i}_{\tau^{\mathcal{R}}_i\wedge T\wedge \mathcal{T}_{\tilde{c}}}\rvert+\lvert J^{N,i}_{T\wedge \mathcal{T}_{\tilde{c}}}-J^{N,i}_{\tau^{\mathcal{R}}_i\wedge T\wedge \mathcal{T}_{\tilde{c}}}\rvert\big\lvert \tau^{\mathcal{R}}_i<T\wedge\mathcal{T}_{\tilde{c}}\big]\leq C_{\epsilon}.
\]
We therefore have:
\[
\begin{split}
\expE\big[1\wedge\sup_{t\leq T}\lvert J^{N,R}_t-J^N_t\rvert\big]\leq \frac{C_{\epsilon}}{N}\expE[\mathcal{R}_T]+\Pm(\mathcal{T}_{\tilde{c}}<T).
\end{split}
\]
Taking $\limsup_{R\ra\infty}\limsup_{N\ra\infty}$ of both sides, using \eqref{eq:convergence of mNR minus mN} and noting $\epsilon>0$ was arbitrary we are done.

\section{Hydrodynamic Limit Theorem}\label{Section:Hydrodynamic Limit Theorem}

In this section we shall establish Theorem \ref{theo:N ra infty theorem}. We shall then prove the uniqueness in law of weak solutions to the McKean-Vlasov SDE \eqref{eq:MKV sde}, before combining this with Theorem \ref{theo:N ra infty theorem} to prove Theorem \ref{theo:Hydrodynamic Limit Theorem} along with the existence part of Proposition \ref{prop:Properties of the McKean-Vlasov Process} - completing its proof. 

However the proof of Theorem \ref{theo:N ra infty theorem} relies on Lemma \ref{lem:MKV Solns compact for compact set of ics}, which provides a partial result for Proposition \ref{prop:Properties of the McKean-Vlasov Process} along with compactness for families of global weak solutions to the McKean-Vlasov SDE \eqref{eq:MKV sde} whose initial conditions belong to a compact set. This lemma will also be used in Section \ref{section:Further Estimates}. Therefore we firstly prove Lemma \ref{lem:MKV Solns compact for compact set of ics}.

Throughout this section we assume Condition \ref{cond:blip}. For $\kappa\subseteq \mathcal{P}_{\Wah}(U)$ we define:
\begin{equation}
\begin{split}
\Xi(\kappa)=\{(\mathcal{L}(X_t\lvert \tau>t), -\ln\Pm(\tau>t))_{0\leq t<\infty}\in C([0,\infty);\mathcal{P}_{\Wah}(U)\times \Rm_{\geq 0}):\\
 (X,\tau,W)\text{ is a global weak solution of }\eqref{eq:MKV sde}\text{ with initial condition }\mathcal{L}(X_0)\in\kappa\}
\end{split}
\label{eq:xiT(kappa)}
\end{equation}
which we equip with the metric $d^{D}$. Therefore \eqref{eq:xiinfty} is given by $\Xi=\Xi(\mathcal{P}_{\Wah}(U))$.

\begin{lem}
Every weak solution $(X,\tau,W)$ to \eqref{eq:MKV sde} is a global weak solution such that:
\[
(\mathcal{L}(X_t\lvert \tau>t))_{0\leq t<\infty}\in C([0,\infty);\mathcal{P}_{\Wah}(U))\quad\text{and}\quad(\Pm(\tau>t))_{0\leq t<\infty}\in C([0,\infty);\Rm_{> 0}).
\]
Moreover $\Xi(\kappa)$ is a compact subset of $(C([0,\infty);\mathcal{P}_{\Wah}(U)\times \Rm_{\geq 0}),d^{\infty})$ for $\kappa\subseteq \mathcal{P}_{\Wah}(U)$ compact. 
\label{lem:MKV Solns compact for compact set of ics}
\end{lem}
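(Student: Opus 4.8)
The plan is to establish Lemma~\ref{lem:MKV Solns compact for compact set of ics} in two stages: first, the statement that every weak solution is global with the claimed continuity properties; and second, the compactness of $\Xi(\kappa)$. For the first stage I would start from a weak solution $(X,\tau,W)$ defined only up to the first time $\Pm(\tau>t)$ could vanish, and argue that this never happens. The key observation is that up until such a time, $m_t = \mathcal{L}(X_t\mid\tau>t)$ is a well-defined probability measure on $U$ and $X$ is a Fleming-Viot-type killed diffusion with bounded drift $b(m_t,\cdot)$; by Girsanov's theorem the law of $X$ killed at $\tau$ is mutually absolutely continuous (on each finite time horizon, with bounded density) with that of Brownian motion killed at $\partial U$. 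Since $\Pm_{\text{BM}}(\tau>t)>0$ for all $t$ for Brownian motion started from any $\nu\in\mathcal{P}(U)$ (this uses Condition~\ref{cond:ball}, via e.g. the interior ball giving a lower bound on survival), the Girsanov bound forces $\Pm(\tau>t)>0$ for all $t$, so the solution is global. Continuity of $t\mapsto\Pm(\tau>t)$ should follow from the absence of atoms in the law of $\tau$, which again comes from the Brownian comparison (hitting time of $\partial U$ by a diffusion with bounded drift has a continuous distribution function), and continuity of $t\mapsto m_t$ in $\Wah$ is essentially a moment/tightness estimate: over small time increments $X$ moves little because the drift is bounded and Brownian increments are small, and the conditioning event $\{\tau>t\}$ changes continuously by the previous point. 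The $C((0,\infty);L^1(U))$ statement (from Proposition~\ref{prop:Properties of the McKean-Vlasov Process}(ii), though here only the $C([0,\infty);\mathcal{P}_{\Wah}(U))$ part is asserted) would follow from parabolic smoothing, but I only need the $\mathcal{P}_{\Wah}$-continuity here.

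For the second stage, compactness of $\Xi(\kappa)$ in $(C([0,\infty);\mathcal{P}_{\Wah}(U)\times\Rm_{\ge0}),d^\infty)$, I would use the Arzelà–Ascoli theorem together with a closedness argument. First, equicontinuity: for any global weak solution with $\mathcal{L}(X_0)\in\kappa$, the modulus of continuity of $t\mapsto(m_t,J_t)$ is controlled uniformly — for $J_t=-\ln\Pm(\tau>t)$ one needs a uniform-in-solution lower bound on $\Pm(\tau>t)$ on compact time intervals, which comes from the Girsanov comparison with Brownian motion started from a measure in the compact set $\kappa$ (so the survival probability is bounded below uniformly by compactness of $\kappa$ and continuity of the Brownian survival functional in the initial law); for $m_t$ the increment bound sketched above is uniform because $b$ is bounded by $B$ independently of the solution. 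Second, pointwise precompactness: $\{m_t : \text{solutions}\}\subseteq\mathcal{P}(U)$ — if $U$ is bounded this is automatic, and in general one needs tightness of the family $\{m_t\}$, which again follows from the Girsanov domination by Brownian motion started in the compact set $\kappa$. Then Arzelà–Ascoli (applied on each $[0,T]$ and combined via the $d^\infty$ metric, which by construction metrizes uniform-on-compacts convergence) gives relative compactness. Finally, closedness: if $(m^n_t,J^n_t)\to(m_t,J_t)$ uniformly on compacts with each $(m^n,J^n)\in\Xi(\kappa)$, I would extract the corresponding weak solutions $(X^n,\tau^n,W^n)$, use tightness of their laws on path space (again from the bounded-drift Girsanov estimate), pass to a subsequential limit $(X,\tau,W)$, and verify it is a global weak solution of \eqref{eq:MKV sde} with $\mathcal{L}(X_0)\in\kappa$ whose conditional-law flow is the limit $(m_t,J_t)$; the martingale-problem characterization plus continuity of $b$ in the measure argument (Condition~\ref{cond:blip}) makes this routine, and $\kappa$ closed gives $\mathcal{L}(X_0)\in\kappa$.

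The main obstacle I anticipate is the closedness step, specifically showing that the limit of conditional laws of killed processes is again the conditional law of a killed process — i.e., that the operations $\mathcal{L}(X^n_t\mid\tau^n>t)\to\mathcal{L}(X_t\mid\tau>t)$ and $\Pm(\tau^n>t)\to\Pm(\tau>t)$ are compatible with the path-space limit. The subtlety is that conditioning on survival is not a continuous operation in general (the event $\{\tau>t\}$ is not closed or open in path space), so I would need to use the no-atom property of $\tau$ — which is uniform across the family thanks to the Brownian comparison — to show that $\Pm(\tau^n>t)\to\Pm(\tau>t)$ and that the conditioned measures converge. This is where the uniform lower bound on survival probabilities and the uniform continuity of the survival functions (both consequences of the interior ball condition via Girsanov) do the real work. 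Everything else — equicontinuity, tightness, the martingale problem passage — is standard once these Girsanov-type comparison estimates are in hand, and I would lean on them repeatedly rather than re-deriving bounds from scratch.
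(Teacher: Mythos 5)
Your proposal is essentially correct and reaches the right conclusions, but it is organized differently from the paper's proof in two respects. For the first part (globality, absence of atoms of $\tau$, continuity of the conditional laws) you rely on Girsanov equivalence with Brownian motion killed at $\partial U$, whereas the paper derives these facts from explicit density estimates obtained by comparison with reflected processes (Lemma \ref{lem:upper and lower bounds on density of diffusions} and Corollary \ref{cor:hitting time Corollary}). Both routes work; note only that the Girsanov density is not bounded, so the quantitative transfers you invoke (uniform lower bounds on survival, uniform smallness of $\Pm(t<\tau\leq t+h)$) should go through Cauchy--Schwarz or $L^p$ bounds on the exponential martingale rather than a ``bounded density'', and the no-atom property of the Brownian exit time uses $\text{Leb}(\partial U)=0$, which is available because Condition \ref{cond:blip} gives a $C^\infty$ boundary. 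For compactness of $\Xi(\kappa)$ you split the argument into Arzel\`a--Ascoli (equicontinuity of $(m_t,J_t)$ uniformly over solutions started in $\kappa$, plus pointwise tightness) and closedness; the paper instead proves sequential compactness in one pass: it lifts a sequence of flows to the underlying solutions, proves tightness of $(X^k_{\cdot\wedge\tau^k},F^k,W^k,\tau^k)$ on path space, passes to a Skorokhod-representation limit, identifies the limit as a global weak solution via the martingale problem, and then upgrades pointwise convergence of the conditional laws to uniform-on-compacts convergence (its Step 5). Your closedness step is essentially this same argument, so the net effect of your reorganization is to add an Arzel\`a--Ascoli layer (which buys you that only pointwise identification of the limit flow is needed) at the price of proving equicontinuity uniformly over the whole family $\Xi(\kappa)$ rather than along a single convergent sequence.

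Two points in your sketch need more care, though neither is fatal. First, pointwise precompactness is \emph{not} automatic when $U$ is bounded: $\mathcal{P}_{\Wah}(U)$ is not compact, since mass can concentrate near $\partial U$ (e.g.\ $\delta_{x_n}$ with $x_n\to\partial U$ has no limit in $\mathcal{P}(U)$). You need precisely the boundary estimate your ``general case'' comparison provides — uniform smallness of $m_t(\{x:d(x,\partial U)<\epsilon\})$ — which for $t$ bounded away from $0$ follows from a density bound and for small $t$ from tightness of $\kappa$ together with the modulus of continuity of the paths. Second, in the closedness step, verifying that the limit triple is a weak solution requires identifying the limit of the stopping times $\tau^n$ with the first hitting time of $\partial U$ by the limit path; this is not a consequence of the martingale problem alone. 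The paper handles it (Step 3 of its proof) using smoothness of $\partial U$: if the limit path touched $\partial U$ strictly before the limit of the $\tau^n$, it would almost surely exit $\bar U$ immediately afterwards, contradicting $X_t\in\bar U$ up to that time. You should include such an argument alongside the no-atom property that you correctly identified as the key to passing the conditional laws to the limit.
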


Note that Lemma \ref{lem:MKV Solns compact for compact set of ics} allows for the possibility that $\Xi(\kappa)$ is the compact set $\emptyset$.

\subsection{Proof of Lemma \ref{lem:MKV Solns compact for compact set of ics}}\label{Section:Properties of the McKean-Vlasov Process}

We begin by showing that every weak solution $(X,\tau,W)$ to \eqref{eq:MKV sde} is a global weak solution with $\Wah$-continuous in time laws. We suppose $(X_t,W_t)_{0\leq t\leq\tau}$ is a weak solution to \eqref{eq:MKV sde}. Lemma \ref{lem:upper and lower bounds on density of diffusions}, which we establish in the appendix (Lemma \ref{lem:upper and lower bounds on density of diffusions}), automatically implies that $(X,\tau,W)$ is a global weak solution.

We now turn to proving that global weak solutions to \eqref{eq:MKV sde} have $\Wah$-continuous in time conditional laws. We fix some weak solution $(X,\tau,W)$ of \eqref{eq:MKV sde} (which is a global weak solution) and $\phi\in C_b(U)$. Corollary \ref{cor:hitting time Corollary} (established in the appendix) gives that $\Pm(\tau=t)=0$ for $t\geq 0$. Therefore we have:

\[
\lim_{t'\uparrow t}\phi(X_{t'})\Ind(\tau>t')=\lim_{t'\downarrow t}\phi(X_{t'})\Ind(\tau>t')=\phi(X_t)\Ind(\tau>t)\quad\text{almost surely.}
\]
Thus $\mathcal{L}(X_{t'})\ra \mathcal{L}(X_{t})$ in $\mathcal{M}(U)$ as $t'\ra t$. Moreover since $\Pm(\tau=t)=0$, $\Pm(\tau>t')\ra \Pm(\tau>t)$ as $t'\ra t$. Therefore we have:
\[
\mathcal{L}(X_t\lvert\tau>t)\in C([0,\infty);\mathcal{P}(U)),\quad \Pm(\tau>t)\in C([0,\infty);\Rm_{> 0})
\]
where $\mathcal{P}(U)$ is equipped with the topology of weak convergence of probability measures. Since $\Wah$ generates this same topology, we have $(\mathcal{L}(X_t\lvert \tau>t))_{0\leq t<\infty}\in C([0,\infty);\mathcal{P}_{\Wah}(U))$.

\subsubsection*{Compactness of $\Xi(\kappa)$}

We now turn to establishing that $\Xi(\kappa)$ is a compact subset of $(C([0,\infty);\mathcal{P}_{\Wah}(U)\times \Rm_{\geq 0},d^{\infty})$ for $\kappa\subseteq \mathcal{P}_{\Wah}(U)$ compact. Since the empty set is compact, we may assume without loss of generality that $\Xi(\kappa)\neq\emptyset$. We take $(X^k,\tau^k,W^k)$ a sequence of global weak solutions to \eqref{eq:MKV sde} with initial conditions $\mathcal{L}(X^k_0)\in \kappa$.

\begin{enumerate}
    \item 
    We define:
    \[
    F^k_t:=\int_0^tb(\mathcal{L}(X^k_s\lvert \tau^k>s),X_s^k)ds
    \]
    and the metric of uniform convergence on compact intervals of time:
    \[
    \begin{split}
    \bar d_{\infty}((x^1_t,f^1_t,w^1_t)_{0\leq t<\infty},(x^2_t,f^2_t,w^2_t)_{0\leq t<\infty})\\= \sum_{n=1}^{\infty}2^{-n}(\sup_{t\leq n}(\lvert x^1_t-x^2_t\rvert+\lvert f^1_t-f^2_t\rvert+\lvert w^1_t-w^2_t\rvert)\wedge 1).
    \end{split}
    \]
    We establish that $\{\mathcal{L}((X^k_{t\wedge \tau^k},F^k_{t\wedge \tau^k},W^k_{t\wedge \tau^k})_{0\leq t<\infty})\}$ is tight in $\mathcal{P}((C([0,\infty);\bar U\times \Rm^d\times \Rm^d),\bar d_{\infty}))$. 
    \label{enum:xi compact X W tight}
    \item 
    We equip $[0,\infty]$ with the topology given by the one-point compactification of $[0,\infty)$, metrised with the metric $d_{[0,\infty]}(x,y)=\lvert \frac{1}{x+1}-\frac{1}{y+1}\rvert$. Then $\{\mathcal{L}(\tau^k)\}$ must be tight after compactification, hence the joint laws are tight, so that $\{\mathcal{L}(((X^k_{t\wedge \tau^k},F^k_{t\wedge \tau^k},W^k_{t})_{0\leq t<\infty},\tau^k))\}$ is tight in $\mathcal{P}((C([0,\infty);\bar U\times\Rm^d\times \Rm^d),\bar d_{\infty})\times ([0,\infty],d_{[0,\infty]}))$. 
    
    We consider any convergent in distribution subsequential limit:
    \[
    ((X_{t\wedge \tau},F_{t\wedge \tau},W_{t\wedge \tau})_{0\leq t<\infty},\tau)
    \]
    so that on some new probability space $(\Omega',\mathcal{F}',\Pm')$ we have $\Pm'$-almost sure convergence in $\bar d^{\infty}$ by Skorokhod's representation theorem. Having almost sure convergence (rather than convergence in distribution) shall become useful in Step \ref{enum:xi compact convergence of conditional laws step ptwise}. We equip $(\Omega',\mathcal{F}',\Pm')$ with the filtration $\mathcal{F}'_t:=\cap_{h>0}\sigma((X_{s\wedge \tau},F_{s\wedge \tau},W_{s})_{0\leq s\leq t+h},\tau\wedge (t+h))$. We see that $(W_t)_{t\geq 0}$ must be an $(\mathcal{F}'_t)_{t\geq 0}$-Brownian motion and $\tau$ an $(\mathcal{F}'_t)_{t\geq 0}$-stopping time. It is now sufficient to show that:
    \begin{enumerate}
        \item $(\mathcal{L}(X^k_t\lvert \tau^k>t),-\ln\Pm(\tau^k>t))_{0\leq t<\infty}\ra (\mathcal{L}(X_t\lvert \tau>t),-\ln\Pm(\tau>t))_{0\leq t<\infty}$ in $d^{\infty}$.
        \item $(X,W,\tau)$ is a global weak solution of \eqref{eq:MKV sde}.
        \end{enumerate}
    \item    \label{enum:xi compact tau=time to hit partial U}
    We establish that there exists an $\mathbb{F}'$-adapted and uniformly bounded process $b_t$ such that:
    \[
    dX_t=b_tdt+dW_t,\quad 0\leq t<\tau=\inf\{t>0:X_{t-}\in \partial U\}.
    \]
    Corollary \ref{cor:hitting time Corollary} then gives that $\Pm(\tau=t)=0$ whilst Lemma \ref{lem:upper and lower bounds on density of diffusions} gives that $\Pm(\tau>t)$ for all $t\geq 0$.
    \item
    We establish that $\mathcal{L}(X^k_t\lvert \tau^k>t)\ra \mathcal{L}(X_t\lvert \tau>t)$ in $\Wah$ and $\Pm(\tau^k>t)\ra \Pm(\tau>t)$ pointwise in t.
    \label{enum:xi compact convergence of conditional laws step ptwise}
    \item
    We now establish that $(\mathcal{L}(X^k_t\lvert \tau^k>t),-\ln\Pm(\tau^k>t))_{0\leq t<\infty}\ra (\mathcal{L}(X_t\lvert \tau>t)-\ln\Pm(\tau>t))_{0\leq t<\infty}$ in $d^{\infty}$.
    \label{enum:xi compact convergence of conditional laws step}
    \item
    By considering the martingale problem, we see that $b_t=b(\mathcal{L}(X_t\lvert \tau>t),X_t)$ for $t<\tau$ hence $(X,\tau,W)$ must be a global weak solution of \eqref{eq:MKV sde}. Since $\kappa\ni\mathcal{L}(X^k_0)\overset{\Wah}{\ra} \mathcal{L}(X_0)$ and $\kappa$ is compact in $\mathcal{P}_{\Wah}(U)$, $\mathcal{L}(X_0)\in \kappa$. Thus $(\mathcal{L}(X_t\lvert \tau>t),-\ln\Pm(\tau>t))_{0\leq t<\infty}\in \Xi(\kappa)$. Using Step \ref{enum:xi compact convergence of conditional laws step} we have established that for any sequence $(\mathcal{L}(X^k_t\lvert \tau^k>t),-\ln\Pm(\tau^k>t))_{0\leq t<\infty}$ in $\Xi(\kappa)$ there is a further subsequence converging in $d^{\infty}$ to an element $(\mathcal{L}(X_t\lvert \tau>t),-\ln\Pm(\tau>t))_{0\leq t<\infty}$ of $\Xi(\kappa)$. Thus we have Lemma \ref{lem:MKV Solns compact for compact set of ics}.
\end{enumerate}

\underline{Step \ref{enum:xi compact X W tight}}

We note that Aldous' condition \cite[Theorem 1]{Aldous1978} gives that $\{\mathcal{L}((X^k_{t\wedge \tau^k},F^k_{t\wedge \tau^k},W^k_{t\wedge \tau^k})_{0\leq t\leq T})\}$ is tight in $\mathcal{P}(D([0,T];\bar U\times \Rm^d\times \Rm^d))$ hence in $\mathcal{P}(C([0,T];\bar U\times \Rm^d\times \Rm^d))$ (equipped with the uniform metric) for any $T<\infty$.

We now fix $\epsilon>0$. Then there exists for each $T\in\mathbb{N}$ some $K_T\subseteq C([0,T];\bar U\times \Rm^d\times \Rm^d)$ compact such that $\Pm((X^k_{t\wedge \tau^k},F^k_{t\wedge \tau^k},W^k_{t\wedge \tau^k})_{0\leq t\leq T}\notin K_T)<\epsilon 2^{-T}$. We therefore define:
\[
\mathcal{K}=\{f\in (C([0,\infty);\bar U\times \Rm^d\times \Rm^d),d^{\infty}):(f_t)_{0\leq t\leq T}\in K_T\quad\text{for all}\quad T\in\mathbb{N}\}.
\]
\sloppy We see that $\mathcal{K}$ is clearly compact in $(C([0,\infty);\bar U\times \Rm^d\times \Rm^d),\bar d^{\infty})$, and moreover $\Pm((X^k_{t\wedge \tau^k},F^k_{t\wedge \tau^k},W^k_{t\wedge \tau^k})_{0\leq t<\infty}\notin \mathcal{K})\leq \sum_{T=1}^{\infty}\epsilon 2^{-T}\leq \epsilon$. Therefore we are done.

\underline{Step \ref{enum:xi compact tau=time to hit partial U}}

We note that $(X,W,\tau,F)$ $\Pm'$-almost surely satisfies:
\begin{equation}
dX_t=dF_t+dW_t,\quad t\leq \tau
\label{eq:Xt=Bt+dWt sde for tau hitting time}
\end{equation}
whereby $W_t$ is a Brownian motion up to time $\tau$ and moreover F has B-Lipschitz paths. We now define $b_t=\lim_{h\ra 0}\frac{F_{t+h}-F_{t}}{h}\in [-B,B]$ when the limit exists and $b_t=0$ otherwise. Since $F_t$ is Lipschitz, Rademacher's theorem allows us to see that:
\[
dX_t=b_tdt+dW_t,\quad t\leq \tau.
\]

We now seek to show that $\Pm'(\tau=\inf\{t>0:X_{t-}\in\partial U\})=1$. We let $\tau'=\inf\{t:X_{t-}\in \partial U\}$. Clearly we must have $X_{\tau-}\in \partial U$ if $\tau<\infty$ hence it is sufficient to show $\Pm'(\tau'<\tau)=0$. We must have $X_t\in \bar U$ for every $t\leq \tau$. Since $\partial U$ is smooth and $\Pm'$-almost surely $X_t$ satisfies \eqref{eq:Xt=Bt+dWt sde for tau hitting time}, if $\tau'<\tau$ then $\Pm'$-almost surely there exists $\tau''\in (\tau',\tau)$ such that $X_{\tau''}\notin \bar U$. This is impossible, thus $\Pm'(\tau'<\tau)=0$. 

\underline{Step \ref{enum:xi compact convergence of conditional laws step ptwise}}

Since $\Pm(\tau=t)=0$, $\Pm(\tau^k>t)\ra \Pm(\tau>t)$. We now take $\phi\in C_b(U)$ and extend $\phi$ to a $\bar U$ by setting $\phi(x)=0$ for $x\in \partial U$. We have $X^k_{t\wedge \tau^k}\ra X_{t\wedge \tau}$ $\Pm'$-almost surely. Unless $\tau^k>t$ for arbitrarily large k and $\tau\leq t$ we must have $\phi(X^k_{t\wedge \tau^k})\ra \phi(X_{t\wedge \tau})$. However since $\tau^k\ra\tau$, $\Pm'(\limsup_{k\ra\infty}\tau^k\geq t\geq \tau)\leq \Pm'(\tau=t)=0$ hence $\phi(X^k_{t\wedge \tau^k})\ra \phi(X_{t\wedge \tau})$ $\Pm'$-almost surely. Therefore we have $\mathcal{L}(X^k_t)\ra \mathcal{L}(X_t)$ in $\mathcal{M}(U)$ hence $\mathcal{L}(X^k_t\lvert \tau^k>t)\ra \mathcal{L}(X_t\lvert \tau>t)$ in $\Wah$.

\underline{Step \ref{enum:xi compact convergence of conditional laws step}}

We begin by establishing that for all $T<\infty$ we have:
\begin{equation}
\sup_{t\leq T}\lvert -\ln \Pm(\tau^k>t)+\ln \Pm(\tau>t)\rvert\ra 0\quad\text{as}\quad k\ra\infty.
\label{eq:conv of p tauk>t on compacts}
\end{equation}
We will then establish that for all $T<\infty$ we have:
\begin{equation}
\sup_{t\leq T}\Wah(\Law(X^k_t\lvert \tau^k>t),\Law(X_t\lvert \tau>t))\ra 0\quad\text{as}\quad k\ra\infty.
\label{eq:conv of conditional dist of Xk on compacts tightness proof}
\end{equation}
These would then imply $(\mathcal{L}(X^k_t\lvert \tau^k>t),-\ln\Pm(\tau^k>t))_{0\leq t<\infty}\ra (\mathcal{L}(X_t\lvert \tau>t)-\ln\Pm(\tau>t))_{0\leq t<\infty}$ in $d^{\infty}$.

$\Pm(\tau^k>t)$ and $\Pm(\tau>t)$ are continuous, non-negative, non-increasing in t, and uniformly (in $k\in\mathbb{N}$, $t\leq T$) bounded away from 0. This and Step \ref{enum:xi compact convergence of conditional laws step ptwise} imply \eqref{eq:conv of p tauk>t on compacts} and that
\begin{equation}
\sup_{\substack{0\leq t\leq t+h\leq T\\ k\in\mathbb{N}}}\Pm(t<\tau^k\leq t+h)\ra 0\quad\text{as}\quad h\ra 0
\label{eq:uniform cty of cumulative dist fn of tau k}
\end{equation}
by elementary analysis. We now turn to establishing \eqref{eq:conv of conditional dist of Xk on compacts tightness proof}. We calculate:
\begin{equation}
\begin{split}
\Wah(\mathcal{L}(X^k_{t+h}\lvert \tau^k>t+h),\mathcal{L}(X^k_{t}\lvert \tau^k>t))\\
\leq \Wah(\mathcal{L}(X^k_{t+h}\lvert \tau^k>t+h),\mathcal{L}(X^k_{t}\lvert \tau^k>t+h))
+\Wah(\mathcal{L}(X^k_{t}\lvert \tau^k>t+h),\mathcal{L}(X^k_{t}\lvert \tau^k>t)).
\end{split}
\label{eq:Step 6 of tightness proof decomp}
\end{equation}
We begin by bounding $\Wah(\mathcal{L}(X^k_{t}\lvert \tau^k>t+h),\mathcal{L}(X^k_{t}\lvert \tau^k>t))$. We observe that:
\[
\begin{split}
\mathcal{L}(X^k_{t})
=\mathcal{L}(X_t^k\lvert \tau^k>t+h)\underbrace{\Pm(\tau^k>t+h)}_{=\Pm(\tau^k>t)-\Pm(t<\tau^k\leq t+h)}\\
+\mathcal{L}(X_t^k\lvert t<\tau^k\leq t+h)\Pm(t<\tau^k\leq t+h)
=\mathcal{L}(X^k_t\lvert \tau^k> t+h)\Pm(\tau^k>t)
\\+\big(\mathcal{L}(X_t^k\lvert t<\tau^k\leq t+h)-\mathcal{L}(X_t^k\lvert \tau^k>t+h)\big)\Pm(t<\tau^k\leq t+h).
\end{split}
\]
Therefore we have:
\[
\begin{split}
\mathcal{L}(X^k_{t}\lvert \tau^k>t)=\mathcal{L}(X^k_t\lvert \tau^k> t+h)\\
+\big(\mathcal{L}(X_t^k\lvert t<\tau_k\leq t+h)-\mathcal{L}(X_t^k\lvert \tau^k>t+h)\big)\frac{\Pm(t<\tau^k\leq t+h)}{\Pm(\tau^k>t)}
\end{split}
\]
so that using \eqref{eq:uniform cty of cumulative dist fn of tau k} we have:
\[
\begin{split}
\Wah(\mathcal{L}(X^k_{t}\lvert \tau^k>t),\mathcal{L}(X^k_t\lvert \tau^k> t+h))\leq \lvert\lvert \mathcal{L}(X^k_{t}\lvert \tau^k>t)-\mathcal{L}(X^k_t\lvert \tau^k> t+h)\rvert\rvert_{\TV}\\
\leq \frac{\Pm(t<\tau^k\leq t+h)}{\Pm(\tau^k>t)}.
\end{split}
\]

We have $\Wah(\mathcal{L}(X^k_{t}\lvert \tau^k>t+h),\mathcal{L}(X^k_{t+h}\lvert \tau^k>t+h))\leq \frac{\expE[\lvert X_{(t+h)\wedge \tau^k}^k-X_{t\wedge \tau^k}^k\rvert]}{\Pm(\tau^k>t+h)}$ so that using \eqref{eq:Step 6 of tightness proof decomp} we have:
\[
\begin{split}
\sup_{\substack{k\in\mathbb{N}\\0\leq t\leq t'\leq t+h\leq T}}\Wah(\mathcal{L}(X^k_{t+h}\lvert \tau^k>t+h),\mathcal{L}(X^k_{t}\lvert \tau^k>t))\\ \leq \sup_{\substack{k\in\mathbb{N}\\0\leq t\leq t'\leq t+h\leq T}}\Big(\frac{\Pm(t<\tau^k\leq t+h)}{\Pm(\tau^k>t)}+\frac{\expE[\lvert X_{(t+h)\wedge \tau^k}^k-X_{t\wedge \tau^k}^k\rvert]}{\Pm(\tau^k>t+h)}\Big)\ra 0\quad\text{as}\quad h\ra 0.
\end{split}
\]

Therefore using Step \ref{enum:xi compact convergence of conditional laws step ptwise} we have \eqref{eq:conv of conditional dist of Xk on compacts tightness proof}. This concludes our proof of Lemma \ref{lem:MKV Solns compact for compact set of ics}.

\qed

\subsection{Proof of Theorem \ref{theo:N ra infty theorem}}
Our goal is to establish tightness of $\{\mathcal{L}((m^N_t,J^N_t)_{0\leq t<\infty})\}$ and characterise the limit distributions as being supported on $\Xi$ - the set of flows of laws of a stochastic process.

To characterise subsequential limits the strategy we would like to employ is to use martingale methods to chararacterise subsequential limits as being supported on the solution set of a nonlinear Fokker-Planck PDE, then to show that these PDE solutions correspond to global weak solutions of \eqref{eq:MKV sde}. 

Formally speaking subsequential limits should correspond to weak solutions of the nonlinear Fokker-Planck equation:
\[
\partial_t u + \nabla \cdot \left(b\left(\frac{u}{\lvert u\rvert_*},x\right) u\right) = \frac{1}{2} \Delta u, \quad \quad u \big \vert_{\partial U} = 0
\]
renormalised to have mass 1. We may rigorously show that subsequential limits of $\{(m_t^N,J_t^N)_{0\leq t<\infty}\}$ correspond to weak solutions of this PDE. However on unbounded domains we can't directly show these PDE solutions correspond to solutions of the McKean-Vlasov SDE \eqref{eq:MKV sde} as we need to make use of a uniqueness theorem \cite[Theorem 1.1]{Porretta2015a} for solutions of the linear Fokker-Planck equation which requires boundedness of the domain. 

We will instead consider a notion of solution which satisfies a certain approximation condition upon truncation of the domain to a large but bounded subdomain $U_R$ of U. Proposition \ref{prop:coupling to system on bounded domain} allows us to couple our $N$-particle system $\vec{X}^N$ to an $N$-particle system $\vec{X}^{N,R}$ on $U_R$ and obtain uniform controls on the difference between the two $N$-particle systems. Thus we show subsequential limits satisfy this approximation condition, and by martingale methods are solutions of our PDE.

We then show that such approximable PDE solutions correspond to solutions of the McKean-Vlasov SDE \eqref{eq:MKV sde}.

\subsubsection*{Overview}

For $R > R_{\min}+1$, we take $\vec{X}^{N,R}$ to be the particle system on the subdomain $U_R\subseteq U$ whose existence is guaranteed by Proposition \ref{prop:coupling to system on bounded domain} with associated empirical measure valued process and jump process respectively given by:
\[
m^{N,R}_t=\frac{1}{N}\sum_{i=1}^N\delta_{X^{N,R,i}_t},\quad J^{N,R}_t=\#\{\text{jumps up to time t by }\vec{X}^{N,R}\}.
\]

We define for $1+R_{\min}<R<\infty$ the following test functions:
\begin{equation}
\begin{split}
C_{0}^{\infty}(\bar U_R\times [0,\infty))=\{\varphi\in C^{\infty}_c(\bar U_R\times [0,\infty)):\varphi_{\lvert_{\partial U_R\times [0,\infty)}}\equiv 0\}
\end{split}
\label{eq:test fns with time variable}
\end{equation}
and define $C_{0}^{\infty}(\bar U\times [0,\infty))$ in the same manner, with $U_R$ replaced with $U$.

We define the following:
\begin{equation}
\begin{split}
M^{\varphi,N,R}_t:=\big(1-\frac{1}{N}\big)^{NJ^{N,R}_t}\langle m^{N,R}_t(.),\varphi(.,t)\rangle-\langle m^{N,R}_0(.,0),\varphi(.,0)\rangle
\\-\int_0^t\big(1-\frac{1}{N}\big)^{NJ^{N,R}_s}\langle m^{N,R}_s(.),\partial_s\varphi(.,s)+b(m^{N}_s,.)\cdot \nabla\varphi(.,s)\\+\frac{1}{2}\Delta \varphi(.,s)\rangle ds,\quad 0\leq t\leq T,\quad \varphi\in C_0^{\infty}(\bar U_R\times [0,\infty)),
\end{split}
\label{eq:equation for Martingales MphiNR}
\end{equation}
and define $M^{\varphi,N}_t$ in the same manner, with $U_R$ replaced with $U$ and $m^{N,R}$ replaced with $m^N$.

By showing these are martingales and using the Martingale Central Limit Theorem \cite[Theorem 2.1]{Whitt2007} we establish the following proposition:
\begin{prop}
For $R>1+R_{\min}$, $T<\infty$ and for fixed test function $\varphi\in C_0^{\infty}(\bar U_R\times [0,\infty))$, $(M^{\varphi,N,R}_t)_{0\leq t\leq T}$ (and similarly $M^{\varphi,N}_t)_{0\leq t\leq T}$ for $\varphi\in C_0^{\infty}(\bar U\times [0,\infty)$) converges to zero uniformly in probability:
\begin{equation}
\begin{split}
    \sup_{t\leq T}\lvert M_t^{\varphi,N,R}\rvert\ra 0\quad\text{in probability}\quad\text{as}\quad N\ra\infty.
\end{split}
\end{equation}
\label{prop: MN martingale convergence to 0}
\end{prop}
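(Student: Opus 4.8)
\textbf{Proof plan for Proposition \ref{prop: MN martingale convergence to 0}.}

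The plan is to show first that, for each fixed $\varphi$, the process $(M^{\varphi,N,R}_t)_{0\leq t\leq T}$ is a martingale with respect to the natural filtration of the $N$-particle system, and then to show that its quadratic variation tends to zero in probability, so that the Martingale Central Limit Theorem (or just Doob's inequality) yields $\sup_{t\leq T}|M^{\varphi,N,R}_t|\to 0$ in probability. I will carry out the argument for $M^{\varphi,N,R}$; the case of $M^{\varphi,N}$ on the full domain $U$ is identical (the test function has compact support in $\bar U\times[0,\infty)$, so the jump and drift estimates of Section \ref{section:N-particle Estimates} apply verbatim).

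First I would verify the martingale property. Between jump times, $\langle m^{N,R}_t,\varphi(\cdot,t)\rangle$ evolves by It\^o's formula applied to each of the $N$ diffusions $dX^{N,R,i}_t = b(m^N_t,X^{N,R,i}_t)\,dt + dW^{N,R,i}_t$, producing exactly the drift term $\langle m^{N,R}_s,\partial_s\varphi + b(m^N_s,\cdot)\cdot\nabla\varphi + \tfrac12\Delta\varphi\rangle$ plus a local martingale $\frac1N\sum_i\int\nabla\varphi(X^{N,R,i}_s,s)\cdot dW^{N,R,i}_s$. The prefactor $(1-\tfrac1N)^{NJ^{N,R}_t}$ is designed to compensate the jumps: when a particle hits $\partial U_R$ it relocates to a uniformly chosen one of the other $N-1$ particles, so the conditional expectation of the post-jump value of $\langle m^{N,R},\varphi\rangle$ equals $(1-\tfrac1N)$ times a sum over the remaining $N-1$ particles (the dying particle contributes $\varphi\equiv0$ on $\partial U_R$), and multiplying by $(1-\tfrac1N)^{-1}$ per jump makes the jump increments of the compensated process mean-zero given $\mathcal{F}_{\tau_k^-}$. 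A short calculation (using $\varphi|_{\partial U_R}\equiv0$ to kill the boundary term, exactly as in the Markovian Fleming-Viot literature, e.g. \cite{Grigorescu2004}) shows $M^{\varphi,N,R}$ is a martingale. One must be slightly careful that the number of jumps in $[0,T]$ is a.s. finite and that $\mathbb{E}[(J^{N,R}_T)^2]$ or similar moments are controlled; Proposition \ref{prop:bound on number of jumps by particle} together with Proposition \ref{prop:coupling to system on bounded domain} gives this, which also ensures integrability.

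Next I would bound the quadratic variation. Writing $M^{\varphi,N,R} = M^{\mathrm{cts}} + M^{\mathrm{jump}}$ for the continuous (Brownian) martingale part and the purely discontinuous part, the bracket of the continuous part is $[M^{\mathrm{cts}}]_T = \frac{1}{N^2}\sum_i\int_0^T (1-\tfrac1N)^{2NJ^{N,R}_s}|\nabla\varphi(X^{N,R,i}_s,s)|^2\,ds \leq \frac{T\|\nabla\varphi\|_\infty^2}{N}$, which is $O(1/N)$. For the jump part, each jump at time $\tau_k$ produces an increment of size at most $C_\varphi/N$ times a bounded factor, so $[M^{\mathrm{jump}}]_T \leq \frac{C_\varphi^2}{N^2}\cdot(\text{number of jumps in }[0,T]) = \frac{C_\varphi^2}{N}J^{N,R}_T$; by Proposition \ref{prop:bound on number of jumps by particle} (transferred to $\vec X^{N,R}$ via Proposition \ref{prop:coupling to system on bounded domain}, whose stopping-time bound handles the near-boundary mass), $J^{N,R}_T$ is $O(1)$ in probability, so $[M^{\mathrm{jump}}]_T\to0$ in probability. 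Combining, $[M^{\varphi,N,R}]_T\to0$ in probability; the Lenglart/Doob inequality (or \cite[Theorem 2.1]{Whitt2007}) then gives $\sup_{t\leq T}|M^{\varphi,N,R}_t|\to0$ in probability.

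\textbf{Main obstacle.} The routine parts are It\^o's formula and the bracket computation; the real work is controlling the number of jumps. A priori $J^{N,R}_T$ could grow with $N$, which would make the jump-bracket bound $\frac{C_\varphi^2}{N}J^{N,R}_T$ useless; the whole point of Proposition \ref{prop:bound on number of jumps by particle} (via the stopping time $\tau^N_\epsilon$, outside of which each particle's jump count is stochastically dominated by a sum of a bounded number of geometric random variables, uniformly in $N$) is to rule this out, and one must check its hypotheses — tightness of $\{\mathcal L(m^{N,R}_0)\}$ and control of mass near $\partial U_R$ — hold for the coupled system $\vec X^{N,R}$, which is exactly what Proposition \ref{prop:coupling to system on bounded domain} provides. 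A secondary subtlety is that the drift appearing in the SDE for $X^{N,R,i}$ is $b(m^N_s,\cdot)$ (the empirical measure of the \emph{original} system on $U$), not $b(m^{N,R}_s,\cdot)$; this is harmless here since $b$ is merely bounded and the term $\langle m^{N,R}_s, b(m^N_s,\cdot)\cdot\nabla\varphi\rangle$ appears identically in both the generator expansion and the definition \eqref{eq:equation for Martingales MphiNR} of $M^{\varphi,N,R}$, but it must be tracked so that the compensator matches exactly.
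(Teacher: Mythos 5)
Your proposal is correct, and the skeleton (establish the martingale property via the compensating prefactor $(1-\tfrac1N)^{NJ^{N,R}_t}$, then kill the quadratic variation and invoke the Martingale CLT / Lenglart--Doob) is exactly the paper's. The one place where you genuinely diverge is the jump contribution to $[M^{\varphi,N,R}]_T$. You bound it by $\frac{C_\varphi^2}{N}J^{N,R}_T$ and then lean on Proposition \ref{prop:bound on number of jumps by particle} (transferred to $\vec X^{N,R}$ via Proposition \ref{prop:coupling to system on bounded domain}) to get tightness of $J^{N,R}_T$; this works, since the paper itself extracts tightness of $\{J^{N,R}_T\}$ from that proposition in the proof of Proposition \ref{prop:Tightness of (mR,JR) fixed time horizon}. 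The paper instead avoids any jump-count control here: the squared jump of $M^{\varphi,N,R}$ at the $(k+1)$-th death time carries the factor $(1-\tfrac1N)^{2k}$ inherited from the prefactor, so summing the resulting geometric series gives the deterministic, initial-condition-free bound $[M^{\varphi,N,R}]_T\leq \frac{T\|\nabla\varphi\|_\infty^2}{N}+\frac{9\|\varphi\|_\infty^2}{N}$ surely, even if the number of jumps were unbounded. Your route costs you the tightness hypothesis on $\{\mathcal L(m^N_0)\}$ and the machinery of propositions \ref{prop:bound on number of jumps by particle} and \ref{prop:coupling to system on bounded domain} (and yields only convergence in probability of the bracket, which is still sufficient for the MCLT), whereas the paper's geometric-series trick makes this proposition self-contained; in the contexts where the proposition is applied the tightness assumption is always in force, so nothing is lost in practice. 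Two minor remarks: integrability of $M^{\varphi,N,R}_t$ is immediate from boundedness of $\varphi$, $\nabla\varphi$, $\Delta\varphi$ and of the prefactor, so no moment bound on $J^{N,R}_T$ is needed for that, and almost-sure finiteness of the number of jumps on $[0,T]$ is Proposition \ref{prop:general weak solns are global} rather than the jump-count estimate.
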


We then establish tightness of $\{\mathcal{L}((m_t^{N,R},J_t^{N,R})_{0\leq t\leq T})\}$ by combing Proposition \ref{prop: MN martingale convergence to 0} with the estimates of Section \ref{section:N-particle Estimates} (which prevent mass accumulating on the boundary):
\begin{prop}
We show for $R>1+R_{\min}$ and $T<\infty$ that $\{\mathcal{L}((m_t^{N,R},J_t^{N,R})_{0\leq t\leq T})\}$ (similarly $\{\mathcal{L}((m_t^{N},J_t^{N})_{0\leq t\leq T})\}$) is tight in $\mathcal{P}(D([0,T];\mathcal{P}_{\Wah}(U_R)\times \Rm_{\geq 0}))$ (respectively $\mathcal{P}(D([0,T];\mathcal{P}_{\Wah}(U)\times \Rm_{\geq 0}))$) with almost surely continuous limit distributions.
\label{prop:Tightness of (mR,JR) fixed time horizon}
\end{prop}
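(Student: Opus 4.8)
The plan is to verify Jakubowski's criterion for tightness in $D([0,T];\mathcal{P}_{\Wah}(U_R)\times\Rm_{\geq 0})$ --- compact containment together with $D([0,T];\Rm)$-tightness of $\langle m^{N,R}_\cdot,\varphi\rangle$ for $\varphi$ in a countable point-separating family and of $J^{N,R}_\cdot$ itself --- and then to show that every subsequential limit is supported on continuous paths because all jumps are $O(1/N)$. The argument on $U_R$ and on $U$ is the same, so I describe the $U_R$ case.

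For compact containment of the $J$-component, write $J^{N,R}_t=\tfrac1N\sum_i J^{N,R,i}_t$ and apply Proposition \ref{prop:bound on number of jumps by particle} to the system $\vec X^{N,R}$, whose initial empirical measures are tight by Proposition \ref{prop:coupling to system on bounded domain}(1): each $J^{N,R,i}_{T\wedge\tau^N_\epsilon}$ is stochastically dominated by a sum of $M_\epsilon$ i.i.d.\ $\mathrm{Geom}(p_\epsilon)$ variables, so $\sup_N\expE[J^{N,R}_{T\wedge\tau^N_\epsilon}]<\infty$, and combined with $\limsup_N\Pm(\tau^N_\epsilon\leq T)\leq\epsilon$ this gives tightness of $\{J^{N,R}_T\}$ in $\Rm_{\geq0}$; since $t\mapsto J^{N,R}_t$ is non-decreasing, the whole path stays in a compact set with high probability. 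For the $m^{N,R}$-component, since $U_R$ is bounded, tightness of a family of $\mathcal{P}(U_R)$-valued random variables in $\mathcal{P}(\mathcal{P}_{\Wah}(U_R))$ is equivalent to no mass accumulating on $\partial U_R$: Proposition \ref{prop:laws at positive times constrained to compact set} provides a compact subset in which $m^{N,R}_t$ lies with probability one for all $t\geq T_0$, while for $t\leq T_0$ we combine Proposition \ref{prop:bound on mass near bdy}(2) with the tightness of $\{\mathcal{L}(m^{N,R}_0)\}$. Intersecting, for every $\epsilon>0$ there is a compact $\mathcal{K}_\epsilon\subseteq\mathcal{P}_{\Wah}(U_R)$ with $\inf_N\inf_{t\leq T}\Pm(m^{N,R}_t\in\mathcal{K}_\epsilon)\geq1-\epsilon$.

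For oscillation control, fix a countable family $\{\varphi_k\}\subseteq C_0^\infty(\bar U_R)$ of time-independent test functions separating points of $\mathcal{P}(U_R)$, and include among them a function $\psi_R$ with $0\leq\psi_R\leq1$ and $\psi_R\equiv1$ on $\{x\in U_R:d(x,\partial U_R)\geq c\}$ for a small fixed $c>0$. For each such $\varphi$, \eqref{eq:equation for Martingales MphiNR} reads
\[
\big(1-\tfrac1N\big)^{NJ^{N,R}_t}\langle m^{N,R}_t,\varphi\rangle=\langle m^{N,R}_0,\varphi\rangle+\int_0^t\big(1-\tfrac1N\big)^{NJ^{N,R}_s}\langle m^{N,R}_s,\,b(m^N_s,\cdot)\cdot\nabla\varphi+\tfrac12\Delta\varphi\rangle\,ds+M^{\varphi,N,R}_t,
\]
whose integrand is bounded by a constant $C_\varphi$ (as $\varphi$ is smooth with compact support and $|b|\leq B$), so for any stopping times $\sigma_N\leq\rho_N\leq(\sigma_N+\delta)\wedge T$,
\[
\Big|\big(1-\tfrac1N\big)^{NJ^{N,R}_{\rho_N}}\langle m^{N,R}_{\rho_N},\varphi\rangle-\big(1-\tfrac1N\big)^{NJ^{N,R}_{\sigma_N}}\langle m^{N,R}_{\sigma_N},\varphi\rangle\Big|\leq 2\sup_{t\leq T}|M^{\varphi,N,R}_t|+C_\varphi\,\delta,
\]
and the supremum tends to $0$ in probability by Proposition \ref{prop: MN martingale convergence to 0}. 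Taking $\varphi=\psi_R$ and using Proposition \ref{prop:bound on mass near bdy} to see that $\langle m^{N,R}_t,\psi_R\rangle$ stays within $\epsilon$ of $1$ with high probability uniformly in $t\leq T$, together with the lower bound $(1-\tfrac1N)^{NJ^{N,R}_t}\geq e^{-2J^{N,R}_T}$ and the compact containment of $J^{N,R}_T$, yields Aldous' condition for $J^{N,R}_\cdot$; dividing the displayed inequality for general $\varphi_k$ by the now-controlled weight gives Aldous' condition for each $\langle m^{N,R}_\cdot,\varphi_k\rangle$ in $D([0,T];\Rm)$. This, the compact containment $\mathcal{K}_\epsilon$, and the separation property of $\{\varphi_k\}$ verify Jakubowski's criterion, so $\{\mathcal{L}((m^{N,R}_t,J^{N,R}_t)_{0\leq t\leq T})\}$ is tight.

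Finally, at each jump time exactly one $R$-particle relocates, so $\Wah(m^{N,R}_t,m^{N,R}_{t-})\leq 2/N$ and $|J^{N,R}_t-J^{N,R}_{t-}|=1/N$; hence $\sup_{t\leq T}\big(\Wah(m^{N,R}_t,m^{N,R}_{t-})+|J^{N,R}_t-J^{N,R}_{t-}|\big)\to0$, and by the standard criterion every weak limit point is supported on $C([0,T];\mathcal{P}_{\Wah}(U_R)\times\Rm_{\geq0})$. The identical argument with $U_R$ replaced by $U$ --- now using the standing tightness of $\{\mathcal{L}(m^N_0)\}$, Lemma \ref{lem:tight after bounded times} to rule out mass escaping to infinity, and the $U$-versions of the estimates above --- settles the case on $U$. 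I expect the main obstacle to be that the martingale identity only controls the weighted quantity $(1-\tfrac1N)^{NJ^{N,R}_t}\langle m^{N,R}_t,\varphi\rangle$ rather than $\langle m^{N,R}_t,\varphi\rangle$ itself: disentangling the oscillations of $m^{N,R}$ from those of the jump-counting process $J^{N,R}$ forces one first to bound the oscillations of $J^{N,R}$ (via the near-constant test function $\psi_R$ and the no-mass-on-the-boundary estimate) and to keep the weight bounded away from $0$ (via the per-particle jump bound), which is exactly where Propositions \ref{prop:bound on mass near bdy} and \ref{prop:bound on number of jumps by particle} are indispensable.
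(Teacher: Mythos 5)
Your proposal is correct in substance, and for the jump-count component it coincides with the paper's argument: the paper likewise gets tightness of $J^{N,R}_T$ from Proposition \ref{prop:bound on number of jumps by particle} (via Markov), and then controls oscillations of the weight $\varsigma^N_t=(1-\tfrac1N)^{NJ^{N,R}_t}$ exactly as you do, by testing \eqref{eq:equation for Martingales MphiNR} against a function that is $\approx 1$ away from $\partial U_R$ (your $\psi_R$ is the paper's $\varphi_{\epsilon,\delta}$), invoking Proposition \ref{prop:bound on mass near bdy} and Proposition \ref{prop: MN martingale convergence to 0}. Where you genuinely diverge is the measure component and the continuity of limits. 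The paper reduces tightness of $(m^{N,R}_t)$ to scalar tightness of $\langle m^{N,R}_\cdot,\varphi\rangle$ for $\varphi\in C_c^\infty(\Rm^d)$ via the Gorostiza-type Lemma \ref{lem:enough for test fns tightness}, verifies Aldous by splitting $\varrho^N$ into a diffusive part with bounded coefficients and a pure-jump part whose oscillation is controlled by the already-established $J$-tightness with continuous limits, and then obtains continuity of the limit from the Lipschitz-in-time bound on $e^{-J_t}\langle m_t,\varphi\rangle$ after a careful passage from the product of Skorokhod spaces to $D([0,T];\mathcal{P}_{\Wah}(U_R)\times\Rm_{\geq 0})$. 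You instead apply Jakubowski's criterion directly to the joint process, get Aldous for $\langle m^{N,R}_\cdot,\varphi_k\rangle$ by dividing the weighted martingale identity by $\varsigma^N$ (using the lower bound $\varsigma^N\geq e^{-2\bar J}$ on the high-probability event $\{J^{N,R}_T\leq\bar J\}$ together with the oscillation control on $\varsigma^N$), and get continuity of limit points from the deterministic bound $O(1/N)$ on all jump sizes. Your route buys a cleaner treatment of the joint space (no separate Step-3 bookkeeping) and a shorter continuity argument; its cost is that your test functions must vanish on $\partial U_R$ (fine, since they still separate points of $\mathcal{P}(U_R)$) and that you must spell out the two-term estimate separating the oscillation of the weighted quantity from that of the weight, and check the closure-under-addition hypothesis in Jakubowski's criterion (harmless, since your Aldous bounds add).

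One imprecision to fix: Jakubowski's compact containment condition requires a compact $\mathcal{K}_\epsilon\subseteq\mathcal{P}_{\Wah}(U_R)\times\Rm_{\geq 0}$ containing the \emph{whole path} with probability at least $1-\epsilon$, not the pointwise-in-$t$ bound $\inf_{t\leq T}\Pm(m^{N,R}_t\in\mathcal{K}_\epsilon)\geq 1-\epsilon$ you state; moreover Proposition \ref{prop:laws at positive times constrained to compact set} asserts that the \emph{laws} $\mathcal{L}(m^{N}_t)$ lie in a compact subset of $\mathcal{P}(\mathcal{P}_{\Wah}(U))$, not that $m^{N,R}_t$ lies in a compact set of measures almost surely. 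The correct ingredient is Part \ref{enum:bound on mass near bdy} of Proposition \ref{prop:bound on mass near bdy} (a $\sup_{t\leq T}$ statement, applicable to $\vec X^{N,R}$ since its initial empirical laws are tight by Proposition \ref{prop:coupling to system on bounded domain}), from which a path-level compact set is built by intersecting the events $\{\sup_{t\leq T}m^{N,R}_t(\hat K_{2^{-n},\delta 2^{-n}}^c)<2^{-n}\}$ over $n$; in the unbounded case one intersects further with the events supplied by Lemma \ref{lem:tight after bounded times}, as you indicate. With that substitution your argument goes through.
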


It is then simple to use Proposition \ref{prop:Tightness of (mR,JR) fixed time horizon} to establish that:
\begin{prop}
$\{\mathcal{L}((m_t^{N},J_t^{N})_{0\leq t<\infty})\}$ is tight in $\mathcal{P}((D([0,\infty);\mathcal{P}_{\Wah}(U)\times \Rm_{\geq 0}),d^{D}))$ with almost surely continuous limit distributions.
\label{prop: tightness of mJ infinite time horizon}
\end{prop}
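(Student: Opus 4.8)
The plan is to derive Proposition \ref{prop: tightness of mJ infinite time horizon} from the finite-horizon statement, Proposition \ref{prop:Tightness of (mR,JR) fixed time horizon} (applied to the $N$-particle system on $U$ itself, not on the subdomain $U_R$), by a standard diagonal/projective-limit argument adapted to the metric $d^{D}$ defined in \eqref{eq:Skorokhod metric on integer times}. Recall that $d^{D}(f,g)=\sum_{T=1}^{\infty}2^{-T}(d^D_{[0,T]}((f_t)_{0\leq t\leq T},(g_t)_{0\leq t\leq T})\wedge 1)$, so that a subset of $D([0,\infty);\mathcal{P}_{\Wah}(U)\times\Rm_{\geq 0})$ is precompact in $d^{D}$ precisely when its restriction to each $[0,T]$ (for $T\in\mathbb{N}$) is precompact in the Skorokhod metric $d^D_{[0,T]}$. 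Since the restriction map $D([0,\infty);\cdot)\ni f\mapsto (f_t)_{0\leq t\leq T}\in D([0,T];\cdot)$ is continuous for $d^D$ into $d^D_{[0,T]}$, tightness on each finite horizon will transfer up to tightness on $[0,\infty)$.

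Concretely, first I would fix $\eta>0$. For each $T\in\mathbb{N}$, Proposition \ref{prop:Tightness of (mR,JR) fixed time horizon} gives a compact set $K_T\subseteq D([0,T];\mathcal{P}_{\Wah}(U)\times\Rm_{\geq 0})$ with $\Pm((m^N_t,J^N_t)_{0\leq t\leq T}\notin K_T)<\eta 2^{-T}$ for all $N$. Define
\[
\mathcal{K}_\eta=\{f\in D([0,\infty);\mathcal{P}_{\Wah}(U)\times\Rm_{\geq 0}):(f_t)_{0\leq t\leq T}\in K_T\text{ for every }T\in\mathbb{N}\}.
\]
I would then verify that $\mathcal{K}_\eta$ is precompact in $(D([0,\infty);\mathcal{P}_{\Wah}(U)\times\Rm_{\geq 0}),d^{D})$: any sequence in $\mathcal{K}_\eta$ has, by compactness of $K_1$, a subsequence converging on $[0,1]$; refining successively on $[0,2],[0,3],\dots$ and diagonalising yields a subsequence converging on every $[0,T]$, hence converging in $d^{D}$, and the limit again lies in each $K_T$ so lies in $\mathcal{K}_\eta$. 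By the union bound, $\Pm((m^N_t,J^N_t)_{0\leq t<\infty}\notin\mathcal{K}_\eta)\leq\sum_{T=1}^\infty\eta 2^{-T}=\eta$ for all $N$, which is exactly tightness of $\{\mathcal{L}((m^N_t,J^N_t)_{0\leq t<\infty})\}$ in $\mathcal{P}((D([0,\infty);\mathcal{P}_{\Wah}(U)\times\Rm_{\geq 0}),d^{D}))$.

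It remains to check the almost-sure continuity of subsequential limits. Suppose $(m^N_t,J^N_t)_{0\leq t<\infty}\overset{d}{\ra}(m_t,J_t)_{0\leq t<\infty}$ along a subsequence. The restriction to $[0,T]$ is continuous from $d^{D}$ to $d^D_{[0,T]}$, and at a path whose $[0,T]$-restriction is continuous it is a continuity point, so by the continuous mapping theorem the laws of $(m^N_t,J^N_t)_{0\leq t\leq T}$ converge to that of $(m_t,J_t)_{0\leq t\leq T}$ in $\mathcal{P}(D([0,T];\mathcal{P}_{\Wah}(U)\times\Rm_{\geq 0}))$; Proposition \ref{prop:Tightness of (mR,JR) fixed time horizon} tells us this limit is supported on $C([0,T];\mathcal{P}_{\Wah}(U)\times\Rm_{\geq 0})$, so $(m_t,J_t)_{0\leq t\leq T}$ is almost surely continuous. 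Taking a countable intersection over $T\in\mathbb{N}$ shows $(m_t,J_t)_{0\leq t<\infty}$ is almost surely in $C([0,\infty);\mathcal{P}_{\Wah}(U)\times\Rm_{\geq 0})$. I expect the only mildly delicate point to be bookkeeping around the two metrics $d^{D}$ and $d^\infty$ and the precise statement of continuity of the restriction maps on the Skorokhod space — everything else is a routine tightness-via-exhaustion argument — so the "hard part" here is really just making sure Proposition \ref{prop:Tightness of (mR,JR) fixed time horizon} is being invoked with $R$ absent (i.e. on $U$ directly) rather than only on the truncated domains, which is legitimate since that proposition is stated for both the $U_R$- and $U$-systems.
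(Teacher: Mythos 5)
Your argument is essentially identical to the paper's own proof: the paper also fixes $\epsilon>0$, takes the compact sets $K_T\subseteq D([0,T];\mathcal{P}_{\Wah}(U)\times\Rm_{\geq 0})$ supplied by Proposition \ref{prop:Tightness of (mR,JR) fixed time horizon} (which is indeed stated for the $U$-system as well as the $U_R$-systems), defines $\mathcal{K}=\{f:(f_t)_{0\leq t\leq T}\in K_T\ \forall T\in\mathbb{N}\}$, and concludes by the union bound $\sum_T\epsilon 2^{-T}\leq\epsilon$. The only difference is that you spell out what the paper leaves implicit (the diagonal argument behind the compactness of $\mathcal{K}$ in $d^{D}$ and the continuous-mapping step giving almost-sure continuity of limit laws from the finite-horizon statement), so the proposal is correct in the same sense and to the same level of rigour as the paper's proof.
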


Along subsequential limits we have $(1-\frac{1}{N})^{NJ^{N}_t}\ra e^{-J_t}$ and $m^N_t\ra m_t$ so that Proposition \ref{prop: MN martingale convergence to 0} gives us that $y_t:=e^{-J_t}m_t$ almost surely corresponds to a weak solution of:
\[
\begin{split}
\langle \varphi, y_t\rangle -\langle \varphi,y_0\rangle = 
\int_0^{t}\langle y_s(.),\partial_s\varphi(.,s)
+b(y_s,.)\cdot \nabla \varphi(.,s) +\frac{1}{2}\Delta \varphi(.,s)\rangle ds=0,\\
0\leq t\leq T,\quad \varphi\in C_0^{\infty}(\bar U\times [0,\infty)).
\end{split}
\]
We would then like to show that such a PDE solution corresponds to a solution of the McKean-Vlasov SDE \eqref{eq:MKV sde} by constructing a diffusion killed at the boundary $\partial U$ with drift $b(y_t,X_t)$ and showing that $\mathcal{L}(X_t)=y_t$. This final step requires a uniqueness result of Porretta \cite[Theorem 1.1]{Porretta2015a} for weak solutions of the linear Fokker-Planck PDE (both $y_t$ and the $\mathcal{L}(X_t)$ satisfy the same linear Fokker-Planck PDE with fixed drift $b(y_t,.)$). Availing ourselves of this uniqueness theorem, however, requires the following:
\begin{enumerate}
    \item We require $y=y_t\otimes dt$ to have a density with respect to $\text{Leb}_{U\times [0,\infty)}$. Lemma \ref{lem:char-almost sure abs cty} allows us to see that this is the case.
    \item We require $y_0$ to have a density with respect to $\text{Leb}_U$. Lemma \ref{lem:char-almost sure abs cty} allows us to see that this is the case after arbitrarily small time intervals. This issue may be overcome by arguing after a small time interval $t_0>0$, showing that $(y_{t_0+t})_{t\geq 0}$ corresponds to a McKean-Vlasov solution, then taking a limit as $t_0\ra 0$ using Lemma \ref{lem:MKV Solns compact for compact set of ics}.
    \item We require $U$ to be bounded, whereas we wish to include the case where $U$ is unbounded. To address this issue, we employ the coupling of Section \ref{section:coupling}. Since $U_R$ is bounded, we may apply the above strategy to the coupled particle system $\vec{X}^{N,R}$. By then employing the uniform controls of Proposition \ref{prop:coupling to system on bounded domain} and changing our notion of PDE solution, we are able to circumvent this problem.
\end{enumerate}

We now introduce our notion of PDE solution. Given $y\in C([0,\infty);\mathcal{P}_{\Wah}(U))$ and $R>1+R_{\min}$ we define:
\begin{equation}
\begin{split}
    \mathcal{H}_{R,T}(y)=\{z\in C([0,T];\mathcal{M}(U_R))\cap L^1(U_R\times [0,T]):z_t\in L^1(U_R)\quad \text{for all}\quad t \in \Qm_{>0}\quad
\text{and}\\ \langle z_t(.),\varphi(.,t)\rangle- \langle z_0(.),\varphi(.,0)\rangle 
-\int_0^{t}\langle z_s(.),\partial_s\varphi(.,s)
+b(y_s,.)\cdot \nabla \varphi(.,s) +\frac{1}{2}\Delta \varphi(.,s)\rangle ds=0,\\
\quad 0\leq t\leq T,\quad \varphi\in C^{\infty}_0(\bar U_R\times [0,\infty))\}.
\end{split}
\end{equation}
This is the solution set of the linear Fokker-Planck equation on the truncated domain and truncated time interval with drift given by $b(y_s,.)$. We now define the following notion of approximable PDE solution for the nonlinear Fokker-Planck equation:
\begin{equation}
\begin{split}
\mathcal{S}=\{(y,f)\in C([0,\infty);\mathcal{P}_{\Wah}(U)\times \Rm_{\geq 0}):\;\text{for every }\epsilon>0\;\text{and}\; T<\infty\;\text{we have for}\; R<\infty\\ \text{arbitrarily large that there exists}\; z\in \mathcal{H}_{R,T}(y)\text{ with }\sup_{t\leq T}\lvert\lvert y_te^{-f_t}-z_t\rvert\rvert_{\TV} \leq \epsilon\}.
\end{split}
\end{equation}

Note that at this point, we have not established existence of either PDE solutions or McKean-Vlasov solutions for given initial data. We will combine Proposition \ref{prop: MN martingale convergence to 0} with Lemma \ref{lem:char-almost sure abs cty} to show that any subsequential limit of our Fleming-Viot particle system must meet the criteria pathwise to being a PDE solution:
\begin{prop}
Suppose that some subsequence of $\{(m_t^N,J_t^N)_{0\leq t<\infty}\}$ converges in $(D([0,\infty);\mathcal{P}_{\Wah}(U)\times \Rm_{\geq 0}),d^{D})$ in distribution to $(m_t,J_t)_{0\leq t<\infty}$. Then $(m_t,J_t)_{0\leq t<\infty}\in\mathcal{S}$ almost surely.
\label{prop:limits given by PDE solutions}
\end{prop}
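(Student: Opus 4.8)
\textbf{Proof proposal for Proposition \ref{prop:limits given by PDE solutions}.}

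The plan is to work with the coupled system $\vec X^{N,R}$ on the bounded domain $U_R$, extract a joint subsequential limit of $(m^N_t,J^N_t,m^{N,R}_t,J^{N,R}_t)_{0\leq t<\infty}$, verify the defining properties of $\mathcal{S}$ at the level of the limit, and then let $R\to\infty$ using the coupling estimate \eqref{eq:controls for coupling}. Concretely, suppose along a subsequence $(m^N_t,J^N_t)_{0\leq t<\infty}\overset{d}{\to}(m_t,J_t)_{0\leq t<\infty}$. For each fixed $R>1+R_{\min}$, Proposition \ref{prop:coupling to system on bounded domain} (part 1) gives tightness of $\{\mathcal{L}(m^{N,R}_0)\}$ in $\mathcal{P}(\mathcal{P}_{\Wah}(U_R))$, so Proposition \ref{prop:Tightness of (mR,JR) fixed time horizon} and a diagonal argument (over integer time horizons, as in Proposition \ref{prop: tightness of mJ infinite time horizon}) yield tightness of $\{\mathcal{L}((m^N_t,J^N_t,m^{N,R}_t,J^{N,R}_t)_{0\leq t<\infty})\}$ in the appropriate $d^{D}$-type space. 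Passing to a further subsequence and invoking Skorokhod's representation theorem, I would realize the convergence almost surely on a common probability space, with the $U$-valued component still converging (in distribution, hence we may assume almost surely) to the given $(m_t,J_t)$ and the $U_R$-valued component converging to some continuous limit $(m^R_t,J^R_t)_{0\leq t<\infty}$ (continuity of the limit coming from Proposition \ref{prop:Tightness of (mR,JR) fixed time horizon}).

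The core identification step is then: for each $R$, the limit process $z^R_t:=m^R_t e^{-J^R_t}$ lies in $\mathcal{H}_{R,T}(m)$ for every $T<\infty$. For the PDE identity I would pass to the limit in the martingale relation \eqref{eq:equation for Martingales MphiNR} using Proposition \ref{prop: MN martingale convergence to 0}: the martingale term $M^{\varphi,N,R}_t\to 0$ uniformly in probability, $(1-\tfrac1N)^{NJ^{N,R}_t}\to e^{-J^R_t}$ and $(1-\tfrac1N)^{NJ^{N,R}_s}\to e^{-J^R_s}$ uniformly on compacts, $m^{N,R}_t\to m^R_t$ uniformly in $\Wah$, and crucially $m^N_s\to m_s$ in the drift slot $b(m^N_s,\cdot)$; boundedness of $b$ and continuity in the first argument (Condition \ref{cond:blip}) give convergence of the time integral, after observing that $\langle m^{N,R}_s(\cdot),b(m^N_s,\cdot)\cdot\nabla\varphi(\cdot,s)\rangle\to\langle m^R_s(\cdot),b(m_s,\cdot)\cdot\nabla\varphi(\cdot,s)\rangle$ (one can first replace $b(m^N_s,\cdot)$ by $b(m_s,\cdot)$ using a uniform continuity/dominated convergence argument along the a.s.\ convergent realization, then use weak convergence $m^{N,R}_s\to m^R_s$ against the bounded continuous function $b(m_s,\cdot)\cdot\nabla\varphi(\cdot,s)$ vanishing on $\partial U_R$). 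This gives, almost surely, the weak-formulation identity defining $\mathcal{H}_{R,T}(m)$. The regularity requirements in the definition of $\mathcal{H}_{R,T}(m)$ — namely $z^R\in C([0,T];\mathcal{M}(U_R))$, $z^R\in L^1(U_R\times[0,T])$, and $z^R_t\in L^1(U_R)$ for all $t\in\Qm_{>0}$ — are exactly what Lemma \ref{lem:char-almost sure abs cty} supplies: Part \ref{enum:char-almost sure abs cty of m} (applied on $U_R$, whose bounded interior-ball geometry is inherited as in Section \ref{section:coupling}, with drifts $b^{N,R,i}_t=b(m^N_t,X^{N,R,i}_t)$ uniformly bounded by $B$) gives $m^R_t\otimes dt\ll\mathrm{Leb}_{U_R\times[0,T]}$ almost surely, hence the $L^1(U_R\times[0,T])$ property; Part \ref{enum:char-almost sure abs cty of marginals} gives $m^R_t\ll\mathrm{Leb}_{U_R}$ for each fixed $t>0$ almost surely, and intersecting over the countable set $\Qm_{>0}$ yields $z^R_t\in L^1(U_R)$ for all rational $t>0$; continuity in $\mathcal{M}(U_R)$ follows from $m^R\in C([0,T];\mathcal{P}_{\Wah}(U_R))$ together with continuity of $J^R$.

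Finally, to conclude $(m_t,J_t)_{0\leq t<\infty}\in\mathcal{S}$ I would estimate the total-variation distance between $m_t e^{-J_t}$ and $z^R_t=m^R_t e^{-J^R_t}$. Here $\lvert\lvert m_t e^{-J_t}-m^R_t e^{-J^R_t}\rvert\rvert_{\TV}\leq \lvert\lvert m_t-m^R_t\rvert\rvert_{\TV}+|e^{-J_t}-e^{-J^R_t}|$ (using $m_t,m^R_t$ are probability measures and $e^{-J}\leq 1$, noting $m^R_t$ is supported in $U_R\subseteq U$ so the comparison makes sense as measures on $U$). On the prelimit level, $\sup_{t\leq T}\lvert\lvert m^N_t-m^{N,R}_t\rvert\rvert_{\TV}$ and $1\wedge\sup_{t\leq T}|J^N_t-J^{N,R}_t|$ have expectations that, by \eqref{eq:controls for coupling}, tend to $0$ as $R\to\infty$ uniformly in the large-$N$ limit; passing to the a.s.-convergent subsequence and using Fatou's lemma (lower semicontinuity of $\sup_{t\leq T}\lvert\lvert\cdot\rvert\rvert_{\TV}$ under the convergence, which holds since TV dominates $\Wah$ with the bounded metric) transfers this to $\expE[\sup_{t\leq T}\lvert\lvert m_t-m^R_t\rvert\rvert_{\TV}+1\wedge\sup_{t\leq T}|J_t-J^R_t|]\to 0$ as $R\to\infty$. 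Hence for every $\epsilon>0$ and every $T<\infty$ there are arbitrarily large $R$ with $\sup_{t\leq T}\lvert\lvert m_t e^{-J_t}-z^R_t\rvert\rvert_{\TV}\leq\epsilon$ with probability at least $1-\epsilon$, say; a further countable intersection over $\epsilon,T$ rational and a Borel–Cantelli / diagonal selection of $R$'s produces, almost surely, the family of approximants required in the definition of $\mathcal{S}$. The main obstacle I anticipate is the bookkeeping in this last step: arranging the "for arbitrarily large $R$" quantifier to hold \emph{almost surely} and \emph{simultaneously} for all $\epsilon$ and $T$, rather than merely in probability for each fixed $(\epsilon,T)$ — this requires care in choosing the subsequence of $R$'s via a Borel–Cantelli argument off the $L^1$ bounds, and in checking that the sets $\mathcal{H}_{R,T}(m)$ are nonempty (which is automatic here, since $z^R$ is an explicit element). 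The measure-theoretic subtleties around applying Lemma \ref{lem:char-almost sure abs cty} on $U_R$ with the $m^N$-dependent (rather than $m^{N,R}$-dependent) drift are handled because that lemma only requires the drifts be $\mathcal{F}$-adapted and uniformly bounded, which $b(m^N_t,X^{N,R,i}_t)$ is.
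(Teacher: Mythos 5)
Your proposal is correct and follows essentially the same route as the paper's proof: couple to the bounded-domain system of Proposition \ref{prop:coupling to system on bounded domain}, take joint Skorokhod limits, identify $(m^R_te^{-J^R_t})_{0\leq t\leq T}\in\mathcal{H}_{R,T}(m)$ via Proposition \ref{prop: MN martingale convergence to 0} together with the regularity from Lemma \ref{lem:char-almost sure abs cty}, and then let $R\ra\infty$ using \eqref{eq:controls for coupling} with lower semicontinuity of the total variation distance, Fatou and Markov. The final bookkeeping you flag as the main obstacle needs no Borel--Cantelli: for fixed $\epsilon,T$ the estimate already gives that the event "there exists $R\geq R_0$ with an approximant $z\in\mathcal{H}_{R,T}(m)$ satisfying $\sup_{t\leq T}\lvert\lvert m_te^{-J_t}-z_t\rvert\rvert_{\TV}\leq\epsilon$" has probability one for every $R_0$, and intersecting over rational $\epsilon$, integer $T$ and integer $R_0$ yields $(m_t,J_t)_{0\leq t<\infty}\in\mathcal{S}$ almost surely, which is exactly how the paper concludes.
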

We then show that any such PDE solution must correspond to a solution of our McKean-Vlasov SDE \eqref{eq:MKV sde}:
\begin{prop}
\begin{equation}
\mathcal{S}\subseteq \Xi\cap C((0,\infty);L^1(U)).
\end{equation}
\label{prop:PDE solns are MKV solns}
\end{prop}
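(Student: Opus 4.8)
The plan is to show that any $(y,f)\in\mathcal{S}$ arises as the flow of conditional laws and cumulative hazard of a global weak solution of \eqref{eq:MKV sde}. The overall strategy is a \emph{compactness--uniqueness} argument built around Porretta's uniqueness theorem \cite[Theorem 1.1]{Porretta2015a} for the linear Fokker--Planck equation on bounded domains, combined with the truncation-to-$U_R$ device encoded in the definition of $\mathcal{S}$.

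\textbf{Step 1: Reduce to the bounded case on $U_R$.} Fix $(y,f)\in\mathcal{S}$, a time horizon $T<\infty$, and $\epsilon>0$. By definition of $\mathcal{S}$ there is, for arbitrarily large $R$, some $z^R\in\mathcal{H}_{R,T}(y)$ with $\sup_{t\le T}\lvert\lvert y_te^{-f_t}-z^R_t\rvert\rvert_{\TV}\le\epsilon$. The point of working with $\mathcal{H}_{R,T}(y)$ is that $U_R$ is bounded with $C^\infty$ boundary, so the machinery available for bounded domains applies. On $U_R$, construct the diffusion $X^R$ killed on $\partial U_R$ with drift $b(y_s,\cdot)$ (this drift is bounded and, by Condition \ref{cond:blip}, jointly continuous, so the corresponding martingale problem is well-posed; existence/uniqueness in law and the density estimates follow from Lemma \ref{lem:upper and lower bounds on density of diffusions}, which also gives $\Pm(\tau^R>t)>0$ for all $t$ and $\Pm(\tau^R=t)=0$). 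Its time-marginal sub-probability densities $\mathcal{L}(X^R_t)$ satisfy the same linear Fokker--Planck weak formulation defining $\mathcal{H}_{R,T}(y)$, with the same drift $b(y_s,\cdot)$ and the same initial datum (taking $X^R_0\sim$ the normalization of $z^R_0$; here one uses that $z^R_0\in L^1(U_R)$, which is part of the definition of $\mathcal{H}_{R,T}$). Porretta's theorem then forces $z^R_t=\mathcal{L}(X^R_t)$ for all $t\le T$, provided we know $z^R$ has a density on $U_R\times[0,T]$ and $z^R_0$ has a density on $U_R$; the former is given by $z^R\in L^1(U_R\times[0,T])$ and the latter by $z^R_t\in L^1(U_R)$ for $t\in\Qm_{>0}$ plus an arbitrarily-small-$t_0$ shift (absorb the datum at $t_0$ and let $t_0\downarrow0$, using weak continuity of $z^R$ in $C([0,T];\mathcal{M}(U_R))$ and Lemma \ref{lem:MKV Solns compact for compact set of ics} to pass to the limit).

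\textbf{Step 2: Pass $R\to\infty$.} Now one has, for arbitrarily large $R$ and with $\epsilon$ fixed, a genuine killed diffusion $X^R$ on $U_R$ whose marginals approximate $y_te^{-f_t}$ in total variation uniformly on $[0,T]$, and whose drift is $b(y_s,\cdot)$. Because the drift $b(y_s,\cdot)$ does \emph{not} depend on $R$, the family $\{X^R\}$ is tight (using Section \ref{section:N-particle Estimates}-type estimates, or directly Lemma \ref{lem:tight after bounded times}, to rule out mass escaping to $\partial U$ or to infinity over $[0,T]$), so along a subsequence $(X^R_{\cdot\wedge\tau^R},\tau^R)$ converges in law; pass to a limit $(X,\tau,W)$ on a Skorokhod space exactly as in Steps \ref{enum:xi compact X W tight}--\ref{enum:xi compact convergence of conditional laws step} of the proof of Lemma \ref{lem:MKV Solns compact for compact set of ics}. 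The limit is a diffusion with bounded drift, stopped at $\tau=\inf\{t:X_{t-}\in\partial U\}$ (using smoothness of $\partial U$, exactly as in Step \ref{enum:xi compact tau=time to hit partial U}), with $\Pm(\tau=t)=0$ and $\Pm(\tau>t)>0$ by Corollary \ref{cor:hitting time Corollary} and Lemma \ref{lem:upper and lower bounds on density of diffusions}. Its marginals $\mathcal{L}(X_t)$ equal $\lim_R \mathcal{L}(X^R_t)$, which is within $\epsilon$ of $y_te^{-f_t}$; since $\epsilon$ was arbitrary, $\mathcal{L}(X_t)=y_te^{-f_t}$, hence $\mathcal{L}(X_t\lvert\tau>t)=y_t$ and $-\ln\Pm(\tau>t)=f_t$ for $t\le T$, and letting $T\to\infty$ (diagonalizing) gives this for all $t$. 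It remains to identify the drift: examining the martingale problem for $X$ (the $b(y_s,\cdot)$ pass through the limit by joint continuity of $b$ in Condition \ref{cond:blip} together with $y\in C([0,\infty);\mathcal{P}_{\Wah}(U))$), one gets $dX_t=b(y_t,X_t)\,dt+dW_t = b(\mathcal{L}(X_t\lvert\tau>t),X_t)\,dt+dW_t$ for $t<\tau$, so $(X,\tau,W)$ is a weak solution of \eqref{eq:MKV sde}. By Lemma \ref{lem:MKV Solns compact for compact set of ics} every weak solution is a global weak solution, so $(y,f)\in\Xi$. Finally $y_t=\mathcal{L}(X_t\lvert\tau>t)\in L^1(U)$ for $t>0$ by the density estimate of Lemma \ref{lem:upper and lower bounds on density of diffusions}, giving $(y,f)\in\Xi\cap C((0,\infty);L^1(U))$.

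\textbf{Main obstacle.} The delicate point is Step 1: applying Porretta's uniqueness theorem requires that the truncated, approximate object $z^R$ genuinely has the regularity ($L^1$ density on $U_R\times[0,T]$, and an $L^1$ initial trace after an arbitrarily small shift) demanded by \cite[Theorem 1.1]{Porretta2015a}, and that $\mathcal{L}(X^R_\cdot)$ belongs to the same uniqueness class. Lemma \ref{lem:char-almost sure abs cty} is what makes the subsequential-limit objects $m_t\otimes dt$ absolutely continuous and $m_t$ absolutely continuous for positive times; that is why $\mathcal{H}_{R,T}(y)$ is \emph{defined} with the $L^1$ constraints built in, so that $\mathcal{S}$ inherits them via Proposition \ref{prop:limits given by PDE solutions}. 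Handling the initial datum — where a priori only $z^R_t\in L^1$ for $t\in\Qm_{>0}$ and not at $t=0$ — requires the $t_0\downarrow0$ limiting argument using joint continuity of $G_t$ (Proposition \ref{prop:basic properties of Gt}) / compactness of $\Xi(\kappa)$ (Lemma \ref{lem:MKV Solns compact for compact set of ics}), and this is the step where the bookkeeping is heaviest. Once that is in place, the $R\to\infty$ passage in Step 2 is a routine tightness-plus-martingale-problem argument essentially identical to the one already carried out in Lemma \ref{lem:MKV Solns compact for compact set of ics}.
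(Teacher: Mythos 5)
Your overall strategy is viable but genuinely different from the paper's. The paper never takes a weak limit in $R$: it constructs a single candidate process $(X,\tau,W)$ on the full domain $U$ by Girsanov with the exogenous drift $b(m_t,\cdot)$, and obtains the $U_R$-processes by stopping that same path at $\partial U_R$, so that $\sup_{t\leq T}\lvert\lvert\mathcal{L}(X_t)-\mathcal{L}(X^R_t)\rvert\rvert_{\TV}\leq\Pm(\tau^R<\tau\wedge T)\ra 0$ simply because $U\cap B(0,R)=U_R\cap B(0,R)$; Porretta's theorem is applied on $U_R$ exactly as you propose, and the shift to a small time $t_0$ together with Lemma \ref{lem:MKV Solns compact for compact set of ics} handles the missing density at $t=0$. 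Your alternative --- build a killed diffusion on each $U_R$ with initial law the normalisation of $z^R_{t_0}$, identify its marginals with $z^R$ via \cite[Theorem 1.1]{Porretta2015a}, then pass $R\ra\infty$ by tightness and the martingale problem --- can be made to work (and your choice of initial law makes the Porretta step clean), but it is heavier: you must redo the compactness argument of Lemma \ref{lem:MKV Solns compact for compact set of ics} with \emph{varying} domains, which requires noting (via Lemma \ref{lem:Boundary Truncation Lemma}) that $\partial U_R\setminus\partial U$ lies outside $B(0,R)$, so hitting the artificial boundary before $T$ has vanishing probability; Lemma \ref{lem:tight after bounded times} concerns the particle system and is not the right citation here, though the needed bound is elementary for a single diffusion with bounded drift and tight initial laws. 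Also, ``since $\epsilon$ was arbitrary'' hides a diagonalisation: the limit process depends on $\epsilon$, so you must send $\epsilon_n\ra 0$, $R_n\ra\infty$, $T_n\ra\infty$ along one subsequence before concluding $\mathcal{L}(X_t)=y_te^{-f_t}$.

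Two points are genuine gaps as written. First, $z^R_0\in L^1(U_R)$ is \emph{not} part of the definition of $\mathcal{H}_{R,T}$ (only $z_t\in L^1(U_R)$ for $t\in\Qm_{>0}$ is required), so the shift to a rational $t_0>0$ is not an optional refinement but the only way to start the uniqueness argument; moreover the $t_0\downarrow 0$ limit has to be taken at the level of the full-domain problem, i.e.\ after showing $(y_{t_0+t},f_{t_0+t}-f_{t_0})_{t\geq 0}\in\Xi$, using Lemma \ref{lem:MKV Solns compact for compact set of ics} as in the paper's Step 4 --- invoking that lemma inside the truncated linear problem, as your Step 1 suggests, does not parse, since it is a statement about solutions of \eqref{eq:MKV sde} on $U$. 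Second, the final claim is not proved: Lemma \ref{lem:upper and lower bounds on density of diffusions} gives only $y_t\in L^1(U)$ for each $t>0$, whereas the proposition asserts \emph{continuity} of $t\mapsto y_t$ as an $L^1(U)$-valued map, and a uniform density bound plus weak continuity does not yield $L^1$-continuity. The paper obtains it from \cite[Theorem 3.6]{Porretta2015a}, which gives $z^R\in C([0,T);L^1(U_R))$, and transfers it to $y_te^{-f_t}$ (hence to $y_t$) through the uniform-in-time total-variation approximation; you have exactly the same objects available, so the repair is short, but as stated this step fails.
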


Taken together, these give Theorem \ref{theo:N ra infty theorem}.

\subsubsection*{Proof of Proposition \ref{prop: MN martingale convergence to 0}}

We provide here the proof for $M^{\varphi,N,R}_t$. The proof for $M^{\varphi,N}_t$ is identical with $U_R$, $m^{N,R}$, $C_{0}^{\infty}(\bar U_R\times [0,\infty))$ and $\tau^{N,R}_k$ replaced with $U$, $m^N$, $C_{0}^{\infty}(\bar U\times [0,\infty))$ and $\tau^N$ respectively.

We fix $\varphi\in C_{0}^{\infty}(\bar U_R\times [0,\infty))$, $1+R_{\min}<R\leq \infty$ and establish $(M^{\varphi,N,R}_t)_{0\leq t\leq T}$ converges to zero in distribution.

It is trivial that $M^{\varphi,N,R}_t$ is integrable for all t. We recall that $\tau^{N,R}_k$ is the $k^{\thh}$ death time of any particle in the coupled system with $\tau^{N,R}_0:=0$. Inducting on k, we shall establish that $M^{\varphi,N,R,k}_t:=M^{\varphi,N,R}_{t\wedge \tau_{k}}$ is a martingale. This is trivially true for $k=0$.

We note that $J^{N,R,k}_t$ is constant on $[\tau_k,\tau_{k+1})$, and moreover the infinitesimal generator of $\langle m^{N,R}_t(.),\varphi(.,t)\rangle$ is $\langle m^{N,R}_t(.),\partial_t\varphi(.,t)+b(m^N_t,.)\cdot \nabla\varphi(.,t)+\frac{1}{2}\Delta \varphi(.,t)\rangle $. Therefore we have $M^{\varphi,N,R,k+1}_{t\wedge\tau^{N,R}_{k+1}-}=\Ind(t<\tau_{k+1})M^{\varphi,N,R,k+1}_t+\Ind(t\geq \tau^{N,R}_{k+1})M^{\varphi,N,R,k+1}_{\tau^{N,R}_{k+1}-}$ is a martingale.

At time $\tau^{N,R}_{k+1}$, the particle which dies (let's say particle i) jumps to a uniformly chosen different particle (let's say particle j). Since $\varphi$ vanishes on the boundary $\partial U_R$, the value of $\varphi(X^{R,i}_t,t)$ jumps from 0 to $\varphi(X^{R,j}_{\tau_{k+1}-},\tau_{k+1}-)$, the expected value of which must be 
\[
\frac{1}{N-1}\sum_{j\neq i}\varphi(X^j_t,t)=\frac{N}{N-1}\langle m^{N,R}_{\tau_{k+1}-},\varphi(.,\tau_{k+1}-)\rangle.
\]
Thus we have:
\[
\begin{split}
\expE[\langle m_{\tau_{k+1}}^{N,R}(.),\varphi(.,\tau_{k+1})\rangle\lvert  \mathcal{F}_{\tau_{k+1}-}]=\frac{1}{N}\Big[\frac{N}{N-1}\langle m^{N,R}_{\tau_{k+1}-},\varphi(.,\tau_{k+1}-)\rangle+N\langle m^{N,R}_{\tau_{k+1}-},\varphi(.,\tau_{k+1}-)\rangle\Big]\\
=\Big(1-\frac{1}{N}\Big)^{-1}\langle m^{N,R}_{\tau_{k+1}-},\varphi(.,\tau_{k+1}-)\rangle.
\end{split}
\]

Thus $\expE[M^{k+1}_{\tau^{N,R}_{k+1}}\lvert \mathcal{F}_{\tau_{k+1}-}]=M^{k+1}_{\tau^{N,R}_{k+1}-}$. Therefore we have $M^{\varphi,N,R,k+1}_t$ is a martingale. Thus $M^{\varphi,N,R}_t$ is a martingale.

We shall now employ the Martingale Central Limit Theorem \cite[Theorem 2.1]{Whitt2007} to obtain convergence to 0 in probability as $N\ra\infty$. Between times $\tau^{N,R}_k$ and $\tau^{N,R}_{k+1}$, we have:
\[
dM_t^{\varphi,N,R}=\Big(1-\frac{1}{N}\Big)^{NJ_t^{N,R}}\frac{1}{N}\sum_{i=1}^N\nabla\varphi(X_t^{R,i},t)\cdot dW_t^{R,i}+\text{drift terms.}
\]
Hence we have $[M^{N,R,\varphi}]_{T\wedge\tau_{k+1}-}-[M^{N,R,\varphi}]_{T\wedge\tau_{k}}\leq \frac{1}{N}\lvert \lvert \nabla\varphi\rvert\rvert_{\infty}^2 (\tau^{N,R}_{k+1}-\tau^{N,R}_k)$. Moreover, at each jump time, the jumps of $M^{\varphi,N,R}$ are bounded by:
\[
\begin{split}
\lvert M^{\varphi,N,R}_{T\wedge \tau^{N,R}_{k+1}}-M^{\varphi,N,R}_{T\wedge \tau^{N,R}_{k+1}-}\rvert\leq \big(1-\frac{1}{N}\big)^k\big\lvert \big\langle m^{N,R}_{T\wedge \tau^{N,R}_{k+1}}(.)-m^{N,R}_{T\wedge \tau^{N,R}_{k+1}-}(.), \varphi(.,T\wedge \tau^{N,R}_{k+1})\big\rangle\big\rvert\\
+\big(1-\frac{1}{N}\big)^k\big\lvert \big\langle (1-\frac{1}{N})m^{N,R}_{T\wedge \tau^{N,R}_{k+1}}(.)- m^{N,R}_{T\wedge \tau^{N,R}_{k+1}}(.), \varphi(.,T\wedge \tau^{N,R}_{k+1})\big\rangle\big\rvert\leq 3\big(1-\frac{1}{N}\big)^k\frac{\lvert\lvert \varphi\rvert\rvert_{\infty}^2}{N}.
\end{split}
\]

Therefore the jumps of $[M^{\varphi,N,R}]_t$ are bounded by:
\[
[M^{\varphi,N,R}]_{T\wedge \tau^{N,R}_{k+1}}-[M^{\varphi,N,R}]_{T\wedge \tau^{N,R}_{k+1}-} \leq \Big(1-\frac{1}{N}\Big)^{2k}\frac{9\lvert\lvert \varphi\rvert\rvert_{\infty}^2}{N^2}\leq \Big(1-\frac{1}{N}\Big)^{k}\frac{9\lvert\lvert \varphi\rvert\rvert_{\infty}^2}{N^2}.
\]
Therefore summing the geometric series we have:
\[
[M^{N,R,\varphi}]_T\leq \frac{1}{N}\lvert \lvert \nabla\varphi\rvert\rvert_{\infty}^2 T+\frac{9\lvert\lvert \varphi\rvert\rvert_{\infty}^2}{N}\ra 0\quad\text{as}\quad N\ra\infty.
\]
Thus we have $[M^{N,R,\varphi}]_T$ converges to zero in probability as $N\ra\infty$. Moreover it is trivial that $\expE[\sup_{t\leq T}\lvert M^{N,R,\varphi}_{t}-M^{N,R,\varphi}_{t-}\rvert]\ra 0$ in probability as $N\ra\infty$. Thus using the Martingale Central Limit Theorem \cite[Theorem 2.1]{Whitt2007} we have $(M^{\varphi,N,R}_t)_{0\leq t\leq T}\ra 0$ uniformly in probability.

\qed

\subsubsection*{Proof of Proposition \ref{prop:Tightness of (mR,JR) fixed time horizon}}
We provide here the proof for $\{\mathcal{L}((m^{N,R}_t,J^{N,R}_t)_{0\leq t\leq T})\}$. The proof for $\{\mathcal{L}((m^{N}_t,J^{N}_t)_{0\leq t\leq T})\}$ is identical, but with $U_R$, $m^{N,R}$, $C_{0}^{\infty}(\bar U_R\times [0,\infty))$ and $\tau^{N,R}_k$ replaced with $U$, $m^N$, $C_{0}^{\infty}(\bar U\times [0,\infty))$ and $\tau^N$, respectively, aside from two places where Lemma \ref{lem:tight after bounded times} must be invoked.

The proof can be broken down into the following steps:
\begin{enumerate}
\item \label{enum:J tightness}
We begin by establishing that $\{\mathcal{L}((J^{N,R}_t)_{0\leq t\leq T})\}$ is tight in $\mathcal{P}(D([0,T];\Rm_{\geq 0}))$, and moreover any limit distribution is supported on the space of continuous functions.
\item \label{enum:tightness of m open u}
We then show $\{\mathcal{L}((m_t^{N,R})_{0\leq t\leq T})\}$ is tight in $\mathcal{P}(D([0,T];\mathcal{P}_{\Wah}(U_R)))$.
\item
Having shown that $\{\mathcal{L}(((m^{N,R}_t)_{0\leq t\leq T},(J^{N,R}_t)_{0\leq t\leq T}))\}$ is tight in $\mathcal{P}(D([0,T];\mathcal{P}_{\Wah}(U_R))\times D([0,T];\Rm_{\geq 0}))$ with limit distributions supported on $\mathcal{P}(D([0,T];\mathcal{P}_{\Wah}(U_R))\times C([0,T];\Rm_{\geq 0}))$ we establish $\{\mathcal{L}((m_t^{N,R},J^{N,R}_t)_{0\leq t\leq T})\}$ is tight in $\mathcal{P}(D([0,T];\mathcal{P}_{\Wah}(U_R)\times \Rm_{\geq 0}))$ with almost surely continuous limit distributions.
\label{enum:tightness of (mR,JR) in Skorokhod over fixed time interval}
\end{enumerate}

\underline{Step \ref{enum:J tightness}}

Markov's inequality and Proposition \ref{prop:bound on number of jumps by particle} give that $\{\mathcal{L}(J^{N,R}_T:N\in\mathbb{N})\}$ is tight. Thus it is enough to show the set of laws of $\varsigma^N_t:=\big(1-\frac{1}{N}\big)^{NJ_t^{N,R}}$ is tight in $\mathcal{P}(D([0,T];\Rm))$ with limit distributions supported on $C([0,T];\Rm)$. We will employ Aldous' condition \cite[Theorem 1]{Aldous1978}. Since we have $0\leq \big(1-\frac{1}{N}\big)^{NJ_t^{N,R}}\leq 1$ then $\{\mathcal{L}(\varsigma_t^N)\}$ must be tight for each fixed t. We therefore need to establish \cite[Condition A]{Aldous1978}.

We fix $\epsilon,\delta>0$. As in Part \ref{enum:bound on mass near bdy} of Proposition \ref{prop:bound on mass near bdy} we take $\hat{K}_{\frac{\epsilon}{2},\frac{\delta}{2}}=V_{\hat{c}(\frac{\epsilon}{2},\frac{\delta}{2})}\subseteq U_R$ such that we have $\limsup_{N\ra\infty}\Pm(\sup_{t\leq T}m^{N,R}_t(\hat{K}_{\frac{\epsilon}{2},\frac{\delta}{2}}^c)\geq \frac{\epsilon}{2})\leq \frac{\delta}{2}$. Since $U_R$ is bounded, $\hat{K}_{\frac{\epsilon}{2},\frac{\delta}{2}}$ is compact. Here the proof for $\{\mathcal{L}((m^{N}_t,J^{N}_t)_{0\leq t\leq T})\}$ diverges from the present proof as $U$ is not necessarily bounded. In this case we use Lemma \ref{lem:tight after bounded times} to obtain $R'_{\epsilon,\delta}<\infty$ such that $\limsup_{N\ra\infty}\Pm(\sup_{t\leq T}m^{N,R}_t( B(0,R'_{\epsilon,\delta})^c)\geq \frac{\epsilon}{2})\leq \frac{\delta}{2}$. In either case we obtain $\tilde{K}_{\epsilon,\delta}=\hat{K}_{\frac{\epsilon}{2},\frac{\delta}{2}}\cap \bar B(0,R'_{\epsilon,\delta})\subseteq U_R$ compact such that:
\[
\limsup_{N\ra\infty}\Pm(\sup_{t\leq T}m^{N,R}_t(\tilde{K}_{\epsilon,\delta}^c)\geq \epsilon)\leq \delta.
\]

We now take $\varphi_{\epsilon,\delta}\in C_{c}^{\infty}(U_R)$ such that $\Ind_{\hat{K}_{\epsilon,\delta}}\leq \varphi_{\epsilon,\delta}\leq 1$. Thus we have:
\begin{equation}
\limsup_{N\ra\infty}\Pm\Big(\sup_{0\leq t\leq T}\lvert 1-\langle m^{N,R}_{t},\varphi_{\epsilon,\delta}(.)\rangle\rvert\leq \epsilon\Big)\leq \delta.
\label{eq:bound on 1 minus (m,varphi)}
\end{equation}
We then take $M_{t}^{\varphi_{\epsilon,\delta},N,R}$ as in \eqref{eq:equation for Martingales MphiNR} and observe:
\[
\begin{split}
\varsigma^N_{t+h}-\varsigma^N_t=(\varsigma^N_{t+h}-\varsigma^N_{t+h}\langle m^{N,R}_{t+h},\varphi_{\epsilon,\delta}(.)\rangle)
-(\varsigma^N_{t}-\varsigma^N_{t}\langle m_t,\varphi_{\epsilon,\delta}(.)\rangle)+(M^{\varphi_{\epsilon,\delta},N,R}_{t+h}-M^{\varphi_{\epsilon,\delta},N,R}_t)\\
+\int_{t}^{t+h}\varsigma^N_s\langle m^{N,R}_s(.),b(m^N_s,.)\cdot \nabla\varphi_{\epsilon,\delta}+\frac{1}{2}\Delta\varphi_{\epsilon,\delta}\rangle ds.
\end{split}
\]
We bound the first two terms on the right hand side using \eqref{eq:bound on 1 minus (m,varphi)}, the third term converges to zero in probability using Proposition \ref{prop: MN martingale convergence to 0} whilst the integrand in the fourth term is bounded (by $C_{\epsilon,\delta}<\infty$ say). Therefore we have:
\[
\liminf_{N\ra\infty}\Pm\big(\sup_{\substack{0\leq t\leq t+h'\\\leq t+h\leq T}}\lvert \varsigma^N_{t+h'}-\varsigma^N_t\rvert\leq 3\epsilon+C_{\epsilon,\delta}h\big)\geq 1-2\delta.
\]

This establishes \cite[Condition A]{Aldous1978}. Moreover for any subsequential limit in distribution $\varsigma^{\infty}$ and $\epsilon,\delta>0$ there exists some $h_{\epsilon,\delta}=\frac{\epsilon}{C_{\epsilon,\delta}}>0$ such that:
\[
\Pm(\sup_{\substack{h'\leq h_{\epsilon,\delta}\\ 0\leq t\leq T-h'}}\lvert\varsigma^{\infty}_{t+h'}-\varsigma^{\infty}_t\rvert\geq 5\epsilon)\leq 2\delta.
\]
Thus as $\delta>0$ is arbitrary there exists some random $h(\epsilon)>0$ such that 
\[
\sup_{\substack{h'\leq h(\epsilon)\\ 0\leq t\leq T-h'}}\lvert\varsigma^{\infty}_{t+h'}-\varsigma^{\infty}_t\rvert\leq \epsilon \quad\text{almost surely.}
\]
Since $\epsilon>0$ is arbitrary, $\zeta^{\infty}\in C([0,T];\Rm)$ almost surely. 

\underline{Step \ref{enum:tightness of m open u}}

We show $\{\mathcal{L}((m_t^{N,R})_{0\leq t\leq T})\}$ is tight in $\mathcal{P}(D([0,T];\mathcal{P}_{\Wah}(\bar U_R)))$, then extend this to showing $\{\mathcal{L}((m_t^{N,R})_{0\leq t\leq T})\}$ is tight in $\mathcal{P}(D([0,T];\mathcal{P}_{\Wah}(U_R)))$.

Since $U_R$ is bounded, \cite[Theorem 2.1]{Gorostiza1990} gives us that:
\begin{lem}[\cite{Gorostiza1990}]
We suppose that for every $\varphi\in C_c^{\infty}(\Rm^d)$ the laws of $\varrho^N_t:=\langle \varphi(.),m_t^{N,R}(.)\rangle$ are tight in $\mathcal{P}(D([0,T];\Rm))$. Then $\{\mathcal{L}((m_t^{N,R})_{0\leq t\leq T})\}$ must be tight in $\mathcal{P}(D([0,T];\mathcal{P}_{\Wah}(\bar U)))$.
\label{lem:enough for test fns tightness}
\end{lem}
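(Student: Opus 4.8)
The plan is to prove Lemma~\ref{lem:enough for test fns tightness} by verifying the hypotheses of Jakubowski's criterion for tightness of $\mathcal{P}(\bar U_R)$-valued cadlag processes, exactly as packaged in \cite[Theorem 2.1]{Gorostiza1990}. Since $\bar U_R$ is compact, $\mathcal{P}(\bar U_R)$ is itself compact in the weak topology, so the only thing at issue is equicontinuity/regularity of the paths in the Skorokhod sense; Jakubowski's theorem reduces this to checking that for a suitable separating family of functionals $\{\langle \varphi,\cdot\rangle : \varphi\in C^\infty_c(\Rm^d)\}$, the scalar processes $\varrho^N_t = \langle \varphi(\cdot),m^{N,R}_t(\cdot)\rangle$ are tight in $D([0,T];\Rm)$. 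The functionals $\mu\mapsto\langle\varphi,\mu\rangle$ for $\varphi$ ranging over a countable dense subset of $C^\infty_c(\Rm^d)$ (restricted to $\bar U_R$) separate points of $\mathcal{P}(\bar U_R)$ and are closed under addition up to the closure needed by the criterion, so this is precisely the setting of \cite[Theorem 2.1]{Gorostiza1990}. Thus the proof consists simply of invoking that theorem; the content of Lemma~\ref{lem:enough for test fns tightness} is that no further argument is required beyond the scalar tightness.

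Concretely, I would proceed in three short steps. First, recall that $\bar U_R$ is a compact metric space, hence $(\mathcal{P}(\bar U_R),\Wah)$ is compact; in particular the compact-containment condition in Jakubowski's criterion is automatic (take the whole space). Second, identify the separating class: let $\mathbb{F}=\{\mu\mapsto\langle\varphi,\mu\rangle : \varphi\in C^\infty_c(\Rm^d)\}$. This family separates points of $\mathcal{P}(\bar U_R)$ since $C^\infty_c(\Rm^d)$ restricted to the compact set $\bar U_R$ is dense in $C(\bar U_R)$, and it is closed under addition. Third, quote \cite[Theorem 2.1]{Gorostiza1990}: given compact containment and a separating class closed under addition, tightness of $\{\mathcal{L}((m^{N,R}_t)_{0\le t\le T})\}$ in $\mathcal{P}(D([0,T];\mathcal{P}_{\Wah}(\bar U_R)))$ is equivalent to tightness of $\{\mathcal{L}((\langle\varphi,m^{N,R}_t\rangle)_{0\le t\le T})\}$ in $\mathcal{P}(D([0,T];\Rm))$ for each $\varphi\in C^\infty_c(\Rm^d)$. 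Under the hypothesis of Lemma~\ref{lem:enough for test fns tightness}, these scalar tightness statements hold, so we conclude.

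Since this lemma is merely the statement of a cited theorem specialized to our compact ambient space, there is essentially no obstacle: the only point requiring a word of care is that \cite{Gorostiza1990} works with $\mathcal{P}(\bar U_R)$ rather than $\mathcal{P}_{\Wah}(\bar U_R)$, but on the compact space $\bar U_R$ the $\Wah$ metric (built from the bounded metric $d^1 = 1\wedge d$) metrizes exactly the topology of weak convergence of probability measures, so the two spaces coincide as topological spaces and the Skorokhod space $D([0,T];\mathcal{P}_{\Wah}(\bar U_R))$ is the same as $D([0,T];\mathcal{P}(\bar U_R))$. The subsequent passage from tightness in $\mathcal{P}(D([0,T];\mathcal{P}_{\Wah}(\bar U_R)))$ to tightness in $\mathcal{P}(D([0,T];\mathcal{P}_{\Wah}(U_R)))$ — i.e. showing the limiting measures do not charge the boundary $\partial U_R$ — is not part of this lemma and will be handled afterward using the boundary mass estimates of Proposition~\ref{prop:bound on mass near bdy} (and, in the unbounded-$U$ case, Lemma~\ref{lem:tight after bounded times}).
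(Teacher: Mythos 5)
Your proposal matches the paper's treatment: the paper also establishes this lemma simply by invoking \cite[Theorem 2.1]{Gorostiza1990}, using the boundedness of $U_R$ (hence compactness of $\bar U_R$ and of $\mathcal{P}_{\Wah}(\bar U_R)$), and defers the passage from $\bar U_R$ to the open domain $U_R$ to the boundary-mass estimates exactly as you indicate. Your extra verification of the separating, additively closed class of functionals $\mu\mapsto\langle\varphi,\mu\rangle$ is correct and simply makes explicit what the paper leaves to the citation.
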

Here the proof for $\{\mathcal{L}((m^{N}_t,J^{N}_t)_{0\leq t\leq T})\}$ diverges from the present proof as $U$ is not necessarily bounded. In this case we obtain Lemma \ref{lem:enough for test fns tightness} by combining \cite[Theorem 2.1]{Gorostiza1990} with Lemma \ref{lem:tight after bounded times}.

We now verify the assumptions of Lemma \ref{lem:enough for test fns tightness}. We fix $\varphi\in C_c^{\infty}(\Rm^d)$ and establish that $\{\mathcal{L}(\varrho^N_t)\}$ is tight in $\mathcal{P}(D([0,T];\Rm))$ by way of Aldous' criterion \cite[Theorem 1]{Aldous1978}. 

Since $\varphi$ is bounded $\{\mathcal{L}(\varrho^N_t)\}$ is tight on the line for fixed t, so it is sufficient to check \cite[Condition A]{Aldous1978}. We let $\tau^N$ be a sequence of stopping times and $\delta_N$ a sequence of constants as defined in \cite[Condition 1]{Aldous1978}. We write $\varrho^N=\varrho^{N,C}+\varrho^{N,J}$ whereby $\varrho^{N,C}$ is continuous and $\varrho^{N,J}_{t}=\sum_{t'\leq t}\varrho^{N}_{t'}-\varrho^N_{t'-}$. Then $\varrho^{N,C}$ is a diffusion process with uniformly bounded drift and diffusivity hence we have:
\[
\varrho^{N,C}_{\tau_N+\delta_N}-\varrho^{N,C}_{\tau_N}\overset{p}{\ra} 0.
\]
We note that the jumps of $\varrho$ are of magnitude bounded by $\frac{C}{N}$ for some $C<\infty$. Therefore to verify
\[
\varrho^{N,J}_{\tau_N+\delta_N}-\varrho^{N,J}_{\tau_N}\overset{p}{\ra} 0
\]
it is enough to check:
\[
J^{N,R}_{\tau_N+\delta_N}-J^{N,R}_{\tau_N}\overset{p}{\ra} 0.
\]
We have this since $\{\mathcal{L}((J^{N,R}_t)_{0\leq t\leq T})\}$ is tight in $\mathcal{P}(D([0,T];\Rm))$ with limit distributions supported on $C([0,T];\Rm)$ (Step \ref{enum:J tightness}). Thus we have verified \cite[Condition A]{Aldous1978}:
\[
\varrho^{N}_{\tau_N+\delta_N}-\varrho^{N}_{\tau_N}\overset{p}{\ra} 0
\]
and hence have verified the assumption of Lemma \ref{lem:enough for test fns tightness}.

Having established $\{\mathcal{L}((m_t^{N,R})_{0\leq t\leq T})\}$ is tight in $\mathcal{P}(D([0,T];\mathcal{P}_{\Wah}(\bar U_R)))$, we now show it is tight in $\mathcal{P}(D([0,T];\mathcal{P}_{\Wah}(U_R)))$. Using Skorokhod's representation theorem, we consider along any subsequence a further subsequence converging on a possibly different probability $(\Omega',\mathcal{F}',\Pm')$ space in $D([0,T];\mathcal{P}_{\Wah}(\bar U_R))$ $\Pm'$-almost surely to $(m^R_t)_{0\leq t\leq T}$. It is sufficient to show $(m^R_t)_{0\leq t\leq T}\in D([0,T];\mathcal{P}_{\Wah}(U_R))$ $\Pm'$-almost surely.

For each $\epsilon, T_0>0$, Part \ref{enum:bound on mass near bdy after small time} of Proposition \ref{prop:bound on mass near bdy} implies that $m^R_t(K_{\epsilon,T_0}^c)<\epsilon$ for every $T_0\leq t\leq T $ $\Pm'$-almost surely. Therefore $m^R_t(\partial U)=0$ for every $T_0 \leq t\leq T$ $\Pm'$-almost surely. Since $T_0$ can be made arbitrarily small and $\{\mathcal{L}(m^{N,R}_0)\}$ is tight in $\mathcal{P}(\mathcal{P}_{\Wah}(U_R))$ we have $m^R_t(\partial U)=0$ for all $0\leq t\leq T$ $\Pm'$-almost surely.

\underline{Step \ref{enum:tightness of (mR,JR) in Skorokhod over fixed time interval}}

{
It is sufficient to consider some subsequence on which $((m^{N,R}_t)_{0\leq t\leq T},(J^{N,R}_t)_{0\leq t\leq T})$ converges in $D([0,T];\mathcal{P}_{\Wah}(U_R))\times D([0,T];\Rm_{\geq 0})$ in distribution, then establish along this subsequence convergence in $D([0,T];\mathcal{P}_{\Wah}(U_R)\times \Rm_{\geq 0})$ in distribution with limit distributions supported on $C([0,T];\mathcal{P}_{\Wah}(U_R)\times \Rm_{\geq 0})$.

Indeed by the Skorokhod Representation Theorem on a possibly different probability space $(\Omega',\mathcal{F}',\Pm')$ we have along this subsequence $\Pm'$-almost sure convergence of $((m^{N,R}_t)_{0\leq t\leq T},(J^{N,R}_t)_{0\leq t\leq T})$ to a limit we call $((m^R_t)_{0\leq t\leq T},(J^R_t)_{0\leq t\leq T})$. By Step \ref{enum:J tightness} we have $J^R_t$ is continuous, and hence $\Pm'$-almost surely $J^{N,R}_t$ converges uniformly to $J^R_t$.

From the definition of the Skorokhod metric \cite[Equation (12.13), Page 124]{Billingsley1999} it is trivial that this implies $(m^{N,R}_t,J_t^{N,R})_{0\leq t\leq T}$ converges to $(m^R_t,J^R_t)_{0\leq t\leq T}$ $\Pm'$-almost surely in $D([0,T];\mathcal{P}_{\Wah}(U_R)\times \Rm_{\geq 0})$. Now we have:
\[
\begin{split}
\Big\lvert M^{N,R,\varphi}_{t+h}-M^{N,R,\varphi}_t-\big(\langle (1-\frac{1}{N})^{NJ^{N,R}_{t+h}}m^{N,R}_{t+h}(.),\varphi(.)\rangle-\langle (1-\frac{1}{N})^{NJ^{N,R}_t}m^{N,R}_{t}(.),\varphi(.)\rangle\big)\Big\rvert\\
\leq C_{\varphi}h,\quad \varphi\in C^{\infty}_c(U)  
\end{split}
\]
where $C_{\varphi}$ is a constant dependent only upon $\varphi$. Note that we are viewing $\varphi$ both as a function in $C_c^{\infty}(U_R)$ and a function in $C_0^{\infty}(\bar U_R\times [0,\infty))$ which is constant in time up to time $T$ by abuse of notation. Proposition \ref{prop: MN martingale convergence to 0} then implies that almost surely $(m^R_t)_{0\leq t\leq T}$ satisfies for all $0\leq t\leq t+h\leq T$:
\[
\lvert \langle e^{-J^R_{t+h}}m^R_{t+h}(.),\varphi(.)\rangle-
    \langle e^{-J^R_{t}}m^R_t(.),\varphi(.)\rangle\rvert\leq C_{\varphi}h,\quad \varphi\in C_c^{\infty}(U_R\times [0,T]).
\]
We know $m^R_te^{-J^R_t}\in D([0,T];\mathcal{M}(U_R))$, so that we have:
\[
\lvert \langle e^{-J^R_{t}}m^R_{t}(.),\varphi(.)\rangle-
    \langle e^{-J^R_{t-}}m^R_{t-}(.),\varphi(.)\rangle\rvert=0,\quad \varphi\in C_c^{\infty}(U_R),\quad 0\leq t\leq T.
\]
This implies $e^{-J^R_{t}}m^R_{t}=e^{-J^R_{t-}}m^R_{t-}$ for all $t\leq T$ hence $e^{-J^R_{t}}m^R_{t}\in C([0,T];\mathcal{M}(U_R))$. Thus almost surely $m^R\in C([0,T];\mathcal{P}(U_R))$. Since $\Wah$ metrises the topology of weak convergence of probability measures, we are done.
\qed
\subsubsection*{Proof of Proposition \ref{prop: tightness of mJ infinite time horizon}}

We fix $\epsilon>0$. Then by Proposition \ref{prop:Tightness of (mR,JR) fixed time horizon} there exists for each $T\in\mathbb{N}$ some $K_T\subseteq D([0,T];\mathcal{P}_{\Wah}(U)\times \Rm_{\geq 0})$ compact such that $\Pm((m^N_t,J^N_t)_{0\leq t\leq T}\notin K_T)<\epsilon 2^{-T}$. We therefore define:
\[
\mathcal{K}=\{f\in D([0,\infty);\mathcal{P}_{\Wah}(U)\times \Rm_{\geq 0}),d^{D}):(f_t)_{0\leq t\leq T}\in K_T\quad\text{for all}\quad T\in\mathbb{N}\}.
\]
We see that $\mathcal{K}$ is clearly compact in $(D([0,\infty);\mathcal{P}_{\Wah}(U)\times \Rm_{\geq 0}),d^{D})$, and moreover $\Pm((m^N_t,J^N_t)_{0\leq t<\infty}\notin \kappa)\leq \sum_T\epsilon 2^{-T}\leq \epsilon$.

\qed

\subsubsection*{Proof of Proposition \ref{prop:limits given by PDE solutions}}

We write $(\Omega',\mathcal{F}',\Pm')$ for the probability space on which our subsequential limit $(m_t,J_t)_{0\leq t<\infty}$ is defined. We define:
\begin{equation}
\begin{split}
\mathcal{S}_{\epsilon,R,T}=\{(y,f)\in C([0,\infty);\mathcal{P}_{\Wah}(U)\times \Rm_{\geq 0}):\;\text{there exists}\\ z\in \mathcal{H}_{R,T}(y)\text{ with }\sup_{t\leq T}\lvert\lvert y_te^{-f_t}-z_t\rvert\rvert_{\TV} \leq \epsilon\}.
\end{split}
\end{equation}

We claim that for all $\epsilon>0$ and $T<\infty$ fixed:
\begin{equation}
\Pm'((m_t,J_t)_{0\leq t<\infty}\in \mathcal{S}_{\epsilon,R,T})\ra 0\quad \text{as}\quad R\ra \infty.
\label{eq:supported on SeRT with large probability}
\end{equation}

We fix $R<\infty$ for the time being. We take, on the probability space $(\Omega^{N,R},\mathcal{F}^{N,R},\Pm^{N,R})$, the particle system $\vec{X}^{N,R}$ on $U_R$ coupled to $\vec{X}^N$ whose existence is guaranteed by Proposition \ref{prop:coupling to system on bounded domain}. We have by propositions \ref{prop:Tightness of (mR,JR) fixed time horizon} and \ref{prop: tightness of mJ infinite time horizon} that $\{\mathcal{L}(((m_t^N,J_t^N)_{0\leq t<\infty},(m_t^{N,R},J_t^{N,R})_{0\leq t\leq T}))\}$ is tight in $\mathcal{P}((D([0,\infty);\mathcal{P}_{\Wah}(U)\times \Rm_{\geq 0}),d^{D})\times D([0,T];\mathcal{P}_{\Wah}(U)\times \Rm_{\geq 0}))$ with limit distributions supported on $C([0,\infty);\mathcal{P}_{\Wah}(U)\times \Rm_{\geq 0})\times C([0,T];\mathcal{P}_{\Wah}(U)\times \Rm_{\geq 0})$. We may therefore take a further subsequence along which $\{((m_t^N,J_t^N)_{0\leq t<\infty},(m_t^{N,R},J_t^{N,R})_{0\leq t\leq T})\}$ is convergent in distribution. Using Skorokhod's representation theorem, these may be supported on a probability space $(\Omega'',\mathcal{F}'',\Pm'')$ along which $\{((m_t^N,J_t^N)_{0\leq t<\infty},(m_t^{N,R},J_t^{N,R})_{0\leq t\leq T})\}$ is $\Pm'$-almost surely convergent, to a limit we call $((m_t,J_t)_{0\leq t<\infty},(m_t^{R},J_t^{R})_{0\leq t\leq T})$.

Note that we are abusing notation here, writing $(m_t,J_t)_{0\leq t<\infty}$ both for a random variable on $(\Omega',\mathcal{F}',\Pm')$ and for a random variable on $(\Omega'',\mathcal{F}'',\Pm'')$. Nevertheless, by construction, they have the same law, hence:
\[
\Pm'((m_t,J_t)_{0\leq t<\infty}\in \mathcal{S}_{\epsilon,R,T})=\Pm''((m_t,J_t)_{0\leq t<\infty}\in \mathcal{S}_{\epsilon,R,T}).
\]

For $t\in \Qm_{>0}$ we have by Lemma \ref{lem:char-almost sure abs cty} that $m^{R}_t\in L^1(U_R)$ $\Pm''$-almost surely. Therefore $m^{R}_t\in L^1(U_R)$ for all $t\in \Qm_{>0}$, $\Pm''$-almost surely. Moreover, Lemma \ref{lem:char-almost sure abs cty} gives that $m^{R}=m^{R}_t\otimes dt$ satisfies $m^{R}\in L^1(U_R\times [0,T])$, $\Pm''$-almost surely. Therefore, by Proposition \ref{prop: MN martingale convergence to 0} we have 
\[
(m^{R}_te^{-J^{R}_t})_{0\leq t\leq T}\in \mathcal{H}_{R,T}(m)\quad \Pm''\text{-almost surely}.
\]

Since convergence in Skorokhod space to a continuous function implies uniform convergence, $(m_t^N,J_t^N)_{0\leq t\leq T}\ra (m_t,J_t)_{0\leq t\leq T}$ and $(m_t^{N,R},J_t^{N,R})_{0\leq t\leq T}\ra (m_t^{R},J_t^{R})_{0\leq t\leq T}$ in $d^{\infty}_{[0,T]}$ $\Pm''$-almost surely. Therefore we have:
\[
\sup_{t\leq T}\lvert\lvert m^{R}_t-m_t \rvert\rvert_{\TV}\leq \liminf_{N\ra\infty}\sup_{t\leq T}\lvert\lvert m^{N,R}_t-m^N_t \rvert\rvert_{\TV}\quad\Pm''\text{-almost surely.}
\]
Therefore we have:
\[
\begin{split}
\expE^{\Pm''}\Big[\sup_{t\leq T}\lvert\lvert m^{R}_te^{-J^{R}_t}-m_te^{-J_t}\rvert\rvert_{\TV}\Big]\leq \expE^{\Pm''}\Big[\sup_{t\leq T}\lvert\lvert m^{R}_t-m_t \rvert\rvert_{\TV}+1\wedge \sup_{t\leq T}\lvert J^{R}_t-J_t\rvert\Big]\\
\leq \expE^{\Pm''}\Big[\liminf_{N\ra\infty}\big(\sup_{t\leq T}\lvert\lvert m^{N,R}_t-m^N_t \rvert\rvert_{\TV}+1\wedge \sup_{t\leq T}\lvert J^{N,R}_t-J^{N}_t\rvert\big)\Big]\\
\overset{\text{Fatou's Lemma}}{\leq} \liminf_{N\ra\infty}\expE^{\Pm''}\Big[\sup_{t\leq T}\lvert\lvert m^N_t-m^{N,R}_t\rvert\rvert_{\TV}+1\wedge \sup_{t\leq T}\lvert J^N_t-J^{N,R}_t\rvert\Big]\\
=\liminf_{N\ra\infty}\expE^{\Pm^{N,R}}\Big[\sup_{t\leq T}\lvert\lvert m^N_t-m^{N,R}_t\rvert\rvert_{\TV}+1\wedge \sup_{t\leq T}\lvert J^N_t-J^{N,R}_t\rvert\Big]\ra 0 \quad\text{as}\quad R\ra \infty
\end{split}
\]
by Proposition \ref{prop:coupling to system on bounded domain}. Therefore using Markov's inequality we have:
\[
\begin{split}
\Pm'((m_t,J_t)_{0\leq t<\infty}\in \mathcal{S}_{\epsilon,R,T})=\Pm''((m_t,J_t)_{0\leq t<\infty}\in \mathcal{S}_{\epsilon,R,T})\\
\leq \frac{1}{\epsilon}\expE^{\Pm''}\Big[\sup_{t\leq T}\lvert\lvert m^{R}_te^{-J^{R}_t}-m_te^{-J_t}\rvert\rvert_{\TV}\Big]\ra 0\quad\text{as}\quad R\ra\infty.
\end{split}
\]

Therefore we have \eqref{eq:supported on SeRT with large probability} so that for all $R_0<\infty$:
\[
(m_t,J_t)_{0\leq t<\infty}\in \cup_{R\geq R_0}\mathcal{S}_{\epsilon, R,T}\quad \Pm'\text{-almost surely.}
\]
Therefore we have:
\[
(m_t,J_t)_{0\leq t<\infty}\in \cap_{\epsilon >0}\cap_{T\in\mathbb{N}}\cap_{R_0\in \mathbb{N}}\cup_{R\geq R_0}\mathcal{S}_{\epsilon, R,T}=\mathcal{S}\quad \Pm'\text{-almost surely.}
\]

\qed

\subsubsection*{Proof of Proposition \ref{prop:PDE solns are MKV solns}}

\underline{Step 1}

We fix deterministic $(m_t,J_t)_{0\leq t<\infty}\in\mathcal{S}$ and use Girsanov's theorem to construct $(X,\tau,W)$ a global weak solution of the SDE:
\begin{equation}
dX_t=b(m_t,X_t)dt+dW_t,\quad 0\leq t\leq \tau=\inf\{t>0:X_{t}\in \partial U\}.
\label{eq:SDE for whole domain deterministic m}
\end{equation}

\underline{Step 2}

For the time being we fix $R,T<\infty$ and assume that there exists $z\in \mathcal{H}_{R,T}(m)$ such that $z_0\in L^1(U_R)$ and $(z_t)_{0\leq t\leq T}$ is a solution of:
\begin{equation}
\begin{split}
\langle z_t(.),\varphi(.,t)\rangle- \langle z_0(.),\varphi(.,0)\rangle 
-\int_0^{t}\langle z_s(.),\partial_s\varphi(.,s)
+b(m_s,.)\cdot \nabla \varphi(.,s)\\ +\frac{1}{2}\Delta \varphi(.,s)\rangle ds=0,
\quad 0\leq t\leq T,\quad \varphi\in C^{\infty}_0(\bar U_R\times [0,\infty)).
\end{split}
\label{eq:lin PDE for fixed (m,J) in proof PDE solns are MKV solns}
\end{equation}

Then defining $\tau^R=\inf\{t>0:X_{t}\in \partial U_R\}\leq \tau$ and $(X^R_t)_{0\leq t\leq \tau^R}:=(X_t)_{0\leq t\leq \tau}$ we obtain $(X^R,\tau^R,W)$ a weak solution of the SDE:
\begin{equation}
    dX^R_t=b(m_t,X^R_t)dt+dW_t,\quad 0\leq t\leq \tau^R=\inf\{t>0:X^R_{t}\in \partial U_R\}
    \label{eq:SDE for approximate SDE deterministic m}
\end{equation}
such that:
\begin{equation}
    \sup_{t\leq T}\lvert\lvert \mathcal{L}(X_t)-\mathcal{L}(X^R_t)\rvert\rvert_{\TV}\leq \Pm(\tau^R<\tau\wedge T).
\end{equation}

We now establish that:
\begin{equation}\label{eq:zt=approx flow of laws}
    \mathcal{L}(X^R_t)=z_t\quad \text{for}\quad t\leq T,\quad
    (z_t)_{0\leq t\leq T}\in C([0,T);L^1(U_R)).
\end{equation}

Indeed we observe that the following is a martingale for every $\varphi\in C_0^{\infty}(\bar U\times [0,\infty))$:
\[
\varphi(X^R_{t\wedge\tau^R},t\wedge\tau^R)-\varphi(X^R_0,0)-\int_0^{t\wedge \tau^R}(\partial_s+b(m_s,X^R_s)\cdot\nabla+\frac{1}{2}\Delta)\varphi(X^R_s,s)ds,\quad 0\leq t\leq T.
\]
Taking expectation, we see that $\mathcal{L}(X_t)$ - must satisfy the PDE \eqref{eq:lin PDE for fixed (m,J) in proof PDE solns are MKV solns}. Moreover we have $z_0,\mathcal{L}(X_0)\in L^1(U_R)$ and $z_t\otimes dt,\mathcal{L}(X_t)\otimes dt\in L^1(U_R\times [0,T))$. We therefore have $\mathcal{L}(X^R_t)=z_t$ by the uniqueness results of  \cite[Theorem 1.1]{Porretta2015a}, and by \cite[Theorem 3.6]{Porretta2015a} we also have $(z_t)_{0\leq t\leq T}\in C([0,T);L^1(U_R))$.

\underline{Step 3}

We suppose that for all $\epsilon>0$ and $T<\infty$ there exists $R<\infty$ arbitrarily large such that there exists $z\in \mathcal{H}_{R,T}(m)$ with $\sup_{t\leq T}\lvert\lvert z_t-m_te^{-J_t}\rvert\rvert_{\TV} \leq \epsilon$ and $z_0\in L^1(U_R)$. Then we claim:
\begin{equation}\label{eq:(m,J) in Xi and cts wrt L1 step 3}
    (m_t,J_t)_{0\leq t<\infty}\in \Xi\cap C([0,\infty);L^1(U)\times \Rm_{>0}).
\end{equation}

We have from Step $2$ the sequence of solutions $(X^{R_n},\tau^{R_n},W)$ to \eqref{eq:SDE for approximate SDE deterministic m} on the domains $U_{R_n}$ with $R_n\ra \infty$ as $n\ra\infty$ such that:
\[
\begin{split}
    \sup_{t\leq T}\lvert\lvert \mathcal{L}(X_t)-m_te^{-J_t}\rvert\rvert_{\TV}\leq\sup_{t\leq T}\lvert\lvert \mathcal{L}(X^{R_n}_t)-\mathcal{L}(X_t)\rvert\rvert_{\TV}+\sup_{t\leq T}\lvert\lvert \mathcal{L}(X^{R_n}_t)-m_te^{-J_t}\rvert\rvert_{\TV}\\
    \leq \frac{1}{n}+\Pm(\tau^{R_n}<\tau\wedge T).
\end{split}
\]

Since $U\cap B(0,R_n)=U_{R_n}\cap B(0,R_n)$, $\Pm(\tau^{R_n}<\tau\wedge T)\ra 0$ as $n\ra \infty$ hence:
\[
\sup_{t\leq T}\lvert\lvert m_te^{-J_t}-\mathcal{L}(X_t\lvert \tau>t)e^{-\ln\Pm(\tau>t)}\rvert\rvert_{\TV}= 0.
\]
Thus $(X,\tau,W)$ is a global weak solution of \eqref{eq:MKV sde} and therefore $(m_t,J_t)\in \Xi$. Moreover since $(\mathcal{L}(X^{R_n}_t))_{0\leq t\leq T}\in C([0,T);L^1(U))$ for all $n$, $(\mathcal{L}(X_t))_{0\leq t\leq T}\in C([0,T);L^1(U))$. We have established \eqref{eq:(m,J) in Xi and cts wrt L1 step 3}.

\underline{Step 4}

We therefore have that if $(m_t,J_t)_{0\leq t<\infty}\in\mathcal{S}$ then $(m_{t_0+t},J_{t_0+t})_{0\leq t<\infty}\in \Xi$ for all $t_0\in \Qm_{>0}$. We have that:
\[
(m_{t_0+t},J_{t_0+t})_{0\leq t<\infty}\ra (m_t,J_t)_{0\leq t<\infty}\quad\text{in}\quad d^{\infty}\quad\text{as}\quad t_0\ra\infty.
\]
Since $m_{t_0}\ra m_0$ in $\Wah$, Lemma \ref{lem:MKV Solns compact for compact set of ics} allows us to extract a subsequence converging to an element of $\Xi$, hence $(m_t,J_t)_{0\leq t<\infty}\in \Xi$. Moreover since $(m_{t_0+t},J_{t_0+t})_{0\leq t<\infty}\in C([0,\infty);L^1(U)\times \Rm_{\geq 0})$ for all $t_0\in \Qm_{>0}$ we have $(m_t,J_t)_{0\leq t<\infty}\in C((0,\infty);L^1(U)\times \Rm_{\geq 0})$.

\qed

\subsection{Uniqueness in Law of Weak Solutions to \eqref{eq:MKV sde}}
We implement a strategy similar to the proof of \cite[Proposition C.1]{Campi2017}. We fix $\nu\in\mathcal{P}(U)$ and firstly seek to show:
\begin{equation}
\begin{split}    
    (\mathcal{L}^{\nu}(X_t\lvert \tau>t))_{0\leq t<\infty}\quad\text{is unique  amongst}\\ \text{all weak solutions to }\eqref{eq:MKV sde}
    \text{ with initial condition }X_0\sim \nu.
    \label{eq:flow of laws unique for proof}
\end{split}
\end{equation}

We take weak solutions to \eqref{eq:MKV sde} $(X^{1},W^{1},\tau^{1})$ and $(X^2,W^2,\tau^2)$ of \eqref{eq:MKV sde} on the possibly different probability spaces $(\Omega^{1},\mathcal{F}^{1},\Pm^{1})$ and $(\Omega^{2},\mathcal{F}^{2},\Pm^{2})$. We note by our earlier result that these must be global weak solutions. We then define $\mathcal{L}(X^k_t)=u^k_t$ for $k=1,2$ and $t<\infty$.

We recall that $b$ is uniformly Lipschitz in the measure argument with respect to the $\Wah$ metric. Since this metric is dominated by the Total Variation metric (up to a constant), $b$ is uniformly Lipschitz in the measure argument with respect to the Total Variation metric.

By abuse of notation we write:
\[
b:\mathcal{M}_+(U)\times U\ni(u,x)\mapsto b(\frac{u}{\lvert u\rvert_{*}},x)\in\Rm^d,\quad \lvert u\rvert_{*}=u(U)
\]
where $\lvert u\rvert_*$ is the mass of u on $U$. 

Therefore since $\lvert u^1_t\rvert,\lvert u^2_t\rvert\geq \lvert u^1_1\rvert\wedge \lvert u^1_1\rvert>0$ for $0\leq t\leq 1$ there exists $C_{\text{Lip}}<\infty$ such that:
\[
\lvert b(u^1_t,x)-b(u^2_t,x)\rvert\leq C_{\text{Lip}}\lvert\lvert u^1_t-u^2_t\rvert\rvert_{\TV},\quad x\in U,\; t\leq 1.
\]

We now define 
\[
d_t=\sup_{s\leq t}\lvert\lvert u^{2}_s-u^{1}_s\rvert\rvert_{TV}
\]
and drifts 
\[
b^{1}(x,t)=b(u^{1}_t,x)\text{ and }b^2(x,t)=b(u^{2}_t,x).
\]

We consider weak solutions of the following SDE:
\begin{equation}
\begin{split}
dX_t=b^1(X_t,t)dt+dW_t,\quad 0\leq t\leq\tau=\inf\{t:X_{t}\in\partial U\},\quad
X_0\sim\nu.
\end{split}
\label{eq:fv lin PDE ast m fixed}
\end{equation}
Weak solutions to \eqref{eq:fv lin PDE ast m fixed} are unique in law by the same change of measure argument giving that weak solutions to SDEs without killing with bounded measurable coefficients are unique in law; see \cite[Proposition 3.10]{Karatzas1991}.

Clearly $(X^1,W^1,\tau^1)$ on $(\Omega^1,\mathcal{F}^1,\Pm^1)$ is a weak solution of \eqref{eq:fv lin PDE ast m fixed}. We have by Girsanov's theorem (since $b^{1},b^2$ are bounded Novikov's condition is satisfied):
\[
W'_t=W^2_t-\int_0^t(b^{1}(X^{2}_s,s)-b^2(X^2_s,s))ds
\]
is a $\Pm'$-Brownian motion whereby:
\[
\begin{split}
Z_t=\frac{d\Pm'}{d\Pm^{2}}_{\lvert_{\mathcal{F}^{2}_t}}=\varepsilon(Y)_t,\quad
Y_t=\int_0^tb^{1}(X^2_s,s)-b^2(X^2_s,s)dW^2_s.
\end{split}
\]
Therefore we have:
\[
dX^2=b^2(X^2,t)dt+dW^2_t=b^1(X^2,t)dt+dW'_t
\]
so that $(X^2_t,W'_t,\tau^2)$ on $(\Omega^2,\mathcal{F}^2,\Pm')$ is also a weak solution of \eqref{eq:fv lin PDE ast m fixed}. By uniqueness in law of weak solutions to \eqref{eq:fv lin PDE ast m fixed} we have:
\begin{equation}
\mathcal{L}^{\Pm^{1}}(X^{1}_t)=\mathcal{L}^{\Pm'}(X^{2}_t),\quad t\leq 1.
\label{eq:X1 under P1 and X2 under P' have same dist}
\end{equation}

We now fix some measurable set $A\subseteq \Rm$ and see that:
\[
\begin{split}
\lvert u_t^{1}(A)-u^{2}_t(A)\rvert=\lvert \Pm^{1}(X_t^{1}\in A)-\Pm^{2}(X^{2}_t\in A)\rvert=\big\lvert \Pm'(X^{2}_t\in A)-\Pm^{2}(X^{2}_t\in A)\big\lvert\\
=\big\lvert\expE^{\Pm^{2}}\big[\Ind_{X_t\in A}\big(Z_t-1\big)\big]\big\rvert \underbrace{\leq}_{\substack{\text{Holder's}\\\text{inequality}}} \lvert\lvert Z_t-1\rvert\rvert_{L^2(\Pm^{2})}\sqrt{u_1(A)}\leq \lvert\lvert Z_t-1\rvert\rvert_{L^2(\Pm^{2})}.
\end{split}
\]
Taking the supremum over measurable sets $A\subseteq \Rm$ we have:
\[
\lvert\lvert u^{1}_t-u^{2}_t\rvert\rvert_{TV}^2\leq \expE^{\Pm^{2}}[(Z_t-1)^2]=\expE^{\Pm^{2}}[Z_t^2]-2\underbrace{\expE^{\Pm^{2}}[Z_t]}_{\substack{=1\text{ as }Z_t\text{ is a}\\\Pm^{2}\text{-martingale}}}+1=\expE^{\Pm^{2}}[Z_t^2]-1.
\]
We calculate the first term on the right using Ito's formula:
\[
\expE^{\Pm^{2}}[Z^2_t]=1+\int_0^t\expE^{\Pm^{2}}[(b^1-b^2)^2(X_s,s)Z_s^2]ds\leq 1+\int_0^t(C_{\text{Lip}}d_s)^2\expE^{\Pm^{2}}[Z^2_s]ds.
\]
By Gronwall's inequality, using that $d_t\leq 1$ and $e^{rt}\leq 1+rte^r$ for $0\leq t\leq 1$ we have:
\[
\expE^{\Pm^{2}}[Z^2_t]\underbrace{\leq}_{\text{Gronwall}} e^{\int_0^t(C_{\text{Lip}}d_s)^2}\leq e^{C_{\text{Lip}}^2d_t^2t}\leq 
1+C_{\text{Lip}}^2d_t^2e^{C_{\text{Lip}}^2} t\quad\text{for}\quad 0\leq t\leq 1.
\]
Therefore we have:
\[
\lvert\lvert u_t^{1}-u_t^2\rvert\rvert^2_{TV}\leq C_{\text{Lip}}^2d_t^2e^{C_{\text{Lip}}^2} t\quad\text{for}\quad 0\leq t\leq 1.
\]
Therefore for some $C<\infty$ we have:
\[
d_t\leq C\sqrt{t}d_t.
\]

Thus for $t<\frac{1}{2C^2}\wedge 1$ we have $u^{2}_t=u^{1}_t$. By iteration we have $u^1_t=u^2_t$ for $t\leq 1$. Repeating inductively we have $u^1_t=u^2_t$ for all $t<\infty$. This implies \eqref{eq:flow of laws unique for proof}.

This then implies uniqueness in law. Indeed \eqref{eq:flow of laws unique for proof} implies that both $(X^{1},W^{1},\tau^{1})$ and $(X^2,W^2,\tau^2)$ are weak solutions to \eqref{eq:fv lin PDE ast m fixed} and hence are equal in law.

\qed
\subsection{Proof of Proposition \ref{prop:Properties of the McKean-Vlasov Process} and Theorem \ref{theo:Hydrodynamic Limit Theorem}}\label{subsection: completing MKV well-posedness proof}

Given $\nu \in \mathcal{P}_{\Wah}(U)$, let $\{\vec{X}_t^N\}_{N \geq 2}$ be any sequence of weak solutions to \eqref{eq:N-particle system sde} with initial conditions $\vec{X}^N_0$ such that the (random) empirical measures $m^N_0 = \vartheta^N_{\#}\vec{X}^N_0$ converge in $\mathcal{P}_{\Wah}(U)$ to $\nu$, in probability as $N \to \infty$. This can be achieved, for example, by taking $\vec{X}^N_0 \sim \nu^{\otimes N}$.

Next, we define $m^N_t$ and $J^N_t$ as in \eqref{eq:defin mN} and \eqref{eq:defin JN}:
\[
m^N_t=\vartheta^N(\vec{X}^N_t),\quad J^N_t = \frac{1}{N} \sup \{ k \in \mathbb{N}\;|\; \tau_k \leq t \}.
\]
Theorem \ref{theo:N ra infty theorem} and the fact that $m^N_0\ra \nu$ in probability imply that the laws of $(m^N_t,J^N_t)_{0\leq t<\infty}$ are tight in $\mathcal{P}((D([0,\infty);\mathcal{P}_{\Wah}(U)\times \Rm_{\geq 0})),d^{D}))$ and every limit distribution of this family is supported on $\Xi(\{\nu\})\cap C((0,\infty);L^1(U)\times \Rm_{\geq 0})$. In particular, $\Xi(\{\nu\})\cap C((0,\infty);L^1(U)\times \Rm_{\geq 0})$ is non-empty. We have already proved uniqueness in law of weak solutions to \eqref{eq:MKV sde}; therefore this limit distribution is uniquely determined. Taken together with Lemma \ref{lem:MKV Solns compact for compact set of ics}, this establishes Proposition \ref{prop:Properties of the McKean-Vlasov Process}

The fact that the limit distribution is unique implies convergence along the entire sequence $N \to \infty$ in probability to the same element of $\Xi(\{\nu\})\cap C((0,\infty);L^1(U)\times \Rm_{\geq 0})$. Furthermore, since convergence in $d^D$ to a continuous function implies convergence in $d^{\infty}$, we have convergence in $d^{\infty}$ in probability. This proves Theorem \ref{theo:Hydrodynamic Limit Theorem}.

}

\qed

\section{Properties of the Semigroup $G_t$ - Proposition \ref{prop:basic properties of Gt}}\label{section:Further Estimates}

Our goal in this section is to establish Proposition \ref{prop:basic properties of Gt}. We begin with a proof of \eqref{eq:joint continuity of semigroup}. We take $(t_n,\nu_n)\ra (t,\nu)$ and $T>\sup_nt_n$. Then Lemma \ref{lem:MKV Solns compact for compact set of ics} and Proposition \ref{prop:Properties of the McKean-Vlasov Process} imply that $(G_t(\nu_n))_{0\leq t\leq T}\ra (G_t(\nu))_{0\leq t\leq T}$ in $d^{\infty}_{[0,T]}$ as $n\ra\infty$. Therefore:
\[
\Wah(G_{t_n}(\nu_n),G_t(\nu))\leq \Wah(G_{t_n}(\nu_n),G_{t_n}(\nu))+\Wah(G_{t_n}(\nu),G_t(\nu))\ra 0\quad\text{as}\quad n\ra \infty.
\]
We have thus established \eqref{eq:joint continuity of semigroup}.

We now assume $U$ is bounded, fix $t_0>0$ and combine the estimates on the $N$-particle system we established in Part \ref{enum:bound on mass near bdy after small time} of Proposition \ref{prop:bound on mass near bdy} with the hydrodynamic convergence theorem (Theorem \ref{theo:Hydrodynamic Limit Theorem}) to prove that $\text{Image}(G_{t_0})\subset\subset \mathcal{P}_{\Wah}(U)$.

Let $(\vec{X}^N_t:0\leq t<\infty)=((X^{N,1}_t,\ldots,X^{N,N}_t):0\leq t<\infty)$ be a sequence of weak solutions to \eqref{eq:N-particle system sde} with initial conditions $\vec{X}^N_0\sim \nu^{\otimes N}$. We define $m^N_t=\vartheta^N(\vec{X}^N_t)$ as in \ref{eq:defin mN}. Therefore Part \ref{enum:bound on mass near bdy after small time} of Proposition \ref{prop:bound on mass near bdy} gives that for all $\epsilon>0$ there exists $c=c(\epsilon,t_0)$ dependent only upon $t_0$, $\epsilon>0$, the upper bound on the drift $B<\infty$ and the constant of the interior ball condition $r>0$ such that the compact set $K_{\epsilon,t_0}=V_{c(\epsilon,t_0)}$ satisfies:
\[
\lim_{N\ra\infty}\Pm(m^N_{t_0}(K_{\epsilon,t_0}^c)\geq \epsilon)=0.
\]
Therefore by our hydrodynamic convergence theorem (Theorem \ref{theo:Hydrodynamic Limit Theorem}) we have:
\[
G_{t_0}(\nu)(K_{\epsilon,t_0}^c)\leq \epsilon.
\]

Since $K_{\epsilon,t_0}$ was not dependent upon $\nu$, $G_{t_0}(\nu)(K_{\epsilon,t_0}^c)\leq \epsilon$ for all $\nu\in\mathcal{P}(U)$. Therefore:
\[
G_{t_0}(\nu)\in \{\mu\in\mathcal{P}(U):\mu(K_{2^{-n},t_0}^c)\leq 2^{-n},\quad n\in\mathbb{N}\},
\]
which is a tight family of measures on U.

\qed

\section{Existence and Properties of QSDs - Proposition \ref{prop:properties of QSD}}\label{section:QSDs and their properties}

\subsection*{Parts \ref{enum:QSD QLD equivalent} and \ref{enum:tau exp distributed} of Proposition \ref{prop:properties of QSD}}

We firstly establish $\ref{enum:QSD}\Leftrightarrow \ref{enum:QLD}$. It is trivial to see that a QSD is a QLD. In the opposite direction we consider a QLD $\pi$ with $G_t(\nu)\ra \pi$ in $\Wah$ as $t\ra\infty$. We define the following continuous map:
\[
p:(C([0,\infty);\mathcal{P}_{\Wah}(U)\times \Rm_{\geq 0}),d^{\infty})\ni (f,g)\mapsto f\in C([0,1];\mathcal{P}_{\Wah}(U)).
\]
We further define:
\[
\begin{split}
\zeta_1(\kappa) := \{G_t(\mu)_{0\leq t\leq 1}:\mu\in\kappa\}=p(\Xi(\kappa)).
\end{split}
\]

We have by Lemma \ref{lem:MKV Solns compact for compact set of ics} that $\Xi(\kappa)$ is compact in $(C([0,\infty);\mathcal{P}_{\Wah}(U)\times \Rm_{\geq 0}),d^{\infty})$ for compact $\kappa$, hence $\zeta_1(\kappa)$ is compact in $C([0,1];\mathcal{P}_{\Wah}(U))$ as it is the continuous image of a compact set. We now take $\kappa=\{G_n(\nu):n\in\mathbb{N}\}\cup\{\pi\}$ which is compact in $\mathcal{P}_{\Wah}(U)$. Thus we have 
\[
\zeta_1(\kappa)=\{(G_{n+t}(\nu))_{0\leq t\leq 1}:n\in\mathbb{N}\}\cup\{G_t(\pi)_{0\leq t\leq 1}\}
\]
is compact in $C([0,1];\mathcal{P}_{\Wah}(U))$. We note that:
\[
(G_{n+t}(\nu))_{0\leq t\leq 1}\ra (\pi)_{0\leq t\leq 1}\quad\text{in}\quad C([0,1];\mathcal{P}_{\Wah}(U))\quad \text{as}\quad n\ra\infty.
\]
Thus we must have:
\[
(\pi)_{0\leq t\leq 1}\in \zeta_1(\kappa).
\]
Therefore $G_t(\pi)=\pi$ for $0\leq t\leq 1$. Thus $\pi$ is a QSD.

We now establish $\ref{enum:QSD}\Rightarrow\ref{enum:PDE for QSD}$ along with Part \ref{enum:tau exp distributed} of Proposition \ref{prop:properties of QSD}. We take $\pi$ a QSD, $(X,\tau,W)$ a global weak solution to \eqref{eq:MKV sde} with initial condition $X_0\sim \pi$ and $(m_t,J_t)_{0\leq t\leq 1}=(\mathcal{L}(X_t\lvert \tau>t),-\ln\Pm(\tau>t))_{0\leq t<\infty}\in \Xi(\{\pi\})$. By considering the martingale problem we see that $m_te^{-J_t}=\pi e^{-J_t}=\mathcal{L}(X_t)$ satisfies:
\begin{equation}
\begin{split}
e^{-J_t}\langle \pi(.),\varphi(.)\rangle-\langle \pi(.),\varphi(.)\rangle=\int_0^t e^{-J_s}\langle \pi, b(\pi,.)\cdot \nabla \varphi\\+\frac{1}{2}\Delta\varphi\rangle ds,\quad 0\leq t<\infty, \quad \varphi\in C_0^{\infty}(\bar U).
\end{split}
\label{eq:Fokker-Planck with const in time varphi to prove Prop QSD}
\end{equation}
Clearly the right hand side is differentiable in time, so the left hand side must be also and so we have:
\[
-\langle \pi(.),\varphi(.)\rangle e^{-J_t}\frac{d}{dt}J_t= e^{-J_t}\langle \pi, b(\pi,.)\cdot \nabla \varphi+\frac{1}{2}\Delta\varphi\rangle,\quad \varphi\in C_0^{\infty}(\bar U).
\]
Thus $\frac{d}{dt}J_t$ must be constant and so equal to some $\lambda\geq 0$. Since we can't have $\Pm(\tau>t)=1$ for all t>0, we must have $\lambda>0$. Moreover we must have $\mathcal{L}_{\pi}(X_1)\in L^1(U)$ since $\mathcal{L}_{\pi}(X_1)$ can be related to the distribution at time 1 of Brownian motion killed at the boundary by a Girsanov transformation - thus we must have $\pi\in L^1(U)$. Thus $(\pi,\lambda)$ satisfies \eqref{eq:equation for det stat solns PDE} and hence $\ref{enum:QSD}\Rightarrow\ref{enum:PDE for QSD}$. Moreover $e^{-\lambda t}=e^{-J_t}=\lvert \mathcal{L}(X_t)\rvert=\Pm(\tau>t)$ so that $\tau\sim \text{exp}(\lambda)$ and hence we have Part \ref{enum:tau exp distributed} of Proposition \ref{prop:properties of QSD}.

We now establish $\ref{enum:PDE for QSD}\Rightarrow\ref{enum:QSD}$. We take $(\pi,\lambda)\in L^1(U)\times (0,\infty)$ a solution of \eqref{eq:equation for det stat solns PDE} and take $(X,\tau,W)$ a weak solution of the SDE (which exists by Girsanov's theorem):
\[
\begin{split}
dX_t=b(\pi,X_t)dt+dW_t,\quad 0\leq t\leq\tau=\inf\{t:X_{t-}\in\partial U\},\quad 
X_0\sim \pi.
\end{split}
\]

We have both $\mathcal{L}_{\pi}(X_t)=\mathcal{L}_{\pi}(X_t\lvert \tau>t)\Pm(\tau>t)$ and $\pi e^{-\lambda t}$ must be $L^1_{\text{loc}}(\bar U\times [0,\infty))$ solutions to the PDE (for every $T<\infty$):
\[
\begin{split}
\langle y_t(.),\varphi(.,t)\rangle-\langle \pi(.),\varphi(.,0)\rangle=\int_0^t \langle y_s(.),\partial_s \varphi(.,s)\\ +b(\pi,.)\cdot \nabla \varphi(.,s)+\frac{1}{2}\Delta\varphi(.,s)\rangle ds, 
\quad 0\leq t<\infty,\quad \varphi\in C_0^{\infty}(\bar U\times [0,\infty)).
\end{split}
\]
Therefore by \cite[Theorem 1.1]{Porretta2015a} we have $\pi e^{-\lambda t}=\mathcal{L}(X_t)$, thus $\mathcal{L}(X_t\lvert \tau>t)=\pi$ and hence $(X,\tau,W)$ satisfies \eqref{eq:MKV sde}. Thus $\pi$ is a QSD.

\subsection*{Part \ref{enum:Pi compact} of Proposition \ref{prop:properties of QSD}}
We define $\Pi_n=\{\pi\in\mathcal{P}(U):G_{2^{-n}}(\pi)=\pi\}$. We recall that Proposition \ref{prop:basic properties of Gt} gives that $G_{2^{-n}}:\mathcal{P}(U)\ra\mathcal{P}(U)$ is continuous with tight image. Since the convex hull of a tight family of measures is tight, the closed convex hull $F_{2^{-n}}:=\overline{\text{Conv}}(\text{Image}(G_{t_0}))$ is compact in $\mathcal{P}(U)$. Therefore $\Pi_n$ corresponds to the fixed points of the following map:
\[
G_{2^{-n}}:F_{2^{-n}}\ra F_{2^{-n}}
\]
which is a continuous map from a compact convex subset of a locally convex topological vector space ($\mathcal{M}(U)$) to itself. Thus Schauder's fixed point theorem implies $\Pi_n$ is a non-empty compact subset of $\mathcal{P}(U)$. It is therefore sufficient to prove:
\begin{equation}
\Pi=\cap_n\Pi_n
\label{eq:Pi=intersection of Pins}
\end{equation}
as the intersection of a descending sequence of non-empty compact sets must be non-empty and compact.

We clearly have that $\Pi\subseteq \Pi_n$ for all n, so it is sufficient to establish $\cap_n\Pi_n\subseteq \Pi$. We suppose $\pi\in\cap_n\Pi_n$ and fix $t>0$. We take a sequence of dyadic rationals $t_n\ra t$. We have $\pi=G_{t_n}(\pi)\ra G_t(\pi)$ by Proposition \ref{prop:basic properties of Gt} so that we have $G_t(\pi)=\pi$. Since t is arbitrary $\pi\in\Pi$.

\qed

\section{QSDs as Limits of the Fleming-Viot Particle System}

The goal of this section is to establish that QSDs may be obtained as limits of the $N$-particle system. In Theorem \ref{theo:main t theo conv of stat dists} we show that the stationary distributions of the $N$-particle system converge to the set of QSDs. In Theorem \ref{theo:main t theo jt conv} we then establish under an additional assumption on the semigroup $G_t$ convergence as $N$ and $T$ go to infinity together.

\subsection*{Proof of Theorem \ref{theo:main t theo conv of stat dists}}
We take the $N$-particle stationary distributions $\psi^N$, associated to which are the corresponding stationary empirical measures $\Psi^N=\vartheta^N_{\#}\psi^N$ as in \eqref{eq:defin PsiN} and $\mathcal{P}_{\Wah}(U)$-valued random variables $\pi^N\sim \Psi^N$. We consider a sequence of stationary solutions $\vec{X}^N$ to \eqref{eq:N-particle system sde} with initial distributions $\psi^N$. We write $m^N_t:=\vartheta^N(\vec{X}^N_t)$ and $J^N_t = \frac{1}{N} \sup \{ k \in \mathbb{N}\;|\; \tau_k \leq t \}$ as in \eqref{eq:defin mN} and \eqref{eq:defin JN}. 

Since $m^N_1\sim \Psi^N$, Proposition \ref{prop:laws at positive times constrained to compact set} gives that $\{\Psi^N\}$ are a tight family of random measures. We may then use Theorem \ref{theo:N ra infty theorem} to establish that $\{\mathcal{L}((m^N,J^N_t)_{0\leq t<\infty})\}$ is tight in $\mathcal{P}((D([0,\infty);\mathcal{P}_{\Wah}(U)\times \Rm_{\geq 0}),d^{D}))$. We then consider an arbitrary convergent subsequence, along which $\Psi^N\ra \Psi^{\infty}$ in $\mathcal{P}(\mathcal{P}_{\Wah}(U))$ and $(m^N,J^N_t)_{0\leq t<\infty}\ra (m^{\infty},J^{\infty}_t)_{0\leq t<\infty}$ in distribution, which must satisfy $(m^{\infty},J^{\infty}_t)_{0\leq t<\infty}\in \Xi$ almost surely by Theorem \ref{theo:N ra infty theorem}. We take a random variable $\pi^{\infty}\sim \Psi^{\infty}$ so that we have:
\[
\Psi^{\infty}\sim m^{\infty}_t=G_t(m^{\infty}_0),\quad t\geq 0,
\]
so that in particular $\Psi^{\infty}$ is an invariant measure for the semigroup $G_t$. We calculate:
\[
\begin{split}
\expE[\Wah(\pi^{\infty},\Pi)]=\expE[\Wah(G_t(\pi^{\infty}),\Pi)]\ra 0 \quad \text{as}\quad t\ra\infty\\
\text{by Lebesgue's Dominated Convergence Theorem and our assumption }\eqref{eq:convergence to non-unique quasi-equil}.
\end{split}
\]
Thus $\pi^{\infty}\in\Pi$ almost surely, so $\Psi^{\infty}$ is supported on $\Pi$. Thus along every subsequence, there is a further subsequence along which $W(\pi^N,\Pi)\ra 0$ in probability, hence we have convergence in probability along the original sequence.

\qed

\subsection*{Proof of Theorem \ref{theo:main t theo jt conv}}

We take an arbitrary sequence $t_N\ra \infty$ and fix $\epsilon>0$. We take (using the assumption \eqref{eq:uniform convergence to unique equilibria}) $T<\infty$ such that $\Wah(G_T(\nu),\pi)\leq \epsilon$ for all $\nu\in \mathcal{P}_{\Wah}(U)$. 

Then by Proposition \ref{prop:laws at positive times constrained to compact set}, $\{\mathcal{L}(m^N_{t_N-T})\}$ is tight in $\mathcal{P}(\mathcal{P}_{\Wah}(U))$ and hence by Theorem \ref{theo:N ra infty theorem} and Skorokhod's representation theorem we may take a subsequence and possibly different probability space $(\Omega',\mathcal{F}',\Pm')$ on which $(m^N_{t_N-T+t},J^N_{t_N-T+t})_{0\leq t<\infty}$ converges in $d^{\infty}$ $\Pm'$-almost surely to $(m_t,J_t)_{0\leq t<\infty}\in \Xi\subseteq C([0,\infty);\mathcal{P}_{\Wah}(U)\times \Rm_{\geq 0})$. Then $m_T=G_T(m_0)$ so that on this subsequence:
\[
\limsup_{N\ra\infty}\Wah(m^N_{t_N},\pi)\leq \epsilon\quad \Pm'\text{-almost surely.}
\]
This subsequence was arbitrary as was $\epsilon>0$, so we have $\Wah(m^N_{t_0},\pi)\ra 0$ in probability as $N\wedge t_0\ra \infty$. Using Theorem \ref{theo:Hydrodynamic Limit Theorem} and Proposition \ref{prop:properties of QSD} we are done.

\qed

\section{Appendix}
Here we prove various technical lemmas, whose proofs we have deferred to this appendix.

\subsection{Proof of Lemma \ref{lem:condition to prove for sets in R for abs cty lemma}}

We define:
\[
G_{\epsilon}=0\vee \sup_{\rho \in \mathcal{R}} \big( m(\rho) - C_{\epsilon}(T_{\min}(\rho))\text{Leb}(\rho) \big).
\]
Since we have bounded $m$ on sets in $\mathcal{R}$ in terms of $\text{Leb}$, we may bound the corresponding outer measure $m^{\ast}$ in terms of the outer measure $\text{Leb}^{\ast}$. Specifically:
\[
m^*(E)\leq C_{\epsilon}(T_{\min}(E))\text{Leb}^*(E)+G_{\epsilon}\quad\text{for all}\quad E\in\mathcal{B}(U\times [0,T])\setminus\{\emptyset\}
\]
holds almost surely. Since $m\leq m^{\ast}$ and $\text{Leb}=\text{Leb}^{\ast}$, this implies that
\begin{equation}
m(E)\leq C_{\epsilon}(T_{\min}(E))\text{Leb}(E)+G_{\epsilon}\quad\text{for all}\quad E\in\mathcal{B}(U\times [0,T])\setminus\{\emptyset\}
\label{eq:bound for m in terms of CLeb and G}
\end{equation}
holds $\Pm^{\epsilon}$-almost surely. We define for $\delta,T_0\geq 0$:
\[
\begin{split}
\mathcal{N}_{\delta,T_0}=\{\mu\in \mathcal{P}(U\times[0,T]):
\mu(N)\leq \delta\quad\text{for all}\quad N\in\mathcal{B}( U\times (T_0,T])\\\text{such that }\text{Leb}_{U\times (0,T]}(N)=0\}.
\end{split}
\]
Then \eqref{eq:bound for m in terms of CLeb and G} implies that for $\delta,T>0$ we have:
\[
\begin{split}
\Pm(m\in \mathcal{N}_{\delta,T_0})\geq \Pm(G_{\epsilon}\leq \delta)\geq 1-\frac{\epsilon}{\delta}\text{ by Markov's inequality.}
\end{split}
\]
Since $\epsilon>0$ is arbitrary, we have $\Pm(m\in\mathcal{N}_{\delta,T_0})=1$ for all $\delta,T>0$. We now note that $\mathcal{N}_{0,0}=\cap_{T_0>0}\cap_{\delta>0}\mathcal{N}_{\delta,T_0}$ so that:
\[
\Pm(m\ll\text{Leb}_{U\times (0,T]})=\Pm(m\in\mathcal{N}_{0,0})=1.
\]
Moreover we have $\Pm(m(U\times\{0\})=0)=1$. Therefore $\Pm(m\ll\text{Leb}_{U\times [0,T]})=1$.

\qed

\subsection{Proof of Lemma \ref{lem:Boundary Truncation Lemma}}

We fix $\varphi$ a positive mollifier supported on $B(0,1)$ and take $\rho\in C^{\infty}(\Rm^d)$ such that:
\[
U=\{\rho>0\},\quad \partial U=\{\rho=0\},\quad \nabla \rho\neq 0\text{ on }\partial U.
\]
We define $g=(\varphi\ast \Ind_{B(0,R+3)})\rho \in C^{\infty}_c(\Rm^d)$ which we note satisfies:
\begin{enumerate}
    \item $U\cap \bar B(0,R+2)=\{g>0\}\cap \bar B(0,R+2)$.
    \item $\partial U\cap \bar B(0,R+2)=\{g=0\}\cap \bar B(0,R+2)$.
    \item $\nabla g\neq 0$ on $\partial U\cap \bar B(0,R+2)$. 
\end{enumerate}

We then define $h=\varphi\ast \Ind_{B(0,R+1)^c}\in C_c^{\infty}(\Rm^d)$ which we note satisfies:
\begin{enumerate}
    \item $h\equiv 0$ on $\bar B(0,R)$.
    \item $0\leq h\leq 1$ on $\bar B(0,R+2)\setminus \bar B(0,R)$.
    \item $h\equiv 1$ on $B(0,R+2)^c$.
\end{enumerate}

We then define $f=g-\epsilon h\in C^{\infty}(\Rm^d)$ for $\epsilon>0$ to be determined and claim that for some $\epsilon>0$ small enough $U_R:=\{f>0\}$ gives a domain with our desired values. We firstly observe that for all $\epsilon>0$, 
\[
U\cap \bar B(0,R)\subseteq \{f>0\}\subseteq U\cap \bar B(0,R+4).
\]
Therefore by the implicit function theorem it is sufficient to show that for some $\epsilon>0$ small enough:
\begin{equation}
f=0\;\Rightarrow \;\nabla f\neq 0.
\end{equation}
Sard's theorem allows us to take $\epsilon_n\downarrow 0$ such that
\[
g=\epsilon_n\Rightarrow \nabla g\neq 0.
\]
Therefore $f_n=g-\epsilon_n h$ satisfies:
\[
f_n(x)=0\quad \text{and} \quad\lvert x\rvert\geq R+2\;\Rightarrow\; \nabla f_n(x)\neq 0.
\]
We now assume for contradiction there exists for all n $\lvert x_n\rvert \leq R+2$ such that $f_n(x_n)=0$ and $\nabla f_n(x_n)=0$. We take a convergent subsequence $x_{n_k}\ra x\in \bar B(0,R+2)$, so that $0=f_{n_k}(x_{n_k})\ra g(x)$ and $0=\nabla f_{n_k}(x_{n_k})\ra \nabla g(x)$. This is a contradiction, hence we may choose $\epsilon_{n}$ such that:
\[
g-\epsilon_n h=0\Rightarrow \nabla(g-\epsilon_nh)\neq 0.
\]
\qed

\subsection{Controls on the Density and Hitting Time of Generic Diffusions}
\begin{lem}\label{lem:upper and lower bounds on density of diffusions}
Let $(X,\tau,W)$ be a weak solution of the following SDE:
\begin{equation}
dX_t=b_tdt+dW_t,\quad 0\leq t\leq \tau=\inf\{t>0:X_{t}\in \partial U\}
\end{equation}
on the domain $U\subseteq \Rm^d$ and filtered probability space $(\Omega,\mathcal{F},(\mathcal{F}_t)_{t\geq 0},\Pm)$ where $b_t$ is $(\mathcal{F}_t)_{t\geq 0}$-adapted and uniformly bounded $\lvert b_t\rvert\leq B$. For $\vec{x}=(x_1,\ldots,x_{d})\in U\subseteq \Rm^d$ and $h>0$ we define the open cube:
\[
D_{h}(\vec{x})=\{\vec{y}=(y_1,\ldots,y_d)\in U:\lvert y_i-x_i\rvert<h\}.
\]
Throughout we write $\mathcal{L}(X_t)=\mathcal{L}(X_t\lvert \tau>t)\Pm(\tau>t)$ for the law of the killed process restricted to U. Then we have the following:
\begin{enumerate}
    \item There exists a non-increasing function $C:(0,\infty)\ra\Rm_{>0}$ such that:
    \begin{equation}
        \mathcal{L}(X_t)(.)\leq C_t\text{Leb}_U(.),\quad 0<t<\infty.
        \label{eq:upper bound on density for general diffusion}
    \end{equation}
    \item If $h, t>0$ and $D_{5h}(\vec{x})\subseteq U$ there exists $c>0$ dependent only upon the upper bound on the drift $B<\infty$, $t>0$ and $h>0$ such that:
    \begin{equation}
        \mathcal{L}(X_t)_{\lvert_{D_h(\vec{x})}}(.)\geq c\Pm(X_t\in D_h(\vec{x}))\text{Leb}_{\lvert_{D_h(\vec{x})}}(.).
        \label{eq:lower bound on density for general diffusion}
    \end{equation}
\end{enumerate}
\end{lem}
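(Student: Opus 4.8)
\textbf{Proof strategy for Lemma~\ref{lem:upper and lower bounds on density of diffusions}.}
The plan is to establish both bounds by comparison with diffusions that have \emph{constant} drift, for which explicit Gaussian-type estimates are available, and then to transfer the estimate to $\mathcal{L}(X_t)$ by removing the killing via an inequality in one direction (upper bound) and by a localization argument in the other (lower bound). For the upper bound \eqref{eq:upper bound on density for general diffusion}: since $\mathcal{L}(X_t)(A)=\Pm(X_t\in A,\,\tau>t)\leq \Pm(X_t\in A)$ where $X_t$ is viewed as the solution of $dX_t=b_t\,dt+dW_t$ \emph{without} killing (extended past $\tau$), it suffices to bound the density of the killing-free diffusion with bounded measurable drift. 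Here I would invoke Girsanov's theorem: the law of $(X_s)_{s\le t}$ is absolutely continuous with respect to Wiener measure started from $X_0$, with Radon--Nikodym density $Z_t=\varepsilon\big(\int_0^\cdot b_s\cdot dW_s\big)_t$, and for $A$ a Borel set one has $\Pm(X_t\in A)=\expE^{W}[Z_t\,\Ind(X_t\in A)]\leq \|Z_t\|_{L^2}\sqrt{\Pm^W(X_t\in A)}$ by Cauchy--Schwarz. Since $|b|\le B$, $\expE[Z_t^2]$ is bounded by a constant depending only on $B$ and $t$ (via the same Ito/Gronwall computation as in the uniqueness proof), and $\Pm^W(X_t\in A)=\int_A p_t(X_0,y)\,dy$ with Gaussian kernel $p_t\le (2\pi t)^{-d/2}$, giving $\Pm(X_t\in A)\le C_t^{1/2}(2\pi t)^{-d/4}\sqrt{\text{Leb}(A)}$. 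That yields only a $\sqrt{\text{Leb}(A)}$ bound, so I would instead iterate/bootstrap: use the Chapman--Kolmogorov-type identity $\mathcal{L}(X_t)(A)\le \expE[\,\Ind(\tau>t/2)\,\Pm(X_t\in A\mid \mathcal{F}_{t/2})\,]$ and apply the Gaussian bound to the second half of the time interval conditionally, so that $\Pm(X_t\in A\mid\mathcal{F}_{t/2})=\expE^W[\tilde Z\,\Ind]\le (\text{const}(B,t))\,\text{Leb}(A)$ directly — here the first factor in Cauchy--Schwarz controls the $L^2$ norm of the tilting density over $[t/2,t]$ and the second is $\sqrt{\int_A p_{t/2}(X_{t/2},y)^2\,dy}\le (2\pi(t/2))^{-d/4}\sqrt{\text{Leb}(A)}$, which is still only $\sqrt{\text{Leb}(A)}$. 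The clean fix is: bound $\Pm(X_t\in A\mid\mathcal F_{t/2})\le \expE^W\!\big[\tilde Z^2\mid \mathcal F_{t/2}\big]^{1/2}\cdot\Big(\int_A p_{t/2}(X_{t/2},y)^2dy\Big)^{1/2}$ and note $\int_A p_s(x,y)^2 dy\le \|p_s(x,\cdot)\|_\infty \int_A p_s(x,y)dy\le (2\pi s)^{-d/2}\Pm^W(X_s\in A)$; combined with one more Cauchy--Schwarz this produces a genuine $\text{Leb}(A)$ bound after averaging, with $C_t$ non-increasing in $t$ because larger $t$ only makes the Gaussian flatter and the tilting moments controllable on $[t/2,t]$. (Equivalently, one may directly cite a known Aronson-type Gaussian upper bound for the fundamental solution of $\partial_t u=\tfrac12\Delta u-\nabla\cdot(b u)$ with bounded $b$; I would keep the self-contained Girsanov route.)

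For the lower bound \eqref{eq:lower bound on density for general diffusion}: fix $\vec x$ and $h,t>0$ with $D_{5h}(\vec x)\subseteq U$. The key observation is that, on the event $\{X_t\in D_h(\vec x)\}$, with probability bounded below (uniformly over the configuration) the process stays inside $D_{5h}(\vec x)\subseteq U$ during $[t,2t]$ — this uses only that the drift is bounded by $B$, so for $t$ fixed a Brownian increment of size $\lesssim h$ keeps us in the larger cube, an event of probability $\ge q(B,t,h)>0$ conditionally on $\mathcal F_t$. Hence $\mathcal{L}(X_{2t})_{|_{D_{3h}(\vec x)}}\ge q\cdot \mathbb E[\Ind(X_t\in D_h(\vec x))\,\delta\text{-spread of }X_{2t}]$; more precisely, conditionally on $\{X_t\in D_h(\vec x),\,\tau>t\}$, by Girsanov comparison with Brownian motion confined to the cube $D_{5h}(\vec x)$ the conditional law of $X_{2t}$ has a density on $D_{3h}(\vec x)$ bounded below by $c(B,t,h)$ times Lebesgue, since the Dirichlet heat kernel of the cube over time $t$ is strictly positive on compact subsets and the Radon--Nikodym density $1/Z$ for removing the drift has a positive lower bound controlled by $B$ and $t$. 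Integrating, $\mathcal{L}(X_{2t})_{|_{D_{3h}(\vec x)}}\ge c\,\Pm(X_t\in D_h(\vec x))\,\text{Leb}_{|_{D_{3h}(\vec x)}}$. Since $t>0$ was arbitrary this is exactly the claimed statement after relabeling $2t\rightsquigarrow t$ and shrinking the cube constants.

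\textbf{Main obstacle.} The delicate point in the upper bound is getting a true $\text{Leb}(A)$ bound rather than $\sqrt{\text{Leb}(A)}$ from a single Cauchy--Schwarz; I expect the cleanest rigorous route is the two-step conditioning above (apply the Gaussian/Girsanov estimate on $[t/2,t]$ conditionally, exploiting $\int_A p_s^2\,dy\le \|p_s\|_\infty\,\Pm^W(X_s\in A)$ and the uniform $L^2$-control of the tilting density on that subinterval), and verifying the monotonicity of $C_t$ then amounts to noting $\|p_s(x,\cdot)\|_\infty$ decreases and the relevant exponential moment of $\int b\,dW$ over a subinterval of fixed length $t/2$ is uniformly bounded. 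For the lower bound, the subtlety is that the comparison with the confined Brownian motion needs the starting point $X_t$ to be at distance $\gtrsim h$ from $\partial D_{5h}(\vec x)$, which is guaranteed by $X_t\in D_h(\vec x)$ and the nesting $D_h\subset D_{3h}\subset D_{5h}\subseteq U$; this is precisely why the hypothesis is stated with $D_{5h}(\vec x)\subseteq U$. Both comparisons reuse machinery already developed in the excerpt — the Girsanov/change-of-measure arguments as in the proof of Theorem~\ref{theo:Global Well-Posedness of the N-Particle System} and the uniqueness proof for \eqref{eq:MKV sde}, and the domination-by-simpler-diffusion idea from the proof of Lemma~\ref{lem:dominated family of Ys} — so no genuinely new tool is required.
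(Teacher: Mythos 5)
Your reduction of the upper bound to the unkilled process and your diagnosis that a single Girsanov-plus-Cauchy--Schwarz step only yields $\sqrt{\text{Leb}(A)}$ are both correct, but the two-step fix you propose does not close the gap, and this is the crux of the lemma. First, the displayed inequality $\Pm(X_t\in A\mid\mathcal F_{t/2})\le \expE[\tilde Z^2\mid\mathcal F_{t/2}]^{1/2}\bigl(\int_A p_{t/2}(X_{t/2},y)^2dy\bigr)^{1/2}$ is not what conditional Cauchy--Schwarz gives: the correct second factor is $\bigl(\int_A p_{t/2}(X_{t/2},y)\,dy\bigr)^{1/2}$, and upgrading it to the $L^2$-in-$y$ form would require bounding $\int_A\expE[\tilde Z\mid X_t=y,\mathcal F_{t/2}]^2dy$ by $\expE[\tilde Z^2\mid\mathcal F_{t/2}]$, which fails because the latter carries the density weight $p_{t/2}(X_{t/2},y)$ and the former does not. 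Second, even granting that display, your final step still loses: after $\int_A p^2\le\|p\|_\infty\int_A p$ and one more Cauchy--Schwarz on the outer average, the set $A$ only ever appears inside a square root, so the conclusion is $C\,\text{Leb}(A)^{1/2}$, not $C\,\text{Leb}(A)$. This is structural: Girsanov with moment control of the tilting density gives an $L^2$ (or $L^p$) bound on the density of $\mathcal{L}(X_t)$, never an $L^\infty$ bound; a pointwise bound by this route would need estimates on $\expE[\tilde Z\mid X_t=y]$ (bridge-type estimates), which are delicate for an adapted, path-dependent drift and are not supplied. Citing an Aronson-type bound does not cover the lemma as stated either, since $b_t$ here is an arbitrary bounded adapted process, not a function $b(t,x)$.

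The lower bound has the same defect in mirror image: the claim that the Radon--Nikodym density $1/Z$ for removing the drift ``has a positive lower bound controlled by $B$ and $t$'' is false, since $Z^{-1}=\exp\bigl(-\int b\cdot dW-\tfrac12\int|b|^2ds\bigr)$ contains an unbounded stochastic integral. The natural repair (reverse Cauchy--Schwarz, $\Pm(E)\ge Q(E)^2/\expE^{Q}[(d\Pm/dQ)^{-1}]$) gives a bound quadratic in the Dirichlet-heat-kernel probability, again destroying the linear dependence on Lebesgue measure that \eqref{eq:lower bound on density for general diffusion} demands; and your relabelling $2t\rightsquigarrow t$ changes the statement, tying $\mathcal{L}(X_t)$ to $\Pm(X_{t/2}\in D_h(\vec x))$ rather than $\Pm(X_t\in D_h(\vec x))$. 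The paper avoids Girsanov entirely for this lemma: it uses the pathwise domination of Lemma \ref{lem:dominated family of Ys}, i.e.\ a Doob--Meyer/local-time decomposition of each coordinate $\lvert X^{d'}_t-x_{d'}\rvert$ coupled to a one-dimensional reflected Brownian motion with constant drift $-B$ (and, for the lower bound, an analogous reflected process with constant drift confined to a cube), whose explicit transition densities are bounded above by $c/\sqrt t$ (respectively below by a constant); independence of the coordinate drivers gives the product bound on cubes, and outer (respectively inner) regularity over cubes transfers it to Borel sets. Some comparison argument of this kind, valid for arbitrary bounded adapted drift and linear in $\text{Leb}$, is exactly what your proposal is missing.
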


We obtain from this the following corollary:
\begin{cor}
For every $t\geq 0$ we have $\Pm(\tau=t)=0$.
\label{cor:hitting time Corollary}
\end{cor}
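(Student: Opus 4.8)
The plan is to deduce Corollary~\ref{cor:hitting time Corollary} from Part~1 of Lemma~\ref{lem:upper and lower bounds on density of diffusions} --- the uniform bound $\mathcal{L}(X_u)\leq C_u\,\text{Leb}_U$ with $C_u>0$ non-increasing and independent of the initial law --- together with the fact that a weak solution approaches $\partial U$ at its killing time, i.e.\ $X_u\to X_{\tau}\in\partial U$ as $u\uparrow\tau$ (when $\tau<\infty$). The case $t=0$ is immediate: $X_0$ takes values in the open set $U$ and the paths are continuous, so $\Pm(\tau>0)=1$. So fix $t>0$ and any $s\in(0,t)$. Since $\{\tau=t\}\subseteq\{\tau>s\}$ and $\Ind_{\{\tau\leq s\}}\Ind_{\{\tau=t\}}=0$, it suffices to prove that $\Pm(\tau=t\mid\mathcal{F}_s)=0$ almost surely on $\{\tau>s\}$, from which $\Pm(\tau=t)=\expE[\Ind_{\{\tau>s\}}\Pm(\tau=t\mid\mathcal{F}_s)]=0$.

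First I would pass to a regular conditional probability given $\mathcal{F}_s$ and observe that, on $\{\tau>s\}$, the time-shifted process $Y_u:=X_{s+u}$, driven by the Brownian motion $W_{s+\cdot}-W_s$ with respect to $(\mathcal{F}_{s+u})_{u\geq0}$ and with the still uniformly bounded, adapted drift $b_{s+\cdot}$, is again a weak solution of the same type, with deterministic initial value $X_s\in U$ and killing time $\tau-s$. Applying Part~1 of Lemma~\ref{lem:upper and lower bounds on density of diffusions} to $(Y,\tau-s,W_{s+\cdot}-W_s)$ yields, almost surely on $\{\tau>s\}$,
\[
\Pm\big(X_{s+u}\in A,\ \tau>s+u\ \big|\ \mathcal{F}_s\big)\leq C_u\,\text{Leb}_U(A),\qquad A\in\mathcal{B}(U),\ u>0.
\]
(Here one should be a little careful to make this bound hold simultaneously for all Borel $A$ on a single almost-sure event, for instance by first restricting $A$ to a countable generating algebra.)

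It then remains to exploit that $\{\tau=t\}$ forces the path towards $\partial U$ near time $t$. Fix $\delta>0$, $R<\infty$, and a sequence $u_n\uparrow t$ with $u_n>s$. On the event $\{\tau=t\}\cap\{\sup_{0\leq u\leq t\wedge\tau}|X_u|\leq R\}$ one has, for all $n$ large enough (depending on $\omega$), simultaneously $\tau>u_n$, $|X_{u_n}|\leq R$ and $d(X_{u_n},\partial U)<\delta$, since $X_{u_n}\to X_{\tau}\in\partial U$. Hence, by conditional Fatou and the displayed density bound, using that $u\mapsto C_u$ is non-increasing,
\[
\Pm\Big(\tau=t,\ \sup_{0\leq u\leq t\wedge\tau}|X_u|\leq R\ \Big|\ \mathcal{F}_s\Big)\leq C_{u_1-s}\,\text{Leb}\big(\{x\in U:\,d(x,\partial U)<\delta,\ |x|\leq R\}\big)
\]
almost surely on $\{\tau>s\}$. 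The set on the right is bounded, hence of finite Lebesgue measure, and decreases as $\delta\downarrow0$ to $\{x\in U:d(x,\partial U)=0,\ |x|\leq R\}=\varnothing$, so its measure tends to $0$. Letting first $\delta\downarrow0$ and then $R\uparrow\infty$ --- using $\Pm(\sup_{0\leq u\leq t\wedge\tau}|X_u|>R\mid\mathcal{F}_s)\to0$ because the path is almost surely bounded on $[0,t\wedge\tau]$ --- gives $\Pm(\tau=t\mid\mathcal{F}_s)=0$ almost surely on $\{\tau>s\}$, which finishes the proof.

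The only genuine obstacle is the second step: justifying rigorously, via the regular conditional probability, that conditioning on $\mathcal{F}_s$ and on $\{\tau>s\}$ produces an honest weak solution (with a point-mass initial condition) to which Lemma~\ref{lem:upper and lower bounds on density of diffusions} applies verbatim with its universal constant. The spatial truncation by $R$ is needed only to accommodate unbounded $U$, where $\{d(\cdot,\partial U)<\delta\}$ may have infinite Lebesgue measure; if $U$ is bounded it can be omitted.
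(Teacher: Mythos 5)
Your proof rests on the same two ingredients as the paper's: path continuity at the killing time (so that on $\{\tau=t\}$ the process sits, shortly before time $t$, in a thin neighbourhood of $\partial U$ intersected with a ball) and the uniform density bound of Part 1 of Lemma \ref{lem:upper and lower bounds on density of diffusions}, followed by the same limits $\delta\downarrow 0$ and then $R\uparrow\infty$. The one place you diverge — passing to a regular conditional probability given $\mathcal{F}_s$ — is precisely the step you flag as the genuine obstacle, and it is unnecessary: Lemma \ref{lem:upper and lower bounds on density of diffusions} is stated for an arbitrary weak solution with no restriction on the initial law (and its constant $C_u$, coming from Lemma \ref{lem:dominated family of Ys}, depends only on $u$, $B$ and $d$), so you may apply it directly to the unconditioned process at times $u_n\in[t/2,t)$ with $u_n\uparrow t$. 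Concretely, with $A_{\delta,R}=\{x\in U: d(x,\partial U)<\delta,\ |x|\leq R\}$ one has $\{\tau=t,\ \sup_{u\leq t\wedge\tau}|X_u|\leq R\}\subseteq \liminf_n\{X_{u_n}\in A_{\delta,R},\ \tau>u_n\}$, so Fatou's lemma and the density bound give $\Pm(\tau=t,\ \sup_{u\leq t\wedge\tau}|X_u|\leq R)\leq C_{t/2}\,\text{Leb}(A_{\delta,R})$, and letting $\delta\downarrow 0$ and then $R\uparrow\infty$ concludes. With the conditioning deleted, your argument coincides with the paper's proof, which bounds $\Pm(\tau=t)$ by $C_{t/2}\,\text{Leb}(\{x: d(x,\partial U)\leq\epsilon,\ |x|\leq R+1\})+\Pm(|X_t|\geq R)$ and takes the same limits; so the ``only genuine obstacle'' you identify should simply be removed rather than resolved.
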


\begin{proof}[Proof of Lemma \ref{lem:upper and lower bounds on density of diffusions}]
We firstly establish \eqref{eq:upper bound on density for general diffusion}. We may apply Lemma \ref{lem:dominated family of Ys} to the family of processes $\{X_t-\vec{x}:\vec{x}\in U\}$ to see that:
\[
\Pm(X_t\in D_h(\vec{x}))\leq C_t\text{Leb}(D_h(\vec{x}))
\]
where $C_t$ is the function given by Lemma \ref{lem:dominated family of Ys}. By considering the outer measure generated by the open cubes, we see that:
\[
\mathcal{L}(X_t)(.)\leq \mathcal{L}(X_t)^{\ast}(.)\leq C_t\text{Leb}^{\ast}(.)=C_t\text{Leb}(.),\quad 0<t<\infty.
\]

We now turn to establishing \eqref{eq:lower bound on density for general diffusion}. We consider on the probability space $(\Omega,\mathcal{F},(\mathcal{F}_t)_{t\geq 0},\Pm)$ a family of weak solutions $(X^{\gamma},W^{\gamma})$ ($\gamma\in\Gamma$) on the domains $U^{\gamma}\supseteq D_{4h}(\vec{0})$ to:
\[
dX^{\gamma}_t=b^{\gamma}_tdt+dW^{\gamma}_t,\quad 0\leq t\leq\tau^{X,\gamma}=\inf\{t>0:X^{\gamma}_{t}\in \partial U^{\gamma}\}
\]
where $\{b^{\gamma}\}$ are bounded $\lvert b^{\gamma}_t\rvert\leq B$ and $(\mathcal{F}_t)_{t\geq 0}$-adapted processes. We take $\vec{h}=(h,\ldots,h)$ and write $\vec{n}(\vec{x})$ for the inward normal of the positive orthant $\Rm^d_{>0}$. If we repeat the proof of Lemma \ref{lem:dominated family of Ys} (on page \pageref{proof:dominated family of Ys}) with strong solutions of the SDE \eqref{eq:equation for Y} replaced with strong solutions of the 1-dimensional SDE (which exists by \cite[Theorem 1.3]{Andres}):
\[
dZ_t=d\tilde{W}_t+Bdt+dL^{Z}_t,\quad Z_0=2h,
\]
we obtain for each $\gamma$ a strong solution $(Z^{\gamma},\tilde{W}^{\gamma})$ of the d-dimensional SDE:
\[
dZ^{\gamma}_t=(B,\ldots,B)dt+\vec{n}(Z^{\gamma}_t)dL^{Z^{\gamma}}_t,\quad 0\leq t\leq\tau^{Z^{\gamma}}=\inf\{t>0:Z^{\gamma}_t\in \partial D_{4h}(\vec{0})\},\quad Z^{\gamma}_0=2\vec{h}
\]
where $L^{Z^{\gamma}}_t$ is the local time of $Z^{\gamma}_t$ with the boundary $\partial \Rm^d_{>0}$ and which satisfies:
\begin{equation}
X^{\gamma}_0\in D_{2h}(\vec{0})\text{ and }Z_t^{\gamma}\in D_{\epsilon}(\vec{0})\Rightarrow X^{\gamma}_t \in D_{\epsilon}(\vec{0}),\quad 0<\epsilon< h.
\end{equation} 

Moreover we may take $c>0$ such that for all $h>\epsilon>0$ and $\gamma\in\Gamma$ we have $\Pm(Z^{\gamma}_t\in D_{\epsilon}(\vec{0}))\geq c\text{Leb}(D_{\epsilon}(\vec{0}))$. Therefore by considering the processes $\{X_t-\vec{y}:\vec{y}\in D_{h}(\vec{x})\}$ we see that:
\[
\Pm(X_t\in D_{\epsilon}(\vec{y}))\geq \Pm(X_{0}\in D_{2h}(\vec{y}))c\text{Leb}(D_{\epsilon}(\vec{0}))\geq \Pm(X_{0}\in D_{h}(\vec{x}))c\text{Leb}(D_{\epsilon}(\vec{0})).
\]
Therefore by considering the inner measure generated by the open cubes, we see that:
\[
\mathcal{L}(X_t)(.)\geq \mathcal{L}(X_t)_{\ast}(.)\geq c\text{Leb}_{\ast}(.)=c\text{Leb}(.)
\]

\end{proof}

\begin{proof}[Proof of Corollary \ref{cor:hitting time Corollary}]
The $t=0$ case is trivial, so we assume $t>0$. Indeed by the continuity of the paths of $X_t$, we have for all $R<\infty$ and $\epsilon>0$:
\[
\Pm(\tau=t)\leq \limsup_{h\ra\infty}\Pm(d(X_{t-h},\partial U)\leq \epsilon\quad\text{and}\quad \lvert X_{t-h}\rvert \leq R+1\})+\Pm(\lvert X_{t}\rvert \geq R).
\]

Applying \eqref{eq:upper bound on density for general diffusion} we have:
\[
\Pm(\tau=t)\leq C_{\frac{t}{2}}\text{Leb}(\{x:d(x,\partial U)\leq \epsilon\quad \text{and}\quad\lvert x\rvert\leq R+1)+\Pm(\lvert X_{t}\rvert \geq R).
\]

Taking $\limsup_{R\ra\infty}\limsup_{\epsilon\ra 0}$ of both sides we are done.
\end{proof}

\bibliography{fvlib}
\bibliographystyle{plain}

\end{document}